\documentclass[11pt,reqno]{amsart}
\usepackage{amssymb,amsmath,amsfonts,amsthm,enumerate,stmaryrd}
\usepackage{mathrsfs}
\usepackage{graphicx}

\usepackage[left=.75 in, right=.75 in,top=.75 in, bottom=.75 in]{geometry}
\usepackage{nameref,hyperref,cleveref}
\numberwithin{equation}{section}

\usepackage{color}

\providecommand{\abs}[1]{\left\vert#1\right\vert}
\providecommand{\norm}[1]{\left\Vert#1\right\Vert}
\providecommand{\snorm}[1]{\left[#1\right]}

\providecommand{\ip}[1]{\left(#1 \right)}

\def\nab{\nabla}
\def\dt{\partial_t}
\def\hal{\frac{1}{2}}
\def\ep{\varepsilon}
\def\vchi{\text{\large{$\chi$}}}
\def\ls{\lesssim}
\def\p{\partial}

\def\naba{\nab_{\mathcal{A}}}

\def\H1{{_0}H^1((-\ell,\ell))}

\providecommand{\abs}[1]{\left\vert#1\right\vert}
\providecommand{\norm}[1]{\left\Vert#1\right\Vert}


\providecommand{\br}[1]{\langle #1 \rangle}



\def\hal{\frac{1}{2}}
\def\ep{\varepsilon}
\def\vchi{\text{\large{$\chi$}}}
\def\ls{\lesssim}

\def\nab{\nabla}
\def\dt{\partial_t}
\def\p{\partial}

\def\naba{\nab_{\mathcal{A}}}


\def\SH0{\mathcal{H}^0((-\ell,\ell))}

\def\H{X}
\def\P{Y}

\def\A{\mathcal{A}}
\def\B{\mathcal{B}}
\def\C{\mathbb{C}}
\def\D{\mathcal{D}}

\def\H{\mathcal{H}}
\def\h{\mathcal{H}}

\def\L{\mathcal{L}}

\def\M{\mathcal{M}}
\def\N{\mathbb{N}}
\def\P{\mathcal{P}}

\def\R{\mathbb{R}}
\def\T{\mathbb{T}}

\def\Z{\mathbb{Z}}

\def\ef{\mathfrak{E}}

\def\jf{\mathfrak{J}}
\def\ff{\mathfrak{F}}

\def\kf{\mathfrak{K}}

\def\pf{\mathfrak{P}}
\def\sf{\mathfrak{S}}


\def\st{\;\vert\;}


\def\XXint#1#2#3{{\setbox0=\hbox{$#1{#2#3}{\int}$ }
\vcenter{\hbox{$#2#3$ }}\kern-.6\wd0}}


\DeclareMathOperator{\tr}{Tr}

\DeclareMathOperator{\diverge}{div}
\DeclareMathOperator{\di}{div}
\DeclareMathOperator{\supp}{supp}

\DeclareMathOperator{\sech}{sech}


\def\Zz{\mathbb{Z}}

\def\P{\mathbb{P}}

\def\cB{\mathcal{B}}

\def\cF{\mathcal{F}}
\def\cG{\mathcal{G}}

\def\cN{\mathcal{N}}

\def\cT{\mathcal{T}}
\def\cL{\mathcal{L}}

\def\cH{\mathcal{H}}

\def\ld{\lambda}

\def\wt{\widetilde}
\def\wL{\widetilde{L}}
\def\wh{\widehat}

\newtheorem{lem}{Lemma}[section]
\newtheorem{cor}[lem]{Corollary}
\newtheorem{prop}[lem]{Proposition}
\newtheorem{thm}[lem]{Theorem}
\newtheorem{remark}[lem]{Remark}

\newtheorem{dfn}[lem]{Definition}

\title{Traveling wave solutions to the one-phase Muskat problem: existence and stability}

\author{Huy Q. Nguyen}
\address{
Department of Mathematics\\
University of Maryland\\
College Park, MD 20742, USA
}
\email[H. Nguyen]{hnguye90@umd.edu}

\author{Ian Tice}
\address{
Department of Mathematical Sciences\\
Carnegie Mellon University\\
Pittsburgh, PA 15213, USA
}
\email[I. Tice]{iantice@andrew.cmu.edu}

\subjclass[2010]{Primary 76S05, 35R35, 35C07; Secondary 35S16, 35B40, 76D03}


\keywords{Muskat problem, traveling waves, nonlinear stability}

\begin{document}

\begin{abstract}
We study the Muskat problem for one fluid in arbitrary dimension, bounded below by a flat bed and above by a free boundary given as a graph. In addition to a fixed uniform gravitational field, the fluid is acted upon by a generic force field in the bulk and an external pressure on the free boundary, both of which are posited to be in traveling wave form.  We prove that for sufficiently small force and pressure data in Sobolev spaces, there exists a locally unique traveling wave solution in Sobolev-type spaces. The free boundary of the traveling wave solutions is either periodic or asymptotically flat at spatial infinity. Moreover, we prove that small periodic traveling wave solutions induced by external pressure only are asymptotically stable. These results provide the first class of nontrivial stable solutions for the problem.  
\end{abstract}

\maketitle

\section{Introduction} 
In this paper we study traveling wave solutions to the one-phase Muskat problem, which concerns the dynamics of the free boundary of a viscous fluid in  homogeneously permeable porous media. The n-dimensional ($n\ge 2$) wet region $\Omega_{\zeta(\cdot,t)}$ lies above the flat bed of depth $b>0$ and below the free boundary that is the graph of an unknown time-dependent function $\zeta$, i.e.
\begin{equation}
\Omega_{\zeta(\cdot,t)} = \{x \in \Gamma \times \R \st -b < x_n < \zeta(x',t)\},
\end{equation}
where the cross-section $\Gamma$ is either $\R^{n-1}$ or $\T^{n-1} := \R^{n-1} / (2\pi \Z^{n-1})$.  Here, for any point $x\in \Gamma \times \R$ we split its horizontal and vertical coordinates as $x=(x', x_n)$. We denote the free-boundary and the flat bed respectively by $\Sigma_{\zeta(\cdot,t)}$ and $\Sigma_{-b}$; that is,
\begin{equation}
 \Sigma_{\zeta(\cdot,t)} = \{x \in \Gamma \times \R \st x_n = \zeta(x',t)\} \text{ and } \Sigma_{-b} = \{x \in \Gamma\times \R \st x_n = -b\}.
\end{equation}
We posit that when the cross-section is $\R^{n-1}$, the free boundary $\zeta(x', t)$ decays as $|x'|\to \infty$. 

The fluid is acted upon in the bulk by a uniform gravitational field $-e_n$ pointing downward, where $e_n$ is the upward pointing unit vector in the vertical direction, and a generic body force $\tilde{\mathfrak{f}}(\cdot, t): \Omega_{\zeta(\cdot, t)}\to \R^n$.  Then the fluid motion in the porous medium is modeled by the Darcy  law 
\begin{equation}
 w + \nab P = - e_n + \tilde{\mathfrak{f}} \quad  \text{and} \quad \diverge{w} = 0\quad \text{in } \Omega_{\zeta(\cdot,t)},
\end{equation}
where for the sake of simplicity we have normalized the dynamic viscosity, the fluid density, and the permeability of the medium  to unity. Here, $w$ and $P$ respectively denote the fluid velocity and pressure. On the surface, the fluid is acted upon by a constant pressure $P_0$ from the dry region above $\Omega_{\zeta(\cdot,t)}$ and an externally applied pressure $\phi(\cdot, t): \Sigma_{\zeta(\cdot, t)}\to \R$. This leads to the boundary condition 
\begin{equation}
P=P_0+\phi\quad\text{on } \Sigma_{\zeta(\cdot,t)}. 
\end{equation}
The no-penetration boundary condition is assumed on the flat bed:
\begin{equation}
w_n=0\quad\text{on }\Sigma_{-b}.
\end{equation}
Finally, the free boundary evolves according to the kinematic boundary condition 
\begin{equation}
 \dt \zeta = w \cdot (-\nab' \zeta,1) \quad\text{on } \Sigma_{\zeta(\cdot,t)}.
\end{equation}
We shall refer to the following system as the (one-phase) Muskat problem 
\begin{equation}\label{Muskat:wpzeta}
\begin{cases}
 w + \nab P = - e_n + \tilde{\mathfrak{f}}  & \text{in }\Omega_{\zeta(\cdot,t)} \\
 \diverge{w} = 0 &\text{in } \Omega_{\zeta(\cdot,t)} \\
 \dt \zeta = w \cdot (-\nab' \zeta,1) &\text{on } \Sigma_{\zeta(\cdot,t)} \\
 P = P_0 + \phi &\text{on } \Sigma_{\zeta(\cdot,t)} \\
 w_n =0 & \text{on } \Sigma_{-b}.
\end{cases}
\end{equation}
In the absence of the body force and the external pressure, i.e. $\tilde{\mathfrak{f}} =0$ and $\phi=0$, \eqref{Muskat:wpzeta} is called the {\it free} Muskat problem. 

The free Muskat problem can be recast as a nonlocal equation for the free boundary function $\eta$ (see \eqref{Muskat}). It was proved in \cite{NguyenPausader} that the problem is locally-in-time well-posed for large data $\eta_0\in H^s(\Gamma)$ for any $1+\frac{n-1}{2}<s\in \R$, which is the lowest Sobolev index guaranteeing that $\eta_0\in W^{1, \infty}(\Gamma)$. We also refer to \cite{CCG} for local well-posedness for the case of non-graph free boundary. The free Muskat problem admits the following trivial stationary solutions 
\begin{equation}\label{stationarysol}
(\overline{w}, \overline{P}, \overline{\zeta})=
\begin{cases}
(0, -x_n+P_0, 0)\quad \text{if } \Gamma=\R^{n-1},\\
(0, -x_n + c+P_0, c),~c\in \R\quad \text{if } \Gamma=\T^{n-1}.
\end{cases}
\end{equation}
In fact, under mild regularity and decay assumptions, \eqref{stationarysol} are the only stationary solutions. They have been proved to be stable in various norms \cite{CGBS, GGJPS, Nguyen2022,SieCafHow}. To the best of our knowledge, \eqref{stationarysol} are the only solutions that are known to be stable.

In this paper  we are interested in the construction of nontrivial special solutions and the stability of them. In view of the translation invariance of \eqref{Muskat:wpzeta} in the horizontal directions, it is natural to consider traveling wave solutions. These are solutions that propagate along a fixed direction, which without loss of generality we may assume is the  $x_1$-direction, with constant velocity $\gamma$.  To this end, we assume that
\begin{equation}
 \phi(x,t) = \varphi(x-\gamma t e_1) \text{ and } \tilde{\mathfrak{f}}(x,t) = \mathfrak{f}(x-\gamma t e_1)
\end{equation}
and make the traveling wave ansatz 
\begin{equation}
 \zeta(x',t) = \eta(x' - \gamma t e_1 ).
\end{equation}
This determines the unknown domain $\Omega_\eta = \{x \in \Gamma \times \R \st -b < x_n < \eta(x')\}$  as well as the free boundary $\Sigma_\eta = \{x \in \Gamma \times \R \st x_n = \eta(x')\}$ as before.  We then define the traveling wave unknowns $v : \Omega_\eta \to \R^n$ and $q: \Omega_\eta \to \R$ via 
\begin{equation}
 w(x,t) = v(x-\gamma t e_1 ) \text{ and } P(x,t) = P_0 - x_n + q(x - \gamma t e_1 ).
\end{equation}
In the latter we have subtracted off the hydrostatic pressure, as is often convenient.  The new equations for $(v,q, \eta)$ read
\begin{equation}\label{muskat_euler:intro}
\begin{cases}
 v + \nab q = \mathfrak{f} & \text{in }\Omega_{\eta} \\
 \diverge{v} = 0 &\text{in } \Omega_{\eta} \\
 -\gamma \p_1 \eta = v \cdot N&\text{on } \Sigma_{\eta} \\
 q - \eta =   \varphi &\text{on } \Sigma_{\eta} \\
 v_n =0 & \text{on } \Sigma_{-b},
\end{cases}
\end{equation}
where
\begin{equation}
N=(-\nab'\eta,1).
\end{equation}

We pause to remark that the only solutions to the free version of \eqref{muskat_euler:intro} ($ \mathfrak{f}=0$ and $\phi=0$) are the trivial solutions as given in \eqref{stationarysol}. Indeed, assuming that $(v, q, \eta)$ is a decaying solution, then using Green's theorem and the boundary conditions for $v$ and $q$, we obtain 
\begin{equation}
\int_{\Omega_\eta}|v|^2dx =-\int_{\Omega_\eta} v\cdot \nab q dx\\
= -\int_{\Sigma_\eta} q(v\cdot \frac{N}{|N|})dS-\int_{\Sigma_{-b}}qv_n dS+\int_{\Omega_\eta}  q\diverge{v}dx\\
=\gamma\int_{\Gamma}\eta\p_1\eta dx=0.
\end{equation}
It follows that $v=0$ and hence $q=c$, a constant. Consequently $\eta(x')=q(x', \eta(x'))=c$.  When $\Gamma = \R^{n-1}$ this implies that $\eta=q=0$ since  $\eta$ decays.  Thus $(v, q, \eta)=(0, 0, 0)$ is the trivial solution when $\Gamma = \R^{n-1}$. In the periodic case, $\Gamma = \T^{n-1}$, we obtain the trivial solutions $(v, q, \eta)=(0, c, c)$, for $c\in \R$ (one can uniquely determine $c$ by fixing a mass of the fluid).  This is not a surprise since for free Muskat problem, the energy dissipates, so it cannot sustain the permanent structure of traveling waves. It is therefore necessary to have some sort of external energy in order for traveling wave solutions to exist.  In the context of \eqref{muskat_euler:intro}, this is provided by the external bulk force $ \mathfrak{f} $ and the external pressure $\phi$.  

Our first main result states  that for suitably small $\mathfrak{f}$ and $\phi$, there exists a locally unique traveling wave solution to \eqref{muskat_euler:intro} in Sobolev-type spaces. Note that in the following statement we employ a reformulation of the problem \eqref{muskat_euler:intro} as well as some nonstandard function spaces; these will be explained after the theorem statement.

\begin{thm}(Proved in Section \ref{sec_tw_exist})\label{well_posedness_flat:intro}
Let $\frac{n}{2} - 1 < s \in \N$  and consider the open set
\begin{equation}
 U^s_\delta = \{(u,p,\eta) \in {_n}H^{s+1}(\Omega;\R^n) \times H^{s+2}(\Omega) \times \H^{s+3/2}(\Sigma) \st \norm{\eta}_{\H^{s+3/2}} < \delta \}
\end{equation}
with $\delta >0$ as  constructed in Theorem \ref{nlin_well_def}.  Define the open set $\mathfrak{C}\subseteq \R$ to be $\R$ if $\Gamma  = \T^{n-1}$ and $\R \backslash \{0\}$ if $\Gamma = \R^{n-1}$.  Then there exist open sets 
\begin{equation} 
\mathcal{D}^s \subseteq \mathfrak{C} \times H^{s+3/2}(\Sigma) \times H^{s+3}(\Gamma \times \R) \times H^{s+1}(\Omega;\R^n) \times H^{s+2}(\Gamma \times \R;\R^n)
\text{ and } \mathcal{S}^s \subseteq U^s_\delta
\end{equation}
such that the following hold.
\begin{enumerate}
 \item $\mathfrak{C} \times \{0\} \times  \{0\} \times  \{0\} \times  \{0\} \subseteq \mathcal{D}^s$ and  $(0,0,0) \in \mathcal{S}^s$.
 \item For each $(\gamma, \varphi_0, \varphi_1, \mathfrak{f}_0, \mathfrak{f}_1) \in \mathcal{D}^s$ there exists a locally unique $(u,p,\eta) \in \mathcal{S}^s$ classically solving 
\begin{equation}\label{well_posedness_flat_02:intro}
\begin{cases}
 u + \nab_{\A} p + \nab_{\A} \pf \eta = \jf \M^T[ \mathfrak{f}_0 +  \mathfrak{f}_1\circ \ff_\eta] & \text{in }\Omega:=\Gamma\times (-b, 0) \\
 \diverge{u} = 0 &\text{in } \Omega \\
 -\gamma \p_1 \eta = u _n &\text{on } \Sigma:=\Gamma\times \{0\} \\
 p  = \varphi_0 +  \varphi_1 \circ \ff_\eta &\text{on } \Sigma \\
 u_n =0 & \text{on } \Sigma_{-b}.
\end{cases}
\end{equation}
 \item The map  $\mathcal{D}^s \ni (\gamma, \varphi_0, \varphi_1, \mathfrak{f}_0, \mathfrak{f}_1) \mapsto (u,p,\eta) \in \mathcal{S}^s$ is $C^1$ and locally Lipschitz.
\end{enumerate}
\end{thm}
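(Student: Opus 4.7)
The plan is to apply the implicit function theorem, treating $(\gamma, \varphi_0, \varphi_1, \mathfrak{f}_0, \mathfrak{f}_1)$ as parameters and $(u,p,\eta)$ as the unknown. I would encode the five equations in \eqref{well_posedness_flat_02:intro} as a single nonlinear map
\begin{equation*}
\Psi: \mathfrak{C} \times H^{s+3/2}(\Sigma) \times H^{s+3}(\Gamma\times\R) \times H^{s+1}(\Omega;\R^n) \times H^{s+2}(\Gamma\times\R;\R^n) \times U^s_\delta \longrightarrow \mathcal{T}
\end{equation*}
into a suitable target space $\mathcal{T}$ for which the reference pair $(\gamma,0,0,0,0;0,0,0)$ lies in $\Psi^{-1}(0)$ for every $\gamma \in \mathfrak{C}$.

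The first task is to verify that $\Psi$ is $C^1$. The $\eta$-dependence enters only through the flattening map $\ff_\eta$ and hence through $\A$, $\M$, $\jf$, and $\pf\eta$, each of which was built in Theorem~\ref{nlin_well_def} to depend smoothly on $\eta \in U^s_\delta$. The compositions $\varphi_1\circ \ff_\eta$ and $\mathfrak{f}_1\circ\ff_\eta$ are $C^1$ by Moser-type estimates, available thanks to the assumption $s > n/2-1$ and the extension of $\varphi_1,\mathfrak{f}_1$ to the full strip $\Gamma\times\R$.

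The central analytic task is to show that the partial differential $D_{(u,p,\eta)}\Psi$ at the reference point is a Banach space isomorphism. At $\eta = 0$ one has $\A = I$, $\M = I$, $\jf = 1$, so the linearization reads
\begin{equation*}
\begin{cases}
\dot u + \nab\dot p + \nab(\pf\dot\eta) = F_1 & \text{in } \Omega, \\
\diverge \dot u = F_2 & \text{in } \Omega, \\
-\gamma\p_1 \dot\eta - \dot u_n = F_3 & \text{on } \Sigma, \\
\dot p = F_4 & \text{on } \Sigma, \\
\dot u_n = F_5 & \text{on } \Sigma_{-b},
\end{cases}
\end{equation*}
for arbitrary data $(F_1,\dots,F_5)$ in the target space. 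I would introduce the effective pressure $\pi := \dot p + \pf\dot\eta$ and, using that $\pf$ is a harmonic extension of its surface datum, reduce the bulk/bed conditions to the mixed boundary value problem
\begin{equation*}
\Delta \pi = \diverge F_1 - F_2 \text{ in } \Omega, \quad \p_n\pi = F_{1,n} - F_5 \text{ on } \Sigma_{-b}, \quad \pi|_\Sigma = F_4 + \dot\eta,
\end{equation*}
after which the kinematic boundary condition collapses to the single scalar equation
\begin{equation*}
(\mathcal{N}_b - \gamma\p_1)\dot\eta = F_{1,n}|_\Sigma - F_3 - \mathcal{N}_b F_4 - \mathcal{R}(F_1,F_2,F_5)
\end{equation*}
on $\Sigma$, where $\mathcal{N}_b$ is the Dirichlet-to-Neumann operator in the slab with a homogeneous Neumann condition on the bed and $\mathcal{R}$ is a bounded remainder.

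The hard part will be inverting $\mathcal{N}_b - \gamma\p_1$, whose Fourier symbol is $|\xi'|\tanh(b|\xi'|) - i\gamma\xi_1$. On $\Gamma = \T^{n-1}$ this symbol vanishes only at the zero mode, which encodes the gauge of adding a constant to $(p,\eta)$ and is quotiented out in the construction of $\H^{s+3/2}(\Sigma)$; invertibility on the complement is immediate since the symbol is bounded below on a lattice avoiding the origin. On $\Gamma = \R^{n-1}$ the symbol degenerates at $\xi' = 0$, and the restriction $\gamma \neq 0$ is precisely what is needed to control $\dot\eta$ at low frequencies, with the residual directional loss of one derivative being exactly the feature absorbed by the anisotropic design of $\H^{s+3/2}$ from Theorem~\ref{nlin_well_def}. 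Once this linear isomorphism is established, the implicit function theorem yields a locally unique solution map on a neighborhood of each $(\gamma,0,0,0,0)$ with $\gamma \in \mathfrak{C}$, with the $C^1$ and locally Lipschitz properties built in; the advertised open sets $\mathcal{D}^s$ and $\mathcal{S}^s$ arise by taking the union of these local neighborhoods.
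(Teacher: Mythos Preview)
Your proposal is correct and follows essentially the same route as the paper: set up a $C^1$ map $\Psi$ via Theorem~\ref{nlin_well_def}, linearize at the trivial solution, reduce the linear problem to a scalar pseudodifferential equation on $\Sigma$ with symbol $|\xi|\tanh(b|\xi|)-i\gamma\xi_1$, and invoke the implicit function theorem pointwise in $\gamma\in\mathfrak{C}$, then take unions. Your use of the effective pressure $\pi=\dot p+\pf\dot\eta$ is a slight repackaging of the paper's approach, which instead keeps $p$ and $\pf\eta$ separate and frames the reduction through an over-determined elliptic problem (their $T_0,T_1,T_2,T_3$ chain); the content is the same.

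One point you leave implicit that the paper makes explicit: the ``suitable target space $\mathcal{T}$'' cannot simply be the product of the obvious Sobolev spaces, because the linearized map is not surjective onto that. The paper identifies a closed subspace $V^s$ (Definition~\ref{Vs_def}) that encodes a compatibility condition---a $\dot H^{-1}$ constraint on $\int_{-b}^0 G\,dx_n - H$ when $\Gamma=\R^{n-1}$, or a zero-mean condition when $\Gamma=\T^{n-1}$---and then checks both that the nonlinear map $\Psi$ actually lands in $V^s$ (Lemma~\ref{T3_range}, Theorem~\ref{nlin_well_def}) and that $T_3$ is onto $V^s$. Your scalar equation for $\dot\eta$ is solvable in $\H^{s+3/2}$ precisely because the right-hand side inherits the correct low-frequency behavior from this compatibility structure, so in a fully rigorous write-up you would need to name $\mathcal{T}$ and verify this closure.
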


Some remarks are in order:

\begin{enumerate}
\item \eqref{well_posedness_flat_02:intro} is a reformulation of \eqref{muskat_euler:intro} in the fixed domain $\Omega$ and with $\mathfrak{f}(x)=\mathfrak{f}_0(x')+\mathfrak{f}_1(x)$ and $\varphi(x)=\varphi_0(x')+\varphi(x)$.  See Section \ref{subsection:flattening} for the derivation of \eqref{well_posedness_flat_02:intro} and for the precise meaning of  $\mathcal{A}$,  $\mathfrak{J}$, $\mathcal{M}$ and $\mathfrak{F}_\eta$. The preceding forms of $\mathfrak{f}$ and $\varphi$ are imposed since we assume less regularity for $\mathfrak{f}$ and $\varphi$ when they are independent of the vertical variable $x_n$. Note also that the integer constraint for the regularity parameter $s$ comes from the need to verify that the maps $(\mathfrak{f}_1,\eta) \mapsto \mathfrak{f}_1 \circ \mathfrak{F}_\eta$ and $(\varphi_1,\eta) \mapsto \varphi_1 \circ \mathfrak{F}_\eta$  are $C^1$.  If these forcing terms are ignored, then we may relax this requirement for $s$: see Theorems \ref{theo:existence:finite} and \ref{theo:existence:finite:p}.

\item The space ${_n}H^{s+1}(\Omega;\R^n)$ is defined in Definition \ref{nHs_def}.

\item  When the cross-section is $\T^{n-1}$, the boundary function $\eta$ is constructed in  $\mathring{H}^s(\T^{n-1})$, the usual Sobolev space of zero-mean functions. On the other hand, when the cross-section is $\R^{n-1}$,  then $\eta$ belongs to the anisotropic Sobolev space  $\cH^{s+\frac32}(\R^{n-1})$, as defined in Definition \ref{specialized_dfn}.  At high frequencies this space provides standard $H^{s+3/2}$ Sobolev control, but at low frequencies it only controls
\begin{equation}
\int_{B(0,1)} \frac{\xi_1^2+|\xi|^4}{|\xi|^2}|\widehat{\eta}(\xi)|^2d\xi.
\end{equation}
The modulus  $\frac{\xi_1^2+|\xi|^4}{|\xi|^2}$ naturally arises from the linearized operator $\p_1-|D|\tanh(b|D|)$ and the following  structure of the nonlinearity at low frequencies: $\mathcal{N}=|D|\widetilde{\mathcal{N}}$. The anisotropic Sobolev space $\cH^s(\R^d)$, which satisfies the inclusions $H^s(\R^d) \subset \cH^s(\R^d) \subseteq H^s(\R^d) + C^\infty_0(\R^d)$, was introduced  in \cite{LeoniTice} for the construction of traveling wave solutions to the free boundary Navier-Stokes equations and plays a key role in our construction here. We recall the definition and basic properties of $\cH^s$ in Appendix \ref{appendixA1}. 

\item Theorem \ref{well_posedness_flat:intro} asserts the uniqueness of traveling wave solutions in the small but does not exclude the possibility of nonuniqueness in the large. 
\end{enumerate}

Our proof of Theorem \ref{well_posedness_flat:intro} is based on the implicit function theorem, applied in a neighborhood of the trivial solutions obtained with $\gamma \in \mathfrak{C}$, $\mathfrak{f}_0 = \mathfrak{f}_1 = 0$, $\varphi_0 = \varphi_1 =0$, $u=0$, $p=0$, and $\eta =0$. In order for this strategy to work, we need a good understanding of the solvability of the linear problem 
\begin{equation}\label{linear_full}
\begin{cases}
 u + \nab p + \nab \pf \eta = F & \text{in }\Omega \\
 \diverge{u} = G &\text{in } \Omega \\
 u_n +\gamma \p_1 \eta = H &\text{on } \Sigma \\
 p  =   K &\text{on } \Sigma \\
 u_n = 0 & \text{on } \Sigma_{-b},
\end{cases}
\end{equation}
which is obtained by linearizing the flattened reformulation of \eqref{muskat_euler:intro} given in \eqref{well_posedness_flat_02:intro} around the trivial solutions.  More precisely, we need to identify appropriate function spaces $\mathbb{E}$ and $\mathbb{F}$ for which the linear map $\mathbb{E} \ni (u,p,\eta) \mapsto (F,G,H,K) \in \mathbb{F}$ induced by \eqref{linear_full} is an isomorphism.  When $\Gamma = \T^{n-1}$ the function spaces we employ are standard $L^2-$Sobolev spaces, but when $\Gamma = \R^{n-1}$ even identifying appropriate spaces turns out to be quite delicate for a couple reasons.  First, in $L^2-$Sobolev spaces on the infinite domain $\Omega = \R^{n-1} \times (-b,0)$ there are some subtle compatibility conditions that the data tuple $(F,G,H,K)$ need to satisfy, and these need to be encoded in $\mathbb{F}$.  Second, as mentioned in the above remarks, even when the data satisfy the appropriate compatibility conditions, the free surface function $\eta$ necessarily lives in the strange anisotropic Sobolev spaces given in Definition \ref{specialized_dfn}, which behave like standard $L^2-$based Sobolev spaces at large frequencies but have unusual anisotropic behavior at low frequencies (for instance, these spaces are not closed under composition with rotations).  Similar issues arose in the second author's recent work on the construction of traveling wave solutions to the incompressible Navier-Stokes system, \cite{koganemaru_tice,LeoniTice,stevenson_tice}, and fortunately, we were able to adapt some of the techniques used in those works to handle the Muskat construction of Theorem \ref{well_posedness_flat:intro}.  

In identifying the appropriate function spaces, we also uncover the method for showing that \eqref{linear_full} induces an isomorphism.  We first take the divergence of the first equation and eliminate $u$ to arrive at a problem for $p$ and $\eta$ only, \eqref{linear_pressure}.  To solve this problem we initially ignore the $\eta$ terms and view the resulting problem as an overdetermined problem for $p$, \eqref{ODP}.  This overdetermined problem is only solvable for data satisfying certain compatibility conditions, reminiscent of those from the closed range theorem, which we identify in Section \ref{sec_odp_cc}.  These turn out to be the key to solving \eqref{linear_pressure}, as they lead us to a pseudodifferential equation for $\eta$ that can be solved independently of $p$:
\begin{equation} 
\left[- i \gamma \xi_1 +  \abs{\xi} \tanh( \abs{\xi} b)\right] \hat{\eta}(\xi) = \psi(\xi),
\end{equation}
where $\psi$ is a specific function determined linearly by the data in \eqref{linear_pressure} (see \eqref{psi_def} for the precise definition).  It is this equation that forces $\eta$ into the anisotropic Sobolev spaces, but in turn the spaces allow us to construct $\eta$ and verify that it is a reasonably nice function.  Note that when $\Gamma = \R^{n-1}$ we require $\gamma \neq 0$ precisely because this term is responsible for ensuring that $\eta$ is a nice function; in the case $\gamma =0$ we lose the ability to verify this.  With $\eta$ in hand, we can then solve for $p$ and show that \eqref{linear_pressure} induces an isomorphism (see Theorem \ref{T2_iso}).  Then in Theorem \ref{T3_iso} we show that we can return to \eqref{linear_full} and uncover an isomorphism.  Finally, in Section \ref{sec_nonlinear_tw} we verify that our function spaces are nice enough to be used in an implicit function theorem argument and then employ the IFT to prove Theorem \ref{well_posedness_flat:intro}.

It is natural to investigate  the stability of the traveling wave solutions constructed in Theorem \ref{well_posedness_flat:intro}, and we next turn to this topic.  We expect that the stability analysis depends on the type and form of external forces. In our second main result, we prove that under the sole effect of the external pressure (i.e. $\mathfrak{f}_0=\mathfrak{f}_1=0$),  the small {\it periodic} traveling wave solutions constructed in Theorem \ref{well_posedness_flat:intro} are {\it asymptotically stable}. For simplicity we state the result for $\varphi(x)=\varphi_0(x')$. 

\begin{thm}(Proved in Section \ref{sec_stability} )\label{theo:stability:intro}
Let $\gamma \in \R$ and $1+\frac{n-1}{2}<s\in \R$. There exists a small positive constant $\ep_*=\ep_*(s, b, n)$ such that if $\| \nab \varphi_0\|_{H^{s-\frac12}(\T^{n-1})}<\ep_*$, then  the unique steady solution $\eta_*\in \mathring{H}^s(\T^{n-1})$ of \eqref{Muskat} with $\varphi(x)=\varphi_0(x')$ is asymptotically stable in $\mathring{H}^s(\T^{n-1})$. More precisely, there exist  positive constants  $\nu$ and $\delta$, both depending only on $(s, b, n)$, such that if $\eta_0 \in \mathring{H}^s(\T^{n-1})$ satisfies $\| \eta_0-\eta_*\|_{\mathring{H}^s}<\delta$, then the dynamic problem \eqref{Muskat} with initial data $\eta_0$ has a unique solution $\eta \in \eta_\ast +  B_{Y^s([0, \infty))}(0, \nu)$, where
\begin{equation}
 Y^s([0, \infty))=\wL^\infty([0, \infty); \mathring{H}^s(\T^{n-1}) )\cap L^2([0,\infty); \mathring{H}^{s+\hal}(\T^{n-1}));
\end{equation}
moreover, we have the estimates
\begin{equation}
\| \eta(t)-\eta_*\|_{H^s}\le \| \eta_0-\eta_*\|_{H^s}e^{-c_0t}\quad\forall t>0
\end{equation}
and
\begin{equation}
\int_0^\infty \| \eta(t)-\eta_*\|_{H^{s+\hal}}^2dt\le \frac{1}{2c_0}\| \eta_0-\eta_*\|^2_{H^s},
\end{equation}
where $c_0=c_0(s, b, d)$.
\end{thm}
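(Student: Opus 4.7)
The plan is to pass to the moving frame $x' \mapsto x' - \gamma t e_1$, in which the traveling wave $\eta_\ast$ becomes a stationary solution and the dynamic equation \eqref{Muskat} takes the form $\partial_t \eta - \gamma \partial_1 \eta + \mathcal{N}(\eta,\varphi_0) = 0$, where $\mathcal{N}(\cdot,\varphi_0)$ is the (nonlocal, Dirichlet--Neumann-type) Muskat nonlinearity. Writing the perturbation as $\rho := \eta - \eta_\ast$ and subtracting the stationary equation $-\gamma \partial_1 \eta_\ast + \mathcal{N}(\eta_\ast,\varphi_0)=0$, one obtains
\begin{equation}
 \partial_t \rho - \gamma \partial_1 \rho + L_{\eta_\ast}\rho = \mathcal{Q}(\rho;\eta_\ast),
\end{equation}
where $L_{\eta_\ast}$ is the Fr\'echet derivative of $\eta\mapsto \mathcal{N}(\eta,\varphi_0)$ at $\eta_\ast$ and $\mathcal{Q}$ collects all terms that are at least quadratic in $\rho$. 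The implicit-function construction in Theorem \ref{well_posedness_flat:intro} gives $\|\eta_\ast\|_{\mathring{H}^s} \lesssim \|\nabla\varphi_0\|_{H^{s-1/2}}$, so the background is as small as we wish by choosing $\varepsilon_\ast$ small.

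The next step is to paralinearize $L_{\eta_\ast}$ in the spirit of \cite{NguyenPausader,Nguyen2022} and to write $L_{\eta_\ast} = |D|\tanh(b|D|) + R_{\eta_\ast}$, where the principal part is the flat linearization and the remainder satisfies a bound of the form $\|R_{\eta_\ast} f\|_{\mathring{H}^s} \lesssim \|\eta_\ast\|_{\mathring{H}^s} \,\|f\|_{\mathring{H}^{s+1/2}}$. On the zero-mean subspace of $\T^{n-1}$, the Fourier multiplier $|\xi|\tanh(b|\xi|)$ is bounded below by a positive constant times $|\xi|$ for all nonzero integer modes, yielding the spectral gap
\begin{equation}
 \bigl\langle |D|\tanh(b|D|) f,\,f \bigr\rangle_{\mathring{H}^s} \geq c_0 \|f\|_{\mathring{H}^{s+1/2}}^2 \geq c_0 \|f\|_{\mathring{H}^s}^2
\end{equation}
for all $f \in \mathring{H}^{s+1/2}(\T^{n-1})$. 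This is the source of the dissipation and of the exponential decay.

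I then perform an $\mathring{H}^s$ energy estimate on $\rho$: the transport term obeys $\langle \partial_1 \rho, \rho\rangle_{\mathring{H}^s}=0$ by integration by parts; the principal dissipation contributes $\geq c_0 \|\rho\|_{\mathring{H}^{s+1/2}}^2$; the remainder $R_{\eta_\ast}$ is absorbed by the dissipation once $\|\nabla\varphi_0\|_{H^{s-1/2}}$ is taken small; and the nonlinear term is bounded, using $s>1+\tfrac{n-1}{2}$ so that $\mathring{H}^s$ embeds into $W^{1,\infty}$, by standard product and commutator estimates for nonlocal Muskat-type operators in the form $\|\mathcal{Q}(\rho;\eta_\ast)\|_{\mathring{H}^{s-1/2}} \lesssim (\|\rho\|_{\mathring{H}^s}+\|\eta_\ast\|_{\mathring{H}^s})\,\|\rho\|_{\mathring{H}^{s+1/2}}$. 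A continuity argument keeping $\|\rho(t)\|_{\mathring{H}^s}$ small then yields
\begin{equation}
 \frac{d}{dt}\|\rho\|_{\mathring{H}^s}^2 + 2c_0 \|\rho\|_{\mathring{H}^{s+1/2}}^2 \leq 0,
\end{equation}
and the Poincar\'e-type lower bound $\|\rho\|_{\mathring{H}^{s+1/2}}\geq\|\rho\|_{\mathring{H}^s}$ combined with Gr\"onwall gives the exponential decay $\|\rho(t)\|_{\mathring{H}^s}\leq\|\rho(0)\|_{\mathring{H}^s}e^{-c_0 t}$; integrating the differential inequality in time yields the $L^2_t \mathring{H}^{s+1/2}$ bound.

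Uniqueness and existence of $\eta$ in $\eta_\ast + Y^s([0,\infty))$ follow by a fixed-point argument using the local well-posedness of \eqref{Muskat} from \cite{NguyenPausader} together with the above a priori estimate to extend the solution to $[0,\infty)$. The main obstacle is a clean paralinearization of $L_{\eta_\ast}$ with a quantitatively small remainder, together with the product/commutator estimates needed to show that the quadratic term $\mathcal{Q}(\rho;\eta_\ast)$ is truly absorbed by the $\mathring{H}^{s+1/2}$ dissipation at the critical Sobolev level $s>1+\tfrac{n-1}{2}$; the integrability gap between the energy space $\mathring{H}^s$ and the dissipation space $\mathring{H}^{s+1/2}$ is exactly $1/2$ derivative, leaving no room for slack in the nonlinear estimates and requiring careful bookkeeping of constants in terms of $\|\nabla\varphi_0\|_{H^{s-1/2}}$.
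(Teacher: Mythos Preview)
Your overall strategy is correct and matches the paper's: isolate the flat principal symbol $m(D)=|D|\tanh(b|D|)$, exploit the spectral gap on $\mathring{H}^s(\T^{n-1})$ to get coercivity $\langle m(D)f,f\rangle_{H^s}\ge c_0\|f\|_{H^{s+1/2}}^2$, absorb the remainder and nonlinear terms by smallness of $\eta_\ast$ and of the perturbation, and conclude exponential decay via Gr\"onwall together with the $L^2_tH^{s+1/2}$ dissipation bound.

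There are, however, three places where the paper takes a concretely different route from what you outline. First, the paper does \emph{not} linearize at $\eta_\ast$; it linearizes at the flat surface via the exact decomposition $G(\eta)=m(D)+R(\eta)$ and then writes the perturbation equation as
\[
\partial_t f=\gamma\partial_1 f-m(D)f+\bigl[R(\eta_\ast)-R(\eta_\ast+f)\bigr](\eta_\ast+\varphi_0)-R(\eta_\ast+f)f,
\]
so that the needed bounds are precisely a boundedness estimate for $R(\eta)$ and a contraction estimate for $R(\eta_1)-R(\eta_2)$ (Propositions~\ref{prop:estvw} and~\ref{prop:DNd}), not a bound on the Fr\'echet derivative at $\eta_\ast$. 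Second, these estimates are obtained not by paradifferential calculus in the sense of \cite{NguyenPausader}, but from an explicit integral representation of $R(\eta)$ derived from the forward/backward parabolic factorization $\Delta_{x,z}=(\partial_z+\mathcal D(z))(\partial_z-\mathcal D(z))$ and controlled in Chemin--Lerner spaces; in particular one also needs $\|\eta_\ast\|_{H^{s+1/2}}$ (not just $H^s$) small, which the paper obtains from its contraction-mapping construction of $\eta_\ast$ at regularity $s+\tfrac12$. Third, global existence is established by a Banach fixed point for the mild formulation in $Y^s([0,T])$ using semigroup estimates \eqref{semig1:finite}--\eqref{semig2:finite}, with constants independent of $T$; only afterward is the $H^s$ energy identity used to upgrade to exponential decay. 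Your proposed route---local well-posedness from \cite{NguyenPausader} plus a continuation argument---would also work, but the paper's fixed-point approach is self-contained and sidesteps any question of whether the local solution has enough regularity to justify the energy computation.
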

To be best of our knowledge, Theorem \ref{theo:stability:intro} provides the first class of nontrivial stable solutions to the one-phase Muskat problem with graph free boundary. 

Inspired by the proof of stability of the trivial solution for the Muskat problem in \cite{Nguyen2022}, we obtain the stability of small periodic traveling wave solutions by linearizing the Dirichlet-Neumann operator about the flat surface, 
\begin{equation}
G(\eta)h=m(D)h+R(\eta)h,\quad m(D)=|D|\tanh(b|D|),
\end{equation}
 and establish good boundedness and contraction estimates for the remainder $R(\eta)$. More precisely, the results obtained in Section \ref{Section:DN} imply the estimates 
\begin{equation}\label{boundR:intro}
\| |D|^{-\hal}R(\eta)h\|_{H^{s}(\T^d)}\lesssim\| \eta\|_{H^{s}(\T^d)}\| \nab h\|_{H^{s-\frac12}(\T^d)}+ \| \eta\|_{H^{s+\frac12}(\T^d)}\| \nab h\|_{H^{s-1}(\T^d)}
\end{equation}
and
\begin{equation}\label{contractionR:intro}
\begin{aligned}
&\| |D|^{-\hal}\left\{R(\eta_1)h-R(\eta_2)h\right\}\|_{H^{s}(\T^d)}\\
&\quad\lesssim\| \eta_\delta\|_{H^{s}(\T^d)}\Big(\| \nab h\|_{H^{s-\frac12}(\T^d)}+\| \eta_1\|_{H^{s+\frac12}(\T^d)}\| \nab h\|_{H^{s-1}(\T^d)}\Big)+\|\eta_\delta\|_{H^{s+\frac12}(\T^d)} \| \nab  h\|_{H^{s-1}(\T^d)},
\end{aligned}
\end{equation}
where $\eta_\delta=\eta_1-\eta_2$ and $d=n-1$. The estimates in Section \ref{Section:DN} for the Dirichlet-Neumann operator are obtained for the free boundary belonging to the anisotropic Sobolev spaces $\cH^s(\Gamma)$, $\Gamma\in\{\R^d, \T^d\}$, and  are of independent interest. 

Now,  fix a traveling wave solution $\eta_*$ with data $\varphi_0$. The perturbation $f=\eta-\eta_*$ then satisfies 
\begin{equation}\label{eqf:intro}
\p_tf=\gamma\p_1 f-m(D)f+\left[R(\eta_*)(\eta_*+{\varphi_0})-R(\eta_*+f)(\eta_*+{\varphi_0})\right]-R(\eta_*+f)f.
\end{equation}
Assuming that $f_0$ has zero mean in $H^s(\T^d)$, then $f(t)$ has zero mean for $t>0$. When performing the $H^s$ energy estimate, the dissipation term $m(D)$ yields a gain of $\frac12$ derivative:
\begin{equation}
(m(D)f, f)_{H^s(\T^d)}\ge c_0(s, b, d)\| f\|_{H^{s+\frac12}(\T^d)}^2. 
\end{equation}
On the other hand, by virtue of \eqref{boundR:intro} and \eqref{contractionR:intro}, we can control the nonlinear terms in \eqref{eqf:intro}  in $H^{s-\frac12}(\T^d)$ by 
\begin{equation}
C\big(\alpha +\| f\|_{H^s(\T^d)}\big)\| f\|_{H^{s+\frac12}(\T^d)}^2,
\end{equation}
where the coefficient $\alpha$ is small when $\varphi_0$ and $\eta_*$ are small. Therefore, if $\|f(t)\|_{H^s}$ is small globally, then it decays exponentially. On the other hand, the global existence and smallness of $\|f(t)\|_{H^s}$ are proved by appealing to the estimates \eqref{boundR:intro} and \eqref{contractionR:intro} again for the mild-solution formulation of \eqref{eqf:intro}.
\section{Problem reformulations} 

In this section we present two reformulations we will use in proving our two main theorems. The first one is a reformulation for  the general traveling wave system \eqref{muskat_euler:intro} in a flattened domain.  When the generic body force $\mathfrak{f}$ is absent, we present a reformulation for the dynamic problem  \eqref{Muskat:wpzeta} using the Dirichlet-Neumann operator. 

\subsection{Flattening the traveling wave system}\label{subsection:flattening}

Consider the flat domain $\Omega := \Omega_0 = \Gamma \times (-b,0)$ and write $\Sigma = \Sigma_0 = \Gamma \times \{0\}$.  We define the Poisson extension operator $\pf$ as in Appendix \ref{appendix:poisson_ext}.  Assuming that $\eta \in \H^{s+3/2}(\Sigma)$ (see Definition \ref{specialized_dfn} for the precise definition of this anisotropic Sobolev space), we define the flattening map $\ff_\eta : \bar{\Omega} \to \bar{\Omega}_\eta$ via 
\begin{equation}\label{flattening_def}
 \ff_\eta(x) = (x', x_n + \pf \eta(x)(1+x_n/b)) = x + \pf\eta(x)\left(1+x_n/b\right) e_n.
\end{equation}
Note that $\ff_\eta\vert_{\Sigma_{-b}} = I$ and $\ff_\eta(\Sigma) = \Sigma_{\eta}$.  We compute 
\begin{equation}
 \nab \ff_\eta(x) = 
\begin{pmatrix}
I_{n-1 } & 0_{(n-1) \times 1} \\
(1+x_n/b) \nab'\pf \eta(x) & 1 + \pf\eta(x) /b  + \p_n \pf\eta(x) (1+x_n/b)
\end{pmatrix}.
\end{equation}
We define the functions $\jf,\kf : \Omega \to (0,\infty)$ via  
\begin{equation}\label{JK_def}
  \jf(x) = \det \nab \ff_\eta(x) = 1 + \pf\eta(x) /b  + \p_n \pf\eta(x) (1+x_n/b) \text{ and } \kf(x) = 1/\jf(x). 
\end{equation}
It will be useful to introduce the matrix field $\M : \Omega \to \R^{n \times n}$ via
\begin{equation}\label{M_def}
 \M(x) = (\nab \ff_\eta(x))^{-T} = 
\begin{pmatrix}
I_{n-1} & - \kf(x)(1+x_n/b) \nab' \pf \eta(x) \\
0_{1 \times (n-1)} & \kf(x)
\end{pmatrix}.
\end{equation}

Our interest in the field $\M$ comes from a trio of useful identities it satisfies.  The first is Piola identity, 
\begin{equation}
 \p_j[\jf \M_{ij} ] =0 \text{ for } 1\le i \le n.
\end{equation}
The second and third are a pair of identities on $\Sigma$ and $\Sigma_{-b}$: 
\begin{equation}
 \jf \M e_n \vert_{\Sigma} = (-\nab' \eta,1) \text{ and } \jf \M e_n \vert_{\Sigma_{-b}} = e_n.
\end{equation}
To see the utility of the Piola identity note that $v : \Omega_\eta \to \R^n$ satisfies $\diverge{v}=0$ if and only if $\hat{v} = v \circ \ff_\eta : \Omega \to \R^n$ satisfies $\jf \M_{ij} \p_j \hat{v}_i =0$ (the summation convention is used here), but 
\begin{equation}
 \jf \M_{ij} \p_j \hat{v}_i =  \p_j[\jf \M_{ij} \hat{v}_i  ] = \p_j[\jf \M^T_{ji} \hat{v}_i  ],
\end{equation}
so a further equivalent condition is that $u = \jf \M^T \hat{v}: \Omega \to \R^n$ satisfies $\diverge{u}=0$.

In light of the previous calculations, we use $\ff_\eta$ and $\M$ to rephrase the traveling wave Muskat system \eqref{muskat_euler:intro} in the fixed domain $\Omega$ by defining $u = \jf \M^T v \circ \ff_\eta$ and $p = -\pf \eta +  q \circ \ff_\eta$.  The new system reads
\begin{equation}\label{muskat_flattened}
\begin{cases}
 u + \nab_{\A} p + \nab_{\A} \pf \eta = \jf \M^T \mathfrak{f}\circ \ff_\eta & \text{in }\Omega \\
 \diverge{u} = 0 &\text{in } \Omega \\
 -\gamma \p_1 \eta = u_n &\text{on } \Sigma \\
 p  =   \varphi \circ \ff_\eta &\text{on } \Sigma \\
 u_n =0 & \text{on } \Sigma_{-b},
\end{cases}
\end{equation}
where $\A : \Omega \to \R^{n \times n}_{sym}$ is defined by  
\begin{equation}\label{A_def}
 \A(x) = \jf(x) \M^T(x) \M(x) = 
\begin{pmatrix}
\jf(x) I_{n-1} & -  (1+x_n/b) \nab' \pf \eta(x) \\
- (1+x_n/b) \nab' \pf \eta(x) & \kf(x) + \kf(x) (1+x_n/b)^2 \abs{ \nab' \pf \eta(x)}^2
\end{pmatrix},
\end{equation}
and we write
\begin{equation}
 \naba \psi = \A \nabla \psi.
\end{equation}

\subsection{Dirichlet-Neumann reformulation}\label{subsection:DN}

We consider the dynamic  Muskat problem \eqref{Muskat:wpzeta} with $\widetilde{\mathfrak{f}}=0$ and  $\phi(x,t) = \varphi(x-\gamma t e_1)$. In the moving frame $x\mapsto x-\gamma t e_1$, we make the change of variables
\begin{equation}
\zeta(x', t)=\eta(x'-\gamma t e_1, t),\quad w(x,t) = v(x-\gamma t e_1, t),\quad P(x,t) = P_0 - x_n - q(x - \gamma t e_1, t)
\end{equation}
to obtain the system 
\begin{equation}\label{Muskat:0}
\begin{cases}
v+\nab q=0\quad\text{in } \Omega_\eta ,\\
\di v=0\quad\text{in } \Omega_\eta,\\
\p_t \eta-\gamma\p_1 \eta=v\cdot (\nab' \eta, 1)\quad\text{on } \Sigma_\eta,\\
q=\eta+\varphi\quad\text{on } \Sigma_\eta,\\
v_n=0\quad\text{on } \Sigma_{-b},
\end{cases}
\end{equation}
This problem can be recast on the free boundary by means of the Dirichlet-Neumann operator \eqref{def:DN} defined as follows.  Let $\psi$ be the solution of
 \begin{equation}
\begin{cases}
\Delta \psi=0\quad\text{in } \Omega_\eta,\\
\psi=f\quad\text{on } \Sigma_\eta,\\
\p_n\psi=0\quad\text{on }  \Sigma_{-b}.
\end{cases}
\end{equation}
The Dirichlet-Neumann operator associated to $\Omega$ is denoted by $G(\eta)$ and 
\begin{equation}\label{def:DN}
[G(\eta)f](x'):=N(x')\cdot(\nab\psi)(x', \eta(x')),\quad N(x')=(-\nab' \eta(x'), 1).
\end{equation}
By  taking the divergence of the first equation in \eqref{Muskat:0}, we deduce that $q$ satisfies 
\begin{equation}
\begin{cases}
\Delta q=0\quad\text{in } \Omega_\eta,\\
q=\eta+\varphi(\cdot, \eta(\cdot))\quad\text{on } \Sigma_\eta,\\
\p_nq=0\quad\text{on } \Sigma_{-b},
\end{cases}
\end{equation}
It follows from the third equation in \eqref{Muskat:0} that
\begin{equation}
v\cdot (-\nab'\eta, 1)=-\nab q\cdot (-\nab'\eta, 1)=-G(\eta)\big(\eta+\varphi(\cdot, \eta(\cdot))\big),
\end{equation}
where $G(\eta)$ denotes the Dirichlet-Neumann operator for $\Omega_\eta$.  Therefore,  $\eta$ obeys the equation 
\begin{equation}\label{Muskat}
\p_t\eta=\gamma \p_1\eta-G(\eta)\big(\eta+\varphi(\cdot, \eta(\cdot))\big)\quad \text{on } \R^{n-1}\times \R_+.
\end{equation}

\section{Linear analysis for the traveling wave system} 

In this section we study the linearization of \eqref{muskat_flattened} around the trivial solution, which is the system \eqref{linear_full}, where $(F,G,H,K)$ are given data.  Note that for the purposes of studying the linearization of \eqref{muskat_flattened} we could reduce to the case $G=0$ and $H=0$; we have retained these terms here for the sake of generality.

We can eliminate $u$ to get an equivalent formulation of the problem.  Indeed, we take the divergence of the first equation and then use the first equation to remove $u$ from the boundary conditions.  This results in the  problem 
\begin{equation}\label{linear_pressure}
\begin{cases}
 -\Delta p = G-\diverge{F}   & \text{in }\Omega \\
 -\p_n p -\p_n \pf \eta  +\gamma \p_1 \eta = H - F_{n}(\cdot,0) &\text{on } \Sigma \\
 p  =   K &\text{on } \Sigma \\
 -\p_n p - \p_n \pf \eta  =  -F_{n}(\cdot,-b) & \text{on } \Sigma_{-b}.
\end{cases}
\end{equation}
We will study this form of the problem and eventually show that it is equivalent to \eqref{linear_full}.

\begin{remark}
Throughout what follows we will often abuse notation by identifying 
\begin{equation}
\Sigma \simeq \Sigma_{-b} \simeq \Gamma \in \{\R^{n-1}, \T^{n-1}\} 
\end{equation}
in order to allow us to handle linear combinations of functions defined on $\Sigma$, $\Sigma_{-b}$, and $\Gamma$ in a simple way.  In reality we actually identify these through the natural isometric isomorphism, but this is obvious and the corresponding notation is too cumbersome to introduce.
\end{remark}

\subsection{The upper-Dirichlet-lower-Neumann isomorphism}

Consider the problem 
\begin{equation}\label{udln}
\begin{cases}
-\Delta p = f & \text{in } \Omega \\
p = k & \text{on } \Sigma \\
-\p_n p = l &\text{on } \Sigma_{-b}
\end{cases}
\end{equation}
for given $(f,k,l) \in H^s(\Omega) \times H^{s+3/2}(\Sigma) \times H^{s+1/2}(\Sigma_{-b})$.  Associated to this PDE is the bounded linear map 
\begin{equation}
 T_0 : H^{s+2}(\Omega) \to H^s(\Omega) \times H^{s+3/2}(\Sigma) \times H^{s+1/2}(\Sigma_{-b})
\end{equation}
given by 
\begin{equation}
 T_0 p = (-\Delta p, p \vert_{\Sigma}, -\p_n p \vert_{\Sigma_{-b}}).
\end{equation}

\begin{thm}\label{udln_iso}
 The map $T_0$ is an isomorphism for every $0 \le s \in \R$.
\end{thm}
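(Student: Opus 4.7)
The plan is to prove that $T_0$ is an isomorphism by separately establishing boundedness, injectivity, and surjectivity, leveraging the slab structure of $\Omega = \Gamma \times (-b,0)$ via horizontal Fourier analysis.

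\emph{Boundedness} is immediate: $p \mapsto -\Delta p$ is bounded $H^{s+2}(\Omega) \to H^s(\Omega)$, and the trace theorems give continuity of $p \mapsto p|_\Sigma$ into $H^{s+3/2}(\Sigma)$ and of $p \mapsto -\p_n p|_{\Sigma_{-b}}$ into $H^{s+1/2}(\Sigma_{-b})$. \emph{Injectivity} follows from an energy estimate: if $T_0 p =0$, then integration by parts yields
\begin{equation}
0 = \int_\Omega (-\Delta p) p\,dx = \int_\Omega |\nab p|^2 \,dx - \int_\Sigma p\, \p_n p\,dS' + \int_{\Sigma_{-b}} p \,\p_n p \,dS',
\end{equation}
and both boundary terms vanish, forcing $\nab p = 0$; since $p|_\Sigma = 0$, we conclude $p \equiv 0$. (In the case $\Gamma = \R^{n-1}$ the integration by parts is legitimate because $p \in H^{s+2}(\Omega) \subseteq H^2(\Omega)$ decays at infinity in the appropriate sense.)

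The heart of the argument is \emph{surjectivity}, which I would prove by explicit construction via horizontal Fourier analysis. Writing $\hat{p}(\xi,x_n)$ for the Fourier transform of $p$ in the horizontal variable $x'$ (interpreted as $\xi \in \R^{n-1}$ or $\xi \in \Z^{n-1}$ depending on $\Gamma$), the problem \eqref{udln} becomes, for each frequency $\xi$, the family of two-point boundary value problems
\begin{equation}
(-\p_n^2 + |\xi|^2) \hat{p}(\xi, x_n) = \hat{f}(\xi, x_n) \text{ on } (-b,0), \quad \hat{p}(\xi,0) = \hat{k}(\xi), \quad -\p_n \hat{p}(\xi,-b) = \hat{l}(\xi).
\end{equation}
For $|\xi| > 0$ I would split $\hat{p} = \hat{p}_h + \hat{p}_p$, where $\hat{p}_h$ solves the homogeneous ODE with the prescribed boundary data and $\hat{p}_p$ absorbs the forcing via the Dirichlet--Neumann Green's function on $(-b,0)$. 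The homogeneous piece has the explicit form
\begin{equation}
\hat{p}_h(\xi,x_n) = \left[\frac{\hat{k}(\xi)}{\cosh(|\xi|b)} + \frac{\hat{l}(\xi)\tanh(|\xi|b)}{|\xi|}\right]\cosh(|\xi|(x_n+b)) - \frac{\hat{l}(\xi)}{|\xi|}\sinh(|\xi|(x_n+b)),
\end{equation}
and the particular piece is given by an explicit integral kernel. When $\Gamma = \T^{n-1}$, the zero-frequency case is handled separately by direct integration, producing $\hat{p}(0,x_n) = \hat{k}(0) - \hat{l}(0) x_n + (\text{antiderivative of }\hat{f}(0,\cdot))$.

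The main technical obstacle is verifying that these explicit formulas yield an element of $H^{s+2}(\Omega)$ with the correct norm bound. For this I would use the standard characterization of $H^{s+2}(\Omega) = L^2_\xi((1+|\xi|^2)^{s+2}) \cap L^2_\xi(H^{s+2}_{x_n})$-type norms along slabs, together with the key coercivity facts that $|\xi|\tanh(|\xi|b) \gtrsim |\xi|^2/(1+|\xi|)$ and $1/\cosh(|\xi|b)$ decays exponentially at high frequency. These yield frequency-uniform estimates of the form
\begin{equation}
\|\hat{p}(\xi,\cdot)\|_{H^{s+2}_{x_n}(-b,0)}^2 \ls \|\hat{f}(\xi,\cdot)\|_{H^s_{x_n}(-b,0)}^2 + (1+|\xi|)^{2s+3}|\hat{k}(\xi)|^2 + (1+|\xi|)^{2s+1}|\hat{l}(\xi)|^2,
\end{equation}
which upon integration (or summation) in $\xi$ deliver the bound $\|p\|_{H^{s+2}(\Omega)} \ls \|(f,k,l)\|_{H^s \times H^{s+3/2} \times H^{s+1/2}}$. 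Combined with boundedness and injectivity, the open mapping theorem (or direct verification) then yields that $T_0$ is an isomorphism.
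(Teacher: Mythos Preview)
Your injectivity argument is identical to the paper's. For surjectivity, however, you take a genuinely different route. The paper proceeds via the weak formulation: it introduces the space ${^0}H^1(\Omega) = \{q \in H^1(\Omega) : q\vert_\Sigma = 0\}$, uses Riesz representation to produce a weak solution, then upgrades regularity for integer $s$ by horizontal difference quotients combined with the identity $-\p_n^2 p = \Delta' p + f$, and finally passes to all $s \ge 0$ by Sobolev interpolation of the solution operator. The Dirichlet datum $k$ is handled by lifting it to $H^{s+2}(\Omega)$ and absorbing it into the forcing.

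Your approach instead solves the problem frequency-by-frequency as a two-point ODE boundary value problem, writing down the homogeneous solution and the Green's function explicitly. This is a legitimate alternative and has the virtue of producing the explicit formula for $T_0^{-1}(0,k,0)$ directly---something the paper needs separately anyway (see Theorem~\ref{xi_symbol}). The cost is that the final estimate you display is not quite the right object: controlling $\|\hat{p}(\xi,\cdot)\|_{H^{s+2}_{x_n}}$ alone and integrating in $\xi$ does not recover the $H^{s+2}(\Omega)$ norm; you need the full mixed-norm characterization, i.e.\ bounds on $(1+|\xi|)^{2(s+2-j)}\|\p_n^j \hat{p}(\xi,\cdot)\|_{L^2(-b,0)}^2$ for $0 \le j \le s+2$, and for non-integer $s$ this characterization itself requires either interpolation or a careful treatment of fractional vertical regularity on the interval. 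So in practice your argument would likely also fall back on proving the integer-$s$ case directly and then interpolating, at which point it converges with the paper's strategy. The paper's route is more robust to these technicalities; yours is more explicit and better suited to the downstream symbol computations.
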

\begin{proof}
To see that $T_0$ is injective we suppose that $T_0 p =0$, multiply the resulting equation $-\Delta p =0$ by $p$ and integrate by parts.  Using the boundary conditions contained in the identity $T_0 p =0$, we deduce that $\int_{\Omega} \abs{\nab p}^2 = 0,$ and so $p=0$ in $\Omega$ since $p=0$ on $\Sigma$. Thus $p=0$, and injectivity is proved.

It remains to prove that $T_0$ is surjective, and this ultimately boils down to the weak solvability and elliptic regularity associated to the problem \eqref{udln}, which we will briefly sketch.  We initially define the space ${^0}H^1(\Omega) = \{f \in H^1(\Omega) \st f=0 \text{ on } \Sigma\},$ which we can equip with the inner-product $\ip{f,g}_{{^0}H^1} = \int_\Omega \nab f \cdot \nab g$. This in indeed an inner-product and generates the usual $H^1$ topology thanks to a Poincar\'{e}-type inequality provided by the vanishing on $\Sigma$.  Then by Riesz representation, for any $\mathcal{F} \in ( {^0}H^1(\Omega))^\ast$, there exists a unique $p \in  {^0}H^1(\Omega)$ such that 
\begin{equation}
 \int_{\Omega} \nab p \cdot \nab q = \br{\mathcal{F},q} \text{ for all } q \in  {^0}H^1(\Omega),
\text{ and } \norm{p}_{ {^0}H^1} = \norm{\mathcal{F}}_{( {^0}H^1(\Omega))^\ast}.
\end{equation}

Next we consider $s \in \N$ and data $f \in H^{s}(\Omega)$ and $l \in H^{s+1/2}(\Sigma_{-b})$.  According to standard trace theory and the above, we can then find a unique $p \in  {^0}H^1(\Omega)$ such that 
\begin{equation}
 \int_{\Omega} \nab p \cdot \nab q = \int_{\Omega} fq +  \int_{\Sigma_{-b}} l q \text{ for all } q \in  {^0}H^1(\Omega),
\text{ and }
\norm{p}_{ {^0}H^1} \ls \norm{f}_{H^s} +  \norm{l}_{H^{s+1/2}}.
\end{equation}
Standard interior elliptic regularity shows that $p \in H^{s+2}_{\text{loc}}(\Omega)$ and $-\Delta p =f$ in $\Omega$.  Using horizontal difference quotients, we may deduce in turn that 
\begin{equation}
 \sum_{\abs{\alpha} \le s+1, \alpha_n=0} \norm{\p^\alpha p}_{ {^0}H^1} \ls  \norm{f}_{H^s} +  \norm{l}_{H^{s+1/2}}.
\end{equation}
We then recover control of the vertical derivatives by using the identity $-\p_n^2 p = \Delta' p + f$ together with a simple iteration argument; this yields the inclusion $p \in H^{s+2}(\Omega)$ with the estimate 
\begin{equation}
 \norm{p}_{H^{s+2}} \ls \norm{f}_{H^s} +  \norm{l}_{H^{s+1/2}}.
\end{equation}
Returning to the weak formulation and integrating by parts, we find that 
\begin{equation}
 \int_{\Omega} (-\Delta p - f) q = \int_{\Sigma_{-b}} (l+\partial_n p) q \text{ for all } q \in {^0}H^1(\Omega),
\end{equation}
and hence that $-\p_n p = l$ on $\Sigma_{-b}$.  Thus, $p \in H^{s+2}(\Omega) \cap {^0}H^1(\Omega)$ satisfies $T_0 p = (f,0,l)$.

For each $s \in \N$ this analysis defines a bounded linear map $S_0 : H^{s}(\Omega) \times H^{s+1/2}(\Sigma_{-b})  \to H^{s+2}(\Omega) \cap {^0}H^1(\Omega)$ via $S_0(f,l) = p$.   Employing the usual Sobolev interpolation theory (see, for instance, \cite{bergh_lof,triebel}), we deduce that $S_0$ extends to a map between the same spaces but for all $0 \le s \in \R$.

Now suppose that  $f \in H^s(\Omega)$, $k \in H^{s+3/2}(\Sigma)$, and $l \in H^{s+1/2}(\Sigma_{-b})$ for some $0\le s \in \R$.  By trace theory, we can pick $K \in H^{s+2}(\Omega)$ such that $P = k$ on $\Sigma_b$.  Using the above, we then find $P = S_0(f+\Delta K,l+\p_n K)  \in H^{s+2}(\Omega)\cap {^0}H^1(\Omega)$, which satisfies $T_0 P = (f + \Delta K, 0, l + \p_n K)$.  Then $p:= P+K \in H^{s+2}(\Omega)$ satisfies $T_0 p = (f,k,l)$, and we conclude that $T_0$ is surjective.
\end{proof}

Later in our analysis we will need to consider the following bounded linear operator.

\begin{dfn}\label{Xi_def}
We define the bounded linear map $\Xi : H^{s+3/2}(\Sigma) \to H^{s+2}(\Omega)$ via $\Xi k = T_0^{-1}(0,k,0)$.  
\end{dfn}

The next result records a crucial property of $\Xi$.

\begin{thm}\label{xi_symbol}
The map $\Xi$ from Definition \ref{Xi_def} satisfies  $\widehat{\Xi k} (\xi,x_n) = \hat{k}(\xi) Q(\xi,x_n)$ for  $\xi \in \hat{\Gamma}$, where $Q : \R^{n-1} \times (-b,0) \to \R$ is defined by 
\begin{equation}
 Q(\xi,x_n) = \frac{\cosh( \abs{\xi} (x_n+b))}{\cosh( \abs{\xi} b)}.
\end{equation}
Note that in the case $\Gamma = \T^{n-1}$ we have that the dual group is $\hat{\Gamma} = \Z^{n-1} \subset \R^{n-1}$ and $Q$ is given by restriction to $\hat{\Gamma}$.
\end{thm}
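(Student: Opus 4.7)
The plan is to diagonalize the boundary value problem defining $\Xi$ by the horizontal Fourier transform and read off the symbol. By Definition \ref{Xi_def}, $p := \Xi k$ is the unique element of $H^{s+2}(\Omega)$ satisfying $-\Delta p = 0$ in $\Omega$, $p = k$ on $\Sigma$, and $-\p_n p = 0$ on $\Sigma_{-b}$. Applying the horizontal Fourier transform in the $x'$ variable (Fourier series in the $\Gamma = \T^{n-1}$ case), the PDE together with the two boundary conditions becomes, for each fixed $\xi \in \hat{\Gamma}$, the one-dimensional boundary value problem
\begin{equation}
-\p_n^2 \hat{p}(\xi, x_n) + |\xi|^2 \hat{p}(\xi, x_n) = 0 \text{ on } (-b,0), \quad \hat{p}(\xi, 0) = \hat{k}(\xi), \quad \p_n \hat{p}(\xi, -b) = 0.
\end{equation}

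Next I would solve this ODE frequency by frequency. For $\xi \neq 0$ the general solution is spanned by $\cosh(|\xi|(x_n + b))$ and $\sinh(|\xi|(x_n + b))$; the Neumann condition at $x_n = -b$ annihilates the $\sinh$ component, and the Dirichlet condition at $x_n = 0$ then fixes the $\cosh$-coefficient to $\hat{k}(\xi)/\cosh(|\xi| b)$. For $\xi = 0$ the ODE reduces to $\p_n^2 \hat{p}(0, \cdot) = 0$ with the same boundary data, which forces $\hat{p}(0, x_n) = \hat{k}(0)$; this agrees with $\hat{k}(0) Q(0, x_n)$ since $Q(0, \cdot) \equiv 1$. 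In every case we obtain $\hat{p}(\xi, x_n) = \hat{k}(\xi) Q(\xi, x_n)$, as claimed.

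To make the formal computation rigorous I would run a short density argument. For $k$ in a dense subspace of $H^{s+3/2}(\Sigma)$ (Schwartz functions when $\Gamma = \R^{n-1}$, trigonometric polynomials when $\Gamma = \T^{n-1}$) the frequency-by-frequency solution is literally justified, and the inverse horizontal Fourier transform of $\hat{k}(\xi) Q(\xi, x_n)$ produces a smooth function in $H^{s+2}(\Omega)$ solving the defining boundary value problem; by the uniqueness half of Theorem \ref{udln_iso}, this function must coincide with $\Xi k$. The identity then extends to all of $H^{s+3/2}(\Sigma)$ by continuity.

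The only mildly nontrivial point is verifying that the Fourier multiplier with symbol $Q$ defines a bounded operator $H^{s+3/2}(\Sigma) \to H^{s+2}(\Omega)$, which is what enables the density step. This is routine: $Q$ is smooth and uniformly bounded by $1$ on $\hat{\Gamma} \times [-b, 0]$, and it enjoys the elementary derivative bounds $|\p_n^j Q(\xi, x_n)| \le |\xi|^j$ for every integer $j \ge 0$, together with analogous estimates for $\xi$-derivatives. Since $Q$ extends smoothly across $\xi = 0$, no low-frequency subtlety arises, and I do not anticipate any substantive obstacle in executing the argument.
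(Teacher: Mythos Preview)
Your proposal is correct and follows essentially the same route as the paper: take the horizontal Fourier transform, reduce to the one-dimensional ODE boundary value problem, and read off $Q$. The paper's own proof is terser, simply recording the ODE and declaring the verification an elementary exercise; your additional remarks on the $\xi=0$ case, the density argument, and the multiplier boundedness are reasonable elaborations but not required by the paper's treatment.
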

\begin{proof}
Write $p = T_0^{-1}(0,k,0) \in H^{s+2}(\Omega)$, and let $\hat{p}$ denote its horizontal Fourier transform.  Then $\hat{p}$ satisfies the ordinary differential boundary value problem
\begin{equation}
\begin{cases}
 (- \abs{\xi}^2 + \p_n^2) \hat{p}(\xi,x_n) =0 &\text{for }x_n \in (-b,0) \\
 \hat{p}(\xi,0) = \hat{k}(\xi) \\
 \p_n\hat{p}(\xi,-b) = 0.
\end{cases}
\end{equation}
From this it's an elementary exercise to verify that $\hat{p}(\xi,x_n) = \hat{k}(\xi) Q(\xi,x_n)$, and the result follows.
\end{proof}

\subsection{The over-determined problem: compatibility conditions}\label{sec_odp_cc}

Consider the over-determined problem 
\begin{equation}\label{ODP}
\begin{cases}
-\Delta p = f & \text{in } \Omega \\
p = k & \text{on } \Sigma \\
-\p_n p = h_+ &\text{on } \Sigma \\
-\p_n p = h_- &\text{on } \Sigma_{-b}
\end{cases}
\end{equation}
for given $(f,h_+,h_-,k) \in H^s(\Omega) \times H^{s+1/2}(\Sigma) \times H^{s+1/2}(\Sigma_{-b}) \times H^{s+3/2}(\Sigma)$.

Associated to \eqref{ODP} are a pair of compatibility conditions.  The first actually is associated to a sub-system of \eqref{ODP}.

\begin{prop}\label{odp_CC1}
Suppose that $(f,h_+,h_-) \in H^s(\Omega) \times H^{s+1/2}(\Sigma) \times H^{s+1/2}(\Sigma_{-b})$ and  $p \in H^{s+2}(\Omega)$ satisfy
\begin{equation}
\begin{cases}
-\Delta p = f & \text{in } \Omega \\
-\p_n p = h_+ &\text{on } \Sigma \\
-\p_n p = h_- &\text{on } \Sigma_{-b}.
\end{cases}
\end{equation}
Then the following hold.
\begin{enumerate}
 \item If $\Gamma = \R^{n-1}$, then 
\begin{equation}
\int_{-b}^0 f(\cdot,x_n) dx_n - (h_+ - h_-) \in \dot{H}^{-2}(\Sigma) \cap \dot{H}^{-1}(\Sigma)
\end{equation}
and we have the bounds
\begin{equation}
 \snorm{\int_{-b}^0 f(\cdot,x_n) dx_n - (h_+ - h_-) }_{\dot{H}^{-2}} \ls \norm{p}_{L^2}
\text{ and }
 \snorm{\int_{-b}^0 f(\cdot,x_n) dx_n - (h_+ - h_-) }_{\dot{H}^{-1}} \ls \norm{\nab' p}_{L^2}. 
\end{equation}

 \item If $\Gamma = \T^{n-1}$, then 
\begin{equation}
\int_{-b}^0 \hat{f}(0,x_n) dx_n - (\hat{h}_+(0) - \hat{h}_-(0)) =0.
\end{equation}
 
\end{enumerate}
\end{prop}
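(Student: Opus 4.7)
The plan is to derive the stated compatibility conditions by vertically integrating the equation $-\Delta p = f$ across $\Omega$ and interpreting the result on the horizontal cross-section. Since $p \in H^{s+2}(\Omega)$ with $s \geq 0$ we have $p \in H^2(\Omega)$, so Fubini's theorem and the usual trace theory apply and every manipulation below is justified.

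First I would set $P(x') := \int_{-b}^0 p(x',x_n)\, dx_n$ and integrate the identity $-\Delta p = f$ in $x_n$ from $-b$ to $0$. Splitting $\Delta = \Delta' + \partial_n^2$ and commuting $\Delta'$ with the vertical integral, this yields
\begin{equation}
 -\Delta' P(x') - \bigl[\partial_n p(x',x_n)\bigr]_{x_n=-b}^{x_n=0} = \int_{-b}^0 f(x',x_n)\, dx_n.
\end{equation}
Inserting the Neumann boundary conditions $\partial_n p|_{\Sigma}=-h_+$ and $\partial_n p|_{\Sigma_{-b}}=-h_-$ gives the fundamental identity
\begin{equation}\label{prop_plan_id}
 \int_{-b}^0 f(\cdot,x_n)\, dx_n - (h_+ - h_-) = -\Delta' P \quad \text{on } \Gamma.
\end{equation}

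With \eqref{prop_plan_id} in hand, the two cases follow easily. In the periodic case $\Gamma = \T^{n-1}$, the zero Fourier coefficient of $-\Delta' P$ vanishes, so evaluating the Fourier transform of \eqref{prop_plan_id} at $\xi=0$ yields exactly the claimed identity $\int_{-b}^0 \hat{f}(0,x_n)\, dx_n - (\hat{h}_+(0)-\hat{h}_-(0)) = 0$. In the Euclidean case $\Gamma = \R^{n-1}$, Fubini and Cauchy--Schwarz give $\|P\|_{L^2} \lesssim \|p\|_{L^2}$ and $\|\nabla' P\|_{L^2} \lesssim \|\nabla' p\|_{L^2}$. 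Taking horizontal Fourier transforms of \eqref{prop_plan_id} and using the definition of the homogeneous Sobolev norms,
\begin{equation}
 \snorm{-\Delta' P}_{\dot H^{-2}}^2 = \int_{\R^{n-1}} |\xi|^{-4}|\xi|^4 |\widehat{P}(\xi)|^2\, d\xi = \|P\|_{L^2}^2 \lesssim \|p\|_{L^2}^2,
\end{equation}
\begin{equation}
 \snorm{-\Delta' P}_{\dot H^{-1}}^2 = \int_{\R^{n-1}} |\xi|^{-2}|\xi|^4 |\widehat{P}(\xi)|^2\, d\xi = \|\nabla' P\|_{L^2}^2 \lesssim \|\nabla' p\|_{L^2}^2,
\end{equation}
which gives both the stated membership and the quantitative bounds.

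I do not anticipate any serious obstacle: the only mild subtlety is the justification of the manipulations in the Euclidean case, where $\dot H^{-1}$ and $\dot H^{-2}$ are genuine homogeneous Sobolev spaces and one needs \eqref{prop_plan_id} to hold as a distributional identity so that its Fourier-side characterization is meaningful. This is fine because $f \in L^2(\Omega)$, $h_\pm \in H^{1/2}$, and $P \in L^2 \cap \dot H^1$ by the bounds just displayed, so each side of \eqref{prop_plan_id} is a well-defined tempered distribution on $\R^{n-1}$ and the two agree pointwise almost everywhere.
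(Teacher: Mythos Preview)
Your proposal is correct and follows essentially the same approach as the paper: both derive the key identity $\int_{-b}^0 f(\cdot,x_n)\,dx_n - (h_+ - h_-) = -\Delta' \int_{-b}^0 p(\cdot,x_n)\,dx_n$ by vertically integrating $-\Delta p = f$, and then read off the $\dot H^{-2}$ and $\dot H^{-1}$ bounds (resp.\ the vanishing zero mode) via Parseval and Cauchy--Schwarz. Your presentation is slightly more streamlined in naming $P$ and writing out the Fourier-side norm computations explicitly, but there is no substantive difference.
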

\begin{proof}
We will only record the proof for $\Gamma = \R^{n-1}$, as the other case follows from similar but simpler analysis.  We have that 
\begin{equation}
 \int_{-b}^0 f(\cdot,x_n) dx_n =  \int_{-b}^0 -\Delta' p(\cdot,x_n) dx_n -  \int_{-b}^0 \p_n^2 p(\cdot,x_n) dx_n.
\end{equation}
We then compute 
\begin{equation}
\int_{-b}^0 -\Delta' p(\cdot,x_n) dx_n = -\Delta' \int_{-b}^0  p(\cdot,x_n) dx_n 
\end{equation}
and 
\begin{equation}
\int_{-b}^0 \p_n^2 p(\cdot,x_n) dx_n = \p_n p(\cdot,0) - \p_n p(\cdot,-b) =  -(h_+ - h_-).
\end{equation}
Combining these, we see that 
\begin{equation}
 \int_{-b}^0 f(\cdot,x_n) dx_n - (h_+ - h_-) = -\Delta' \int_{-b}^0 p(\cdot,x_n) dx_n,
\end{equation}
and the $\dot{H}^{-2}$ inclusion and estimate then follow from an application of Cauchy-Schwarz, Fubini-Tonelli, and Parseval: 
\begin{multline}
\snorm{-\Delta' \int_{-b}^0 p(\cdot,x_n) dx_n}_{\dot{H}^{-2}}^2 = \int_{\R^{n-1}} \frac{(  \abs{\xi}^2)^2}{\abs{\xi}^4} \abs{\int_{-b}^0 \hat{p}(\xi,x_n) dx_n}^2 d\xi \\
\le  b^2 \int_{\R^{n-1}} \int_{-b}^0 \abs{\hat{p}(\xi,x_n)}^2 dx_n d\xi 
\ls \int_{\Omega} \abs{p(x)}^2 dx.
\end{multline}
The $\dot{H}^{-1}$ inclusion and estimate follow similarly.
\end{proof}

Next we identify the formal adjoint of the over-determined problem as an under-determined problem, given here in homogeneous form:
\begin{equation}\label{UDP}
\begin{cases}
-\Delta q = 0 & \text{in } \Omega \\
-\p_n q = 0 &\text{on } \Sigma_{-b}.
\end{cases}
\end{equation}
We can augment this problem with an extra Dirichlet condition at the upper boundary in order to introduce the upper-Dirichlet-lower-Neumann problem \eqref{udln}.  Indeed, we can parameterize solutions to \eqref{UDP} by letting $q = \Xi \psi$ for some $k \in H^{s+3/2}(\Sigma)$, where $\Xi$ is as in Definition \ref{Xi_def}.  With this in mind we borrow an idea from the closed range theorem to deduce a second compatibility condition.

\begin{prop}\label{odp_CC2}
Suppose that  $(f,h_+,h_-,k) \in H^s(\Omega) \times H^{s+1/2}(\Sigma) \times H^{s+1/2}(\Sigma_{-b}) \times H^{s+3/2}(\Sigma)$ and $p \in H^{s+2}(\Omega)$ satisfy \eqref{ODP}.  Then the data $(f,h_+,h_-,k)$ satisfy both of the following equivalent conditions:
\begin{enumerate}
 \item For every $\psi \in H^{s+3/2}(\Sigma)$, if we let $q = \Xi \psi \in H^{s+2}(\Omega)$ for $\Xi$ as in  Definition \ref{Xi_def}, then 
\begin{equation}\label{odp_CC2_00}
 \int_{\Omega} f q - \int_{\Sigma} k \p_n q + h_+ \psi + \int_{\Sigma_{-b}} h_- q = 0.
\end{equation} 
 
 \item For a.e. $\xi \in \hat{\Gamma}$ we have that
\begin{equation}\label{odp_CC2_01}
0= \int_{-b}^0 \hat{f}(\xi, x_n) \frac{\cosh( \abs{\xi} (x_n+b))}{\cosh( \abs{\xi} b)} dx_n  - \hat{k}(\xi)  \abs{\xi} \tanh( \abs{\xi} b) - \hat{h}_+(\xi)  + \hat{h}_-(\xi) \sech( \abs{\xi} b). 
\end{equation}
\end{enumerate}
\end{prop}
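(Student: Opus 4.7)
The plan is to derive (1) from Green's second identity applied to the ODP solution $p$ paired with test functions $q := \Xi \psi$ solving the UDP \eqref{UDP}, and then to establish the equivalence (1)$\Leftrightarrow$(2) as a purely analytic statement via the horizontal Fourier transform, using the explicit symbol for $\Xi$ from Theorem \ref{xi_symbol}.

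To obtain (1), I would fix $\psi \in H^{s+3/2}(\Sigma)$ and set $q = \Xi \psi \in H^{s+2}(\Omega)$; by Definition \ref{Xi_def} this $q$ satisfies $-\Delta q = 0$ in $\Omega$ with $q|_\Sigma = \psi$ and $-\p_n q|_{\Sigma_{-b}} = 0$. Green's second identity applied to $(p,q)$ on $\Omega$ reads
\begin{equation*}
\int_\Omega (q\, \Delta p - p\, \Delta q)\, dx = \int_\Sigma (q \p_n p - p \p_n q)\, dS - \int_{\Sigma_{-b}} (q \p_n p - p \p_n q)\, dS,
\end{equation*}
where the minus sign on the lower boundary accounts for the outward normal being $-e_n$. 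Plugging in $-\Delta p = f$, $\Delta q = 0$, the ODP boundary data ($p|_\Sigma = k$, $-\p_n p|_\Sigma = h_+$, $-\p_n p|_{\Sigma_{-b}} = h_-$) together with the UDP data ($q|_\Sigma = \psi$, $\p_n q|_{\Sigma_{-b}} = 0$), and rearranging, produces \eqref{odp_CC2_00}.

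For the equivalence (1)$\Leftrightarrow$(2), I would apply Theorem \ref{xi_symbol} to obtain $\hat{q}(\xi, x_n) = \hat\psi(\xi) Q(\xi, x_n)$, from which the explicit form of $Q$ yields $q|_\Sigma = \psi$, $\widehat{q|_{\Sigma_{-b}}}(\xi) = \sech(|\xi| b)\, \hat\psi(\xi)$, and $\widehat{\p_n q|_\Sigma}(\xi) = |\xi|\tanh(|\xi| b)\, \hat\psi(\xi)$. Plancherel applied termwise to \eqref{odp_CC2_00} then reduces it to
\begin{equation*}
\int_{\hat\Gamma} \overline{\hat\psi(\xi)}\, \mathcal{R}(\xi)\, d\xi = 0,
\end{equation*}
where $\mathcal{R}(\xi)$ is precisely the bracketed expression on the right-hand side of \eqref{odp_CC2_01}. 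Since this must hold for every $\psi \in H^{s+3/2}(\Sigma)$, varying $\hat\psi$ over a sufficiently rich family forces $\mathcal{R}(\xi) = 0$ for a.e.\ $\xi \in \hat\Gamma$, giving (2); the converse is immediate by reversing the Parseval computation.

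The only technical nuisance I anticipate is justifying Green's identity on the unbounded slab when $\Gamma = \R^{n-1}$, but since $p, q \in H^{s+2}(\Omega)$ and all boundary traces land in the standard $L^2$-based Sobolev spaces on $\Sigma$ and $\Sigma_{-b}$, this can be handled routinely either by a horizontal cutoff/density argument or by the canonical $H^{1/2}$--$H^{-1/2}$ duality pairing.
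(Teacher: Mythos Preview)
Your proposal is correct and follows essentially the same approach as the paper: derive \eqref{odp_CC2_00} via Green's identity between $p$ and $q=\Xi\psi$, then use Parseval together with the explicit symbol from Theorem \ref{xi_symbol} to pass between the integral condition (1) and the pointwise Fourier condition (2). The paper's proof does not dwell on the justification of the integration by parts on the unbounded slab either, so your brief remark on density/cutoffs is in the same spirit.
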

\begin{proof}
Let $\psi \in H^{s+3/2}(\Sigma)$ and write $q = \Xi \psi \in H^{s+2}(\Omega)$.  Multiplying the first equation in \eqref{ODP} by $q$ and integrating by parts, we find that 
\begin{multline}\label{odp_CC2_1}
 \int_{\Omega} f q = \int_{\Omega} -\Delta p q = \int_{\Omega} -\Delta q p + \int_{\p \Omega} p \p_\nu q - \p_\nu p q 
= \int_{\Sigma} p \p_n q - \p_n p q - \int_{\Sigma_{-b}} p \p_n q - \p_n p q  \\
= \int_{\Sigma} k \p_n q  + h_+  \psi - \int_{\Sigma_{-b}} h_-  q.
\end{multline}
Rearranging yields \eqref{odp_CC2_00}.  It remains to prove that \eqref{odp_CC2_01} is equivalent to this.

Viewing $k$, $h_+$, and $h_-$ as functions on $\hat{\Gamma}$ in the natural way, we may rearrange \eqref{odp_CC2_1} and apply Fubini-Tonelli to see that
\begin{equation}
 \int_{\hat{\Gamma}} \left[\int_{-b}^0 f(\cdot, x_n) q(\cdot,x_n) dx_n  - k \p_n q(\cdot,0) - h_+ \psi + h_- q(\cdot,-b)  \right] =0.
\end{equation}
From this, Parseval's theorem, and Theorem \ref{xi_symbol}, we then find that 
\begin{equation}
 \int_{\hat{\Gamma}} \left[\int_{-b}^0 \hat{f}(\xi, x_n) \overline{Q(\xi,x_n)} dx_n  - \hat{k}(\xi) \overline{\p_n Q(\xi,0)} - \hat{h}_+(\xi)  + \hat{h}_-(\xi) \overline{Q(\xi,-b)}  \right] \overline{\hat{\psi}}(\xi) d\xi =0
\end{equation}
for all $\psi \in H^{s+3/2}(\Sigma)$.  This implies the  identity 
\begin{equation}
0= \int_{-b}^0 \hat{f}(\xi, x_n) \overline{Q(\xi,x_n)} dx_n  - \hat{k}(\xi) \overline{\p_n Q(\xi,0)} - \hat{h}_+(\xi)  + \hat{h}_-(\xi) \overline{Q(\xi,-b)} 
\end{equation}
for a.e. $\xi \in \hat{\Gamma}$, and \eqref{odp_CC2_01} then follows by employing the formula for $Q(\xi,x_n)$ from Theorem \ref{xi_symbol}.  The fact that \eqref{odp_CC2_01} implies \eqref{odp_CC2_00} is readily seen by multiplying \eqref{odp_CC2_01} by $\overline{\hat{\psi}}$ and then working backward through the above argument with Parseval and Fubini-Tonelli.
\end{proof}

Next we show that data obeying the conditions identified in this result must also obey an estimate in $\dot{H}^{-2}$ as in Proposition \ref{odp_CC1}.

\begin{prop}\label{odp_CC_implication}
If $(f,h_+,h_-,k) \in H^s(\Omega) \times H^{s+1/2}(\Sigma) \times H^{s+1/2}(\Sigma_{-b}) \times H^{s+3/2}(\Sigma)$ satisfy either (and thus both) of the conditions in Proposition \ref{odp_CC2}.  Then the following hold.
\begin{enumerate}
 \item If $\Gamma = \R^{n-1}$, then 
\begin{equation}\label{odp_CC_implication_00}
\int_{-b}^0 f(\cdot,x_n) dx_n - (h_+ - h_-) \in \dot{H}^{-2}(\R^{n-1})
\end{equation}
and 
\begin{equation}\label{odp_CC_implication_01}
 \snorm{\int_{-b}^0 f(\cdot,x_n) dx_n - (h_+ - h_-) }_{\dot{H}^{-2}} \ls \norm{f}_{L^2} + \norm{h_+}_{L^2} + \norm{h_-}_{L^2} + \norm{k}_{L^2}.
\end{equation}
 \item If $\Gamma = \T^{n-1}$, then 
\begin{equation}\label{odp_CC_implication_02}
\int_{-b}^0 \hat{f}(0,x_n) dx_n - (\hat{h}_+(0) - \hat{h}_-(0)) =0.
\end{equation}
\end{enumerate}
\end{prop}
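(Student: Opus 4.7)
The strategy is to use the Fourier identity \eqref{odp_CC2_01} from Proposition \ref{odp_CC2} to rewrite the quantity
\begin{equation}
 g(\xi) := \int_{-b}^0 \hat{f}(\xi,x_n)\,dx_n - \hat{h}_+(\xi) + \hat{h}_-(\xi)
\end{equation}
as a sum of three pieces, each of which vanishes to order $\abs{\xi}^2$ at the origin, which is precisely what is needed to place the inverse Fourier transform in $\dot H^{-2}$.

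\textbf{The torus case (part 2).} First I would dispose of the case $\Gamma = \T^{n-1}$ by simply specializing \eqref{odp_CC2_01} to the mode $\xi = 0$. There the hyperbolic factors evaluate to $\cosh(0)/\cosh(0) = 1$, $\abs{\xi}\tanh(\abs{\xi}b)|_{\xi=0} = 0$, and $\sech(0) = 1$, which collapses the identity to \eqref{odp_CC_implication_02}. This is immediate.

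\textbf{The Euclidean case (part 1).} Subtracting the identity \eqref{odp_CC2_01} from the definition of $g(\xi)$, I would write
\begin{equation}
 g(\xi) = \int_{-b}^0 \hat{f}(\xi,x_n)\Bigl[1 - \tfrac{\cosh(\abs{\xi}(x_n+b))}{\cosh(\abs{\xi}b)}\Bigr] dx_n + \abs{\xi}\tanh(\abs{\xi}b)\,\hat{k}(\xi) + \bigl[1-\sech(\abs{\xi}b)\bigr]\hat{h}_-(\xi).
\end{equation}
The estimate \eqref{odp_CC_implication_01} amounts to showing $\int_{\R^{n-1}} \abs{g(\xi)}^2/\abs{\xi}^4\,d\xi$ is controlled by the squared $L^2$ norms of $f,h_+,h_-,k$. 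I would handle each of the three summands separately by checking that the symbol multiplying $\hat f$, $\hat k$, or $\hat h_-$, divided by $\abs{\xi}^2$, is uniformly bounded on $\R^{n-1}$.

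\textbf{Symbol bounds.} The three symbols to check are $\abs{\xi}^{-2}\bigl[1 - \cosh(\abs{\xi}(x_n+b))/\cosh(\abs{\xi}b)\bigr]$, $\tanh(\abs{\xi}b)/\abs{\xi}$, and $\abs{\xi}^{-2}\bigl[1-\sech(\abs{\xi}b)\bigr]$. Taylor expansion at the origin gives each of these a finite limit as $\abs{\xi}\to 0$: respectively $\frac12[b^2 - (x_n+b)^2]$, $b$, and $b^2/2$. For $\abs{\xi} \ge 1$ the bracketed quantities are bounded by an absolute constant and $\tanh \le 1$, so all three symbols are $O(\abs{\xi}^{-2})$ at high frequency, hence bounded. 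Combining this with Cauchy--Schwarz in $x_n$ for the first term (which introduces an innocuous factor of $b$) and Parseval in $\xi$ yields the required control by $\ns{f}_{L^2} + \ns{h_-}_{L^2} + \ns{k}_{L^2}$, and the $\hat h_+$ contribution has already been absorbed into the rewriting. The inclusion \eqref{odp_CC_implication_00} follows from this finiteness.

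\textbf{Main obstacle.} The whole difficulty is packed into the uniform bound for the first symbol, since the dependence on $x_n \in (-b,0)$ must be handled. The convenient observation is that the ratio $\cosh(\abs{\xi}(x_n+b))/\cosh(\abs{\xi}b)$ equals the Poisson symbol $Q(\xi,x_n)$ from Theorem \ref{xi_symbol}, and $\abs{Q(\xi,x_n)} \le 1$ for $x_n \in [-b,0]$; this gives boundedness at high frequency, while the second-order Taylor expansion of $\cosh$ gives the $O(\abs{\xi}^2)$ vanishing at the origin uniformly in $x_n$. Once this uniform symbol bound is in hand the rest of the argument is a straightforward application of Plancherel and Fubini--Tonelli.
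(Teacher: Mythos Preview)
Your proposal is correct and follows the same approach as the paper: use the compatibility identity \eqref{odp_CC2_01} to rewrite $g(\xi)$ as three terms whose symbols vanish to second order at $\xi=0$, then invoke Parseval and Fubini--Tonelli. The paper organizes the estimate via an explicit low/high-frequency split (bounding the region $\abs{\xi}\ge 1$ directly with the unrewritten expression, which is why $\norm{h_+}_{L^2}$ appears on its right-hand side), whereas your uniform symbol bound handles all frequencies at once and in fact yields a bound independent of $h_+$; note only the harmless imprecision that $\tanh(\abs{\xi}b)/\abs{\xi}$ is $O(\abs{\xi}^{-1})$ rather than $O(\abs{\xi}^{-2})$ at infinity, which does not matter since boundedness is all that is needed.
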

\begin{proof}
We will only record the proof when $\Gamma = \R^{n-1}$ as the other case is simpler.  The condition \eqref{odp_CC2_01} implies that 
\begin{multline}
\int_{-b}^0 \hat{f}(\xi,x_n) dx_n - (\hat{h}_+(\xi) - \hat{h}_-(\xi)) = 
\int_{-b}^0 \hat{f}(\xi, x_n) \left[1-\frac{\cosh( \abs{\xi} (x_n+b))}{\cosh(\abs{\xi} b)} \right]dx_n \\
+ \hat{k}(\xi)  \abs{\xi} \tanh(\abs{\xi} b) +    \hat{h}_-(\xi) \left[1- \sech( \abs{\xi} b)\right].
\end{multline}
Upon making routine Taylor expansions and applying Cauchy-Schwarz and Parseval, we see that 
\begin{multline}\label{odp_CC_implication_1}
\int_{B(0,1)} \frac{1}{\abs{\xi}^4}  \abs{\int_{-b}^0 \hat{f}(\xi,x_n) dx_n - (\hat{h}_+(\xi) - \hat{h}_-(\xi)) }^2 d\xi \\
\ls \int_{B(0,1)} \left[ \abs{\hat{k}(\xi)}^2 + \abs{\hat{h}_-(\xi)}^2 + \int_{-b}^0 x_n^2 \abs{\hat{f}(\xi,x_n)}^2 dx_n \right] d\xi \ls \norm{k}_{L^2}^2 + \norm{h_-}_{L^2}^2 + \norm{f}_{L^2}^2.
\end{multline}
This yields the low frequency control of the $\dot{H}^{-2}$ seminorm, but the high frequency control comes directly from Cauchy-Schwarz and Parseval:
\begin{equation}\label{odp_CC_implication_2}
\int_{B(0,1)^c} \frac{1}{\abs{\xi}^4}  \abs{\int_{-b}^0 \hat{f}(\xi,x_n) dx_n - (\hat{h}_+(\xi) - \hat{h}_-(\xi)) }^2 d\xi \ls \norm{f}_{L^2}^2 + \norm{h_+}_{L^2}^2 + \norm{h_-}_{L^2}^2. 
\end{equation}
Thus, the inclusion \eqref{odp_CC_implication_00} holds, and upon summing \eqref{odp_CC_implication_1} and \eqref{odp_CC_implication_2} we deduce the estimate \eqref{odp_CC_implication_01}.

\end{proof}

\subsection{A pair of useful function spaces}

We now introduce a couple function spaces that will be useful in our study of the over-determined problem \eqref{ODP}.

\begin{dfn}\label{Ys_Zs_defs}
For $0 \le s \in \R$ we define the following spaces.
\begin{enumerate}
 \item For $\Gamma = \R^{n-1}$ and $0 < t \in \R$ we define the space
\begin{multline}
 Y^s_{t} = \{(f,h_+,h_-,k)  \in H^s(\Omega) \times H^{s+1/2}(\Sigma) \times H^{s+1/2}(\Sigma) \times H^{s+3/2}(\Sigma_{-b}) \st \\
 \int_{-b}^0 f(\cdot,x_n) dx_n - (h_+ - h_-) \in \dot{H}^{-t}(\Sigma)\}
\end{multline}
and endow this space with the square-norm
\begin{equation}
 \norm{(f,h_+,h_-,k)}_{Y^s_t}^2 = \norm{f}_{H^s}^2 + \norm{h_+}_{H^{s+1/2}}^2 + \norm{h_-}_{H^{s+1/2}}^2 + \norm{k}_{H^{s+3/2}}^2 + \snorm{\int_{-b}^0 f(\cdot,x_n) dx_n - (h_+ - h_-)}_{\dot{H}^{-t}}^2
\end{equation}
and its associated inner-product.

 \item For $\Gamma = \T^{n-1}$ and $0 < t \in \R$ we define the space
\begin{multline}
 Y^s_{t} = \{  (f,h_+,h_-,k) \in H^s(\Omega) \times H^{s+1/2}(\Sigma) \times H^{s+1/2}(\Sigma) \times H^{s+3/2}(\Sigma_{-b}) \st \\
 \int_{-b}^0 \hat{f}(0,x_n) dx_n - (\hat{h}_+(0) - \hat{h}_-(0)) =0\}
\end{multline}
and endow this space with the square-norm
\begin{equation}
 \norm{(f,h_+,h_-,k)}_{Y^s_t}^2 = \norm{f}_{H^s}^2 + \norm{h_+}_{H^{s+1/2}}^2 + \norm{h_-}_{H^{s+1/2}}^2 + \norm{k}_{H^{s+3/2}}^2 
\end{equation}
and its associated inner-product.

\item We define the space
\begin{multline}
 Z^s = \{(f,h_+,h_-,k) \in (f,h_+,h_-,k) \in H^s(\Omega) \times H^{s+1/2}(\Sigma) \times H^{s+1/2}(\Sigma_{-b}) \times H^{s+3/2}(\Sigma) \st \\
 \int_{\Omega} f q - \int_{\Sigma} k \p_n q + h_+ \psi + \int_{\Sigma_{-b}} h_- q = 0 \text{ for every } \psi \in H^{s+3/2}(\Sigma), \text{ where } q = \Xi \psi \}.
\end{multline}
Here we recall that $\Xi : H^{s+3/2}(\Sigma) \to H^{s+2}(\Omega)$ is defined in Definition \ref{Xi_def}.  We endow $Z^s$ with the square norm 
\begin{equation}
  \norm{(f,h_+,h_-,k)}_{Z^s}^2 = \norm{f}_{H^s}^2 + \norm{h_+}_{H^{s+1/2}}^2 + \norm{h_-}_{H^{s+1/2}}^2 + \norm{k}_{H^{s+3/2}}^2
\end{equation}
and its associated inner-product.
\end{enumerate}
\end{dfn}

The next result establishes some key properties of these spaces.

\begin{prop}\label{Ys_Zs_spaces}
Let $0 \le s \in \R$, $0 < t \in \R$, and let $Y^s_t$ and $Z^s$ be as in Definition \ref{Ys_Zs_defs}.  Then the following hold.
\begin{enumerate}
 \item $Y^s_t$ and $Z^s$ are Hilbert spaces.
 \item If $t < r \in \R$ then we have the continuous inclusion $Y^s_r \hookrightarrow Y^s_t$.
 \item We have the continuous inclusion $Z^s \hookrightarrow Y^s_2$.
\end{enumerate}
\end{prop}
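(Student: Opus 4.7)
The plan is to exploit that both $Y^s_t$ and $Z^s$ are cut out as subspaces of the product Hilbert space $W^s := H^s(\Omega) \times H^{s+1/2}(\Sigma) \times H^{s+1/2}(\Sigma_{-b}) \times H^{s+3/2}(\Sigma)$ by closed linear conditions. For $Z^s$ and for $Y^s_t$ in the periodic case, the conditions are intersections of kernels of continuous linear functionals on $W^s$: for $Z^s$, continuity follows from boundedness of $\Xi$ in Definition \ref{Xi_def} combined with standard trace theory applied to $q = \Xi\psi$; for periodic $Y^s_t$, it is a single Fourier-coefficient evaluation. Since the inner product in each case agrees with the ambient $W^s$ one, closedness of the constraints yields Hilbert spaces immediately. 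For $Y^s_t$ on $\Gamma = \R^{n-1}$, I would take a Cauchy sequence $\{(f_m, h_+^m, h_-^m, k_m)\}$, observe that the components converge in $W^s$ to some limit $(f,h_+,h_-,k)$, so that $g_m := \int_{-b}^0 f_m\,dx_n - (h_+^m - h_-^m)$ converges in $L^2(\Sigma)$ to $g := \int_{-b}^0 f\,dx_n - (h_+ - h_-)$, and, using that $\{g_m\}$ is also Cauchy in the $\dot{H}^{-t}$ seminorm, invoke Fatou's lemma on a subsequence with a.e. convergent Fourier transforms to conclude that $\snorm{g_m - g}_{\dot{H}^{-t}} \to 0$, giving the full $Y^s_t$-norm convergence.

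For (2), the torus case is vacuous since neither the defining constraint nor the norm of $Y^s_t$ depends on $t$. For $\Gamma = \R^{n-1}$ with $t < r$ and $(f,h_+,h_-,k) \in Y^s_r$, I would split the Fourier integral defining the $\dot{H}^{-t}$ seminorm of $g := \int_{-b}^0 f\,dx_n - (h_+ - h_-) \in H^s(\Sigma)$ at the unit ball: on $\{|\xi| \le 1\}$ the pointwise bound $|\xi|^{-2t} \le |\xi|^{-2r}$ yields control by $\snorm{g}_{\dot{H}^{-r}}^2$, and on $\{|\xi| > 1\}$ the bound $|\xi|^{-2t} \le 1$ yields control by $\norm{g}_{L^2}^2 \ls \norm{f}_{H^s}^2 + \norm{h_+}_{H^{s+1/2}}^2 + \norm{h_-}_{H^{s+1/2}}^2$. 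Since the Sobolev-norm contributions to $Y^s_r$ and $Y^s_t$ are identical, the continuous embedding follows at once.

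For (3), the inclusion is essentially immediate from Proposition \ref{odp_CC_implication}. Any $(f,h_+,h_-,k) \in Z^s$ satisfies \eqref{odp_CC2_00} by definition, and by Proposition \ref{odp_CC2} it also satisfies \eqref{odp_CC2_01}, so Proposition \ref{odp_CC_implication} applies. In the periodic setting \eqref{odp_CC_implication_02} is precisely the $Y^s_2$ constraint; in the real setting \eqref{odp_CC_implication_00} places the function $g$ in $\dot{H}^{-2}(\Sigma)$ and \eqref{odp_CC_implication_01} bounds $\snorm{g}_{\dot{H}^{-2}}$ by $L^2$ norms that are dominated by the $Z^s$ norm. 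Either way, the inclusion $Z^s \hookrightarrow Y^s_2$ is continuous. The only step in the whole proof that is not purely bookkeeping on top of earlier results is the Fatou argument establishing completeness of $Y^s_t$ when $\Gamma = \R^{n-1}$; that is the main (mild) obstacle.
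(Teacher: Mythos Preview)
Your proposal is correct and follows essentially the same route as the paper's own proof: closedness of $Z^s$ via the boundedness of $\Xi$ (and trace theory), routine completeness of $Y^s_t$ (the paper simply declares this ``routine'' where you supply the Fatou argument), the low-frequency comparison $\abs{\xi}^{-2t}\le\abs{\xi}^{-2r}$ for item (2), and Proposition~\ref{odp_CC_implication} for item (3). The only differences are that you make explicit the high-frequency splitting in (2) and the Cauchy/Fatou details in (1), both of which the paper leaves implicit.
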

\begin{proof}
The completeness of $Y^s_t$ is routine to verify, and since $\Xi$ is a bounded linear map it is easy to see that $Z^s$ is a closed subspace of $H^s(\Omega) \times H^{s+1/2}(\Sigma) \times H^{s+1/2}(\Sigma_{-b}) \times H^{s+3/2}(\Sigma)$ and thus complete.  This proves the first item.  The second item is trivial when $\Gamma = \T^{n-1}$, and when $\Gamma = \R^{n-1}$ it follows from the fact that 
\begin{equation}
 \int_{B(0,1)} \frac{1}{\abs{\xi}^{2t}} \abs{\psi(\xi)}^2 d\xi \le  \int_{B(0,1)} \frac{1}{\abs{\xi}^{2r}} \abs{\psi(\xi)}^2 d\xi
\end{equation}
when $t < r$ and $\psi$ is measurable.  The continuous inclusion $Z^s \hookrightarrow Y^s_2$ follows from Proposition \ref{odp_CC_implication}.
\end{proof}

\subsection{The over-determined problem: isomorphism}

We now aim to establish an isomorphism associated to the over-determined problem \eqref{ODP}.

\begin{thm}\label{ODP_iso}
The bounded linear map $T_1 : H^{s+2}(\Omega) \to Z^s$ associated to \eqref{ODP}, which is given by
\begin{equation}
T_1 p = (-\Delta p,  -\p_n p \vert_{\Sigma}, -\p_n p\vert_{\Sigma_{-b}}, p\vert_{\Sigma}),
\end{equation}
is well-defined and is an isomorphism for every $0 \le s \in \R$.
\end{thm}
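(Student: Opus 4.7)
The plan is to prove well-definedness (with boundedness), injectivity, surjectivity, and continuity of the inverse in turn, using Theorem \ref{udln_iso} and Proposition \ref{odp_CC2} as the main ingredients.

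\textbf{Boundedness and well-definedness.} Continuity of $T_1$ into the ambient product space $H^s(\Omega) \times H^{s+1/2}(\Sigma) \times H^{s+1/2}(\Sigma_{-b}) \times H^{s+3/2}(\Sigma)$ is routine from the boundedness of $-\Delta$ and the trace theorem. To confirm that the range actually lies in $Z^s$, I would apply Proposition \ref{odp_CC2} to the tuple $(f,h_+,h_-,k) = T_1 p$: by definition of $T_1$, the function $p$ solves \eqref{ODP} with this data, so the compatibility identity \eqref{odp_CC2_00} holds, which is precisely the condition defining $Z^s$.

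\textbf{Injectivity.} If $T_1 p = 0$, then $-\Delta p = 0$ in $\Omega$, $p = 0$ on $\Sigma$, and $-\p_n p = 0$ on $\Sigma_{-b}$, so $T_0 p = (0,0,0)$ in the notation of Theorem \ref{udln_iso}. Since $T_0$ is an isomorphism, $p = 0$.

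\textbf{Surjectivity.} Given $(f,h_+,h_-,k) \in Z^s$, use Theorem \ref{udln_iso} to produce the unique $p \in H^{s+2}(\Omega)$ with
\begin{equation}
-\Delta p = f \text{ in } \Omega, \qquad p = k \text{ on } \Sigma, \qquad -\p_n p = h_- \text{ on } \Sigma_{-b}.
\end{equation}
It remains to show the hidden identity $-\p_n p\vert_\Sigma = h_+$; this is where the structure of $Z^s$ is used. Set $h_+' := -\p_n p\vert_\Sigma \in H^{s+1/2}(\Sigma)$. Then $p$ together with $(f,h_+',h_-,k)$ solves the full over-determined problem \eqref{ODP}, so Proposition \ref{odp_CC2} yields
\begin{equation}
 \int_{\Omega} f q - \int_{\Sigma} k \p_n q + h_+' \psi + \int_{\Sigma_{-b}} h_- q = 0
\end{equation}
for every $\psi \in H^{s+3/2}(\Sigma)$ with $q = \Xi \psi$. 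Subtracting this from the analogous identity enjoyed by $(f,h_+,h_-,k) \in Z^s$ collapses all the volume and lower-boundary terms and leaves
\begin{equation}
\int_{\Sigma} (h_+' - h_+) \psi \,dS = 0 \quad \text{for all } \psi \in H^{s+3/2}(\Sigma).
\end{equation}
Since $H^{s+3/2}(\Sigma)$ contains a dense subspace of $L^2(\Sigma)$ (for instance $C_c^\infty$ or trigonometric polynomials in the periodic case), this forces $h_+' = h_+$ as elements of $H^{s+1/2}(\Sigma)$, and therefore $T_1 p = (f,h_+,h_-,k)$.

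\textbf{Conclusion.} Since $T_1$ is a continuous linear bijection between the Hilbert spaces $H^{s+2}(\Omega)$ and $Z^s$ (Hilbert by Proposition \ref{Ys_Zs_spaces}), the open mapping theorem gives that $T_1^{-1}$ is also bounded, so $T_1$ is an isomorphism. The only conceptually delicate step is the surjectivity argument: the realization that the pairing compatibility baked into $Z^s$ is designed to precisely recover the missing upper Neumann datum $h_+$ from the other three entries by way of the adjoint-type identity supplied by $\Xi$. Everything else reduces to density and standard Hilbert space soft analysis.
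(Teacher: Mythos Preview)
Your proof is correct and follows essentially the same approach as the paper: well-definedness via Proposition \ref{odp_CC2}, injectivity by reducing to $T_0$, and surjectivity by solving the upper-Dirichlet-lower-Neumann problem via $T_0^{-1}$ and then using the $Z^s$ compatibility to recover the missing upper Neumann datum. The only cosmetic difference is that you make the density and open-mapping steps explicit, whereas the paper leaves these implicit.
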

\begin{proof}
The map $T_1$ is obviously a bounded linear map into  $H^s(\Omega) \times H^{s+1/2}(\Sigma) \times H^{s+1/2}(\Sigma_{-b}) \times H^{s+3/2}(\Sigma)$, but the range of $T_1$ lies in $Z^s$ by virtue of Proposition \ref{odp_CC2}.  Thus, $T_1 :  H^{s+2}(\Omega)\to Z^s$ is a well-defined and bounded linear map.  If $T_1 p =0$, then in particular $T_0 p =0$, where $T_0$ is the isomorphism from Theorem \ref{udln_iso}, and so $p=0$.  This means that $T_1$ is injective.  

Now let $(f,h_+,h_-,k) \in Z^s$.  Then Theorem \ref{udln_iso} allows us to set $p = T_0^{-1}(f,k,h_-) \in H^{s+2}(\Omega)$.  Set $H_+ = -\p_n p \vert_{\Sigma} \in H^{s+1/2}(\Sigma)$.  Then $p$ solves the over-determined problem
\begin{equation}
\begin{cases}
-\Delta p = f & \text{in } \Omega \\
p = k & \text{on } \Sigma \\
-\p_n p = H_+ &\text{on } \Sigma \\
-\p_n p = h_- &\text{on } \Sigma_{-b},
\end{cases}
\end{equation}
and so Proposition \ref{odp_CC2} tells us that 
\begin{equation}
 \int_{\Omega} f q - \int_{\Sigma} k \p_n q   + \int_{\Sigma_{-b}} h_- q = \int_{\Sigma} H_+ \psi
\end{equation} 
for every $\psi \in H^{s+3/2}(\Sigma)$, where $q = \Xi \psi \in H^{s+2}(\Omega)$ for $\Xi$ defined by Definition \ref{Xi_def}.  On the other hand, the compatibility condition on $(f,h_+,h_-,k)$ built into the definition of $Z^s$ requires that 
\begin{equation}
 \int_{\Omega} f q - \int_{\Sigma} k \p_n q   + \int_{\Sigma_{-b}} h_- q = \int_{\Sigma} h_+ \psi
\end{equation} 
for all such $\psi$ and $q$.  Equating these then shows that 
\begin{equation}
 \int_{\Sigma} (h_+ - H_+) \psi =0 \text{ for all } \psi \in H^{s+3/2}(\Sigma),
\end{equation}
from which we conclude that $h_+ = H_+$.  Hence $p$ solves \eqref{ODP}, or equivalently $T_1 p = (f,h_+,h_-,k)$.  Thus $T_1$ is surjective and so defines an isomorphism.
\end{proof}

\subsection{The  isomorphism for the pressure-free surface system}

Next we aim to show that the PDE system 
\begin{equation}\label{pfs}
\begin{cases}
 -\Delta p = f   & \text{in }\Omega \\
 -\p_n p -\p_n \pf \eta  +\gamma \p_1 \eta = h_+  &\text{on } \Sigma \\
 p  =   k &\text{on } \Sigma \\
 -\p_n p -\p_n \pf \eta   = h_- & \text{on } \Sigma_{-b}
\end{cases}
\end{equation}
induces an isomorphism between appropriate Banach spaces.   As a first step, in the next lemma we establish that the linear mapping associated to our PDE system actually takes values in $Y^s_1$ and is bounded.

\begin{lem}\label{T2_range}
Let $0 \le s \in \R$.  If $(p,\eta) \in H^{s+2}(\Omega) \times \H^{s+3/2}(\Sigma)$ then we have the inclusion
\begin{equation}\label{T2_range_00}
 (-\Delta p, -\p_n p \vert_{\Sigma}   -\p_n \pf \eta \vert_{\Sigma} + \gamma \p_1 \eta, -\p_n p \vert_{\Sigma_{-b}}  -\p_n \pf \eta \vert_{\Sigma_{-b}}, p\vert_{\Sigma}) \in Y^s_1,
\end{equation}
and 
\begin{equation}\label{T2_range_01}
 \norm{(-\Delta p, -\p_n p \vert_{\Sigma}  -\p_n \pf \eta \vert_{\Sigma}  + \gamma \p_1 \eta, -\p_n p \vert_{\Sigma_{-b}} -\p_n \pf \eta \vert_{\Sigma_{-b}} , p\vert_{\Sigma})}_{Y^s_1} \ls \norm{p}_{H^{s+2}} + \norm{\eta}_{\H^{s+3/2}}.
\end{equation}
\end{lem}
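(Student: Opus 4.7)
\medskip

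My plan is to establish the inclusion \eqref{T2_range_00} and bound \eqref{T2_range_01} by treating the ``obvious'' Sobolev components separately from the more delicate compatibility condition built into $Y^s_1$. First, the four components of the tuple are bounded in the requisite standard Sobolev spaces via routine trace theory together with the mapping properties of the Poisson extension $\pf$: the term $-\Delta p$ is clearly bounded in $H^s(\Omega)$ by $\|p\|_{H^{s+2}}$, the traces $p\vert_\Sigma$ and $-\p_n p\vert_\Sigma$, $-\p_n p\vert_{\Sigma_{-b}}$ are bounded in $H^{s+3/2}$ and $H^{s+1/2}$ respectively by the same quantity, the traces of $\p_n \pf\eta$ on $\Sigma$ and $\Sigma_{-b}$ are controlled using the explicit Fourier symbol of $\pf$ (which, in light of Theorem \ref{xi_symbol}, is the harmonic extension with Dirichlet data on $\Sigma$ and Neumann data on $\Sigma_{-b}$) together with the low-frequency control encoded in the anisotropic norm $\H^{s+3/2}$, and the term $\gamma \p_1 \eta$ is controlled in $H^{s+1/2}(\Sigma)$ by $\|\eta\|_{\H^{s+3/2}}$ after splitting into high and low frequencies as above.

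The bulk of the work is the verification of the compatibility condition together with its seminorm bound. For this I will compute explicitly, integrating by parts in the vertical variable: writing $f = -\Delta p = -\Delta' p - \p_n^2 p$ and using that $\pf\eta$ is harmonic with vanishing Neumann trace on $\Sigma_{-b}$, one finds
\begin{equation}
\int_{-b}^0 f(\cdot,x_n)\,dx_n - (h_+ - h_-) = -\Delta' \int_{-b}^0 \bigl[p(\cdot,x_n) + \pf\eta(\cdot,x_n)\bigr] dx_n - \gamma \p_1 \eta
\end{equation}
after cancellation of the $\p_n p \vert_\Sigma$ and $\p_n p \vert_{\Sigma_{-b}}$ contributions. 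In the case $\Gamma = \T^{n-1}$, inspection of the zeroth Fourier mode immediately gives the required identity since the horizontal Laplacian and $\p_1$ annihilate constants. In the case $\Gamma = \R^{n-1}$, I will estimate each of the three summands in $\dot{H}^{-1}(\Sigma)$: the term involving $p$ is dispatched by Cauchy--Schwarz and Fubini--Tonelli exactly as in the proof of Proposition \ref{odp_CC1}, producing a bound by $\|\nab' p\|_{L^2}$; the term $-\Delta' \int \pf\eta\,dx_n$ has Fourier symbol $|\xi|\tanh(|\xi| b)\hat\eta(\xi)$ (after integrating $Q(\xi,x_n)$), which yields a $\dot{H}^{-1}$ seminorm squared of $\int \tanh^2(|\xi| b)|\hat\eta|^2 d\xi$, handled at high frequencies by the $H^{s+3/2}$ part of $\H^{s+3/2}$ and at low frequencies via $\tanh(|\xi| b)\lesssim |\xi|$, i.e., by the $|\xi|^4/|\xi|^2$ piece of the anisotropic low-frequency norm.

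The decisive point, and the place where the anisotropic space $\H^{s+3/2}$ is indispensable, is the estimate for $\gamma \p_1\eta$: its $\dot{H}^{-1}$ seminorm squared equals $\gamma^2 \int \xi_1^2/|\xi|^2 |\hat\eta(\xi)|^2 d\xi$, which at high frequencies is harmless but at low frequencies is exactly the $\xi_1^2/|\xi|^2$ contribution that the norm on $\H^{s+3/2}(\R^{n-1})$ was designed to capture. This is the main obstacle in the sense that it explains the necessity of the anisotropic space in the statement; working in ordinary Sobolev spaces, the quantity $\xi_1/|\xi|$ need not be square integrable near the origin, so $\gamma\p_1\eta$ generically fails to lie in $\dot{H}^{-1}$ unless $\eta$ is given the specific low-frequency control supplied by $\H^{s+3/2}$. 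Combining the three estimates with the earlier Sobolev bounds then yields \eqref{T2_range_01}, completing the proof.
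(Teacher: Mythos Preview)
Your approach is essentially the same as the paper's, and the overall argument is correct. However, you have conflated two different operators. The Poisson extension $\pf$ of Appendix \ref{appendix:poisson_ext} has kernel $e^{|\xi| x_n}$, not the kernel $Q(\xi,x_n) = \cosh(|\xi|(x_n+b))/\cosh(|\xi|b)$ from Theorem \ref{xi_symbol}; the latter describes the operator $\Xi$, not $\pf$. In particular, $\pf\eta$ does \emph{not} have vanishing Neumann trace on $\Sigma_{-b}$ (the trace there has symbol $|\xi| e^{-|\xi| b}\hat\eta$), and $-\Delta'\int_{-b}^0 \pf\eta\,dx_n$ has symbol $|\xi|(1-e^{-|\xi| b})\hat\eta$, not $|\xi|\tanh(|\xi| b)\hat\eta$.

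Fortunately neither slip damages the proof. Your key identity
\[
\int_{-b}^0 f\,dx_n - (h_+ - h_-) = -\Delta'\int_{-b}^0 [p + \pf\eta]\,dx_n - \gamma\p_1\eta
\]
follows from harmonicity of $\pf\eta$ alone, via $\p_n\pf\eta\vert_\Sigma - \p_n\pf\eta\vert_{\Sigma_{-b}} = -\Delta'\int_{-b}^0\pf\eta\,dx_n$, with no Neumann condition needed; and the correct symbol $|\xi|(1-e^{-|\xi| b})$ has the same small- and large-$|\xi|$ asymptotics as $|\xi|\tanh(|\xi| b)$, so your $\dot H^{-1}$ estimate goes through unchanged. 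The paper packages the same three pieces as citations of Proposition \ref{odp_CC1} (for the $p$ contribution), Proposition \ref{normal_poisson_neg_est} (for $\p_n\pf\eta\vert_\Sigma - \p_n\pf\eta\vert_{\Sigma_{-b}}$), and the ninth item of Theorem \ref{specialized_properties} (for $\p_1\eta$), but the content is identical to what you wrote out by hand.
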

\begin{proof}
Write the tuple in \eqref{T2_range_00} as $(f,h_+,h_-,k)$.  From Theorems \ref{specialized_properties}, \ref{Ps_properties}, \ref{poisson_aniso_map} and standard trace theory we see that  
\begin{equation}
\norm{f}_{H^s} + \norm{h_+}_{H^{s+1/2}} + \norm{h_-}_{H^{s+1/2}} + \norm{k}_{H^{s+3/2}} \ls \norm{p}_{H^{s+2}} + \norm{\eta}_{\H^{s+3/2}}   ,
\end{equation}
and so in particular $(f,h_+,h_-,k)  \in H^s(\Omega) \times H^{s+1/2}(\Sigma)  \times H^{s+1/2}(\Sigma_{-b}) \times H^{s+3/2}(\Sigma)$.  

Suppose now that $\Gamma = \R^{n-1}$. Proposition \ref{odp_CC1} implies that 
\begin{equation}
 \snorm{\int_{-b}^0 f(\cdot,x_n) dx_n - (h_+ +  \p_n \pf \eta \vert_{\Sigma} - \gamma \p_1 \eta - h_- -  \p_n \pf \eta \vert_{\Sigma_{-b}}) }_{\dot{H}^{-1}} \ls \norm{p}_{H^{s+2}}.
\end{equation}
We know from Theorem \ref{specialized_properties} that  $\snorm{\p_1 \eta}_{\dot{H}^{-1}} \ls \norm{\eta}_{\H^{s+3/2}}$, and we know from Proposition \ref{normal_poisson_neg_est} that 
\begin{equation}
  \snorm{\p_n \pf\eta \vert_{\Sigma}  - \p_n \pf \eta\vert_{\Sigma_{-b}}  }_{\dot{H}^{-1}}   \ls  \norm{\eta}_{\H^{s+3/2}},
\end{equation}
so we deduce that 
\begin{equation}
 \snorm{\int_{-b}^0 f(\cdot,x_n) dx_n - (h_+  - h_-) }_{\dot{H}^{-1}} 
\ls \norm{p}_{H^{s+2}} + \norm{\eta}_{\H^{s+3/2}}.
\end{equation}
Thus $(f,h_+,h_-,k) \in Y^s_1$ and the estimate \eqref{T2_range_01} holds when $\Gamma = \R^{n-1}$.

Now consider the case $\Gamma =\T^{n-1}$.  In this case Proposition \ref{odp_CC1} shows that 
\begin{equation}
\int_{-b}^0 \hat{f}(0,x_n) dx_n - (\hat{h}_+(0) +  \widehat{\p_n \pf \eta \vert_{\Sigma}}(0) - \gamma \widehat{\p_1 \eta}(0) - \hat{h}_-(0) -  \widehat{\p_n \pf \eta \vert_{\Sigma_{-b}}}(0) ) =0,  
\end{equation} 
but Proposition \ref{normal_poisson_neg_est} shows $\widehat{\p_n \pf \eta \vert_{\Sigma}}(0) = \widehat{\p_1 \eta}(0) = \widehat{\p_n \pf \eta \vert_{\Sigma_{-b}}}(0)=0,$
so 
\begin{equation}
 \int_{-b}^0 \hat{f}(0,x_n) dx_n - (\hat{h}_+(0)  - \hat{h}_-(0) ) =0.
\end{equation}
Thus, $(f,h_+,h_-,k) \in Y^s_1$ and the estimate \eqref{T2_range_01} holds when $\Gamma = \T^{n-1}$.
\end{proof}

We can now state our isomorphism theorem associated to \eqref{pfs}.

\begin{thm}\label{T2_iso}
If $\Gamma = \R^{n-1}$, then assume that $\gamma \neq 0$.  Then the bounded linear map $T_2 : H^{s+2}(\Omega) \times \H^{s+3/2}(\Sigma) \to Y^s_1$ associated to \eqref{pfs}, which is defined by 
\begin{equation}
T_2(p,\eta) =  (-\Delta p, -\p_n p \vert_{\Sigma} -\p_n \pf \eta \vert_{\Sigma} + \gamma \p_1 \eta, -\p_n p \vert_{\Sigma_{-b}} -\p_n \pf \eta \vert_{\Sigma_{-b}}, p\vert_{\Sigma}), 
\end{equation}
is an isomorphism for every $0 \le s \in \R$.
\end{thm}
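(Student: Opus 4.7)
The plan is to prove bijectivity of $T_2$ by reducing to the overdetermined isomorphism $T_1$ of Theorem \ref{ODP_iso} via a pseudodifferential equation that decouples $\eta$ from $p$, exactly as sketched in the introduction. Boundedness is already handled by Lemma \ref{T2_range}, and continuity of the inverse will then follow from the bounded inverse theorem.

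For injectivity, suppose $T_2(p,\eta)=0$. Then $p$ is harmonic, $p\vert_\Sigma=0$, and the two remaining boundary relations couple $\p_n p$ with $\eta$. On the horizontal Fourier side, write $\hat p(\xi,x_n)=\alpha(\xi)\cosh(\abs{\xi}(x_n+b))+\beta(\xi)\sinh(\abs{\xi}(x_n+b))$ and use Theorem \ref{xi_symbol} to compute $\widehat{\p_n \pf\eta}$ at $x_n=0$ and $x_n=-b$. The three boundary conditions become a $3\times 3$ linear system in $(\alpha,\beta,\hat\eta)$ whose Schur complement on $\hat\eta$ is exactly the scalar $-i\gamma\xi_1+\abs{\xi}\tanh(\abs{\xi}b)$; since $\gamma\neq 0$ in the case $\Gamma=\R^{n-1}$, this symbol is nonzero on the relevant frequency set, forcing $\hat\eta\equiv 0$ and then $\alpha=\beta=0$. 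In the periodic case the $\xi=0$ mode is killed by the zero-mean convention encoded in $\mathcal{H}^{s+3/2}(\Sigma)$.

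For surjectivity, let $(f,h_+,h_-,k)\in Y^s_1$ and define
\begin{equation}
 \psi(\xi)=\int_{-b}^0 \hat f(\xi,x_n)\frac{\cosh(\abs{\xi}(x_n+b))}{\cosh(\abs{\xi}b)}\,dx_n-\hat k(\xi)\abs{\xi}\tanh(\abs{\xi}b)-\hat h_+(\xi)+\hat h_-(\xi)\sech(\abs{\xi}b).
\end{equation}
Solve the pseudodifferential equation $[-i\gamma\xi_1+\abs{\xi}\tanh(\abs{\xi}b)]\hat\eta(\xi)=\psi(\xi)$, setting $\hat\eta(0)=0$ in the torus case. Given $\eta$, form the modified Neumann data $\tilde h_+=h_++\p_n\pf\eta\vert_\Sigma-\gamma\p_1\eta$ and $\tilde h_-=h_-+\p_n\pf\eta\vert_{\Sigma_{-b}}$, and apply Theorem \ref{xi_symbol} to verify that the tuple $(f,\tilde h_+,\tilde h_-,k)$ satisfies the $Z^s$ compatibility identity \eqref{odp_CC2_01} precisely by the choice of $\psi$. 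Setting $p=T_1^{-1}(f,\tilde h_+,\tilde h_-,k)\in H^{s+2}(\Omega)$ through Theorem \ref{ODP_iso} then yields $T_2(p,\eta)=(f,h_+,h_-,k)$ after unwinding the definitions.

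The main obstacle is Step 2, namely showing that $\eta$ constructed above actually lies in $\mathcal{H}^{s+3/2}(\Sigma)$, and this is where the $Y^s_1$ space and the hypothesis $\gamma\neq 0$ are essential. At high frequencies the symbol satisfies $\abs{-i\gamma\xi_1+\abs{\xi}\tanh(\abs{\xi}b)}\gtrsim\abs{\xi}$, so standard trace and multiplier estimates give $\psi\in H^{s+1/2}$ and hence the usual $H^{s+3/2}$ control on $\eta$. At low frequencies, Taylor expansion of $\cosh(\abs{\xi}(x_n+b))/\cosh(\abs{\xi}b)$, $|\xi|\tanh(|\xi|b)$ and $\sech(\abs{\xi}b)$ around $\xi=0$ shows
\begin{equation}
\psi(\xi)=\Bigl[\int_{-b}^0\hat f(\xi,x_n)\,dx_n-\hat h_+(\xi)+\hat h_-(\xi)\Bigr]+O(\abs{\xi}^2)\cdot(\text{bounded data terms}),
\end{equation}
so the $\dot H^{-1}$ compatibility condition built into $Y^s_1$ precisely ensures $\psi\in\dot H^{-1}$ at low frequency. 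Since $\gamma\neq 0$, the symbol satisfies $\abs{-i\gamma\xi_1+\abs{\xi}\tanh(\abs{\xi}b)}^2\gtrsim\gamma^2\xi_1^2+b^2\abs{\xi}^4$ on $B(0,1)$, and the ratio $(\xi_1^2+\abs{\xi}^4)/(\gamma^2\xi_1^2+b^2\abs{\xi}^4)$ is bounded, so the anisotropic low-frequency seminorm defining $\mathcal{H}^{s+3/2}$ reduces to $\int_{B(0,1)}\abs{\psi(\xi)}^2/\abs{\xi}^2\,d\xi$, which is finite by the $\dot H^{-1}$ estimate. Together with continuity of $T_2$, bijectivity between the Banach spaces $H^{s+2}(\Omega)\times\mathcal{H}^{s+3/2}(\Sigma)$ and $Y^s_1$ implies $T_2$ is an isomorphism via the bounded inverse theorem.
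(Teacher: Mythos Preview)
Your proof is correct and the surjectivity argument is essentially identical to the paper's: you define the same $\psi$, solve the same pseudodifferential equation for $\hat\eta$, verify the modified tuple lies in $Z^s$ via the compatibility identity \eqref{odp_CC2_01}, and invoke $T_1^{-1}$. The low-frequency analysis leading to $\eta\in\mathcal{H}^{s+3/2}$ matches the paper's computation \eqref{T2_iso_1}--\eqref{T2_iso_4}.

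The one genuine methodological difference is your injectivity argument. The paper proceeds by an energy identity: assuming $T_2(p,\eta)=0$, one integrates $\diverge(-\nabla p-\nabla\pf\eta)(p+\pf\eta)$ over $\Omega$, uses the boundary conditions to kill the boundary terms (the $\gamma\p_1\eta\cdot\eta$ term integrates to zero on $\Sigma$), and concludes $\nabla(p+\pf\eta)=0$, hence $p+\pf\eta$ is constant; evaluating on $\Sigma$ forces $\eta$ constant, whence $\eta=0$ by the zero-mean (or decay) condition in $\mathcal{H}^{s+3/2}$. Your route instead computes the general harmonic $\hat p(\xi,\cdot)$ on the Fourier side and eliminates $(\alpha,\beta)$ to land on the scalar symbol $-i\gamma\xi_1+\abs{\xi}\tanh(\abs{\xi}b)$ acting on $\hat\eta$. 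Both are valid; the paper's energy argument is shorter and does not require the explicit hyperbolic-function algebra, while your approach has the virtue of making the decoupling symbol appear already in the injectivity step, so the same computation drives both halves of the proof.

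One minor correction: you cite Theorem \ref{xi_symbol} for the formulas $\widehat{\p_n\pf\eta}\vert_\Sigma(\xi)=\abs{\xi}\hat\eta(\xi)$ and $\widehat{\p_n\pf\eta}\vert_{\Sigma_{-b}}(\xi)=\abs{\xi}e^{-\abs{\xi}b}\hat\eta(\xi)$, but that theorem concerns the operator $\Xi=T_0^{-1}(0,\cdot,0)$, which has kernel $\cosh(\abs{\xi}(x_n+b))/\cosh(\abs{\xi}b)$, not the Poisson extension $\pf$ with kernel $e^{\abs{\xi}x_n}$. The formulas you need follow directly from the definition of $\pf$ in Proposition \ref{poisson_sobolev_map} (and appear in the paper as \eqref{T2_iso_5}); this is a labeling slip, not a mathematical gap.
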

\begin{proof}
First note that Lemma \ref{T2_range} tells us that $T_2$ is a well-defined bounded linear map.  If $T_2(p,\eta) =0$, then 
\begin{multline}
0 = \int_{\Omega} \diverge(-\nab p - \nab \pf \eta) (p+\pf \eta) = \int_{\Omega} \abs{\nab p +\nab \pf \eta}^2 - \int_{\p \Omega} \p_\nu (p+\pf \eta) (p+ \pf \eta) \\
=  \int_{\Omega} \abs{\nab p + \nab \pf \eta}^2 + \int_{\Sigma} -\p_n (p+\pf \eta) (p+ \pf \eta) \\
= \int_{\Omega} \abs{\nab p + \nab \pf \eta}^2 + \int_{\Sigma}  -\gamma \p_1 \eta \eta 
= \int_{\Omega} \abs{\nab p + \nab \pf \eta}^2, 
\end{multline}
and so $p + \pf \eta =C$ for some constant $C \in \R$.  However, on $\Sigma$ we have that $p=0$ and $\pf \eta =\eta$, so $\eta =C$.  In turn this requires that $\eta =0$ (since $\eta\in \mathcal{H}^{s+3/2}(\Sigma)$) and $p =0$, and so $T_2$ is injective.

Now let $(f,h_+,h_-,k) \in Y^s_1$.  Define the function $\psi : \hat{\Gamma} \to \C$ via 
\begin{equation}\label{psi_def}
\psi(\xi) = \int_{-b}^0 \hat{f}(\xi, x_n) \frac{\cosh( \abs{\xi} (x_n+b))}{\cosh( \abs{\xi} b)} dx_n  - \hat{k}(\xi)  \abs{\xi} \tanh( \abs{\xi} b) 
- \hat{h}_+(\xi)  + \hat{h}_-(\xi) \sech( \abs{\xi} b). 
\end{equation}
Note that we may rewrite 
\begin{multline}
 \psi(\xi) = \int_{-b}^0 \hat{f}(\xi,x_n) dx_n - (\hat{h}_+(\xi) - \hat{h}_-(\xi)) 
+ \int_{-b}^0 \hat{f}(\xi, x_n) \left[\frac{\cosh( \abs{\xi} (x_n+b))}{\cosh(\abs{\xi} b)}-1 \right] dx_n \\
- \hat{k}(\xi)  \abs{\xi} \tanh( \abs{\xi} b) 
 + \hat{h}_-(\xi) \left[\sech( \abs{\xi} b) -1 \right].
\end{multline}
When $\Gamma = \R^{n-1}$, we readily deduce from this and standard Taylor expansion that 
\begin{multline}
 \int_{B(0,1)} \abs{\xi}^{-2} \abs{\psi(\xi)}^2 d\xi   \ls   \snorm{\int_{-b}^0 f(\cdot,x_n) dx_n + (h_+ - h_-) }_{\dot{H}^{-1}}^2 + \norm{f}_{L^2}^2 + \norm{h_-}_{L^2}^2 + \norm{k}_{L^2}^2 \\
\ls 
 \norm{(f,h_+,h_-,k)}_{Y^s_1}^2.
\end{multline}
Similarly, when $\Gamma = \T^{n-1}$ we must have that $\psi(0) =0$.  On the other hand, in both cases we can  bound 
\begin{equation}
 \int_{B(0,1)^c} (1+\abs{\xi}^2)^{s+1/2} \abs{\psi(\xi)}^2 d\xi 
\ls \norm{f}_{H^s}^2 + \norm{h_+}_{H^{s+1/2}}^2 + \norm{h_-}_{H^{s+1/2}}^2 + \norm{k}_{H^{s+3/2}}^2 
 \ls \norm{(f,h_+,h_-,k)}_{Y^s_1}^2.
\end{equation}
Combining these bounds shows that 
\begin{equation}\label{T2_iso_1}
 \int_{B(0,1)} \abs{\xi}^{-2} \abs{\psi(\xi)}^2 d\xi + \int_{B(0,1)^c} (1+\abs{\xi}^2)^{s+1/2} \abs{\psi(\xi)}^2 d\xi \ls \norm{(f,h_+,h_-,k)}_{Y^s_1}^2
\end{equation}
with the understanding that the first integral is replaced with $0$ when $\Gamma = \T^{n-1}$.

Next note that for $\xi \in \hat{\Gamma}$ we have that 
\begin{equation}\label{T2_iso_2}
 \abs{- i \gamma \xi_1 +  \abs{\xi} \tanh(\abs{\xi} b)}^2 =  \gamma^2 \xi_1^2 +  \abs{\xi}^2 \tanh^2( \abs{\xi}b) 
\asymp
\begin{cases}
\gamma^2 \xi_1^2 +  \abs{\xi}^4 b^2 & \text{for } \abs{\xi} \asymp 0 \\
(1+\gamma^2) \abs{\xi}^2 &\text{for } \abs{\xi} \asymp \infty,
\end{cases}
\end{equation}
and in particular the quantity on the left side vanishes if and only if $\xi =0$.  Consequently, we can define the measurable function $\hat{\eta} : \hat{\Gamma} \to \C$ via the identity
\begin{equation}\label{T2_iso_3}
\left[- i \gamma \xi_1 +  \abs{\xi} \tanh( \abs{\xi} b)\right] \hat{\eta}(\xi) = \psi(\xi)
\end{equation}
for $\xi \neq 0$ and $\hat{\eta}(0)=0$.  It may be easily checked that since the data are real-valued we have that $\overline{\psi(\xi)} = \psi(-\xi)$. The multiplier on the left side of \eqref{T2_iso_3} satisfies the same identity, and so we conclude that $\overline{\hat{\eta}(\xi)} = \hat{\eta}(-\xi)$, which means that $\eta$ is also real-valued.  Synthesizing \eqref{T2_iso_1} and \eqref{T2_iso_2}, we see from \eqref{T2_iso_3} that $\eta \in \H^{s+3/2}(\Sigma)$ and  
\begin{multline}\label{T2_iso_4}
\norm{\eta}_{\H^{s+3/2}}^2 \asymp \int_{B(0,1)}  \frac{\gamma^2 \xi_1^2 + \abs{\xi}^4 }{\abs{\xi}^2}  \abs{\hat{\eta}(\xi)}^2 d\xi + \int_{B(0,1)^c} (1+\abs{\xi}^2)^{s+3/2} \abs{\hat{\eta}(\xi)}^2 d\xi \\
\asymp 
 \int_{B(0,1)} \abs{\xi}^{-2} \abs{\psi(\xi)}^2 d\xi + \int_{B(0,1)^c} (1+\abs{\xi}^2)^{s+1/2} \abs{\psi(\xi)}^2 d\xi 
 \ls \norm{(f,h_+,h_-,k)}_{Y^s_1}^2,
\end{multline}
again with the understanding that the integrals over $B(0,1)$ are replaced by $0$ when $\Gamma = \T^{n-1}$, and recalling that $\mathcal{H}^{s+3/2}(\T^{n-1}) = H^{s+3/2}(\T^{n-1})$.

We now know that $\eta \in \H^{s+3/2}(\Sigma)$, so we can use Theorem \ref{poisson_aniso_map} to see that $\pf \eta \in \P^{s+2}(\Omega)$, as defined in Definition \ref{P_aniso_def}.  In particular, this,  Theorem \ref{Ps_properties}, and standard trace theory show that $\p_n \pf \eta \vert_{\Sigma} \in H^{s+3/2}(\Sigma)$ and $\p_n \pf \eta \vert_{\Sigma_{-b}} \in H^{s+3/2}(\Sigma_{-b})$.  Moreover, a simple computation shows that 
\begin{equation}\label{T2_iso_5}
 \widehat{ \p_n \pf \eta} \vert_{\Sigma}(\xi) =  \abs{\xi} \hat{\eta}(\xi) \text{ and }  \widehat{ \p_n \pf \eta} \vert_{\Sigma_{-b}}(\xi) =  \abs{\xi} e^{- \abs{\xi} b} \hat{\eta}(\xi)
\end{equation}
for $\xi \in \hat{\Gamma}$.  From these and the properties of $\H^{s+3/2}(\Sigma)$ given in Theorem \ref{specialized_properties}, we readily deduce that we have the inclusion
\begin{equation}
(f,h_+ -\gamma \p_1 \eta + \p_n \pf \eta \vert_\Sigma, h_-   + \p_n \pf \eta \vert_{\Sigma_{-b}}, k) \in  H^s(\Omega) \times H^{s+1/2}(\Sigma) \times H^{s+1/2}(\Sigma_{-b}) \times H^{s+3/2}(\Sigma).
\end{equation}
We claim that, in fact, this modified tuple belongs to the space $Z^s$.  To show this it suffices to check that the modified tuple satisfies the compatibility condition of Proposition \ref{odp_CC2}.  Using the identities \eqref{T2_iso_5}, we compute 
\begin{multline}
 - (\hat{h}_+(\xi) -\gamma \widehat{\p_1 \eta}(\xi) + \widehat{\p_n \pf \eta \vert_\Sigma}(\xi)) +  (\hat{h}_-(\xi)  + \widehat{\p_n \pf \eta \vert_{\Sigma_{-b}}}(\xi) ) \sech(\abs{\xi} b) \\
= - \hat{h}_+(\xi)  + \hat{h}_-(\xi) \sech( \abs{\xi} b)  - \left[- i \gamma \xi_1 +   \abs{\xi} \tanh( \abs{\xi} b) \right] \hat{\eta}(\xi).
\end{multline}
Thus, the identity 
\begin{multline}
0 = \int_{-b}^0 \hat{f}(\xi, x_n) \frac{\cosh( \abs{\xi} (x_n+b))}{\cosh(\abs{\xi} b)} dx_n  - \hat{k}(\xi)  \abs{\xi} \tanh( \abs{\xi} b) \\  - (\hat{h}_+(\xi) -\gamma \widehat{\p_1 \eta}(\xi) + \widehat{\p_n \pf \eta \vert_\Sigma}(\xi)) +  (\hat{h}_-(\xi)  + \widehat{\p_n \pf \eta \vert_{\Sigma_{-b}}}(\xi) ) \sech( \abs{\xi} b) 
\end{multline}
is equivalent to the identity \eqref{T2_iso_3}, which is satisfied by the construction of $\eta$.  Thus, for the modified tuple we have the inclusion 
\begin{equation}\label{T2_iso_6}
(f,h_+ -\gamma \p_1 \eta + \p_n \pf \eta \vert_\Sigma, h_-   + \p_n \pf \eta \vert_{\Sigma_{-b}}, k) \in Z^s
\end{equation}
as claimed.

In light of \eqref{T2_iso_6} and Theorem \ref{ODP_iso} we may then define 
\begin{equation}
p = T_1^{-1}(f,h_+ -\gamma \p_1 \eta + \p_n \pf \eta \vert_\Sigma, h_-   + \p_n \pf \eta \vert_{\Sigma_{-b}}, k) \in H^{s+2}(\Omega),
\end{equation}
which satisfies 
\begin{equation}
\begin{cases}
-\Delta p = f & \text{in } \Omega \\
p = k & \text{on } \Sigma \\
-\p_n p = h_+ -\gamma \p_1 \eta + \p_n \pf \eta \vert_\Sigma &\text{on } \Sigma \\
-\p_n p = h_-   + \p_n \pf \eta \vert_{\Sigma_{-b}} &\text{on } \Sigma_{-b}.
\end{cases}
\end{equation}
Thus, $T_2(p,\eta) = (f,h_+,h_-,k)$, and we conclude that $T_2$ is surjective and hence an isomorphism.
\end{proof}

\subsection{The  isomorphism for the velocity-pressure-free surface system}

Finally, we aim to show that the PDE system \eqref{linear_full} induces an isomorphism between appropriate Hilbert spaces.  First we must identify the domain and codomain by introducing two definitions.  The first defines a closed subspace of $H^s(\Omega;\R^n)$.

\begin{dfn}\label{nHs_def}
For $1/2 < s \in \R$ we define the space
\begin{equation}
 {_n}H^s(\Omega;\R^n) = \{u \in H^s(\Omega;\R^n) \st u_n \vert_{\Sigma_{-b}} =0 \}.
\end{equation}
Standard trace theory shows that this is a closed subspace of $H^s(\Omega;\R^n)$ and thus a Hilbert space.
\end{dfn}

The second definition introduces a container space for the data in the problem \eqref{linear_full}.

\begin{dfn}\label{Vs_def}
Let $0 \le s \in \R$.   For $\Gamma = \R^{n-1}$ we define the space
\begin{multline}
 V^s = \{ (F,G,H,K) \in H^{s+1}(\Omega;\R^n) \times H^{s}(\Omega) \times H^{s+1/2}(\Sigma)  \times H^{s+3/2}(\Sigma)  \st
\\ 
\int_{-b}^0 (G-\diverge{F})(\cdot,x_n) dx_n - (H - F_n\vert_{\Sigma}  + F_n\vert_{\Sigma_{-b}}) \in \dot{H}^{-1}(\Sigma)\}
\end{multline}
and endow it with the square norm
\begin{multline}
\norm{(F,G,H,K)}_{V^s}^s = \norm{F}_{H^{s+1}}^2 + \norm{G}_{H^s}^2 + \norm{H_+}_{H^{s+1/2}}^2  + \norm{K}_{H^{s+3/2}}^2 \\
+ \snorm{\int_{-b}^0 (G-\diverge{F})(\cdot,x_n) dx_n - (H - F_n\vert_{\Sigma}  + F_n\vert_{\Sigma_{-b}} )}_{\dot{H}^{-1}}^2
\end{multline}
and the associated inner-product.   On the other hand, for $\Gamma = \T^{n-1}$ we 
define the space
\begin{multline}
 V^s = \{ (F,G,H,K) \in H^{s+1}(\Omega;\R^n) \times H^{s}(\Omega) \times H^{s+1/2}(\Sigma)  \times H^{s+3/2}(\Sigma)  \st
\\ 
\int_{-b}^0 (\hat{G}-\widehat{\diverge{F}})(0,x_n) dx_n - (\hat{H}(0) - \hat{F}_n\vert_{\Sigma}(0)  + \hat{F}_n\vert_{\Sigma_{-b}}(0)) =0   \}
\end{multline}
and endow it with the square norm
\begin{equation}
\norm{(F,G,H,K)}_{V^s}^s = \norm{F}_{H^{s+1}}^2 + \norm{G}_{H^s}^2 + \norm{H_+}_{H^{s+1/2}}^2  + \norm{K}_{H^{s+3/2}}^2 
\end{equation}
and the associated inner-product.  It's easy to see that in both cases $V^s$ is a Hilbert space. 
\end{dfn}

\begin{remark}\label{Vs_remark}
Note that 
\begin{equation}
 \int_{-b}^0 -\diverge{F}(\cdot,x_n) dx_n = -\diverge'\int_{-b}^0 F'(\cdot,x_n) dx_n - F_n\vert_{\Sigma}  + F_n \vert_{\Sigma_{-b}}
\end{equation}
and so 
\begin{equation}
  \int_{-b}^0 -\diverge{F}(\cdot,x_n) dx_n + F_n \vert_{\Sigma} - F_n \vert{\Sigma_{-b}} = -\diverge'\int_{-b}^0 F'(\cdot,x_n) dx_n. 
\end{equation}
When $\Gamma = \R^{n-1}$ this provides the estimate 
\begin{equation}
 \snorm{  \int_{-b}^0 -\diverge{F}(\cdot,x_n) dx_n + F_n \vert_{\Sigma} - F_n \vert{\Sigma_{-b}}}_{\dot{H}^{-1}} \le \norm{F'}_{L^2},
\end{equation}
and this means that the term appearing in the $\dot{H}^{-1}$ seminorm in the definition of the $V^s$ norm can be replaced with 
\begin{equation}
\snorm{ \int_{-b}^0 G(\cdot,x_n) dx_n - H  }_{\dot{H}^{-1}}
\end{equation}
to produce an equivalent norm.  Similarly, when $\Gamma = \T^{n-1}$ these calculations show that a data tuple $(F,G,H,K) \in   H^{s+1}(\Omega;\R^n) \times H^{s}(\Omega) \times H^{s+1/2}(\Sigma)  \times H^{s+3/2}(\Sigma)$ belongs to $V^s$ if and only if 
\begin{equation}
 \int_{-b}^0 \hat{G}(0,x_n) dx_n - \hat{H}(0) =0.
\end{equation}
\end{remark}

Our next lemma shows that the linear map associated to \eqref{linear_full} takes values in $V^s$ and is bounded.

\begin{lem}\label{T3_range}
Let $0 \le s \in \R$.  Suppose $(u,p,\eta) \in {_n}H^{s+1}(\Omega;\R^n) \times H^{s+2}(\Omega) \times \H^{s+3/2}(\Sigma)$ and set 
\begin{equation}
F = u + \nab p + \nab \pf \eta, G = \diverge{u}, H  = u_n + \gamma \p_1 \eta,   \text{ and } K = p.
\end{equation}
Then $(F,G,H,K) \in V^s$, $(G - \diverge{F}, H - F_n \vert_{\Sigma} ,  - F_n\vert_{\Sigma_{-b}}, K) \in Y^s_1$, and we have the bound
\begin{equation}
 \norm{(F,G,H,K)}_{V^s}  + 
 \norm{(G - \diverge{F}, H - F_n \vert_{\Sigma} ,  - F_n\vert_{\Sigma_{-b}}, K)}_{Y^s_1} \ls \norm{u}_{H^{s+1}} + \norm{p}_{H^{s+2}} + \norm{\eta}_{\H^{s+3/2}}.
\end{equation}
\end{lem}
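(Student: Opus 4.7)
The whole lemma will reduce to bookkeeping once one recognizes that the second tuple is nothing other than $T_2(p,\eta)$ from Theorem \ref{T2_iso}, so Lemma \ref{T2_range} will do the heavy lifting for the $Y^s_1$ part and for the $\dot{H}^{-1}$ compatibility condition hidden in the definition of $V^s$. My plan therefore splits into three pieces: (a) standard Sobolev bounds on $F,G,H,K$ themselves, (b) an algebraic simplification of the second tuple, and (c) deducing the $V^s$ compatibility from the $Y^s_1$ compatibility.

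For step (a), I will estimate each component separately. The bounds $\|G\|_{H^s}=\|\diverge u\|_{H^s}\le \|u\|_{H^{s+1}}$, $\|u_n|_\Sigma\|_{H^{s+1/2}}\lesssim \|u\|_{H^{s+1}}$, and $\|p|_\Sigma\|_{H^{s+3/2}}\lesssim\|p\|_{H^{s+2}}$ are immediate from the trace theorem. For the Poisson-extension contributions I will invoke Theorems \ref{Ps_properties} and \ref{poisson_aniso_map}, giving $\|\nab\mathfrak{P}\eta\|_{H^{s+1}}\lesssim\|\eta\|_{\H^{s+3/2}}$ together with the trace bounds on $\p_n\mathfrak{P}\eta|_\Sigma$ and $\p_n\mathfrak{P}\eta|_{\Sigma_{-b}}$. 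Finally, Theorem \ref{specialized_properties} supplies $\|\p_1\eta\|_{H^{s+1/2}}\lesssim\|\eta\|_{\H^{s+3/2}}$, which is needed to control the $\gamma\p_1\eta$ piece of $H$. Summing everything yields $(F,G,H,K)\in H^{s+1}(\Omega;\R^n)\times H^s(\Omega)\times H^{s+1/2}(\Sigma)\times H^{s+3/2}(\Sigma)$ with the right estimate.

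For step (b), the key calculation is that $\mathfrak{P}\eta$ is harmonic, so $\Delta\mathfrak{P}\eta=0$. Using this together with the condition $u_n|_{\Sigma_{-b}}=0$ (built into the definition of ${_n}H^{s+1}(\Omega;\R^n)$), I will compute
\begin{equation}
G-\diverge F = -\Delta p, \qquad H - F_n|_\Sigma = \gamma\p_1\eta - \p_n p|_\Sigma - \p_n\mathfrak{P}\eta|_\Sigma,
\end{equation}
\begin{equation}
-F_n|_{\Sigma_{-b}} = -\p_n p|_{\Sigma_{-b}} - \p_n\mathfrak{P}\eta|_{\Sigma_{-b}}, \qquad K = p|_\Sigma.
\end{equation}
Comparing with the formula for $T_2$ in Theorem \ref{T2_iso} shows that this second tuple equals $T_2(p,\eta)$. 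Lemma \ref{T2_range} then immediately gives its $Y^s_1$ membership and the bound by $\|p\|_{H^{s+2}}+\|\eta\|_{\H^{s+3/2}}$.

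For step (c), I just have to check that the $\dot{H}^{-1}$ seminorm appearing in the definition of $V^s$ coincides with the $\dot{H}^{-1}$ seminorm of the $Y^s_1$ data for the second tuple. Unwinding definitions with $f=G-\diverge F$, $h_+=H-F_n|_\Sigma$, $h_-=-F_n|_{\Sigma_{-b}}$, we have $h_+-h_-=H-F_n|_\Sigma+F_n|_{\Sigma_{-b}}$, and the $V^s$ compatibility condition becomes exactly the $Y^s_1$ compatibility for the second tuple; the $\T^{n-1}$ case is analogous with the zero-average condition replacing $\dot{H}^{-1}$ control. Hence the $V^s$ bound on $(F,G,H,K)$ follows from step (a) combined with the $Y^s_1$ bound from step (b). I do not anticipate any real obstacle beyond spotting the identification with $T_2(p,\eta)$; once that is seen, the proof is purely bookkeeping of trace and Poisson-extension estimates.
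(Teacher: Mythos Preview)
Your proposal is correct and follows essentially the same approach as the paper: bound the Sobolev norms of $(F,G,H,K)$ directly, recognize that the second tuple is exactly $T_2(p,\eta)$ (the paper writes out the corresponding system rather than naming it), invoke Lemma~\ref{T2_range} for the $Y^s_1$ bound, and then observe that the $\dot{H}^{-1}$ (resp.\ zero-mode) condition in $Y^s_1$ coincides with the one in $V^s$. Your explicit identification of the second tuple with $T_2(p,\eta)$ and your use of $\Delta\mathfrak{P}\eta=0$ to simplify $G-\diverge F$ make the bookkeeping slightly cleaner than the paper's write-up, but the logical structure is identical.
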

\begin{proof}
We may readily bound 
\begin{equation}
  \norm{F}_{H^{s+1}} + \norm{G}_{H^s} + \norm{H}_{H^{s+1/2}}  + \norm{K}_{H^{s+3/2}} \ls \norm{u}_{H^{s+1}} + \norm{p}_{H^{s+2}}  + \norm{\eta}_{\H^{s+3/2}} .
\end{equation}
On the other hand, if we define $f = G- \diverge{F} \in H^{s}(\Omega)$, $h_+ = H - F_n \vert_{\Sigma} \in H^{s+1/2}(\Sigma)$, $h_- = - F_n \vert_{\Sigma_{-b}} \in H^{s+1/2}(\Sigma_{-b})$, and $k=K \in H^{s+3/2}(\Sigma)$, then we see that 
\begin{equation} 
\begin{cases}
 -\Delta p = G-\diverge{F} = f  & \text{in }\Omega \\
 -\p_n p  -\p_n \pf \eta  +\gamma \p_1 \eta = H - F_{n}(\cdot,0) = h_+&\text{on } \Sigma \\
 p  =   K =k &\text{on } \Sigma \\
 -\p_n p -\p_n \pf \eta  =  -F_{n}(\cdot,-b) = h_-& \text{on } \Sigma_{-b},
\end{cases}
\end{equation}
and so Lemma \ref{T2_range} implies that $\norm{ (f,h_+,h_-,k) }_{Y^s_1} \ls  \norm{p}_{H^{s+2}}  + \norm{\eta}_{\H^{s+3/2}}$.  When $\Gamma = \R^{n-1}$,  the $\dot{H}^{-1}$ control provided by the $Y^s_1$ norm is exactly the $\dot{H}^{-1}$ control in the $V^s$ norm, and the stated estimate follows by summing our two bounds.  Similarly, when $\Gamma = \T^{n-1}$, the vanishing zero mode condition required for inclusion in $Y^s_1$ corresponds with the vanishing condition needed for inclusion in $V^s$. 
\end{proof}

Finally, we can state the isomorphism theorem for the map associated to \eqref{linear_full}.

\begin{thm}\label{T3_iso}
Assume that $0 \le s \in \R$, and if  $\Gamma = \R^{n-1}$, then assume that $\gamma \neq 0$.  Then the bounded linear map $T_3 : {_n}H^{s+1}(\Omega;\R^n) \times H^{s+2}(\Omega) \times \H^{s+3/2}(\Sigma) \to V^s$ associated to \eqref{linear_full}, which is defined by 
\begin{equation}
T_3(u,p,\eta) =  (u+\nab p +\nab \pf \eta, \diverge{u}, u_n \vert_{\Sigma} + \gamma \p_1 \eta, p\vert_{\Sigma}), 
\end{equation}
is an isomorphism. 
\end{thm}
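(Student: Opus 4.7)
My plan is to prove that $T_3$ is an isomorphism by deploying Theorem \ref{T2_iso} as the workhorse, since the structure of \eqref{linear_full} allows us to ``project out'' the velocity unknown $u$ and reduce everything to the pressure-free surface system \eqref{pfs}.

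For injectivity, suppose $T_3(u,p,\eta)=0$. Taking the divergence of the first equation and using $\Delta \pf \eta = 0$ gives $-\Delta p = 0$ in $\Omega$. Dotting the first equation with $e_n$ on $\Sigma$ and combining with the third equation yields $-\p_n p - \p_n \pf\eta + \gamma \p_1 \eta = 0$ on $\Sigma$; doing the same on $\Sigma_{-b}$ and using $u_n = 0$ there gives $-\p_n p - \p_n \pf\eta = 0$ on $\Sigma_{-b}$. Together with $p\vert_\Sigma = 0$, this is exactly the statement that $T_2(p,\eta) = 0$, so Theorem \ref{T2_iso} forces $(p,\eta)=0$, and then the first equation gives $u = 0$.

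For surjectivity, given $(F,G,H,K) \in V^s$, I will set
\begin{equation}
(f,h_+,h_-,k) := (G-\diverge F,\ H - F_n\vert_\Sigma,\ -F_n\vert_{\Sigma_{-b}},\ K),
\end{equation}
which is the data tuple for \eqref{pfs} that would arise from eliminating $u$. The content of the $V^s$ compatibility condition (with the reformulation noted in Remark \ref{Vs_remark}) is precisely the $\dot{H}^{-1}$ inclusion required to place $(f,h_+,h_-,k) \in Y^s_1$ in the $\R^{n-1}$ case, and the vanishing zero-mode condition in the $\T^{n-1}$ case; verifying this equivalence is a short accounting of the defining seminorms. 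Then Theorem \ref{T2_iso} yields a unique $(p,\eta) \in H^{s+2}(\Omega) \times \H^{s+3/2}(\Sigma)$ with $T_2(p,\eta) = (f,h_+,h_-,k)$, along with the quantitative bound
\begin{equation}
\|p\|_{H^{s+2}} + \|\eta\|_{\H^{s+3/2}} \ls \|(F,G,H,K)\|_{V^s}.
\end{equation}

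With $(p,\eta)$ in hand I define $u := F - \nab p - \nab \pf\eta$ and verify the remaining equations. Membership $u \in H^{s+1}(\Omega;\R^n)$ follows from $F \in H^{s+1}$, $\nab p \in H^{s+1}$, and the mapping property $\pf : \H^{s+3/2}(\Sigma) \to \P^{s+2}(\Omega)$ from Theorem \ref{poisson_aniso_map}. The divergence identity $\diverge u = \diverge F - \Delta p - \Delta \pf\eta = \diverge F - (\diverge F - G) - 0 = G$ follows from $-\Delta p = f$ and harmonicity of $\pf\eta$. On $\Sigma$, $u_n = F_n - \p_n p - \p_n\pf\eta = F_n + h_+ - \gamma \p_1 \eta = H - \gamma \p_1\eta$, giving the third equation; on $\Sigma_{-b}$, $u_n = F_n + h_- = 0$, yielding membership in ${_n}H^{s+1}(\Omega;\R^n)$. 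Thus $T_3(u,p,\eta) = (F,G,H,K)$, and surjectivity follows. The open mapping theorem, applied to the bounded linear map $T_3$ (bounded by Lemma \ref{T3_range}), then promotes this bijection to a topological isomorphism.

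The only subtle step is the equivalence between the $V^s$ compatibility condition on $(F,G,H,K)$ and the $Y^s_1$ compatibility condition on $(f,h_+,h_-,k)$, and this is exactly the content of Remark \ref{Vs_remark}, since the cross terms $F_n\vert_\Sigma - F_n\vert_{\Sigma_{-b}}$ cancel against the divergence-theorem identity $\int_{-b}^0 \diverge F\,dx_n = \diverge'\int_{-b}^0 F'\,dx_n + F_n\vert_\Sigma - F_n\vert_{\Sigma_{-b}}$; all other steps are straightforward applications of the preceding isomorphism theorems and the mapping properties of the Poisson extension.
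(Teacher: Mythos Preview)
Your proof is correct and follows essentially the same route as the paper's: reduce to $T_2$ via Theorem \ref{T2_iso} for both injectivity and surjectivity, then recover $u = F - \nabla p - \nabla \pf\eta$. One minor remark: your final paragraph overcomplicates the ``subtle step.'' With $f = G - \diverge F$, $h_+ = H - F_n\vert_\Sigma$, $h_- = -F_n\vert_{\Sigma_{-b}}$, the expression $\int_{-b}^0 f\,dx_n - (h_+ - h_-)$ is \emph{literally identical} to the quantity in the $V^s$ definition, so no cancellation argument via Remark \ref{Vs_remark} is needed at all---the $Y^s_1$ inclusion is immediate from $(F,G,H,K)\in V^s$ by direct substitution.
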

\begin{proof}
Lemma \ref{T3_range} tells us that $T_3$ is well-defined and bounded.  If $T_3(u,p,\eta) =0$, then in particular $u+ \nab p + \nab \pf \eta =0$, and in turn this means that $T_2(p,\eta) = 0$.  Theorem \ref{T2_iso} then implies that $p=0$ and $\eta=0$, and so $u=0$ as well.  Thus, $T_3$ is injective.  

Now let $(F,G,H,K) \in V^s$.  Lemma \ref{T3_range} shows that 
\begin{equation}
(f,h_+,h_-,k) :=(G - \diverge{F}, H - F_n \vert_{\Sigma} , - F_n\vert_{\Sigma_{-b}}, K) \in Y^s_1,
\end{equation}
and so we may use Theorem \ref{T2_iso} to define $(p,\eta) = T_2^{-1}(f,h_+,h_-,k) \in H^{s+2}(\Omega) \times \H^{s+3/2}(\Sigma)$.  In other words, $(p,\eta)$ satisfy 
\begin{equation} 
\begin{cases}
 -\Delta p = G-\diverge{F}   & \text{in }\Omega \\
 -\p_n p -\p_n \pf \eta  +\gamma \p_1 \eta = H - F_{n}(\cdot,0) &\text{on } \Sigma \\
 p  =   K  &\text{on } \Sigma \\
 -\p_n p -\p_n \pf \eta = -F_{n}(\cdot,-b) & \text{on } \Sigma_{-b},
\end{cases}
\end{equation}
and upon setting $u = F - \nab p -\nab \pf \eta \in {_n}H^{s+1}(\Omega;\R^n)$  (where we have used Theorems \ref{specialized_properties}, \ref{Ps_properties}, \ref{poisson_aniso_map} to handle the $\nab \pf \eta$ term) we deduce that $T_3(u,p,\eta) = (F,G,H,K)$.  Thus, $T_3$ is surjective and so is an isomorphism.

\end{proof}

\section{Nonlinear analysis for the traveling wave system} \label{sec_nonlinear_tw}
 
Now we aim to invoke the implicit function theorem to solve \eqref{muskat_flattened}.

\subsection{The nonlinear mapping} 
 
To employ the implicit function theorem we first check that a number of basic nonlinear maps are well-defined.

\begin{prop}\label{AK_smooth}
Let $s > n/2 -1$.  Then there exists $\delta_0 >0$ such that the following hold.
\begin{enumerate}
 \item If $\eta \in \H^{s+3/2}(\Sigma)$ and $\norm{\eta}_{\H^{s+3/2}} < \delta_0$, then 
\begin{equation}\label{AK_smooth_01}
 \norm{b^{-1} \pf \eta + \tilde{b} \p_n \pf \eta}_{C^0_b} \le \hal,
\end{equation}
where $\tilde{b}(x) = 1+x_n/b$.  In particular, for such $\eta$ the functions $\kf$ and $\A$, defined in terms of $\eta$ via \eqref{JK_def} and \eqref{A_def}, are well-defined.  

 \item If $\eta \in \H^{s+3/2}(\Sigma)$ and $\norm{\eta}_{\H^{s+3/2}} < \delta_0$, then the flattening map $\ff_\eta : \Omega \to \Omega_\eta$ given by \eqref{flattening_def} is a $C^{1 + \lfloor s-n/2 +1 \rfloor}$  orientation-preserving diffeomorphism.

 \item For $\eta \in \H^{s+3/2}(\Sigma)$ such that $\norm{\eta}_{\H^{s+3/2}} < \delta_0$, the functions $\jf$ and $\kf$, given in terms of $\eta$ as in \eqref{JK_def}, define  $H^{s+1}(\Omega)$ multipliers.  Moreover, the maps 
\begin{equation}\label{AK_smooth_02}
\begin{split}
  B_{\H^{s+3/2}(\Sigma)}(0,\delta_0)   & \ni \eta \mapsto \jf \in \L(H^{s+1}(\Omega)) \text{ and } \\
  B_{\H^{s+3/2}(\Sigma)}(0,\delta_0)   & \ni \eta \mapsto \kf \in \L(H^{s+1}(\Omega))
\end{split}
\end{equation}
are smooth.

 \item For $\eta \in \H^{s+3/2}(\Sigma)$ such that $\norm{\eta}_{\H^{s+3/2}} < \delta_0$, the functions $\M$ and $\A$, given in terms of $\eta$ as in \eqref{M_def} and \eqref{A_def}, define  $H^{s+1}(\Omega;\R^n)$ multipliers.  Moreover, the maps 
\begin{equation}\label{AK_smooth_03}
\begin{split}
 B_{\H^{s+3/2}(\Sigma)}(0,\delta_0) &\ni \eta \mapsto \M \in \L(H^{s+1}(\Omega;\R^n)) \text{ and }\\
 B_{\H^{s+3/2}(\Sigma)}(0,\delta_0) &\ni \eta \mapsto \A \in \L(H^{s+1}(\Omega;\R^n)) 
\end{split}
\end{equation}
are smooth.
\end{enumerate}
\end{prop}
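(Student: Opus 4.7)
The plan is to leverage the hypothesis $s > n/2 - 1$ in two complementary ways: it yields the Sobolev embedding $H^{s+1}(\Omega) \hookrightarrow C^0_b(\bar\Omega)$ and simultaneously makes $H^{s+1}(\Omega)$ a Banach algebra with the continuous product estimate $\norm{fg}_{H^{s+1}} \ls \norm{f}_{H^{s+1}} \norm{g}_{H^{s+1}}$. Throughout I use the mapping property of the Poisson extension $\pf : \H^{s+3/2}(\Sigma) \to H^{s+2}(\Omega)$ supplied by Theorem \ref{poisson_aniso_map}, which is bounded and linear in $\eta$, so that $\pf\eta$, $\p_n \pf\eta$, and $\nab'\pf\eta$ all depend linearly and boundedly on $\eta$ as $H^{s+1}(\Omega)$-valued functions. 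The auxiliary factor $\tilde b(x) = 1 + x_n/b$ is smooth and bounded on $\bar\Omega$, so multiplication by $\tilde b$ is bounded on $H^{s+1}$.

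For item (1), I bound $\norm{b^{-1}\pf\eta + \tilde b \p_n \pf\eta}_{C^0_b}$ via the Sobolev embedding by a constant times $\norm{\eta}_{\H^{s+3/2}}$, so any sufficiently small $\delta_0$ delivers \eqref{AK_smooth_01}. This upgrades to $\jf \ge 1/2$ pointwise on $\bar\Omega$, which makes $\kf = 1/\jf$ a well-defined bounded function and in turn makes the matrix fields $\M$ and $\A$ well defined via \eqref{M_def} and \eqref{A_def}. For item (2), local invertibility of $\ff_\eta$ is granted by the inverse function theorem from the uniform lower bound on $\jf = \det \nab \ff_\eta$, and the regularity class follows from the embedding $H^{s+2}(\Omega) \hookrightarrow C^{1+\lfloor s - n/2 + 1 \rfloor}(\bar\Omega)$. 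Global injectivity is read off \eqref{flattening_def}: horizontal coordinates are preserved, and on each vertical fiber $\{x'\} \times [-b,0]$ the vertical component has derivative equal to $\jf \ge 1/2$, so it is strictly monotone. Surjectivity onto $\bar\Omega_\eta$ then follows from $\ff_\eta|_{\Sigma_{-b}} = I$, $\ff_\eta(\Sigma) = \Sigma_\eta$, and the intermediate value theorem applied fiberwise in $x_n$.

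For item (3), the Banach algebra estimate implies that any $g \in H^{s+1}(\Omega)$ induces a bounded multiplier on $H^{s+1}(\Omega)$ with operator norm $\ls \norm{g}_{H^{s+1}}$. Since $\eta \mapsto \jf - 1 = b^{-1}\pf\eta + \tilde b \p_n \pf\eta$ is bounded and linear into $H^{s+1}(\Omega)$, the first map in \eqref{AK_smooth_02} is affine in $\eta$ and hence smooth. For $\kf = 1/\jf$ I either invoke the general fact that inversion is smooth on the open set of elements of the unital Banach algebra $\R \cdot 1 + H^{s+1}(\Omega)$ whose pointwise infimum is bounded away from $0$ (and item (1) places $\jf$ in this open set for $\norm{\eta}_{\H^{s+3/2}} < \delta_0$), or expand
\begin{equation}
\kf = \frac{1}{1 + (\jf - 1)} = \sum_{m \ge 0} (1-\jf)^m
\end{equation}
as a Neumann-type series that converges absolutely in $H^{s+1}(\Omega)$ by the algebra estimate once $\norm{\jf - 1}_{H^{s+1}}$ is sufficiently small (shrinking $\delta_0$ if necessary). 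Either route yields smoothness of $\eta \mapsto \kf$ in $\L(H^{s+1}(\Omega))$. For item (4), the entries of $\M$ and $\A$ are polynomials in $\jf$, $\kf$, $\tilde b$, and the components of $\nab' \pf \eta$, each identified as a smooth map from $B_{\H^{s+3/2}}(0,\delta_0)$ into $H^{s+1}(\Omega)$; composing with the continuous multilinear product on the algebra yields the smoothness and multiplier claims \eqref{AK_smooth_03}.

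The main obstacle I anticipate is the smooth dependence of $\kf$ on $\eta$ in the strong topology of $H^{s+1}(\Omega)$: pointwise inversion is easy, but obtaining smoothness as a map into $\L(H^{s+1}(\Omega))$ requires both the algebra estimate and a quantitative lower bound on $\jf$ that is uniform over the ball $\norm{\eta}_{\H^{s+3/2}} < \delta_0$. It is precisely the interplay between the $C^0_b$ control of item (1) and the multiplicative structure of $H^{s+1}$ that makes the Neumann series (or Banach-algebra inversion lemma) converge uniformly on a neighborhood of the origin, and this is where the proof must be checked most carefully.
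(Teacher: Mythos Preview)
There is a genuine gap in your argument when $\Gamma = \R^{n-1}$. You assert that Theorem \ref{poisson_aniso_map} gives $\pf : \H^{s+3/2}(\Sigma) \to H^{s+2}(\Omega)$, but it does not: the Poisson extension of an element of the anisotropic space $\H^{s+3/2}(\R^{n-1})$ lands only in $\P^{s+2}(\Omega) = H^{s+2}(\Omega) + \H^{s+2}(\Sigma)$, not in $H^{s+2}(\Omega)$ itself. Concretely, the low-frequency part $\eta_l$ of $\eta$ need not be square-integrable on $\R^{n-1}$ (only $C^k_0$), so $\pf\eta$ need not be in $L^2(\Omega)$ at all. Consequently $\jf - 1 = b^{-1}\pf\eta + \tilde b\,\p_n\pf\eta$ is not in $H^{s+1}(\Omega)$ in general, and your Neumann series argument in the Banach algebra $H^{s+1}(\Omega)$ breaks down: you cannot control $\norm{(\jf-1)^m}_{H^{s+1}}$ because the base element is not in that space.

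The paper repairs this by working in $\P^{s+1}(\Omega)$ instead. From Theorem \ref{Ps_properties} one has $\nab\pf\eta \in H^{s+1}(\Omega)$, but $\pf\eta$ itself is only in $\P^{s+2}(\Omega) \hookrightarrow \P^{s+1}(\Omega)$, so $\jf - 1 \in \P^{s+1}(\Omega)$. The substitute for the algebra property is Theorem \ref{Ps_product_supercrit}: elements of $\P^{s+1}(\Omega)$ act as bounded multipliers on $H^{s+1}(\Omega)$, with operator norm controlled by the $\P^{s+1}$ norm. One then runs the power series $\P^{s+1}(\Omega) \ni z \mapsto \sum_{m\ge 0}(-1)^m z^m \in \L(H^{s+1}(\Omega))$, which converges and is analytic on a small ball in $\P^{s+1}(\Omega)$; composing with the bounded linear map $\eta \mapsto b^{-1}\pf\eta + \tilde b\,\p_n\pf\eta \in \P^{s+1}(\Omega)$ yields the smoothness of $\eta \mapsto \kf$. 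Your argument is correct as written only in the periodic case $\Gamma = \T^{n-1}$, where $\P^{s+2}(\Omega) = H^{s+2}(\Omega)$ by definition.
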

\begin{proof}
We will only provide the proof for the case $\Gamma = \R^{n-1}$, as the case $\Gamma = \T^{n-1}$ is similar but simpler.  Since $s+1 > n/2$, the existence of a $\delta_1 >0$ such that $\norm{\eta}_{\H^{s+3/2}} < \delta_1$ implies the
bound \eqref{AK_smooth_01} follows readily from the results in Theorems \ref{Ps_properties} and \ref{poisson_aniso_map}, which show that $\pf \eta \in \P^{s+2}(\Omega) \hookrightarrow C^1_b(\Omega)$.  With this bound in hand, we may appeal to \eqref{JK_def} to write 
\begin{equation}\label{AK_smooth_1}
 \kf = \jf^{-1} =  \left(1 + b^{-1} \pf \eta + \tilde{b}\p_n \pf \eta \right)^{-1} = \sum_{m=0}^\infty (-1)^m (b^{-1} \pf \eta + \tilde{b}\p_n \pf \eta)^m.
\end{equation}
In turn, \eqref{M_def} allows us to write 
\begin{equation}\label{AK_smooth_2}
 \mathcal{B} :=  \M -I = (-\kf \tilde{b} \nab' \pf \eta, \kf-1) \otimes e_n,
\end{equation}
and we conclude that $\kf$ and $\B$, and hence $\jf= \kf^{-1}$, $\M =I + \mathcal{B}$, and $\A = \jf(I+\mathcal{B})^T (I+ \mathcal{B})$ are well-defined when $\norm{\eta}_{\H^{s+3/2}} < \delta_1$.  

Next we use Theorem \ref{Ps_product_supercrit} (again noting that $s+1> n/2$) to see that the power series 
\begin{equation}
 \P^{s+1}(\Omega) \ni z \mapsto \sum_{m=0}^\infty (-1)^m z^m \in \L(H^{s+1}(\Omega))
\end{equation}
converges and defines an analytic function for $\norm{z}_{\P^{s+1}} < \delta_2$, for some $\delta_2 >0$.  Again employing Theorems  \ref{Ps_properties} and \ref{poisson_aniso_map}, we may choose $\delta_3 >0$ such that $\norm{\eta}_{\H^{s+3/2}} < \delta_3$ implies that $\norm{b^{-1} \pf \eta + \tilde{b}\p_n \pf \eta}_{\P^{s+1}} < \delta_2$. 

Set $\delta_0 = \min\{\delta_1,\delta_2,\delta_3\}$.  Then for $\norm{\eta}_{\H^{s+3/2}} < \delta_0$ we have that $\kf$, $\jf$, $\M$, and $\A$ are well-defined pointwise, and the formulas \eqref{AK_smooth_1} and \eqref{AK_smooth_2} then show that the maps given in \eqref{AK_smooth_02} and \eqref{AK_smooth_03} are smooth.

Finally, suppose $\norm{\eta}_{\H^{s+3/2}} < \delta_0$.  Then according to Theorems \ref{poisson_aniso_map} and \ref{Ps_properties}, the map $\ff_\eta$ is $C^{1 + \lfloor s-n/2+1 \rfloor }$.  Moreover, the bound \eqref{AK_smooth_01} implies that for each $x' \in \R^{n-1}$, the map $(-b,0) \ni x_n \mapsto e_n \cdot \ff_\eta(x',x_n) \in (-b, \eta(x'))$ is increasing since its derivative is greater than $1/2$ everywhere.  From this and the fact that $\ff_\eta(x)' = x'$ we conclude that $\ff_\eta$ is a bijection from $\Omega$ to $\Omega_\eta$. On the other hand, the bound \eqref{AK_smooth_01} also shows that $\det \nab \ff_\eta(x) \ge 1/2$ for $x \in \Omega$, so $\ff_\eta$ is a $C^{1 + \lfloor s-n/2 +1\rfloor}$ diffeomorphism by virtue of the inverse function theorem.
\end{proof}

We next introduce some useful notation.

\begin{dfn}
Let $0 \le s \in \R$ and $V$ be a finite dimensional real inner-product space.  We define the  bounded linear map $L_\Omega : H^s(\Gamma;V) \to H^s(\Omega;V)$ via $L_\Omega f(x) = f(x')$.
\end{dfn}

The next theorem verifies that the nonlinear maps associated to the problem \ref{muskat_flattened} are well-defined and $C^1$, which is essential for our subsequent use of the implicit function theorem.

\begin{thm}\label{nlin_well_def}
Let $n/2-1 < s \in \N$ and for $\delta >0$ define the set 
\begin{equation}
 U^s_\delta = \{(u,p,\eta) \in {_n}H^{s+1}(\Omega;\R^n) \times H^{s+2}(\Omega) \times \H^{s+3/2}(\Sigma) \st \norm{\eta}_{\H^{s+3/2}} < \delta \}.
\end{equation}
There exists a constant $\delta >0$ such that if $\gamma \in \R$, $\varphi_0 \in H^{s+3/2}(\Sigma)$, $\varphi_1 \in H^{s+3}(\Gamma \times \R)$, $\mathfrak{f}_0 \in H^{s+1}(\Omega;\R^n)$, $\mathfrak{f}_1 \in H^{s+2}(\Gamma \times \R;\R^n)$, and $(u,p,\eta) \in U^s_\delta$, and we define $F: \Omega \to \R^n$, $G: \Omega \to \R$, and $H,K : \Sigma \to \R$ via
\begin{equation}
\begin{aligned}
F &= u + \naba p + \naba \pf \eta  - \jf \M^T \left[  L_\Omega \mathfrak{f}_0 + \mathfrak{f}_1 \circ \ff_\eta \right],  &&&
G &=  \diverge{u}, \\
H &= u_n  + \gamma \p_1 \eta, &&&
K &= p -\varphi_0 - \varphi_1 \circ \ff_\eta \vert_{\Sigma}, 
\end{aligned}
\end{equation}
where $\ff_\eta$, $\jf$, $\M$, and $\A$ are determined by $\eta$ via \eqref{flattening_def}, \eqref{JK_def}, \eqref{M_def}, and \eqref{A_def}, then $(F,G,H,K) \in V^s$, where $V^s$ is as in Definition \ref{Vs_def}.  Moreover, the map 
\begin{equation}
\Psi:  \R \times H^{s+3/2}(\Sigma) \times H^{s+3}(\Gamma \times \R) \times H^{s+1}(\Omega;\R^n) \times H^{s+2}(\Gamma \times \R;\R^n) \times U^s_\delta  \to V^s
\end{equation}
defined by $\Psi(\gamma, \varphi_0, \varphi_1, \mathfrak{f}_0, \mathfrak{f}_1, u,p,\eta) = (F,G,H,K)$ is $C^1$.
\end{thm}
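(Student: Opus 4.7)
The plan is to decompose $\Psi$ into constituent pieces, verify that each takes values in the target Sobolev space, check the single nontrivial compatibility condition in the definition of $V^s$, and finally establish $C^1$ smoothness. First I would confirm that the image of $\Psi$ lands in the ambient product Sobolev space, then that it lies in the subspace $V^s$, and finally that the resulting map is $C^1$ jointly in all its arguments.

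\textbf{Range of the components.} The purely linear pieces of $(F,G,H,K)$ were already handled by Lemma~\ref{T3_range}, so the new work concerns the terms built from $\jf, \M, \A$ and the compositions with $\ff_\eta$. Rewriting
\begin{equation}
F = (u + \nab p + \nab \pf\eta) + (\A - I)\nab(p + \pf\eta) - \jf\M^T\left[L_\Omega \mathfrak{f}_0 + \mathfrak{f}_1 \circ \ff_\eta\right],
\end{equation}
and similarly isolating $\varphi_1\circ\ff_\eta\vert_\Sigma$ inside $K$, Proposition~\ref{AK_smooth} provides that $\A - I$ and $\jf\M^T$ act as multipliers on $H^{s+1}(\Omega;\R^n)$ with smooth dependence on $\eta$ in $B_{\H^{s+3/2}}(0,\delta_0)$. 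Since Proposition~\ref{AK_smooth}(2) also gives that $\ff_\eta$ is a $C^{1+\lfloor s-n/2+1\rfloor}$ diffeomorphism, standard Sobolev composition estimates (applicable because $s \in \N$ and $s+1 > n/2$ render $H^{s+1}(\Omega)$ an algebra) place $\mathfrak{f}_1 \circ \ff_\eta \in H^{s+1}(\Omega;\R^n)$ and $\varphi_1 \circ \ff_\eta \in H^{s+2}(\Omega)$, with the latter's trace on $\Sigma$ lying in $H^{s+3/2}(\Sigma)$. Assembling gives $F \in H^{s+1}(\Omega;\R^n)$, $G \in H^s(\Omega)$, $H \in H^{s+1/2}(\Sigma)$, and $K \in H^{s+3/2}(\Sigma)$.

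\textbf{Compatibility condition.} By Remark~\ref{Vs_remark}, it suffices to show that $\int_{-b}^0 G(\cdot,x_n)\, dx_n - H \in \dot{H}^{-1}(\Sigma)$ when $\Gamma = \R^{n-1}$, or that the zero Fourier mode vanishes when $\Gamma = \T^{n-1}$. Since $G = \diverge u$ and $u_n\vert_{\Sigma_{-b}} = 0$, the fundamental theorem of calculus in $x_n$ together with Fubini yields
\begin{equation}
\int_{-b}^0 G(\cdot,x_n)\, dx_n = \diverge'\int_{-b}^0 u'(\cdot,x_n)\, dx_n + u_n\vert_\Sigma.
\end{equation}
Substituting $H = u_n\vert_\Sigma + \gamma\p_1\eta$ then gives
\begin{equation}
\int_{-b}^0 G(\cdot,x_n)\, dx_n - H = \diverge'\int_{-b}^0 u'(\cdot,x_n)\, dx_n - \gamma \p_1\eta,
\end{equation}
which is a horizontal derivative and thus has vanishing zero mode in the periodic case. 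In the Euclidean case, Minkowski's inequality places $\int_{-b}^0 u'\, dx_n$ in $L^2(\Sigma;\R^{n-1})$, so its horizontal divergence lies in $\dot{H}^{-1}(\Sigma)$; and the anisotropic low-frequency weight $(\xi_1^2+\abs{\xi}^4)/\abs{\xi}^2$ in $\H^{s+3/2}$ (see Definition~\ref{specialized_dfn} and Theorem~\ref{specialized_properties}) directly bounds $\snorm{\gamma\p_1\eta}_{\dot{H}^{-1}}^2 = \gamma^2\int \abs{\xi_1}^2 \abs{\xi}^{-2}\abs{\hat{\eta}(\xi)}^2\, d\xi \ls \gamma^2\norm{\eta}_{\H^{s+3/2}}^2$, so $\gamma\p_1\eta \in \dot{H}^{-1}(\Sigma)$ as required.

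\textbf{$C^1$ regularity.} Joint linearity in $(\gamma, \varphi_0, u, p, \mathfrak{f}_0)$ handles these variables, and dependence on $\eta$ through $\jf, \M, \A$ and $\pf$ is smooth by Proposition~\ref{AK_smooth} and Theorem~\ref{poisson_aniso_map}. The main obstacle is therefore $C^1$ smoothness of the composition pair $(\varphi_1,\eta) \mapsto \varphi_1\circ\ff_\eta$ and $(\mathfrak{f}_1,\eta)\mapsto \mathfrak{f}_1\circ\ff_\eta$. Linearity in the first slot reduces the issue to $C^1$ dependence in $\eta$, and the candidate Fr\'echet derivative in direction $\delta\eta$ is
\begin{equation}
\left[(\nab\varphi_1)\circ\ff_\eta\right]\cdot (1+x_n/b)\, \pf\delta\eta\, e_n,
\end{equation}
with the analogous formula for $\mathfrak{f}_1$. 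To justify this one expands $\varphi_1$ to first order along the segment from $\ff_\eta$ to $\ff_{\eta+\delta\eta}$ and controls the remainder by the composition estimate from the range step together with the algebra embedding $\P^{s+2}(\Omega) \hookrightarrow L^\infty(\Omega)$ available since $s+1>n/2$; the extra derivative on $\varphi_1 \in H^{s+3}$ (resp.\ $\mathfrak{f}_1 \in H^{s+2}$) relative to what the range step required is consumed here. The integer hypothesis $s \in \N$ enters precisely here, ensuring $\ff_\eta$ carries enough classical derivatives to apply the pointwise chain rule and to make the composition operator continuous between the Sobolev spaces in play. Continuous dependence of the derivative on $\eta$ in the operator norm then follows by applying the same composition estimate to the difference $\ff_{\eta_1} - \ff_{\eta_2}$, and the product and chain rules combine these components into $C^1$ regularity of $\Psi$.
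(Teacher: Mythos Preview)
Your proof is correct and follows essentially the same strategy as the paper: decompose $\Psi$ into its constituent pieces, invoke Proposition~\ref{AK_smooth} for the multiplier terms $\jf,\M,\A$, verify the $\dot H^{-1}$ compatibility exactly via the divergence identity of Remark~\ref{Vs_remark}, and then address $C^1$ smoothness of the composition maps $(\mathfrak{f}_1,\eta)\mapsto \mathfrak{f}_1\circ\ff_\eta$ and $(\varphi_1,\eta)\mapsto \varphi_1\circ\ff_\eta\vert_\Sigma$.

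The one substantive difference is in how that last step is handled. The paper does not argue the composition $C^1$ directly: it first extends $\ff_\eta$ to a global diffeomorphism $\ef_\eta:\Gamma\times\R\to\Gamma\times\R$ (Definition~\ref{ef_diff_def}), invokes the Inci--Kappeler--Topalov result (Theorem~\ref{Lambda_diff}) to get $C^1$ of $(f,\eta)\mapsto f\circ\ef_\eta$ between Sobolev spaces on the full space, and then restricts to $\Omega$ via Corollary~\ref{Feta_comp}. Your direct Taylor-expansion sketch is correct in spirit, but two points deserve care: first, the sentence ``linearity in the first slot reduces the issue to $C^1$ dependence in $\eta$'' is a common shortcut that is not literally sufficient---joint $C^1$ requires joint continuity of the full differential $(g,\zeta)\mapsto g\circ\ff_\eta + (\p_n f\circ\ff_\eta)\tilde b\,\pf\zeta$ in $(f,\eta)$, which you would still need to verify; second, your ``standard Sobolev composition estimates'' need the diffeomorphism to be defined on a fixed domain, which is precisely what the paper's extension to $\Gamma\times\R$ arranges. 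Neither is a gap, but the paper's packaging via Corollary~\ref{Feta_comp} is the cleaner way to make these steps rigorous.
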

\begin{proof}
Again, we will only write the proof for the case $\Gamma = \R^{n-1}$, as the case $\Gamma = \T^{n-1}$ is similar but simpler.

Let $\delta >0$ be the smaller of $\delta_0>0$ from Proposition \ref{AK_smooth} and $\delta_\ast >0$ from Corollary \ref{Feta_comp}.  Proposition \ref{AK_smooth}, Theorems \ref{Ps_properties} and \ref{poisson_aniso_map}, and the first item of Corollary \ref{Feta_comp}, applied with $r = \sigma = s+1$, show that the map 
\begin{equation}
 (\mathfrak{f}_0, \mathfrak{f}_1, u,p,\eta) \mapsto  u + \naba p + \naba \pf \eta  - \jf \M^T \left[  L_\Omega \mathfrak{f}_0 + \mathfrak{f}_1 \circ \ff_\eta \right] \in H^{s+1}(\Omega;\R^n)
\end{equation}
is well-defined and $C^1$.  Next, we note that the maps 
\begin{equation}
 u \mapsto \diverge{u} \in H^s(\Omega) \text{ and } (\gamma,u,\eta) \mapsto u_n\vert_{\Sigma} + \gamma \partial_1 \eta \in H^{s+1/2}(\Sigma) 
\end{equation}
are smooth since the former is linear and the latter is a sum of the linear trace map and a  quadratic map.  Finally, Proposition \ref{AK_smooth} and the second item of Corollary \ref{Feta_comp}, applied with $r = \sigma +1 = s+2$, show that the map $(\varphi_0,\varphi_1,p,\eta) \mapsto  p -\varphi_0 - \varphi_1 \circ \ff_\eta \vert_{\Sigma}$ is $C^1$.  These combine to show initially that $\Psi$ is well-defined and $C^1$ as a map into $H^{s+1}(\Omega;\R^n) \times H^{s}(\Omega) \times H^{s+1/2}(\Sigma)  \times H^{s+3/2}(\Sigma)$.

On the other hand, the map 
\begin{equation}
(\gamma,u,\eta) \mapsto  \int_{-b}^0 (\diverge{u})(\cdot,x_n) dx_n - (u_n\vert_{\Sigma} + \gamma \partial_1 \eta)
= \diverge' \int_{-b}^0 u'(\cdot,x_n) dx_n - \gamma \partial_1 \eta
\in \dot{H}^{-1}(\Sigma) 
\end{equation}
is well-defined (thanks to Theorem \ref{specialized_properties}) and quadratic,  and thus smooth.  Combining the above observations with Remark \ref{Vs_remark}, we conclude that $\Psi$ is actually a $C^1$ mapping into the space $V^s$.
\end{proof}

\subsection{Invoking the implicit function theorem: proof of the main existence theorem} \label{sec_tw_exist}

Finally, we are ready to invoke the implicit function theorem to prove the existence of solutions to the system \eqref{muskat_flattened}.

\begin{proof}[Proof of Theorem \ref{well_posedness_flat:intro}]

For the sake of brevity, write $X^s = \R \times H^{s+3/2}(\Sigma) \times H^{s+3}(\Gamma \times \R) \times H^{s+1}(\Omega;\R^n) \times H^{s+2}(\Gamma \times \R;\R^n)$ and $W^s ={_n}H^{s+1}(\Omega;\R^n) \times H^{s+2}(\Omega) \times \H^{s+3/2}(\Sigma)$ and note that $U^s_\delta \subseteq W^s$ is an open subset.  Let  $\Psi: X^s  \times U^s_\delta  \to V^s$ be the $C^1$ map given in Theorem \ref{nlin_well_def} and note that  a given tuple $(\gamma, \varphi_0, \varphi_1, \mathfrak{f}_0, \mathfrak{f}_1, u,p,\eta) \in  X^s \times U^s_\delta$
satisfies \eqref{well_posedness_flat_02:intro} if and only if $\Psi (\gamma, \varphi_0, \varphi_1, \mathfrak{f}_0, \mathfrak{f}_1, u,p,\eta) = (0,0,0,0) \in V^s$.

Given the product structure $\Psi : X^s \times U^s_\delta \to V^s$, we can construct the partial derivatives of $\Psi$ with respect to each factor:
\begin{equation}
 D_1 \Psi : X^s \times U^s_\delta \to \L(X^s; V^s) \text{ and } D_2 \Psi: X^s \times U^s_\delta \to \L(W^s; V^s).
\end{equation}
It is then a simple matter to check that for $\gamma \in \mathfrak{C}$
\begin{equation}
 \Psi(\gamma,0,0,0,0,0,0,0) = (0,0,0,0) 
 \text{ and that }
 D_2 \Psi(\gamma,0,0,0,0,0,0,0) = T_3,
\end{equation}
where $T_3 : W^s \to V^s$ is the isomorphism constructed in Theorem \ref{T3_iso}.  For any $\gamma_\ast \in  \mathfrak{C}$ we may then employ the implicit function theorem to find open sets $\mathcal{D}^s(\gamma_\ast) \subseteq X^s$ and $\mathcal{S}^s(\gamma_\ast) \subseteq U^s_\delta$ and a $C^1$ and Lipschitz function $\Xi_{\gamma_\ast} : \mathcal{D}^s(\gamma_\ast) \to \mathcal{S}^s(\gamma_\ast)$ such that $(\gamma_\ast,0,0,0,0) \in \mathcal{D}^s(\gamma_\ast)$, $(0,0,0) \in \mathcal{S}^s(\gamma_\ast)$, and 
\begin{equation}
\Psi(\gamma, \varphi_0,\varphi_1, \mathfrak{f}_0, \mathfrak{f}_1, \Xi_{\gamma_\ast}(\gamma, \varphi_0,\varphi_1, \mathfrak{f}_0, \mathfrak{f}_1)) = (0,0,0,0)
\end{equation}
for all $(\gamma, \varphi_0,\varphi_1, \mathfrak{f}_0, \mathfrak{f}_1) \in \mathcal{D}^s(\gamma_\ast)$.  The implicit function theorem also guarantees that $\Xi_{\gamma_\ast}$ parameterizes the unique such solution within $\mathcal{S}^s(\gamma_\ast)$.

We now define the open sets 
\begin{equation}
 \mathcal{D}^s = \bigcup_{\gamma_\ast \in \mathfrak{C}}  \mathcal{D}^s(\gamma_\ast) \subseteq X^s \text{ and }  \mathcal{S}^s = \bigcup_{\gamma_\ast \in \mathfrak{C}}  \mathcal{S}^s(\gamma_\ast) \subseteq U^s_\delta.
\end{equation}
By construction, we have the inclusions listed in the first item.  Using the above analysis, we can define the map $\Xi : \mathcal{D}^s \to \mathcal{S}^s$ via $\Xi(\gamma, \varphi_0,\varphi_1, \mathfrak{f}_0, \mathfrak{f}_1) = \Xi_{\gamma_\ast}(\gamma, \varphi_0,\varphi_1, \mathfrak{f}_0, \mathfrak{f}_1)$ whenever $(\gamma, \varphi_0,\varphi_1, \mathfrak{f}_0, \mathfrak{f}_1) \in \mathcal{D}^s(\gamma_\ast)$.  This is well-defined, $C^1$, and locally Lipschitz by the above consequences of the implicit function theorem.  The second and third items then follow by setting $(u,p,\eta) = \Xi(\gamma, \varphi_0,\varphi_1, \mathfrak{f}_0, \mathfrak{f}_1)$.

\end{proof}

\section{Analysis of the Dirichlet-Neumann operator}\label{Section:DN}
This section is devoted to the analysis of the Dirichlet-Neumann operator $G(\eta)$ when $\eta$ is small in $\cH^s$. Since we will  primarily work with the horizontal coordinates, it is more convenient  to denote a point in $\Omega_\eta$ by $(x, y)$, where $x\in \R^d$, $d=n-1\ge 1$, and $y\in \R$. Then, we recall that 
\begin{equation}
[G(\eta)f](x)=N(x)\cdot(\nab_{x, y}\psi)(x, \eta(x)),\quad N(x)=(-\nab f(x), 1),
\end{equation}
where $\psi$ solves the problem 
 \begin{equation}
\begin{cases}
\Delta_{x, y} \psi=0 &\text{in } \Omega_\eta,\\
\psi=f  &\text{on } \Sigma_\eta,\\
\p_y\psi=0 &\text{on }  \Sigma_{-b}.
\end{cases}
\end{equation}
 
We straighten the domain $\Omega_\eta=\{(x, y)\in \R^d\times \R: -b<y<\eta(x)\}$ using the mapping 
\begin{equation}
\mathfrak{F}_\eta:\quad (x, z)\ni \Omega_\eta=M^d\times (-b, 0)\mapsto (x, \varrho(x, z))\in \Omega_\eta,\quad \varrho(x, z)=\frac{z+b}{b}e^{z|D|}\eta(x)+z.
\end{equation}
Note that $\mathfrak{F}_\eta$ is the mapping \eqref{flattening_def} written our new notation. Since 
\begin{equation}
\p_z\varrho(x, z)=\frac{1}{b}e^{z|D|}\eta(x)+\frac{z+b}{b}e^{z|D|}|D|\eta(x)+1, 
\end{equation}
if 
 \begin{equation}
\| e^{z|D|}\eta\|_{L^\infty}+ b\| e^{z|D|}|D|\eta\|_{L^\infty}<b
 \end{equation}
 then $\mathfrak{F}_\eta$ is a Lipschitz diffeomorphism. A direct calculation shows that if $g:\Omega_\eta\to \R$ then $\wt g(x, z):=(g\circ \mathfrak{F}_\eta)(x, z)=g(x, \varrho(x, z))$ satisfies 
 \begin{equation}
 \di_{x, z}(\mathcal{A}\nab_{x, z}\wt g)(x, z)=\p_z\varrho(\Delta_{x, y}g)(x, \varrho(x)),
 \end{equation}
 where
 \begin{equation}
\mathcal{A}= \begin{bmatrix} 
 \p_z\varrho I_d& -\nab_x\varrho \\ -(\nab_x\varrho)^T & \frac{1+|\nab_x\varrho|^2}{\p_z\varrho}
 \end{bmatrix},
 \end{equation}
 $I_d$ being the $d\times d$ identity matrix.   $\mathcal{A}$ is the matrix  \eqref{A_def} written in our new notation. Since $\psi$ is harmonic in $\Omega_\eta$, $v=\psi\circ \mathfrak{F}_\eta$ satisfies  $ \di_{x, z}(\mathcal{A}\nab_{x, z}v)=0$ in $\Omega_\eta$. We write $\mathcal{A}$ as a perturbation of the identity  matrix
  \begin{equation}
 \mathcal{A}=I_{d+1}+\begin{bmatrix}\frac{1}{b}e^{z|D|}\eta(x)+\frac{z+b}{b}e^{z|D|}|D|\eta(x)& -\frac{z+b}{b}e^{z|D|}\nab\eta \\  -\frac{z+b}{b}e^{z|D|}\nab\eta^T & \frac{(\frac{z+b}{b})^2|e^{z|D|}\nab\eta|^2-\frac{1}{b}e^{z|D|}\eta(x)-\frac{z+b}{b}e^{z|D|}|D|\eta(x)}{\frac{1}{b}e^{z|D|}\eta(x)+\frac{z+b}{b}e^{z|D|}|D|\eta(x)+1},\end{bmatrix}
  \end{equation}
Consequently, $v$ satisfies 
  \begin{equation}\label{Deltav}
  \Delta_{x, z}v=\p_zQ_a[v]+\di_x Q_b[v],
  \end{equation}
  where
\begin{equation}
\begin{aligned}
&Q_a[v]=\frac{z+b}{b}e^{z|D|}\nab\eta\cdot \nab_x v-\frac{(\frac{z+b}{b})^2|e^{z|D|}\nab\eta|^2-\frac{1}{b}e^{z|D|}\eta(x)-\frac{z+b}{b}e^{z|D|}|D|\eta(x)}{\frac{1}{b}e^{z|D|}\eta(x)+\frac{z+b}{b}e^{z|D|}|D|\eta(x)+1}\p_zv,\\
&Q_b[v]=-\left(\frac{1}{b}e^{z|D|}\eta(x)+\frac{z+b}{b}e^{z|D|}|D|\eta(x)\right)\nab_xv+\frac{z+b}{b}e^{z|D|}\nab\eta\p_zv.
\end{aligned}
\end{equation}
Setting 
\begin{equation}
\D(z)=|D|\tanh((z+b)|D|),
\end{equation}
we decompose $\Delta_{x, z}=(\p_z+\D(z))(\p_z-\D(z))$. Then, \eqref{Deltav} is equivalent to the following system of forward and backward parabolic equations
\begin{align}\label{eq:w}
&(\p_z+\D(z))w=\di_x Q_b[v]-\D(z)Q_a[v],\\ \label{eq:v}
&(\p_z-\D(z))v=w+Q_a[v].
\end{align}
Since $\D(-b)=0$ and $\p_zv(x, -b)=\p_z\varrho(x, -b)\p_y\phi(x, -b)=0$, we have $Q_a[v](x, -b)$ and $w(x, -b)=0$.  

By the chain rule and \eqref{eq:v}, the Dirichlet-Neumann operator can be written in terms of $f$ and $w$ as 
\begin{equation}\label{DN:v}
\begin{aligned}
G(\eta)f&=\Big(-\nab_x\varrho\cdot \nab_xv+\frac{1+|\nab_x\varrho|^2}{\p_z\varrho}\p_zv\Big)\vert_{z=0}
=\p_zv\vert_{z=0}+\Big(-\nab_x\varrho\cdot \nab_xv+\big(\frac{1+|\nab_x\varrho|^2}{\p_z\varrho}-1\big)\p_zv\Big)\vert_{z=0}\\
&=\Big(\D(z)v+w+Q_a[v]\Big)\vert_{z=0}-Q_a[v]\vert_{z=0}
=m(D)f+w\vert_{z=0},
\end{aligned}
\end{equation}
where we have denoted
\begin{equation}
m(D)=|D|\tanh(b|D|).
\end{equation}
Using the identities 
\begin{equation}\begin{aligned}
&(\p_z+\D(z))w=[\cosh((z+b)|D|)]^{-1}\p_z\Big\{\cosh((z+b)|D|) w\Big\}\\
&(\p_z-\D(z))v=\cosh((z+b)|D|)\p_z\Big\{[\cosh((z+b)|D|)]^{-1}v\Big\},
\end{aligned}\end{equation}
we can integrate \eqref{eq:w} and \eqref{eq:v} to obtain 
\begin{align}\label{form:w}
&w(z)=\int_{-b}^z\frac{\cosh((z'+b)|D|)}{\cosh((z+b)|D|)}\left\{\di_x Q_b[v](z')-\D(z)Q_a[v](z')\right\}dz',\\\label{form:v}
&v(z)=\frac{\cosh((z+b)|D|)}{\cosh(b|D|)}f-\int_z^0 \frac{\cosh((z+b)|D|)}{\cosh((z'+b)|D|)}\left\{w(z')+Q_a[v](z')\right\}dz'
\end{align}
for $z\in [-b, 0]$. It then follows from \eqref{DN:v} that 
\begin{align}
&G(\eta)f=m(D)f+R(\eta)f,\\\label{def:RDN}
&R(\eta)f=\int_{-b}^0\frac{\cosh((z'+b)|D|)}{\cosh(b|D|)}\left\{\di_x Q_b[v](z')-|D|\tanh(b|D|)Q_a[v](z')\right\}dz'.
\end{align}
Clearly $R(\eta)f$ is a derivative. 

We denote $I=[-b, 0]$ and 
\begin{equation}
U^r=\wL^\infty(I; H^r(M^d))\cap \wL^1(I; H^{r+1}(M^d)),
\end{equation}
where the definition of the Chemin-Lerner spaces $\wL^p H^s$ is recalled in Definition \ref{defi:Chermin-Lerner}. In Appendix \ref{Appendix:LlittlewoodPaley}, we establish estimates in Chemin-Lerner norms for the operators appearing in \eqref{form:w} and \eqref{form:v}. The next proposition uncovers the low-frequency structure and provides the boundedness of the remainder $R(\eta)$. 
\begin{prop}\label{prop:estvw}
 Let $\sigma\ge \sigma_0> 1+\frac{d}{2}$. There exist a small positive constant $c_1=c_1(\sigma, \sigma_0, b, d)$ such that if $\|  \eta\|_{\cH^{\sigma_0}}<c_1$  then the following assertions hold.\\
1) We have
\begin{equation}\label{est:nav} \| \nab_{x,z}v\|_{U^{\sigma-1}}\le   C\| \nab_xf\|_{H^{\sigma-1}}+C\| \eta\|_{\cH^{\sigma}}\| \nab_xf\|_{H^{\sigma_0-1}},
\end{equation}
where and $C=C(\sigma, \sigma_0, b, d)$. \\
2) For any continuous symbol $\ell:\R\to \R$ satisfying 
\begin{equation}\label{symbol:ell}
\ell(\xi)\asymp 
\begin{cases} 
|\xi| &\text{if }  |\xi|\asymp \infty,\\
|\xi|^2 &\text{if }  |\xi|\asymp 0\end{cases},
\end{equation}
we have
\begin{equation}\label{est:RDN}
\| \ell^{-\hal}(D) R(\eta)f\|_{H^{\sigma-\hal}(\R^d)}\le C\| \eta\|_{\cH^{\sigma_0}(\R^d)}\| \nab_xf\|_{H^{\sigma-1}(\R^d)}+ C\| \eta\|_{\cH^\sigma(\R^d)}\| \nab_xf\|_{H^{\sigma_0-1}(\R^d)},
\end{equation}
where $C=C(\ell, \sigma, \sigma_0, b, d)$. Moreover, for $\eta,~f: \T^d\to \R$ we have $\wh{R(\eta)f}(0)=0$ and
\begin{equation}\label{est:RDN1}
\| |D|^{-\hal}R(\eta)f\|_{H^{\sigma-\hal}(\T^d)}\le C\| \eta\|_{H^{\sigma_0}(\T^d)}\| \nab_xf\|_{H^{\sigma-1}(\T^d)}+ C\| \eta\|_{H^\sigma(\T^d)}\| \nab_xf\|_{H^{\sigma_0-1}(\T^d)},
\end{equation}
where $C=C(\sigma, \sigma_0, b, d)$. 
\end{prop}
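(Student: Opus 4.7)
The plan is to construct $v$ and $w$ on the flattened strip $\R^d\times[-b,0]$ by a Banach fixed-point argument applied to the coupled integral system \eqref{form:w}-\eqref{form:v}, and then to read off the bounds on the remainder $R(\eta)f$ from the representation \eqref{def:RDN}. The small parameter driving the contraction is $\|\eta\|_{\cH^{\sigma_0}}$: it appears in the coefficients of the bilinear forms $Q_a[v]$ and $Q_b[v]$ through the harmonic extension $e^{z|D|}\eta$. The main tools are the Chemin-Lerner estimates for the $\cosh$-kernels appearing in \eqref{form:w}-\eqref{form:v}, established in Appendix \ref{Appendix:LlittlewoodPaley}, combined with tame paraproduct estimates.

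For \eqref{est:nav}, I would split $v=v_0+\widetilde v$, where $v_0=\frac{\cosh((z+b)|D|)}{\cosh(b|D|)}f$ is the harmonic extension on the flat strip, so that $\|\nab_{x,z}v_0\|_{U^{\sigma-1}}\lesssim\|\nab_xf\|_{H^{\sigma-1}}$ by direct inspection of the explicit Fourier multipliers. The correction $\widetilde v$ solves an integral equation driven by $Q_a[v]$, $Q_b[v]$ and $w$; combining the kernel estimates with a tame bilinear bound of the schematic form $\|\eta\,\nab_{x,z}v\|_{H^{\sigma-1}}\lesssim\|\eta\|_{\cH^{\sigma}}\|\nab_{x,z}v\|_{H^{\sigma_0-1}}+\|\eta\|_{\cH^{\sigma_0}}\|\nab_{x,z}v\|_{H^{\sigma-1}}$ yields a contraction in the $U^{\sigma_0-1}$-ball of small radius once $\|\eta\|_{\cH^{\sigma_0}}<c_1$. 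A separate linear estimate then propagates the control to regularity $\sigma-1$ and produces \eqref{est:nav}.

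The main obstacle is the weighted estimate \eqref{est:RDN} for $R(\eta)f$. From \eqref{def:RDN} one has schematically $R(\eta)f=\di_x A-m(D)B$, with $A$ and $B$ controlled in $H^{\sigma-1}$ by the previous step. At high frequencies the weight $\ell^{-1/2}(D)\asymp|D|^{-1/2}$ gains exactly the half derivative needed for $H^{\sigma-1/2}$-control, and standard tame product estimates handle this regime. At low frequencies $\ell^{-1/2}(D)\asymp|D|^{-1}$, and one must exploit both the outer divergence/$|D|$-structure and the fact that $Q_a[v]$ and $Q_b[v]$ each carry an extra factor of $|D|\eta$, $\nab\eta$ or $b^{-1}\eta$. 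These factors are $L^2$-controlled uniformly in $z$ thanks to the anisotropic definition of $\cH^{\sigma}$, which provides $L^2$ bounds for $|D|\eta$ near zero frequency even though it only controls $\eta$ itself in a directionally weighted $\dot H^{-1}$ sense. Writing these low-frequency cancellations with paraproduct decompositions, and splitting each contribution according to whether derivatives fall on $\eta$ or on $v$, delivers \eqref{est:RDN}.

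For the periodic case on $\T^d$, the anisotropic space reduces to the usual $H^\sigma(\T^d)$ and the low-frequency subtleties disappear. The identity $\wh{R(\eta)f}(0)=0$ follows directly from \eqref{def:RDN}, since $\di_xQ_b[v]$ integrates to zero on $\T^d$ and $m(D)$ annihilates constants; equivalently, it follows from $\int_{\T^d}G(\eta)f=0$ combined with $\int_{\T^d}m(D)f=0$. On the non-zero modes $|D|^{-1/2}$ is a bounded multiplier with the desired half-derivative gain, and repeating the high-frequency half of the previous argument yields \eqref{est:RDN1}.
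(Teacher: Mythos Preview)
Your approach is essentially correct and aligns with the paper's, with one simplification you are missing in part 2. For part 1, the paper also closes the estimate on $\nabla_{x,z}v$ via the integral representations \eqref{form:w}--\eqref{form:v} and the kernel bounds of Proposition \ref{prop:fourierop}, first at regularity $\sigma_0$ (absorbing by smallness of $\|\eta\|_{\cH^{\sigma_0}}$) and then at regularity $\sigma$; it phrases this as an a priori bound with absorption rather than a Banach contraction, but the content is the same.

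For part 2 you are overcomplicating the low-frequency analysis. Since $\ell(\xi)\asymp|\xi|^2$ near zero and $\asymp|\xi|$ at infinity, the composite multiplier $\ell^{-1/2}(\xi)|\xi|$ satisfies $\ell^{-1/2}(\xi)|\xi|\asymp 1$ for $|\xi|\asymp 0$ and $\asymp|\xi|^{1/2}$ for $|\xi|\asymp\infty$. Hence $\ell^{-1/2}(D)\di_x$ and $\ell^{-1/2}(D)m(D)$ (recalling $m(\xi)\asymp|\xi|^2$ near zero) are already bounded from $H^\sigma$ into $H^{\sigma-1/2}$ with no further cancellation required from the internal structure of $Q_a,Q_b$. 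The paper simply observes this and reduces \eqref{est:RDN} directly to controlling $\|Q_a[v]\|_{\wL^1(I;H^\sigma)}+\|Q_b[v]\|_{\wL^1(I;H^\sigma)}$, which then follows from the tame product estimates \eqref{ppestcH:2}, \eqref{nonl:cH} together with \eqref{est:nav}. The anisotropic structure of $\cH^\sigma$ enters only through those product estimates (which place $\eta$ in $\cH^{\sigma_0}$ or $\cH^\sigma$), not through any additional low-frequency gain in $R(\eta)f$ itself. So the ``extra factor of $|D|\eta$'' mechanism you describe is unnecessary, and the argument is shorter than you anticipate.
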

\begin{proof}
 Applying the estimates \eqref{estop1} and \eqref{estop2} to \eqref{form:v} and \eqref{form:w}, we obtain
\begin{equation}\label{estv:1}
\begin{aligned}
\|\nab_xv\|_{U^{\sigma-1}}&\le C\| \nab_xf\|_{H^{\sigma-1}}+C\| \nab_xw\|_{\wL^1(I; H^{\sigma-1})}+C\| \nab_xQ_a[v]\|_{\wL^1(I; H^{\sigma-1})}\\
&\le C\| \nab_xf\|_{H^{\sigma-1}}+C\| w\|_{\wL^1(I; H^{\sigma})}+C\| Q_a[v]\|_{\wL^1(I; H^{\sigma})}
\end{aligned}
\end{equation}
and 
\begin{equation}\label{estw:1}
\begin{aligned}
\| w\|_{U^{\sigma-1}}&\le C\| \di_x Q_b[v]\|_{\wL^1(I; H^{\sigma-1})}+ C\| \D(z)Q_a[v]\|_{\wL^1(I; H^{\sigma-1})}\\
&\le  C\|  Q_b[v]\|_{\wL^1(I; H^{\sigma})}+ C\|Q_a[v]\|_{\wL^1(I; H^{\sigma})},
\end{aligned}
\end{equation}
where $C=C(b)$.  It follows that 
\begin{equation}\label{estv:2}
\begin{aligned}
\|\nab_xv\|_{U^{\sigma-1}}&\le C\| \nab_xf\|_{H^{\sigma-1}}+C\|  Q_a[v]\|_{\wL^1(I; H^{\sigma})}+ C\|Q_b[v]\|_{\wL^1(I; H^{\sigma})},\quad C=C(b).
\end{aligned}
\end{equation}
On the other hand,  equation \eqref{eq:v} gives $\p_zv= \D(z)v+w+Q_a[v]$. Since $\D(z)=|D|\tanh((z+b)|D|)$ and $0\le \tanh((z+b)|\xi|)\le 1$ for $z\in [-b, 0]$, we obtain $\| \D(z)v\|_{U^{\sigma-1}}\le \| \nab_xv\|_{U^{\sigma-1}}$. Combining this with \eqref{estv:1} and \eqref{estw:1}, we deduce 
\begin{equation}\label{estv:3}
\|\p_zv\|_{U^{\sigma-1}}\le  C\| \nab_xf\|_{H^{\sigma-1}}+C\|  Q_a[v]\|_{U^{\sigma-1}}+ C\|Q_b[v]\|_{U^{\sigma-1}},\quad C=C(b).
\end{equation}
For $\sigma\ge \sigma_0>1+\frac{d}{2}$ and $\|  \eta\|_{\cH^{\sigma_0}}<c_1$ small enough, we can apply the product estimate \eqref{ppestcH:2} and the nonlinear estimate \eqref{nonl:cH} to obtain
\begin{equation}\label{est:Qab}
\| (Q_a[v], Q_b[v])\|_{U^{\sigma-1}}\le \cF(\| \eta\|_{\cH^{\sigma_0}})\left\{\| \eta\|_{\cH^{\sigma_0}}\| \nab_{x,z}v\|_{U^{\sigma-1}}+\| \eta\|_{\cH^{\sigma}}\| \nab_{x,z}v\|_{U^{\sigma_0-1}}\right\},
\end{equation}
where $\cF:\R^+\to\R^+$ is  nondecreasing and depends only on $(\sigma, \sigma_0, b, d)$.

A combination of \eqref{estv:2}, \eqref{estv:3} and \eqref{est:Qab} yields 
\begin{equation}\label{est:nav:4}
 \| \nab_{x,z}v\|_{U^{\sigma-1}}\le C\| \nab_xf\|_{H^{\sigma-1}}+ \cF(\| \eta\|_{\cH^\sigma_0})\left\{\| \eta\|_{\cH^{\sigma_0}}\| \nab_{x,z}v\|_{U^{\sigma-1}}+\| \eta\|_{\cH^{\sigma}}\| \nab_{x,z}v\|_{U^{\sigma_0-1}}\right\},
\end{equation}
where $C=C(\sigma, \sigma_0, b, d)$.  Applying \eqref{est:nav:4} with $\sigma=\sigma_0$ we deduce that there exists $c_0=c_0(\sigma_0, b, d)>0$ small enough such that if $\| \eta\|_{\cH^{\sigma_0}}<c_0$, then $\| \nab_{x,z}v\|_{U^{\sigma_0-1}}\le C(\sigma_0, b, d)\| \nab_xf\|_{H^{\sigma_0-1}}$. Inserting this into \eqref{est:nav:4} yields
\begin{equation}
\| \nab_{x,z}v\|_{U^{\sigma-1}}\le C\| \nab_xf\|_{H^{\sigma-1}}+ C\| \eta\|_{\cH^{\sigma_0}}\| \nab_{x,z}v\|_{U^{\sigma-1}}+C\| \eta\|_{\cH^{\sigma}}\| \nab_xf\|_{H^{\sigma_0-1}},\quad C=C(\sigma, \sigma_0, b, d).
\end{equation}
Therefore, for some $c_1=c_1(\sigma, \sigma_0, b, d)\le c_0$ small enough, we have
\begin{equation}\label{estv:6}
\| \nab_{x,z}v\|_{U^{\sigma-1}}\le C\| \nab_xf\|_{H^{\sigma-1}}+C\| \eta\|_{\cH^{\sigma}}\| \nab_xf\|_{H^{\sigma_0-1}}
\end{equation}
provided that $\| \eta\|_{\cH^{\sigma_0}}<c_1$. This concludes the proof of \eqref{est:nav}. 

We turn to prove  \eqref{est:RDN}.  Using the formula \eqref{def:RDN} and the  estimate \eqref{estop3} we obtain 
\begin{equation}
\| \ell^{-\hal}(D)R(\eta)f\|_{H^{\sigma-\hal}}\ls \| \ell^{-\hal}(D)\di_x Q_b[v]\|_{\wL^1(I;  H^{\sigma-\hal})}+ \| \ell^{-\hal}(D)|D|\tanh(b|D|)Q_a[v]\|_{\wL^1(I; H^{\sigma-\hal})}
\end{equation}
Noticing that 
\begin{equation}
\ell^{-\hal}(\xi)|\xi|\asymp 
\begin{cases} 
1 & \text{if }  |\xi|\asymp 0,\\
|\xi|^\hal &\text{if }  |\xi|\asymp \infty\end{cases},
\end{equation}
we deduce
\begin{equation}\label{est:RDN:10}
\begin{aligned}
\| \ell^{-\hal}(D)R(\eta)f\|_{H^{\sigma-\hal}} &\ls \|Q_b[v]\|_{\wL^1(I;  H^{\sigma})}+ \| \tanh(b|D|)Q_a[v]\|_{\wL^1(I; H^{\sigma})}\\
&\ls \|Q_b[v]\|_{\wL^1(I;  H^{\sigma})}+ \| Q_a[v]\|_{\wL^1(I; H^{\sigma})}. 
\end{aligned}
\end{equation}
Therefore, \eqref{est:RDN} follows from \eqref{est:RDN:10}, \eqref{est:Qab} and \eqref{estv:6}. Finally, \eqref{est:RDN1} can be proved analogously. 
\end{proof}
Next we establish the contraction estimate for $R(\eta)$. 
\begin{prop}\label{prop:DNd}
Let $\sigma\ge \sigma_0>1+\frac{d}{2}$ and consider $f\in H^\sigma$ and $\eta_j\in H^\sigma$, $j=1, 2$. Set $\eta_\delta=\eta_1-\eta_2$. There exists  a positive constant $c_2=c_2(\sigma, \sigma_0, b, d)\le c_1$  such that  if $\| \eta_j\|_{\cH^{\sigma_0}}<c_2$, $j=1, 2$,  then 
the following estimates hold.

1) For any symbol $\ell$ satisfying \eqref{symbol:ell}, we have
\begin{equation}\label{est:DNd}
\begin{aligned}
&\| \ell^{-\hal}(D)\left\{R(\eta_1)f-R(\eta_2)f\right\}\|_{H^{\sigma-\hal}(\R^d)}\\
&\quad\le C\| \eta_\delta\|_{\cH^{\sigma_0}(\R^d)}\Big(\| \nab_xf\|_{H^{\sigma-1}(\R^d)}+\| \eta_1\|_{\cH^{\sigma}(\R^d)}\| \nab_xf\|_{H^{\sigma_0-1}(\R^d)}\Big)+C\|\eta_\delta\|_{\cH^\sigma(\R^d)} \| \nab_x f\|_{H^{\sigma_0-1}(\R^d)},
\end{aligned}
\end{equation}
where $C=C(\ell, \sigma, \sigma_0, b, d)$.

2) We have 
\begin{equation}\label{est:DNd1}
\begin{aligned}
&\| |D|^{-\hal}\left\{R(\eta_1)f-R(\eta_2)f\right\}\|_{H^{\sigma-\hal}(\T^d)}\\
&\quad\le C\| \eta_\delta\|_{H^{\sigma_0}(\T^d)}\Big(\| \nab_xf\|_{H^{\sigma-1}(\T^d)}+\| \eta_1\|_{H^{\sigma}(\T^d)}\| \nab_xf\|_{H^{\sigma_0-1}(\T^d)}\Big)+C\|\eta_\delta\|_{H^\sigma(\T^d)} \| \nab_x f\|_{H^{\sigma_0-1}(\T^d)},
\end{aligned}
\end{equation}
where $C=C(\sigma, \sigma_0, b, d)$.
\end{prop}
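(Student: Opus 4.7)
The plan is to mirror the proof of Proposition \ref{prop:estvw} applied to differences. Let $v_j$, $w_j$ denote the straightened harmonic extension and the auxiliary field from \eqref{form:w}--\eqref{form:v} associated with $\eta_j$ and the same Dirichlet datum $f$, and set $v_\delta = v_1-v_2$, $w_\delta = w_1-w_2$. Since both $v_j$ have the same boundary value $f$, subtracting \eqref{form:v} and \eqref{form:w} eliminates the $f$-term and produces the system
\begin{equation}
\begin{aligned}
w_\delta(z) &= \int_{-b}^{z}\frac{\cosh((z'+b)|D|)}{\cosh((z+b)|D|)}\Bigl\{\di_x\bigl(Q_b^1[v_1]-Q_b^2[v_2]\bigr) - \D(z)\bigl(Q_a^1[v_1]-Q_a^2[v_2]\bigr)\Bigr\}(z')\,dz',\\
v_\delta(z) &= -\int_{z}^{0}\frac{\cosh((z+b)|D|)}{\cosh((z'+b)|D|)}\Bigl\{w_\delta(z') + Q_a^1[v_1](z')-Q_a^2[v_2](z')\Bigr\}dz',
\end{aligned}
\end{equation}
where $Q_a^j$, $Q_b^j$ denote the operators built from $\eta_j$. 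The key algebraic step is the splitting
\begin{equation}
Q_a^1[v_1]-Q_a^2[v_2] = \bigl(Q_a^1-Q_a^2\bigr)[v_1] + Q_a^2[v_\delta],
\end{equation}
and analogously for $Q_b$; the first piece is linear in coefficients depending on $\eta_\delta$, and the second is bounded by the Proposition~\ref{prop:estvw}-type nonlinear estimate applied to $v_\delta$.

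Next I would run the Chemin--Lerner estimates \eqref{estop1}, \eqref{estop2} of Appendix \ref{Appendix:LlittlewoodPaley} on the two integral identities, together with the product/nonlinear estimates \eqref{ppestcH:2}, \eqref{nonl:cH} to get
\begin{equation}
\|\nab_{x,z}v_\delta\|_{U^{\sigma-1}} \le C\bigl\|(Q_a^1-Q_a^2)[v_1], (Q_b^1-Q_b^2)[v_1]\bigr\|_{U^{\sigma-1}} + C\bigl\|Q_a^2[v_\delta], Q_b^2[v_\delta]\bigr\|_{U^{\sigma-1}}.
\end{equation}
For the first term on the right, the coefficients in $Q_a^1-Q_a^2$ and $Q_b^1-Q_b^2$ are smooth functions of $e^{z|D|}\eta_2$ acting on $e^{z|D|}\eta_\delta$ and $e^{z|D|}\nab\eta_\delta$, so the product/nonlinear estimates produce bounds of the form
\begin{equation}
\|\eta_\delta\|_{\cH^{\sigma_0}}\|\nab_{x,z}v_1\|_{U^{\sigma-1}} + \|\eta_\delta\|_{\cH^{\sigma}}\|\nab_{x,z}v_1\|_{U^{\sigma_0-1}},
\end{equation}
which after plugging in the bound \eqref{est:nav} for $\nab_{x,z}v_1$ become exactly the right-hand side of \eqref{est:DNd}, modulo constant factors of $\|\nab_xf\|_{H^{\sigma-1}}+\|\eta_1\|_{\cH^\sigma}\|\nab_xf\|_{H^{\sigma_0-1}}$. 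For the second term, the same estimate that was used to close \eqref{estv:6} yields
\begin{equation}
\bigl\|Q_a^2[v_\delta], Q_b^2[v_\delta]\bigr\|_{U^{\sigma-1}} \le \mathcal{F}\bigl(\|\eta_2\|_{\cH^{\sigma_0}}\bigr)\bigl(\|\eta_2\|_{\cH^{\sigma_0}}\|\nab_{x,z}v_\delta\|_{U^{\sigma-1}} + \|\eta_2\|_{\cH^\sigma}\|\nab_{x,z}v_\delta\|_{U^{\sigma_0-1}}\bigr).
\end{equation}
Provided $c_2 \le c_1$ is chosen small enough, the $\|\eta_2\|_{\cH^{\sigma_0}}\|\nab_{x,z}v_\delta\|_{U^{\sigma-1}}$ contribution is absorbed into the left side. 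A preliminary application at $\sigma=\sigma_0$ gives $\|\nab_{x,z}v_\delta\|_{U^{\sigma_0-1}} \le C\|\eta_\delta\|_{\cH^{\sigma_0}}\|\nab_x f\|_{H^{\sigma_0-1}}$, which is then substituted back to close the estimate for general $\sigma$.

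Finally, from the definition \eqref{def:RDN} one has
\begin{equation}
R(\eta_1)f - R(\eta_2)f = \int_{-b}^0 \frac{\cosh((z'+b)|D|)}{\cosh(b|D|)}\Bigl\{\di_x\bigl(Q_b^1[v_1]-Q_b^2[v_2]\bigr) - m(D)\bigl(Q_a^1[v_1]-Q_a^2[v_2]\bigr)\Bigr\}dz',
\end{equation}
so applying \eqref{estop3} together with the Mikhlin-type bound $\ell^{-1/2}(\xi)|\xi| \lesssim 1+|\xi|^{1/2}$ (as in the passage to \eqref{est:RDN:10}) gives
\begin{equation}
\|\ell^{-\hal}(D)\{R(\eta_1)f-R(\eta_2)f\}\|_{H^{\sigma-\hal}} \lesssim \bigl\|(Q_a^1-Q_a^2)[v_1],(Q_b^1-Q_b^2)[v_1]\bigr\|_{\wL^1(I;H^\sigma)} + \bigl\|Q_a^2[v_\delta],Q_b^2[v_\delta]\bigr\|_{\wL^1(I;H^\sigma)},
\end{equation}
and the two terms on the right are controlled by the estimates just obtained. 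This yields \eqref{est:DNd}; the toroidal case \eqref{est:DNd1} is proved identically with the $\cH^s$ norms replaced by $H^s$ and using that the zero Fourier mode of $R(\eta_j)f$ vanishes. The main obstacle I anticipate is the bookkeeping of the high-low split in the contraction estimate for the nonlinear coefficients, so that the $\|\eta_\delta\|_{\cH^\sigma}$ term pairs only with the low-regularity norm $\|\nab_x f\|_{H^{\sigma_0-1}}$ (and not with $\|\nab_x f\|_{H^{\sigma-1}}$), which is what forces us to invoke the product estimate in its tame form and to substitute the $\sigma=\sigma_0$ bound for $v_\delta$ before closing at general $\sigma$.
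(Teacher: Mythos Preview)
Your proposal is correct and follows essentially the same route as the paper: split $Q_a^1[v_1]-Q_a^2[v_2]=(Q_a^1-Q_a^2)[v_1]+Q_a^2[v_\delta]$ (the paper does this implicitly, writing its bounds with $\|\eta_1\|$ rather than $\|\eta_2\|$ on the second piece, which is immaterial), control the first piece via the tame product estimate and \eqref{est:nav}, control the second by the analogue of \eqref{est:Qab}, close the $v_\delta$ estimate first at $\sigma=\sigma_0$ and then at general $\sigma$ by absorption, and finally pass to $R(\eta_1)f-R(\eta_2)f$ via \eqref{estop3} and the symbol bound on $\ell^{-1/2}(\xi)|\xi|$. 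The bookkeeping concern you flag at the end is exactly the point handled by the tame form of \eqref{ppestcH:2} together with the $\sigma=\sigma_0$ bootstrap, and you have identified it correctly.
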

\begin{proof}
We shall only prove \eqref{est:DNd} since the proof of \eqref{est:DNd1} is similar. Consider $\eta_j,~f: \R^d\to \R$ such that $\| \eta_j\|_{\cH^{s_0}}<c_1$.  We recall from \eqref{def:RDN} that 
\begin{equation}
R(\eta_j)f_j(x)=\int_{-b}^0\frac{\cosh((z'+b)|D|)}{\cosh(b|D|)}\left\{\di_x Q_b[v](z')-|D|\tanh(b|D|)Q_a[v](z')\right\}dz',
\end{equation}
where $\| \nab_{x,z}v_j\|_{U^{\sigma-1}}\ls \| \nab_xf\|_{H^{\sigma-1}}$ by virtue of \eqref{est:nav}. 

We shall adopt the notation $g_\delta=g_1-g_2$. Arguing as in \eqref{est:RDN:10}, we obtain 
\begin{equation}
\| \ell^{-\hal}(D)\left\{R(\eta_1)f-R(\eta_2)f\right\}\|_{H^{\sigma-\hal}} 
\ls  \| \big((Q_b[v])_\delta, (Q_a[v])_\delta\big)\|_{\wL^1(I; H^\sigma)}:=A_\sigma.
\end{equation}
Using the  product estimate \eqref{ppestcH:2}, the nonlinear estimate \eqref{nonl:cH} and the bound
\begin{equation}
 \| \big(e^{z|D|}\eta_1, e^{z|D|}\eta_2, e^{z|D|}\nab\eta_1, e^{z|D|}\nab\eta_2\big)\|_{L^\infty(I; \cH^{\sigma_0-1})}\Big)\ls \|\eta_1\|_{_{\cH^{\sigma_0}}} +\|\eta_2\|_{\cH^{\sigma_0}}\ls 1,
\end{equation}
one can prove that 
\begin{multline}
A_\sigma  \ls  \| \big((e^{z|D|}\eta)_\delta, (e^{z|D|}\nab\eta)_\delta\big)\|_{L^\infty(I; \cH^{\sigma_0-1})}\| \nab_{x,z}v_1\|_{\wL^1(I; H^\sigma)}  \\
 +\| \big((e^{z|D|}\eta)_\delta, (e^{z|D|}\nab\eta)_\delta\big)\|_{\wL^1(I; H^\sigma_\sharp)}\| \nab_{x,z}v_1\|_{L^\infty(I; L^\infty)} 
+\| \big(e^{z|D|}\eta_1, e^{z|D|}\nab\eta_1\big)\|_{L^\infty(I; \cH^{\sigma_0-1})}\| \nab_{x,z}v_\delta\|_{\wL^1(I; H^\sigma)}\\
 + \| \big(e^{z|D|}\eta_1, e^{z|D|}\nab\eta_1\big)\|_{\wL^1(I; H^\sigma_\sharp)}\| \nab_{x,z}v_\delta\|_{L^\infty(I; L^\infty)}.
\end{multline}
Combining this with the  easy inequalities
\begin{equation}
\| \big(e^{z|D|}g, e^{z|D|}\nab g\big)\|_{\wL^1(I; H_\sharp^\sigma)}\ls \| g\|_{H^\sigma_\sharp}\ls \| g\|_{\cH^\sigma} \text{ and }
\| \big(e^{z|D|}g, e^{z|D|}\nab g\big)\|_{L^\infty(I; \cH^{\sigma_0-1})}\ls \| g\|_{\cH^{\sigma_0}},
\end{equation}
we obtain
\begin{equation}
\begin{aligned}
A_\sigma&\ls  \| \eta_\delta\|_{\cH^{\sigma_0}}\| \nab_{x,z}v_1\|_{\wL^1(I; H^\sigma)}+\|\eta_\delta\|_{\cH^\sigma}\| \nab_{x,z}v_1\|_{L^\infty(I; L^\infty)}\\
&\qquad
+\| \eta_1\|_{\cH^{\sigma_0}}\| \nab_{x,z}v_\delta\|_{\wL^1(I; H^\sigma)}+\| \eta_1\|_{\cH^\sigma}\| \nab_{x,z}v_\delta\|_{L^\infty(I; L^\infty)}.
\end{aligned}
\end{equation}
Assuming that $\| \eta_j\|_{\cH^{\sigma_0}}<c_1$, we can invoke the estimate   \eqref{est:nav} to have
\begin{equation}\begin{aligned}
&\| \nab_{x, z}v_j\|_{L^\infty(I; L^\infty)}\ls \| \nab_{x, z}v_j\|_{\wL^\infty(I; H^{\sigma_0})}\ls \| \nab_x f\|_{H^{\sigma_0-1}},\\
&\| \nab_{x, z}v_j\|_{\wL^1(I; H^\sigma)}\ls \| \nab_xf\|_{H^{\sigma-1}}+\| \eta_j\|_{\cH^{\sigma}}\| \nab_xf\|_{H^{\sigma_0-1}}.
\end{aligned}\end{equation}
Consequently,
\begin{equation}\label{est:Asigma}
\begin{aligned}
A_\sigma&\ls \| \eta_\delta\|_{\cH^{\sigma_0}}(\| \nab_xf\|_{H^{\sigma-1}}+\| \eta_1\|_{\cH^{\sigma}}\| \nab_xf\|_{H^{\sigma_0-1}})+\|\eta_\delta\|_{\cH^\sigma} \| \nab_x f\|_{H^{\sigma_0-1}}\\
&\qquad +\| \eta_1\|_{\cH^{\sigma_0}}\| \nab_{x,z}v_\delta\|_{U^{\sigma-1}}+\| \eta_1\|_{\cH^{\sigma}}\| \nab_{x,z}v_\delta\|_{U^{\sigma_0-1}}.
\end{aligned}
\end{equation}
On the other hand, the proof of \eqref{estv:2} and \eqref{estv:3} yields
\begin{equation}\label{est:nabvd}
\|\nab_{x,z}v_\delta\|_{U^{\sigma-1}}\ls  \| ((Q_a[v])_\delta, (Q_b[v])_\delta)\|_{U^{\sigma-1}}.
\end{equation}
Arguing as in the proof of \eqref{est:Asigma} one can show that 
\begin{multline}\label{est:Qabd}
 \| ((Q_a[v])_\delta, (Q_b[v])_\delta)\|_{U^{\sigma-1}} 
 \ls \| \eta_\delta\|_{\cH^{\sigma_0}}(\| \nab_xf\|_{H^{\sigma-1}}+\| \eta_1\|_{\cH^{\sigma}}\| \nab_xf\|_{H^{\sigma_0-1}})\\
  +\|\eta_\delta\|_{\cH^\sigma} \| \nab_x f\|_{H^{\sigma_0-1}}+\| \eta_1\|_{\cH^{\sigma_0}}\| \nab_{x,z}v_\delta\|_{U^{\sigma-1}} 
  +\| \eta_1\|_{\cH^{\sigma}}\| \nab_{x,z}v_\delta\|_{U^{\sigma_0-1}}.
\end{multline}
It follows from \eqref{est:nabvd} and \eqref{est:Qabd} that 
\begin{equation}\label{est:vd:100}
\begin{aligned}
\|\nab_{x,z}v_\delta\|_{U^{\sigma-1}}&\ls   \| \eta_\delta\|_{\cH^{\sigma_0}}(\| \nab_xf\|_{H^{\sigma-1}}+\| \eta_1\|_{\cH^{\sigma}}\| \nab_xf\|_{H^{\sigma_0-1}})+\|\eta_\delta\|_{\cH^\sigma} \| \nab_x f\|_{H^{\sigma_0-1}}\\
&\qquad +\| \eta_1\|_{\cH^{\sigma_0}}\| \nab_{x,z}v_\delta\|_{U^{\sigma-1}}+\| \eta_1\|_{\cH^{\sigma}}\| \nab_{x,z}v_\delta\|_{U^{\sigma_0-1}}.
\end{aligned}
\end{equation}
With $\sigma=\sigma_0$, \eqref{est:vd:100} implies that if $\| \eta_j\|_{\cH^{\sigma_0}}<\wt{c_1}\le c_1$ then  
\begin{equation}
\|\nab_{x,z}v_\delta\|_{U^{\sigma_0-1}}\ls \|\eta_\delta\|_{\cH^{\sigma_0}} \| \nab_x f\|_{H^{\sigma_0-1}}.
\end{equation}
We then insert the preceding estimate into \eqref{est:vd:100} to obtain that if $\| \eta_1\|_{\cH^{\sigma_0}}<c_2\le \wt{c_1}$ then  
\begin{equation}\label{est:nabvd:f}
\|\nab_{x,z}v_\delta\|_{U^{\sigma-1}}\ls  \| \eta_\delta\|_{\cH^{\sigma_0}}(\| \nab_xf\|_{H^{\sigma-1}}+\| \eta_1\|_{\cH^{\sigma}}\| \nab_xf\|_{H^{\sigma_0-1}})+\|\eta_\delta\|_{\cH^\sigma} \| \nab_x f\|_{H^{\sigma_0-1}}.
\end{equation}
Finally, inserting \eqref{est:nabvd:f} into \eqref{est:Asigma} we arrive at \eqref{est:DNd}. 
\end{proof}

 \section{Stability of traveling wave solutions} 

In this section, we consider the Muskat problem without  external bulk force ($\mathfrak{f}=0$). In order to simplify the presentation, we shall assume that the external pressure $\varphi$ is independent of the vertical variable, i.e. $\varphi(x, y)=\varphi_0(x)$, where we adopt the notation $(x, y)$ for the horizontal and vertical components of a point in the fluid domain. More precisely, we study equation \eqref{Muskat} for the free boundary $\eta$:
 \begin{equation}\label{Muskat:1}
\p_t\eta=\gamma \p_1\eta-G(\eta)(\eta+{\varphi_0}),\quad (x, t)\in \R^d\times \R_+,\quad d=n-1\ge 1.
\end{equation}
The proofs of all the results in this section can be generalized to the more general case $\varphi(x, y)=\varphi_0(x)+\varphi_1(x, y)$ with extra regularity assumption of $\varphi_1$ as in Theorem \ref{well_posedness_flat:intro}.

\subsection{Existence of traveling wave solutions}  

The existence and uniqueness of steady solutions to \eqref{Muskat:1} have been obtained in Theorem \ref{well_posedness_flat:intro} by means of the  implicit function theorem.  In this subsection, we  shall apply the results in Section \ref{Section:DN} for the Dirichlet-Neumann operator to provide an alternative proof in this special case of the data, which serves to motivate and inform the strategy we will employ in studying the time-dependent problem \eqref{Muskat:1}.

Solutions to the steady  equation
  \begin{equation}\label{eq:steady}
\gamma \p_1\eta-G(\eta)(\eta+{\varphi_0})=0
\end{equation}
shall be constructed by a fixed point argument. To this end, we first use the expansion $G(\eta)=m(D)+R(\eta)$ to equivalently rewrite \eqref{eq:steady} as
\begin{equation}\label{eq:steady:1}
(\gamma\p_1-m(D))\eta=R(\eta)(\eta+{\varphi_0})+m(D){\varphi_0}.
\end{equation}
We note that the symbol $\gamma i\xi_1-m(\xi)$ vanishes only at $\xi=0$, and it follows from the definition \eqref{def:RDN} of $R(\eta)$ that the right-hand side of \eqref{eq:steady:1} vanishes at zero frequency. Therefore, we may seek solutions that vanish at zero frequency by solving the fixed point problem 
\begin{equation}\label{eq:steady2}
\eta=\mathcal{T}_{\varphi_0} (\eta):=(\gamma\p_1-m(D))^{-1}\left\{R(\eta)(\eta+{\varphi_0})+m(D){\varphi_0}\right\},
\end{equation}
where we adopt the convention that $\widehat{\cT_{\varphi_0}(\eta)}(0)=0$.

\begin{thm}\label{theo:existence:finite}
Let $d\ge 1$, $s>1+\frac{d}{2}$ and $\gamma\in \R\setminus\{0\}$. There exist small positive constants $r_0$ and $r_1$, both depending only on $(\gamma, s, b, d)$, such that for $\| \nab{\varphi_0}\|_{H^{s-1}(\R^d)}<r_1$, $\cT_{\varphi_0}$ is a contraction mapping on $B_{\cH^s(\R^d)}(0, r_0)$. Moreover, the mapping that maps ${\varphi_0}$ to the unique fixed point of $\cT_{\varphi_0}$ in $B_{\cH^s(\R^d)}(0, r_0)$ is  Lipschitz continuous.
\end{thm}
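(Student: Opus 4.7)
The plan is to apply the Banach fixed point theorem to $\cT_{\varphi_0}$ on $B_{\cH^s(\R^d)}(0, r_0)$ for appropriately small $r_0, r_1$, combining the estimates \eqref{est:RDN}--\eqref{est:DNd} for $R(\eta)$ with a Fourier-multiplier bound for $L^{-1}$, where $L := \gamma \partial_1 - m(D)$.

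The key linear ingredient is the estimate
\[
\|L^{-1} g\|_{\cH^s(\R^d)} \le C(\gamma, b)\,\|\ell^{-1/2}(D) g\|_{H^{s-1/2}(\R^d)}
\]
valid for any $g$ with $\wh g(0) = 0$ and any symbol $\ell$ satisfying \eqref{symbol:ell}. I would prove it by direct Plancherel computation: at high frequencies $|\gamma i\xi_1 - m(\xi)|^2 \ge m(\xi)^2 \gtrsim |\xi|^2$, yielding a one-derivative gain matching the $H^s$ part of $\cH^s$; at low frequencies $|\gamma i\xi_1 - m(\xi)|^2 \asymp \gamma^2 \xi_1^2 + b^2 |\xi|^4$, and the elementary bound $\frac{\xi_1^2 + |\xi|^4}{\gamma^2 \xi_1^2 + b^2|\xi|^4}\le \max(\gamma^{-2}, b^{-2})$ exactly neutralizes the anisotropic low-frequency weight of $\cH^s$, reducing that contribution to a $\dot H^{-1}$-type low-frequency quantity controlled by $\|\ell^{-1/2}(D) g\|_{L^2}$. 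This is where the hypothesis $\gamma \ne 0$ is essential and where the choice of the space $\cH^s$ is forced upon us.

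For the self-mapping property I would apply this bound with $g = R(\eta)(\eta+\varphi_0) + m(D)\varphi_0$, use \eqref{est:RDN} with $\sigma = \sigma_0 = s$ together with the continuous embedding $\|\nabla\eta\|_{H^{s-1}} \lesssim \|\eta\|_{\cH^s}$, and observe that $\ell^{-1/2}(\xi) m(\xi)$ is bounded by $Cb|\xi|$ at low frequencies and by $C|\xi|^{1/2}$ at high frequencies, so that $\|\ell^{-1/2}(D) m(D)\varphi_0\|_{H^{s-1/2}} \lesssim \|\nabla\varphi_0\|_{H^{s-1}}$. Combining these yields
\[
\|\cT_{\varphi_0}(\eta)\|_{\cH^s} \le C_1\bigl(\|\eta\|_{\cH^s}^2 + \|\eta\|_{\cH^s}\|\nabla\varphi_0\|_{H^{s-1}} + \|\nabla\varphi_0\|_{H^{s-1}}\bigr),
\]
so choosing $C_1 r_0 \le 1/3$ and $C_1 r_1 \le \min(1/3, r_0/3)$ makes the right-hand side $\le r_0$. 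For the contraction I would split
\[
R(\eta_1)(\eta_1 + \varphi_0) - R(\eta_2)(\eta_2 + \varphi_0) = [R(\eta_1) - R(\eta_2)](\eta_1 + \varphi_0) + R(\eta_2)(\eta_1 - \eta_2)
\]
and apply \eqref{est:RDN} together with \eqref{est:DNd} to obtain
\[
\|\cT_{\varphi_0}(\eta_1) - \cT_{\varphi_0}(\eta_2)\|_{\cH^s} \le C_2(r_0 + r_1)\,\|\eta_1 - \eta_2\|_{\cH^s},
\]
a strict contraction after a possible further shrinkage of $r_0, r_1$. Banach's theorem then produces the unique fixed point in the ball, and the Lipschitz dependence on $\varphi_0$ follows from the usual implicit-function-type argument: writing the difference $\eta^{(1)} - \eta^{(2)}$ via $L^{-1}$ with a telescoping decomposition and applying the same estimates, the $\|\eta^{(1)}-\eta^{(2)}\|_{\cH^s}$ terms are absorbed on the left to give $\|\eta^{(1)}-\eta^{(2)}\|_{\cH^s} \lesssim \|\nabla(\varphi_0^{(1)} - \varphi_0^{(2)})\|_{H^{s-1}}$.

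The main obstacle is matching the low-frequency anisotropic weight of $\cH^s$ with the symbol of $L^{-1}$: one must verify that the output $R(\eta)f + m(D)\varphi_0$ lands precisely in the weighted space $\{g : \|\ell^{-1/2}(D) g\|_{H^{s-1/2}} < \infty\}$ and that this space is exactly what $L^{-1}$ converts into $\cH^s$. The estimates in Proposition \ref{prop:estvw} are designed with the weight $\ell^{-1/2}$ precisely to make this pairing work, and the anisotropic space $\cH^s$ in the statement of the theorem is dictated by the symbolic structure of $L$ at the origin.
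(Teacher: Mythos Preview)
Your proposal is correct and follows essentially the same route as the paper: the same Plancherel multiplier bound for $L^{-1}$ (the paper takes $\ell=m$, which satisfies \eqref{symbol:ell}), the same application of \eqref{est:RDN} with $\sigma=\sigma_0=s$ for the self-map, and the same use of Proposition~\ref{prop:DNd} (via the splitting you wrote) for the contraction and Lipschitz dependence. The only cosmetic difference is that the paper leaves the contraction step as ``by virtue of Proposition~\ref{prop:DNd}, one can reduce the size of $\|\nabla\varphi_0\|_{H^{s-1}}$ and $r_0$,'' whereas you spell out the telescoping explicitly.
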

\begin{proof}
In the finite depth case, we have that $m(D)=|D|\tanh(b|D)$ satisfies 
\begin{equation}\label{finitesymbol}
m(\xi)\asymp
\begin{cases} 
|\xi|^2 & \text{for } |\xi|\asymp 0,\\
|\xi| & \text{for } |\xi|\asymp \infty. 
\end{cases}
\end{equation}
Consequently, for low frequencies $|\xi|<1$, we have 
\begin{equation}\label{lowfr:T}
\begin{aligned}
\int_{|\xi|<1}\omega(\xi)\left|\mathscr{F}\left\{(\gamma\p_1-m(D))^{-1}g\right\}(\xi)\right|^2 d\xi&=\int_{|\xi|<1}\frac{\xi_1^2+|\xi|^4}{|\xi|^2}\frac{1}{|\gamma|^2\xi_1^2+m^2(\xi)}|\wh{g}(\xi)|^2 d\xi\\
&\le C(\gamma, b)\int_{|\xi|<1}\frac{1}{|\xi|^2}|\wh{g}(\xi)|^2 d\xi.
\end{aligned}
\end{equation}
where $\mathscr{F}$ denotes the Fourier transform.  On the other hand, for high frequencies $|\xi|\ge 1$, we have 
\begin{equation}\label{highfr:T}
\begin{aligned}
\int_{|\xi|\ge 1}\langle \xi\rangle^{2s}\left|\mathscr{F}\left\{(\gamma\p_1-m(D))^{-1}g\right\}(\xi)\right|^2 d\xi&\le C(b) \int_{|\xi|\ge 1}\langle \xi\rangle^{2s}\frac{1}{|\xi|^2}|\wh{g}(\xi)|^2 d\xi\\
&\le  C(b) \int_{|\xi|\ge 1}\langle \xi\rangle^{2(s-\hal)}\frac{1}{|\xi|}|\wh{g}(\xi)|^2 d\xi.
\end{aligned}
\end{equation}
We note that the condition $\gamma \ne 0$ was used in the low-frequency estimate \eqref{lowfr:T} but not in the high-frequency estimate \eqref{highfr:T}.

It follows from \eqref{finitesymbol}, \eqref{lowfr:T} and \eqref{highfr:T} that
\begin{equation}\label{finitesymbolest}
\| (\gamma\p_1-m(D))^{-1}g\|_{\cH^s}\le C(\gamma, b) \| m^{-\hal}(D)g\|_{H^{s-\hal}}
\end{equation}
From \eqref{finitesymbolest}  and the definition of $\cT_{\varphi_0}(\eta)$, we deduce
\begin{equation}\label{estT:1}
\begin{aligned}
\| \cT_{\varphi_0}(\eta)\|_{\cH^s}&\le  C(\gamma, b) \| m^{-\hal}(D)\left\{R(\eta)(\eta+{\varphi_0})+m(D){\varphi_0}\right\}\|_{H^{s-\hal}}\\
&\le  C(\gamma, b)\left\{\| m^{-\hal}(D)R(\eta)(\eta+{\varphi_0})\|_{H^{s-\hal}}+\| m^\hal(D){\varphi_0}\|_{H^{s-\hal}}\right\}\\
&\le    C(\gamma, b)\left\{\| m^{-\hal}(D)R(\eta)(\eta+{\varphi_0})\|_{H^{s-\hal}}+\| \nab {\varphi_0}\|_{H^{s-1}}\right\},
\end{aligned}
\end{equation}
where  we have used the fact that the norms $\| m^\hal(D){\varphi_0}\|_{H^{s-\hal}}$ and $\| \nab {\varphi_0}\|_{H^{s-1}}$ are equivalent. 

 Suppose that $\| \eta\|_{\cH^s}<c_1$, where $c_1$ is given in Proposition \ref{prop:estvw}. Then the estimate \eqref{est:RDN} with $\sigma=\sigma_0=s$ yields 
\begin{equation}\label{estT:2}
\| m^{-\hal}(D) R(\eta)(\eta+{\varphi_0})\|_{H^{s-\hal}} \le C(s, b, d)\| \eta\|_{\cH^s}\| \nab (\eta+{\varphi_0})\|_{H^{s-1}}
\le C(s, b, d) \| \eta\|_{\cH^s}\left(\| \eta\|_{\cH^s}+\| \nab{\varphi_0} \|_{H^{s-1}}\right),
\end{equation}
where we have used the inequality $\| \nab \eta\|_{H^{s-1}}\le C\| \eta\|_{\cH^s}$. 

It follows from \eqref{estT:1} and \eqref{estT:2} that for $\| \eta\|_{\cH^s}<c_1$ we have
\begin{equation}
\| \cT_{\varphi_0}(\eta)\|_{\cH^s}\le C_1\left\{\| \eta\|_{\cH^s}\left(\| \eta\|_{\cH^s}+\| \nab{\varphi_0} \|_{H^{s-1}}\right)+\| \nab {\varphi_0}\|_{H^{s-1}}\right\},
\end{equation}
where $C_1=C_1(\gamma, s, b, d)$. If $\| \nab {\varphi_0}\|_{H^{s-1}}<\frac{1}{2C_1(2C_1+1)}$, then $\cT_{\varphi_0}$ maps the ball $B_{\cH^s}(0, r_0)\subset \cH^s$ to itself, where $r_0=\min\{c_1, \frac{1}{2C_1+1}\}$. By virtue of Proposition \ref{prop:DNd}, one can reduce the size of $\| \nab{\varphi_0}\|_{H^{s-1}}$ and the radius $r_0$ so that $\cT_{\varphi_0}$ is a contraction on $B_{\cH^s}(0, r_0)$. By the Banach contraction mapping principle, $\cT_{\varphi_0}$ has a unique fixed point  $\eta$ in $ B_{\cH^s}(0, r_0)$. The Lipschitz continuous dependence of $\eta$ on ${\varphi_0}$ again follows from Proposition \ref{prop:DNd}.
\end{proof}

We now define the space of Sobolev functions with average zero on the torus.

\begin{dfn}
For $0 \le s \in \R$ we define the space $\mathring{H}^s(\T^d)=\left\{f\in H^s(\T^d) \st  \int_{\T^d}f=0\right\}$.
\end{dfn}

We now record a variant of Theorem \ref{theo:existence:finite} for the torus case.

\begin{thm}\label{theo:existence:finite:p}
Let $d\ge 1$, $s>1+\frac{d}{2}$ and $\gamma\in \R$. There exist small positive constants $r_0$ and $r_1$, both depending only on $(s, b, d)$, such that for $\| \nab{\varphi_0}\|_{H^{s-1}(\T^d)}<r_1$, $\cT_{\varphi_0}$ is a contraction mapping on $B_{\mathring{H}^s(\T^d)}(0, r_0)$. Moreover, the mapping that maps ${\varphi_0}$ to the unique fixed point of $\cT_{\varphi_0}$ in $B_{\mathring{H}^s(\T^d)}(0, r_0)$ is  Lipschitz continuous.
\end{thm}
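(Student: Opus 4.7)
The plan is to adapt the proof of Theorem \ref{theo:existence:finite} to the torus, exploiting two structural simplifications: first, the Fourier transform on $\T^d$ is supported on $\Z^d$, so the nonzero modes satisfy $|\xi|\ge 1$, which lets us avoid the low-frequency subtleties that forced $\gamma \ne 0$ in the line case; second, $\mathring{H}^s(\T^d)$ is a standard Hilbert space with no anisotropic norm, so the mapping properties of $R(\eta)$ are captured by \eqref{est:RDN1} and \eqref{est:DNd1} directly.

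First I would establish the analog of \eqref{finitesymbolest} on the torus for zero-mean data. Given $g \in H^{s-1/2}(\T^d)$ with $\widehat{g}(0)=0$, the identity $\widehat{\gamma\p_1 - m(D)}(\xi) = i\gamma\xi_1 - m(\xi)$ satisfies
\begin{equation}
|i\gamma\xi_1 - m(\xi)|^2 = \gamma^2 \xi_1^2 + m^2(\xi) \ge m^2(\xi) \asymp |\xi|^2 \quad \text{for all } \xi \in \Z^d\setminus\{0\},
\end{equation}
since $m(\xi) = |\xi|\tanh(b|\xi|)$ is bounded below by a positive multiple of $|\xi|$ on $\{|\xi|\ge 1\}$, with constant depending only on $b$. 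This gives
\begin{equation}
\|(\gamma\p_1 - m(D))^{-1} g\|_{\mathring{H}^s(\T^d)} \le C(b,d)\, \|m^{-1/2}(D) g\|_{H^{s-1/2}(\T^d)}
\end{equation}
uniformly in $\gamma \in \R$, which is the key simplification versus \eqref{lowfr:T}.

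Next I would verify that $\cT_{\varphi_0}$ sends zero-mean functions to zero-mean functions. By the last part of Proposition \ref{prop:estvw}, $\widehat{R(\eta)(\eta+\varphi_0)}(0)=0$, and $\widehat{m(D)\varphi_0}(0) = m(0)\widehat{\varphi_0}(0) = 0$ since $m(0)=0$. Combined with the convention $\widehat{\cT_{\varphi_0}(\eta)}(0)=0$ this makes $\cT_{\varphi_0}$ a well-defined operator on $\mathring{H}^s(\T^d)$. Then, for $\eta \in B_{\mathring{H}^s}(0,c_1)$, the inverse bound together with \eqref{est:RDN1} (applied with $\sigma=\sigma_0=s$) and the crude bound $\|m^{1/2}(D)\varphi_0\|_{H^{s-1/2}} \lesssim \|\nabla \varphi_0\|_{H^{s-1}}$ (valid since on nonzero modes $m(\xi)^{1/2} \lesssim |\xi|$) yields
\begin{equation}
\|\cT_{\varphi_0}(\eta)\|_{\mathring{H}^s} \le C_1\bigl\{\|\eta\|_{\mathring{H}^s}(\|\eta\|_{\mathring{H}^s} + \|\nabla \varphi_0\|_{H^{s-1}}) + \|\nabla \varphi_0\|_{H^{s-1}}\bigr\}
\end{equation}
for some $C_1 = C_1(s,b,d)$. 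Choosing $r_0 = \min\{c_1, 1/(2C_1+1)\}$ and $r_1 = 1/(2C_1(2C_1+1))$ makes $\cT_{\varphi_0}$ map $B_{\mathring{H}^s}(0,r_0)$ into itself whenever $\|\nabla \varphi_0\|_{H^{s-1}} < r_1$.

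Finally, I would invoke the contraction estimate \eqref{est:DNd1} from Proposition \ref{prop:DNd}, again with $\sigma=\sigma_0=s$, together with the inverse bound above, to show
\begin{equation}
\|\cT_{\varphi_0}(\eta_1) - \cT_{\varphi_0}(\eta_2)\|_{\mathring{H}^s} \le C_2\bigl(\|\nabla\varphi_0\|_{H^{s-1}} + \|\eta_1\|_{\mathring{H}^s} + \|\eta_2\|_{\mathring{H}^s}\bigr)\|\eta_1 - \eta_2\|_{\mathring{H}^s}.
\end{equation}
By shrinking $r_0$ and $r_1$ if necessary, the prefactor becomes at most $1/2$, so the Banach contraction mapping principle produces a unique fixed point $\eta \in B_{\mathring{H}^s}(0,r_0)$, which solves \eqref{eq:steady}. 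The Lipschitz dependence of $\eta$ on $\varphi_0$ follows from a standard parameter variation argument using the same contraction bound applied to the difference of two data pairs. The only step that requires care is confirming the low-frequency elimination really does remove the need for $\gamma \ne 0$; everything else is a direct transcription of the real-line argument.
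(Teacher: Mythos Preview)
Your proposal is correct and follows essentially the same route as the paper: the paper's own proof simply says to repeat the argument of Theorem \ref{theo:existence:finite} while noting that, because the zero mode of $\cT_{\varphi_0}$ vanishes on the torus, only the high-frequency estimate \eqref{highfr:T} is needed, so the condition $\gamma \ne 0$ drops and the constants no longer depend on $\gamma$. Your writeup is a faithful and more detailed execution of precisely this sketch.
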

\begin{proof}
The proof mostly follows from  obvious modifications to the proof of Theorem \ref{theo:existence:finite} with the following caveat.   Since the zero mode of $\cT_{\varphi_0}$ vanishes in the periodic setting, there is no need for a low frequency estimate such as \eqref{lowfr:T} and only the high frequency estimate \eqref{highfr:T} is needed.  Consequently, $\gamma \ne 0$ is not needed, and the constants then do not depend on $\gamma$.

\end{proof}

\subsection{Stability of periodic traveling wave solutions}\label{sec_stability}

In this subsection, we prove that small periodic traveling wave solutions obtained in Theorem \ref{theo:existence:finite:p} are nonlinearly asymptotically stable.  The remainder of this section is devoted to the proof of Theorem \ref{theo:stability:intro}.


Suppose that for  fixed $(\gamma, {\varphi_0})$, $\eta_*$ is a steady solution of \eqref{Muskat}. We perturb $\eta_*$ by $f_0$ and  set $f(x, t)=\eta(x, t)-\eta_*(x)$, where $\eta$ is the solution of \eqref{Muskat} with initial data $\eta_0=\eta_*+f_0$.  We have
\begin{equation}\label{eq:f:0}
\p_t f=\gamma\p_1 f-\big\{G(\eta_*+f)(\eta_*+f+{\varphi_0})-G(\eta_*)(\eta_*+{\varphi_0})\big\}.
\end{equation}
Using the expansion $G(\eta)g=m(D)g+R(\eta)f$ we rewrite \eqref{eq:f:0} as
\begin{equation}\label{eq:f:1}
\begin{aligned}
\p_t f&=\gamma\p_1 f-G(\eta_*+f)f-\big\{G(\eta_*+f)(\eta_*+{\varphi_0})-G(\eta_*)(\eta_*+{\varphi_0})\big\}\\
&=\gamma\p_1 f-m(D)f-R(\eta_*+f)f\\
&\quad-\left\{ m(D)(\eta_*+{\varphi_0})+R(\eta_*+f)(\eta_*+{\varphi_0})-m(D)(\eta_*+{\varphi_0})-R(\eta_*)(\eta_*+{\varphi_0})\right\}\\
&=\gamma\p_1 f-m(D)f+\left[R(\eta_*)(\eta_*+{\varphi_0})-R(\eta_*+f)(\eta_*+{\varphi_0})\right]-R(\eta_*+f)f.
\end{aligned}
\end{equation}
The solution of \eqref{eq:f:1} with initial data $f_0$ will be sought as the fixed point of the mapping
\begin{equation}\label{def:cN}
\mathcal{N}(f):=e^{(\gamma\p_1 -m(D))t}f_0+\cL(f)(t)-\cB(f, f)(t),
\end{equation}
where
\begin{align}
&\cL(f)(t)=\int_0^t e^{(\gamma\p_1 -m(D))(t-\tau)} \left[R(\eta_*)(\eta_*+{\varphi_0})-R(\eta_*+f)(\eta_*+{\varphi_0})\right](\tau)d\tau,\\
&\cB(g, f)(t)=\int_0^t e^{(\gamma\p_1 -m(D))(t-\tau)}[R(\eta_*+g)f](\tau) d\tau.
\end{align}
To that end, we shall appeal to the following fixed point lemma.
\begin{lem}\label{lemm:fixedpoint}
Let $(E, \| \cdot\|)$ be a Banach space and let $\nu>0$. Denote by $B_\nu$ the open ball of radius $\nu$ centered at $0$ in $E$.  Assume that $\cL:B_\nu\to E$ and $\cB:B_\nu\times E\to E$ satisfy the following conditions.
\begin{itemize}
\item For all $x\in B_\nu$, $\cB(x, \cdot)$ is linear. 
\item There exists a constant $C_\cL\in (0, 1)$ such that $\| \cL(x)\|\le C_\cL \|x\|$ for all $x\in B_\nu$.
\item There exists an increasing function $\mathcal{G}_\cB$ such that $\| \cB(x, y)\|\le \mathcal{G}_\cB(\|x\|)\|y\|$ for all $x\in B_\nu$ and $y\in E$. There exists $r_*>0$ such that 
\begin{equation}\label{cd:cG}
C_\cL+\cG_\cB(r_*)<1.
\end{equation}
\item There exists an increasing function $\cF_\cL:\R_+\to \R_+$ such that 
\begin{equation}
\forall x_1, x_2\in B_\nu,~\| \cL(x_1)-\cL(x_2)\|\le  \|x_1-x_2\|\cF_\cL(\| x_1\|+\|x_2\|).
\end{equation}
\item There exists an increasing function $\cF_\cB:\R_+\to \R_+$ such that 
\begin{equation}
\forall x_1, x_2\in B_\nu,~\forall y\in E,~\| \cB(x_1, y)-\cB(x_2, y)\|\le  \|x_1-x_2\|\|y\|\cF_\cB(\| x_1\|+\|x_2\|).
\end{equation}
\end{itemize}
Assume moreover that
\begin{equation}\label{cd:cF}
\cF_\cL(2\nu)+\cG_\cB(\nu)+\nu\cF_\cB(2\nu)<\hal. 
\end{equation}
Then there exists $\delta=\delta(\nu, r_*)>0$ small enough such for all $x_0\in E$ with $\|x_0\|<\delta$, the mapping $B_\nu \ni x\mapsto \cN(x):=x_0+\cL(x)+\mathcal{B}(x, x) \in B_\nu$  has a unique fixed point $x_*$ in  $B_\nu$ with $\|x_*\|\le 2\|x_0\|$.
\end{lem}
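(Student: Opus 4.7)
The plan is to apply the Banach fixed point theorem on a small closed ball around the origin. The key step is to show that $\cN$ is a strict $\tfrac12$-contraction on $B_\nu$ and that, for sufficiently small $\|x_0\|$, it maps the closed ball $\overline{B(0, 2\|x_0\|)} \subset B_\nu$ into itself. Two preliminary observations streamline the argument: the bound $\|\cL(x)\| \le C_\cL \|x\|$ forces $\cL(0) = 0$, and the assumed linearity of $\cB(0,\cdot)$ gives $\cB(0,0) = 0$; consequently $\cN(0) = x_0$. Moreover, hypothesis \eqref{cd:cF} already ensures $\cG_\cB(\nu) < \tfrac12$, so the contraction coefficients are fully usable throughout $B_\nu$.

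For the contraction estimate, let $x_1, x_2 \in B_\nu$. I decompose
\begin{equation*}
\cN(x_1) - \cN(x_2) = \bigl[\cL(x_1) - \cL(x_2)\bigr] + \bigl[\cB(x_1, x_1) - \cB(x_2, x_1)\bigr] + \cB(x_2, x_1 - x_2),
\end{equation*}
where the last term uses the linearity of $\cB$ in its second slot (note that $\cB$ is not assumed bilinear, so this asymmetric telescoping is forced). The three summands are controlled, respectively, by the Lipschitz hypothesis on $\cL$, the Lipschitz hypothesis on $\cB$ in its first slot, and the operator bound $\|\cB(x_2, \cdot)\| \le \cG_\cB(\|x_2\|)$. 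After invoking the monotonicity of $\cF_\cL$, $\cF_\cB$, $\cG_\cB$ together with the bound $\|x_i\| < \nu$, I obtain
\begin{equation*}
\|\cN(x_1) - \cN(x_2)\| \le \bigl[\cF_\cL(2\nu) + \nu\,\cF_\cB(2\nu) + \cG_\cB(\nu)\bigr]\,\|x_1 - x_2\| < \tfrac12 \|x_1 - x_2\|
\end{equation*}
by hypothesis \eqref{cd:cF}.

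Now set $\delta = \min\{\nu/2,\, r_*/2\}$ (the factor involving $r_*$ is kept only because hypothesis \eqref{cd:cG} is phrased in terms of $r_*$; the real work is done by the contraction estimate). For $\|x_0\| < \delta$, the closed ball $\overline{B(0, 2\|x_0\|)}$ lies inside $B_\nu$. Combining $\cN(0) = x_0$ with the contraction estimate, every $x \in \overline{B(0, 2\|x_0\|)}$ satisfies
\begin{equation*}
\|\cN(x)\| \le \|\cN(x) - \cN(0)\| + \|x_0\| \le \tfrac12 \|x\| + \|x_0\| \le 2\|x_0\|,
\end{equation*}
so $\cN$ is a self-map of this closed (hence complete) ball. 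The Banach fixed point theorem then yields a unique fixed point $x_* \in \overline{B(0, 2\|x_0\|)}$, and in particular the estimate $\|x_*\| \le 2\|x_0\|$. Uniqueness among all fixed points of $\cN$ in the larger ball $B_\nu$ is an immediate byproduct of the contraction: any two such fixed points $x_*, x_*'$ satisfy $\|x_* - x_*'\| = \|\cN(x_*) - \cN(x_*')\| \le \tfrac12 \|x_* - x_*'\|$, so they coincide.

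There is no serious technical obstacle here --- the whole argument is a routine small-data Banach fixed point --- but one step deserves emphasis: because $\cB$ is only assumed Lipschitz in its first argument and linear in its second, the telescoping of $\cB(x_1, x_1) - \cB(x_2, x_2)$ must be arranged with this asymmetry in mind, exactly as written above.
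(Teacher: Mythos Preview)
Your proof is correct and follows essentially the same contraction-mapping route as the paper: the decomposition of $\cN(x_1)-\cN(x_2)$ and the resulting $\tfrac12$-contraction via \eqref{cd:cF} coincide with the paper's estimate \eqref{cN:diff}. The only cosmetic difference is that the paper runs the Picard iteration explicitly and uses \eqref{cd:cG} in an induction to keep the iterates in $B(0,2\|x_0\|)$, whereas your observation $\cN(0)=x_0$ lets you obtain the self-map property directly from the contraction and invoke the Banach fixed point theorem on the closed ball in one stroke.
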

\begin{proof}
Let $\delta<\min\{\frac{\nu}{2}, \frac{r_*}{2}\}$ and let $x_0\in E$, $\| x_0\|<\delta$.  The fixed point of $\cN$ will be obtained by the Picard iteration $x_{n+1}=\cN(x_n)$, $n\ge 1$. It can be shown using induction with the aid of \eqref{cd:cG} that $\| x_n\|<2\|x_0\|$ for all $n\ge 0$, hence $(x_n)\subset B_\nu$. 

Using the assumptions on $\cL$ and $\cB$, we obtain
\begin{equation}\label{cN:diff}
\| \cN(x)-\cN(y)\|\le \|x-y\|\left\{\cF_\cL(\|x\|+\|y\|)+\cG_\cB(\|x\|)+\|y\|\cF_\cB(\|x\|+\|y\|)\right\}\quad\forall x, y\in B_\nu.
\end{equation}
Combining \eqref{cN:diff} with \eqref{cd:cF} yields
\begin{equation}
\begin{aligned}
\| x_{n+1}-x_n\|
&\le \|x_n-x_{n-1}\|\left\{\cF_\cL(2\nu)+\cG_\cB(\nu)+\nu\cF_\cB(2\nu)\right\}\le \hal \|x_n-x_{n-1}\|.
\end{aligned}
\end{equation}
It follows that $(x_n)$ is a Cauchy sequence, hence $x_n\to x_*\in E$. In particular, we have $\|x_*\|\le 2\|x_0\|<\nu$, and thus \eqref{cN:diff} implies that $\cN(x_n)\to \cN(x_*)$. Passing to the limit in the scheme $x_{n+1}=\cN(x_n)$ yields $x_*=\cN(x_*)$. The uniqueness of $x_*$ in $B_\nu$ again follows from \eqref{cN:diff}.
\end{proof}

We now have all the tools needed to prove Theorem \ref{theo:stability:intro}.

\begin{proof}[Proof of Theorem \ref{theo:stability:intro}]

We consider $\eta_*$ and ${\varphi_0}$ in $H^s(\T^d)$ with $s>1+\frac{d}{2}$ and $\| {\varphi_0}\|_{H^s}<\frac{c_2}{3}$ and  $\| \eta_*\|_{H^s}<\frac{c_2}{3},$ where $c_2$ is the constant given in Proposition \ref{prop:DNd}. 

We note that if $\int_{\T^d}u=0$ then  $\Delta_0 u=0$ (see \eqref{Delta0=mean}). Consequently,
for $1\le q_2\le q_1\le \infty$, we have 
\begin{equation}\label{semig1:finite}
\| e^{(\gamma\p_1 -m(D))t}u\|_{\wL^{q_1}([\alpha, \beta]; H^{\mu+\frac{1}{q_1}}(\T^d))}\le C(b, d)\| u\|_{H^\mu},
\end{equation}
\begin{equation}\label{semig2:finite}
\norm{\int_a^t e^{(\gamma\p_1 -m(D))t}g }_{\wL^{q_1}([\alpha, \beta]; H^{\mu+\frac{1}{q_1}}(\T^d))}\le C(b, d)\| g\|_{\wL^{q_2}([\alpha, \beta]; H^{\mu-1+\frac{1}{q_2}}(\T^d))},
\end{equation}
provided that $u$ and $g(\cdot, t)$ have zero mean. These estimates can be proved as in Proposition \ref{prop:fourierop} with the aid of the dyadic estimate 
\begin{equation}
\| \Delta_j e^{(\gamma\p_1 -m(D))t}u\|_{L^2_x}\le C(d)e^{-c(b, d)2^jt}\| \Delta_j u\|_{L^2_x},\quad j\ge 1.
\end{equation}
Set $Y^\mu([\alpha, \beta])=\wL^\infty([\alpha, \beta]; H^{\mu}(\T^d))\cap \wL^2([\alpha, \beta]; H^{\mu+\hal}(\T^d))$ and
\begin{equation}
\| \cdot\|_{Y^\mu([\alpha, \beta])}=\| \cdot\|_{\wL^\infty([\alpha, \beta]; H^{\mu}(\T^d))}+\| \cdot\|_{\wL^2([\alpha, \beta]; H^{\mu+\hal}(\T^d))}.
\end{equation}
Consider $f_0\in \mathring{H}^s(\T^d)$ and let $T>0$ be arbitrary.  By Proposition \ref{prop:estvw} 2),  $R$ always has zero mean, and so  do $\cL$ and $\cB$. Therefore, using \eqref{semig1:finite}, \eqref{semig2:finite} and the fact that 
\begin{equation}
\| \cdot\|_{\wL^2([\alpha, \beta]; H^{\mu}(\T^d))}=\| \cdot\|_{L^2([\alpha, \beta]; H^{\mu}(\T^d))},
\end{equation}
 we obtain
 \begin{align}\label{est:linsemig}
& \|e^{(\gamma\p_1 -m(D))t}f_0\|_{Y^s([0, T])}\le C(b, d)\| f_0\|_{H^s},\\ \label{est:Lf:1}
&\| \cL(f)\|_{Y^s([0, T])}\le C(b, d)\| R(\eta_*)(\eta_*+{\varphi_0})-R(\eta_*+f)(\eta_*+{\varphi_0})\|_{L^2([0, T]; H^{s-\hal})},\\ \label{est:Bgf:1}
&\| \cB(g, f)\|_{Y^s([0, T])}\le C(b, d)\| R(g)f\|_{L^2([0, T]; H^{s-\hal})}.
\end{align}
 We want to apply Lemma \ref{lemm:fixedpoint} with 
 \begin{equation}
 E_T=\left\{f\in Y^s([0, T]) \st \int_{\T^d}f(x, t)dx=0~\text{a.e.~}t\in [0, T]\right\}.
 \end{equation}
 Let $B_\nu$ be the open ball in $E_T$ with center $0$ and radius $\nu<\frac{2c_2}{3}$. 
Let $f\in B_\nu$. We have $\| f\|_{L^\infty([0, T]; H^s)}\le \| f\|_{\wL^\infty([0, T]; H^s)}$, hence $\|f(t)\|_{H^s}<\nu<c_2/3$ a.e. $t\in [0, T]$.  Consequently, $\| \eta_*+f(t)\|_{H^s}<c_2$ a.e. $t\in [0, T]$ and thus we can apply the contraction estimate \eqref{est:DNd} with $\eta_1=\eta_*$, $\eta_2=\eta_*+f$, $\sigma=s+\hal$ and $\sigma_0=s$,
\begin{multline}\label{est:nonl2}
\| R(\eta_*)(\eta_*+{\varphi_0})-R(\eta_*+f)(\eta_*+{\varphi_0})\|_{H^{s-\hal}} \\
\le C\| f\|_{\cH^s}\left\{\| \nab_x(\eta_*+{\varphi_0})\|_{H^{s-\hal}}+\| \eta_*\|_{\cH^{s+\hal}}\| \nab_x(\eta_*+{\varphi_0})\|_{H^{s-1}}\right\}
+C\|f\|_{\cH^{s+\hal}} \| \nab_x (\eta_*+{\varphi_0})\|_{H^{s-1}}\\
\le C\| f\|_{H^{s+\hal}}\left\{\| \nab_x(\eta_*+{\varphi_0})\|_{H^{s-\hal}}+\| \eta_*\|_{H^{s+\hal}}\| \nab_x(\eta_*+{\varphi_0})\|_{H^{s-1}}\right\}
\end{multline}
a.e. $t\in [0, T]$,  where $C=C(s, b, d)$ and we have used the embedding $H^\mu\subset \cH^\mu$ for $\mu\ge 0$.  Combining \eqref{est:Lf:1} and \eqref{est:nonl2} yields  
\begin{equation}\label{est:cL}
\| \cL(f)\|_{Y^s([0, T])}\le C\| f\|_{L^2([0, T]; H^{s+\hal})}\left\{\| \nab_x(\eta_*+{\varphi_0})\|_{H^{s-\hal}}+\| \eta_*\|_{\cH^{s+\hal}}\| \nab_x(\eta_*+{\varphi_0})\|_{H^{s-1}}\right\},
\end{equation}
where $C=C(s, b, d)$. 

Next, for $g\in B_\nu$, we can apply the remainder estimate \eqref{est:RDN} with $\sigma=s+\hal$ and $\sigma_0=s$ to have
\begin{equation}\label{est:nonl1}
\begin{aligned}
\| R(\eta_*+g)f\|_{H^{s-\hal}}&\le C\| \eta_*+g\|_{\cH^s}\| \nab_xf\|_{H^{s-\hal}}+ C\| \eta_*+g\|_{\cH^{s+\hal}}\| \nab_xf\|_{H^{s-1}}\\
&\le C\|  \eta_*\|_{H^{s+\hal}}\| \nab_xf\|_{H^{s-\hal}}+ C\|  g\|_{H^s}\| \nab_xf\|_{H^{s-\hal}}+C \| g\|_{H^{s+\hal}}\| \nab_xf\|_{H^{s-1}}
\end{aligned}
\end{equation}
a.e. $t\in [0, T]$, where $C=C(s, b, d)$.  It follows from \eqref{est:Bgf:1} and \eqref{est:nonl1} that 
\begin{multline}\label{est:cB}
\| \cB(g, f)\|_{Y^s([0, T])} \le C\|  \eta_*\|_{H^{s+\hal}}\| \nab_xf\|_{L^2([0, T]; H^{s-\hal})}+C\| g\|_{L^\infty([0, T]; H^s)}\| \nab_xf\|_{L^2([0, T]; H^{s-\hal})}\\
+ C\|g\|_{L^2([0, T]; H^{s+\hal})}\| \nab_xf\|_{L^\infty([0, T]; H^{s-1})} 
\le C\big(\|  \eta_*\|_{H^{s+\hal}}+\| g\|_{Y^s([0, T])}\big)\|f\|_{Y^s([0, T])},
\end{multline}
where $C=C(s, b, d)$.  By a completely analogous argument, we obtain that 
\begin{equation}\label{est:cLd}
\begin{aligned}
&\| \cL(f_1)-\cL(f_2)\|_{Y^s([0, T])}\\
&\le C\| R(\eta_*+f_1)(\eta_*+{\varphi_0})-R(\eta_*+f_2)(\eta_*+{\varphi_0})\|_{L^2([0, T]; H^{s-\hal})}\\
&\le  C\| f_1-f_2\|_{L^2([0, T]; H^s)}\left\{\| \nab_x(\eta_*+{\varphi_0})\|_{H^{s-\hal}}+\| \eta_*\|_{H^{s+\hal}}\| \nab_x(\eta_*+{\varphi_0})\|_{H^{s-1}}\right\}\\
&\quad+C\| f_1-f_2\|_{L^\infty([0, T]; H^s)}\| f_1\|_{L^2([0, T]; H^{s+\hal})}\| \nab_x(\eta_*+{\varphi_0})\|_{H^{s-1}}\\
&\quad+C\| f_1-f_2\|_{L^2([0, T]; H^{s+\hal})}\| \nab_x(\eta_*+{\varphi_0})\|_{H^{s-1}}\\
&\le C\|f_1-f_2\|_{Y^s([0, T])}\left\{\| \nab_x(\eta_*+{\varphi_0})\|_{H^{s-\hal}}+\| \eta_*\|_{H^{s+\hal}}\| \nab_x(\eta_*+{\varphi_0})\|_{H^{s-1}}\right.\\
&\qquad\qquad\left.+\| f_1\|_{Y^s([0, T])}\| \nab_x(\eta_*+{\varphi_0})\|_{H^{s-1}}\right\}
\end{aligned}
\end{equation}
for $f_1$,  $f_2\in B_\nu$, and 
\begin{equation}\label{est:cBd}
\begin{aligned}
&\| \cB(g_1, f)-\cB(g_2, f)\|_{Y^s([0, T])}\\
&\le C\| R(\eta_*+g_1)f-R(\eta_*+g_2)f\|_{L^2([0, T]; H^{s-\hal})}\\
&\le  C\| g_1-g_2\|_{L^\infty([0, T]; H^s)}\left\{\| \nab_xf\|_{L^2([0, T]; H^{s-\hal})}+\|g_1\|_{L^2([0, T]; H^{s+\hal})}\| \nab_xf\|_{L^\infty([0, T]; H^{s-1})}\right\}\\
&\quad+C\| g_1-g_2\|_{L^2([0, T]; H^s)}\|\eta_*\|_{H^{s+\hal}}\| \nab_xf\|_{L^\infty([0, T]; H^{s-1})}\\
&\quad+C\| g_1-g_2\|_{L^2([0, T]; H^{s+\hal})}\| \nab_xf\|_{L^\infty([0, T]; H^{s-1})}\\
&\le C\| g_1-g_2\|_{Y^s([0, T])}\|f\|_{Y^s([0, T])}(1+\|\eta_*\|_{H^{s+\hal}}+\| g_1\|_{Y^s([0, T])})
\end{aligned}
\end{equation}
for $g_1$, $g_2\in B_\nu$. In both  \eqref{est:cLd} and \eqref{est:cBd}, $C=C(s, b, d)$.

In view of \eqref{est:cL}, \eqref{est:cB}, \eqref{est:cLd} and \eqref{est:cBd}, we find that the conditions in Lemma \ref{lemm:fixedpoint} are satisfied with 
\begin{equation}
\begin{aligned}
C_\cL &= C\left\{\| \nab_x(\eta_*+{\varphi_0})\|_{H^{s-\hal}}+\| \eta_*\|_{H^{s+\hal}}\| \nab_x(\eta_*+{\varphi_0})\|_{H^{s-1}}\right\},\\
\cG_\cB(z) &= C(\|  \eta_*\|_{H^{s+\hal}}+z),\\
\cF_\cL(z) &= C\left\{\| \nab_x(\eta_*+{\varphi_0})\|_{H^{s-\hal}}+\| \eta_*\|_{H^{s+\hal}}\| \nab_x(\eta_*+{\varphi_0})\|_{H^{s-1}}+z\| \nab_x(\eta_*+{\varphi_0})\|_{H^{s-1}}\right\},\\
\cF_\cB(z) &=C(1+\|\eta_*\|_{H^{s+\hal}}+z),
\end{aligned}
\end{equation}
where $C=C(s, b, d)$. According to Theorem \ref{theo:existence:finite:p}, if $\|\nab{\varphi_0}\|_{H^{s-\hal}}$ is small enough then $\| \eta_*\|_{H^{s+\hal}}\le C(s, b, d)\| \nab {\varphi_0}\|_{H^{s-\hal}}$. Therefore, for sufficiently  small  $\|\nab{\varphi_0}\|_{H^{s-\hal}}$, we have $C_\cL<1$ and the conditions \eqref{cd:cG} and \eqref{cd:cF} hold for sufficiently small $r_*$ and $\nu$. Therefore, by virtue of Lemma \ref{lemm:fixedpoint} and with the aid of \eqref{est:linsemig}, there exist $\delta>0$ and $C(b, d)>0$ such that if $\|f_0\|_{H^s}<\delta$ then $\cN$ has a unique fixed point $f$ in $B_\nu$, and 
\[
\|f\|_{Y^s([0, T])}\le C(b, d)\| f_0\|_{H^s}\quad\forall T>0.
\] 
Since the smallness of ${\varphi_0}$, $\nu$  and $\delta$ is independent of $T$, $f$ is a global solution.  

Next, we prove that $f$ decay exponentially in $H^s$. Since $f\in Y^s([0, T])$ for all $T>0$, using \eqref{est:nonl2} and  \eqref{est:nonl1} we deduce that $\p_t f\in  L^2([0, T]; H^{s-\hal})$ for all $T>0$. Then applying Theorem 3.1 in \cite{LionMage} yields $f\in C([0, T]; H^s)$ for all $T>0$.  Appealing to \eqref{est:nonl2} and  \eqref{est:nonl1} again, we deduce
\begin{equation}
\begin{aligned}
\hal\frac{d}{dt}\| f(t)\|_{H^s}^2&=\langle \p_t f, f\rangle_{H^{s-\hal}, H^{s+\hal}}\\
&=\langle \gamma \p_1f, f\rangle_{H^{s-\hal}, H^{s+\hal}}-\langle m(D)f, f\rangle_{H^{s-\hal}, H^{s+\hal}}\\
&\quad -\langle R(\eta_*+f)f, f\rangle_{H^{s-\hal}, H^{s+\hal}}+\langle R(\eta_*)(\eta_*+{\varphi_0})-R(\eta_*+f)(\eta_*+{\varphi_0}), f\rangle_{H^{s-\hal}, H^{s+\hal}}\\
&\le -c_0\| f\|_{H^{s+\hal}}^2+C\| f\|^2_{H^{s+\hal}}\left\{\| \nab_x(\eta_*+{\varphi_0})\|_{H^{s-\hal}}+\| \eta_*\|_{H^{s+\hal}}\| \nab_x(\eta_*+{\varphi_0})\|_{H^{s-1}}\right\}\\
&\quad +C\left\{\|\eta_*\|_{H^{s+\hal}}\|f\|_{H^{s+\hal}}+\| f\|_{H^s}\| f\|_{H^{s+\hal}}\right\}\| f\|_{H^{s+\hal}},
\end{aligned}
\end{equation}
where we have used the facts that  $f(\cdot, t)$ has zero mean to have
\begin{equation}
\langle m(D)f, f\rangle_{H^{s-\hal}, H^{s+\hal}}\ge c_0\| f\|_{H^{s+\hal}}^2,\quad c_0=c_0(s, b, d).
\end{equation}
It follows that 
\begin{equation}\label{dtf:Hs}
\hal\frac{d}{dt}\| f(t)\|_{H^s}^2\le -c_0\| f\|_{H^{s+\hal}}^2+C\| f\|^2_{H^{s+\hal}}\left\{\mu+\| f\|_{H^s}\right\},
\end{equation}
where
\begin{equation}
\mu=\| \nab_x(\eta_*+{\varphi_0})\|_{H^{s-\hal}}+\| \eta_*\|_{H^{s+\hal}}\| \nab_x(\eta_*+{\varphi_0})\|_{H^{s-1}}+\|\eta_*\|_{H^{s+\hal}}.
\end{equation}
We choose $\|\nab_x {\varphi_0}\|_{H^{s-\hal}}$ small enough so that $\mu<\frac{c_0}{3C}$, and assume that $\| f_0\|_{H^s}<\min\{\delta, \frac{c_0}{3C}\}$. Using the continuity of $t\mapsto \| f(t)\|_{H^s}$, we deduce from \eqref{dtf:Hs} that  $\|f(t)\|_{H^s}<\frac{c_0}{3C}$ for all $t>0$, and thus 
\begin{equation}
\frac{d}{dt}\| f(t)\|_{H^s}^2\le -\frac{c_0}{3}\| f(t)\|_{H^{s+\hal}}^2\le -\frac{c_0}{3}\| f(t)\|_{H^s}^2.
\end{equation}
Therefore, we obtain the exponential decay
\begin{equation}
\| f(t)\|_{H^s}\le \|f_0\|_{H^s}e^{-\frac{c_0}{6}t}\quad\forall t>0
\end{equation}
and the global dissipation bound 
\begin{equation}
\int_0^\infty \| f(t)\|_{H^{s+\hal}}^2dt\le \frac{3}{c_0}\| f_0\|^2_{H^s}. 
\end{equation}
This completes the proof of Theorem \ref{theo:stability:intro}.
 
\end{proof}

\appendix

\section{Analytic tools} 

This appendix collects a number of analysis tools used throughout the paper.
 
\subsection{Specialized scales of anisotropic Sobolev spaces}\label{appendixA1}

In this subsection we recall the properties of a scale of anisotropic Sobolev spaces introduced in \cite{LeoniTice}.

\begin{dfn}\label{specialized_dfn}
Let $0 \le s \in \R$.
\begin{enumerate}
 \item We define the anisotropic Sobolev-type space 
\begin{equation}
 \H^s(\R^d) = \{ f \in \mathscr{S}'(\R^d) \st f = \bar{f}, \hat{f} \in L^1_{loc}(\R^d), \text{ and } \norm{f}_{\H^s} < \infty \},
\end{equation}
where the square-norm is defined by
\begin{equation}
 \norm{f}_{\H^s}^2 = \int_{B(0,1)} \frac{\xi_1^2 + \abs{\xi}^4 }{\abs{\xi}^2} \abs{\hat{f}(\xi)}^2 d\xi + \int_{B(0,1)^c} \br{\xi}^{2s} \abs{\hat{f}(\xi)}^2 d\xi.
\end{equation}
We endow the space $\H^s(\R^d)$ with the obvious associated inner-product.  We write $\H^s(\Sigma) = \H^s(\R^{n-1})$ with the usual identification of $\Sigma \simeq \R^{n-1}$. 

 \item We define 
\begin{equation}
 \H^s(\T^d) = \mathring{H}^s(\T^d) = \{ f \in H^s(\T^d) \st \int_{\T^d} f = 0 \}
\end{equation}
with the usual norm.

 \item We write $\H^s(\Sigma) = \H^s(\Gamma)$ via the natural identification of $\Sigma = \Gamma \times \{0\}$ with $\Gamma \in \{\R^{n-1},\T^{n-1}\}$.
\end{enumerate}
\end{dfn}

The following result summarizes the fundamental properties of this space.

\begin{thm}\label{specialized_properties}
Let $0 \le s \in \R$.  Then the following hold.
\begin{enumerate}
 \item $\H^s(\R^d)$ is a Hilbert space, and the set of real-valued Schwartz functions is dense in $\H^s(\R^d)$.

 \item $H^s(\R^d) \hookrightarrow \H^s(\R^d)$, and this inclusion is strict for $d\ge 2$.  If $d =1$, then $H^s(\R) = \H^s(\R)$.
 
 \item If $t \in \R$ and $s < t$, then we have the continuous inclusion $\H^t(\R^d) \hookrightarrow \H^s(\R^d)$.
 
 \item  For $R \in \R_+$ and $f \in \H^s(\R^d)$ define the low-frequency localization $f_{l,R} = (\hat{f} \vchi_{B(0,R)})^{\vee}$ and the high-frequency localization $f_{h,R} = (\hat{f} \vchi_{B(0,R)^c})^\vee$.  Then $f_{l,R} \in  \bigcap_{t \ge 0} \H^t(\R^d) \cap \bigcap_{k\in \N} C^k_0(\R^d)$, and $f_{h,R} \in \H^s(\R^d) \cap H^s(\R^d)$.  Moreover, we have the estimates 
\begin{equation}
 \norm{f_{l,R}}_{\H^s} \le \norm{f}_{\H^s} \text{ and } \norm{f_{h,R}}_{\H^s} \le \norm{f}_{\H^s}
\end{equation}
as well as 
\begin{equation}
 \norm{f_{l,R}}_{C^k_0} \ls_k \norm{f}_{\H^s},  \norm{f_{l,R}}_{\H^t} \ls \norm{f}_{\H^s}, \text{ and } \norm{f_{h,R}}_{H^s} \ls \norm{f}_{\H^s}.
\end{equation}

 \item For each $k \in \N$ we have the continuous inclusion $\H^s(\R^d) \hookrightarrow C^k_0(\R^d) + H^s(\R^d)$.

 \item If $s > d/2$ then $\hat{f}\in L^1(\R^d;\C)$, and 
\begin{equation}\label{L1cH}
 \norm{\hat{f}}_{L^1} \ls \norm{f}_{\H^s}.
\end{equation}

 \item If $k \in \N$ and $s >k+ d/2$, then we have the continuous inclusion  $\H^s(\R^d) \hookrightarrow C^k_0(\R^d)$.


 \item If $s \ge 1$, then  $\norm{\nab f}_{H^{s-1}} \ls \norm{f}_{\H^s}$ for each $f \in \H^s(\R^d)$. In particular, we have that $\nab : \H^s(\R^d) \to H^{s-1}(\R^d;\R^d)$ is a bounded linear map, and this map is injective.

 \item If $s \ge 1$, then $\snorm{\p_1 f}_{\dot{H}^{-1}} \ls \norm{f}_{\H^s}$ for $f \in \H^s(\R^d)$. In particular, we have that $\p_1 : \H^s(\R^d) \to H^{s-1}(\R^d)\cap \dot{H}^{-1}(\R^d)$ is a bounded linear map, and this map is injective.
\end{enumerate}
\end{thm}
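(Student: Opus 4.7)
My plan is to prove the nine items largely in sequence, reducing everything to pointwise comparisons against the weight
\begin{equation}
w(\xi) = \frac{\xi_1^2 + |\xi|^4}{|\xi|^2} \vchi_{B(0,1)}(\xi) + \langle \xi \rangle^{2s} \vchi_{B(0,1)^c}(\xi),
\end{equation}
so that $\|f\|_{\mathcal{H}^s}^2 = \int_{\R^d} w(\xi)|\hat f(\xi)|^2 \, d\xi$. The first three items are direct. Completeness in (1) follows by viewing $\mathcal{H}^s$ as the real subspace (enforcing Hermitian symmetry) of $L^2(w\,d\xi)$; density of Schwartz functions is achieved by truncating $\hat f$ away from a small neighborhood of the origin and mollifying. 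Item (2) comes from the inequality $w \ls \langle \xi \rangle^{2s}$ on all of $\R^d$, with strictness in $d \ge 2$ witnessed by functions whose Fourier transforms are concentrated in narrow cones around $\{\xi_1 = 0\}$ where $w$ vanishes quadratically at the origin; in $d=1$, $w \asymp 1 + \xi_1^2 \asymp \langle\xi\rangle^{2s}$ on $B(0,1)$, which recovers equality of the norms. Item (3) uses $\langle \xi \rangle^{2s} \le \langle \xi \rangle^{2t}$ on $B(0,1)^c$ while the low-frequency weights coincide.

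The heart of the argument is item (4). Since $\hat f \in L^1_{\text{loc}}$, the compactly supported factor $\hat f\vchi_{B(0,R)}$ lies in $L^1(\R^d)$, so Riemann--Lebesgue and differentiation under the integral sign give $f_{l,R} \in C^\infty(\R^d)$ with all derivatives vanishing at infinity. The quantitative $C^k_0$ bound will follow from Cauchy--Schwarz:
\begin{equation}
|\partial^\alpha f_{l,R}(x)| \le \left( \int_{B(0,R)} \frac{|\xi|^{2|\alpha|}}{w(\xi)}\,d\xi \right)^{\!1/2} \|f\|_{\mathcal{H}^s}.
\end{equation}
The high-frequency part satisfies $\|f_{h,R}\|_{H^s}^2 = \int_{B(0,R)^c} \langle \xi \rangle^{2s}|\hat f|^2 \le \|f\|_{\mathcal{H}^s}^2$ trivially. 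Items (5)--(7) are then quick consequences: (5) uses the decomposition $f = f_{l,R} + f_{h,R}$ directly; for (6) one estimates $\int|\hat f|$ by Cauchy--Schwarz on $B(0,1)$ (using $\int_{B(0,1)} w^{-1} < \infty$) and on $B(0,1)^c$ (using $\int_{B(0,1)^c} \langle\xi\rangle^{-2s} < \infty$ when $s > d/2$); and (7) follows by applying (6) to $\partial^\alpha f$ with the Fourier-multiplier comparison $|\xi|^{2|\alpha|} w(\xi) \ls w_{s-|\alpha|}(\xi)$ in the obvious sense, combined with Riemann--Lebesgue.

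Items (8) and (9) are pointwise multiplier bounds against $w$. For (8), on $B(0,1)$ we have $|\xi|^2 \langle\xi\rangle^{2(s-1)} \asymp |\xi|^2$ and $|\xi|^4 \le \xi_1^2 + |\xi|^4$, so $|\xi|^2 \cdot \langle\xi\rangle^{2(s-1)} \le w$; on $B(0,1)^c$ the bound $|\xi|^2\langle\xi\rangle^{2(s-1)} \ls \langle\xi\rangle^{2s}$ is immediate. For (9), on $B(0,1)$ one has $\xi_1^2/|\xi|^2 \le w(\xi)$ by construction and $\xi_1^2/|\xi|^2 \le 1 \ls \langle\xi\rangle^{2s}$ on $B(0,1)^c$. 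Injectivity in both cases follows from the fact that $\hat f \in L^1_{\text{loc}}$ is an honest function, so the identity $\xi \hat f \equiv 0$ (or $\xi_1 \hat f \equiv 0$) a.e.\ forces $\hat f = 0$ a.e., hence $f = 0$. I expect the main obstacle to be the careful verification of finiteness of $\int_{B(0,R)} w^{-1}|\xi|^{2|\alpha|}\,d\xi$ in item (4) near the degeneracy locus $\{\xi_1 = 0\}$ of $w$: since $w(\xi) \gtrsim |\xi|^2$ there, the integrand is controlled by $|\xi|^{2|\alpha|-2}$, which is locally integrable for all $\alpha$ when $d \ge 2$, and the $d=1$ case is nondegenerate---once this geometric bookkeeping is settled, the rest of the theorem reduces to bookkeeping with the definition of $w$ and standard Cauchy--Schwarz.
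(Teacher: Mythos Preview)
The paper does not prove this theorem directly but cites Proposition 5.2 and Theorems 5.5--5.6 of \cite{LeoniTice}, so your self-contained argument is necessarily more detailed than what appears here. Your overall strategy is sound, but there is a genuine gap in precisely the step you flagged as the main obstacle.

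You claim that the crude lower bound $w(\xi) \gtrsim |\xi|^2$ controls $\int_{B(0,R)} |\xi|^{2|\alpha|} w(\xi)^{-1}\,d\xi$ by $\int_{B(0,R)} |\xi|^{2|\alpha|-2}\,d\xi$, and that the latter is locally integrable for all $\alpha$ when $d \ge 2$. This is false when $|\alpha|=0$ and $d=2$: there $\int_{|\xi|<1} |\xi|^{-2}\,d\xi = 2\pi\int_0^1 r^{-1}\,dr = \infty$. The bound $w \ge |\xi|^2$ discards the anisotropic term $\xi_1^2/|\xi|^2$, which is exactly what makes the integral converge in two dimensions. Writing $w(\xi) = \cos^2\theta + r^2$ in polar coordinates (with $\theta$ measured from the $\xi_1$-axis), one computes for $d=2$ that $\int_0^{2\pi}(\cos^2\theta + r^2)^{-1}\,d\theta = 2\pi/(r\sqrt{1+r^2})$, whence
\[
\int_{B(0,1)} w(\xi)^{-1}\,d\xi = 2\pi\int_0^1 (1+r^2)^{-1/2}\,dr < \infty.
\]
Your argument works as written for $d \ge 3$ (where $|\xi|^{-2}$ is genuinely integrable) and for $d=1$ (where $w$ is nondegenerate). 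Since the same integral underlies your proof of item (6), that item shares the gap. A minor secondary point: your inequality $\|f_{h,R}\|_{H^s}^2 \le \|f\|_{\mathcal{H}^s}^2$ holds as an identity only for $R \ge 1$; when $R<1$ the annulus $R \le |\xi| \le 1$ contributes a factor of order $R^{-2}$, which is harmless since the theorem only asserts $\lesssim$.
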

\begin{proof}
 These are proved in  Proposition 5.2 and Theorems 5.5 and 5.6 of \cite{LeoniTice}.

\end{proof}

Next we recall another space introduced in \cite{LeoniTice} that will be useful in working with the Poisson extension of functions in $\H^s(\Sigma)$.

\begin{dfn}\label{P_aniso_def}
Let $0 \le s \in \R$ and $n \ge 2$. 
\begin{enumerate}
 \item When $\Gamma = \R^{n-1}$ we define the space 
\begin{multline}
 \P^s(\Omega) = H^s(\Omega) + \H^s(\Sigma) = \{f \in L^1_{\text{loc}}(\Omega) \st \text{there exist } g \in H^s(\Omega) \text{ and } h \in \H^s(\Sigma) \\
 \text{ such that } f(x) = g(x) + h(x') \text{ for almost every }x \in \Omega \}. 
\end{multline}
We endow $\P^s(\Omega)$ with the norm 
\begin{equation}
 \norm{f}_{\P^s} = \inf\{ \norm{g}_{H^s} + \norm{h}_{\H^s} \st f = g +h \text{ for } g \in H^s(\Omega), h\in \H^s(\R^{n-1})  \}.
\end{equation}
 \item When $\Gamma = \T^{n-1}$ we define the space $\P^s(\Omega) = H^s(\Omega)$ and endow it with the usual $H^s(\Omega)$ norm.
\end{enumerate}

\end{dfn}

The key properties of this space are recorded in the following.

\begin{thm}\label{Ps_properties}
 Let $0 \le s \in \R$ and $n \ge 2$.  Then the following hold.
\begin{enumerate}
\item If $t \in \R$ and $s < t$, then we have the continuous inclusion $\P^t(\Omega) \subset \P^s(\Omega)$.

\item For each $f \in \H^s(\Sigma)$ we have that $\norm{f}_{\P^s} \le \norm{f}_{\H^s}$, and hence we have the continuous inclusion $\H^s(\Sigma) \subset \P^s(\Omega)$.

\item If $k \in \N$ and $s >k+ n/2$, then $\norm{f}_{C^k_b} \ls  \norm{f}_{\P^s}$ for all $f \in \P^s(\Omega)$. Moreover, we have the continuous inclusion  
\begin{equation}
\P^s(\Omega) \subseteq \{f \in C^k_b(\Omega) \st \lim_{\abs{x'} \to \infty} \p^\alpha f(x) = 0 \text{ for } \abs{\alpha} \le k\} \subset     C^k_b(\Omega).
\end{equation}

 \item If $s \ge 1$, then $\norm{\nab f}_{H^{s-1}} \ls \norm{f}_{\P^s}$ for each $f \in \P^s(\Omega)$.
In particular, we have that $\nab : \P^s(\Omega) \to H^{s-1}(\Omega;\R^n)$ is a bounded linear map.

 \item If $s > 1/2$, then the trace map $\tr: H^s(\Omega) \to H^{s-1/2}(\Sigma)$ extends to a bounded linear map $\tr: \P^s(\Omega) \to \H^{s-1/2}(\Sigma)$.  More precisely, the following hold.
\begin{enumerate}
 \item If $f \in C^0(\bar{\Omega}) \cap \P^s(\Omega)$, then $\tr f = f \vert_{\Sigma}$.
 \item If $\varphi \in C_c^1(\R^{n-1} \times (-b,0])$, then 
\begin{equation}
 \int_{\Sigma} \tr f \varphi = \int_{\Omega} \p_n f \varphi + f \p_n \varphi \text{ for all } f \in \P^s(\Omega).
\end{equation}
 \item We have the bound $\norm{\tr f}_{\H^{s-1/2}} \ls \norm{f}_{\P^s}$  for all $f\in \P^s(\Omega)$.
\end{enumerate}

\end{enumerate} 
 
\end{thm}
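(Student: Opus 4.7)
\medskip
\noindent\textbf{Proof proposal for Theorem \ref{Ps_properties}.}

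The plan is to handle the torus case as an immediate consequence of standard properties of $H^s(\Omega)$ and then focus on $\Gamma = \R^{n-1}$, where everything reduces via the decomposition $f = g + h$ with $g \in H^s(\Omega)$ and $h \in \H^s(\Sigma)$ to the analogous facts for each factor: elementary Sobolev embedding/trace theory for $g$, together with the properties of the anisotropic scale $\H^s$ recorded in Theorem \ref{specialized_properties}. Throughout, the infimum defining $\norm{\cdot}_{\P^s}$ permits us to pick, for each $\ep > 0$, a decomposition $f = g + h$ with $\norm{g}_{H^s} + \norm{h}_{\H^s} \le \norm{f}_{\P^s} + \ep$.

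For items (1) and (2), I would decompose $f \in \P^t(\Omega)$ as $g + h$, use $\norm{g}_{H^s} \le \norm{g}_{H^t}$ and the inclusion $\H^t \hookrightarrow \H^s$ from Theorem \ref{specialized_properties}(3) applied to $h$, and then take infima; item (2) just uses the trivial decomposition $f = 0 + f$. For item (3), with $s > k + n/2 > k + (n-1)/2$, standard Sobolev embedding gives $\norm{g}_{C^k_b(\Omega)} \ls \norm{g}_{H^s}$ together with decay as $\abs{x'} \to \infty$ (since $\Omega$ is a strip and $H^s$-functions for supercritical $s$ vanish at infinity), while Theorem \ref{specialized_properties}(7) yields $\norm{h}_{C^k_0(\R^{n-1})} \ls \norm{h}_{\H^s}$; summing and taking infima produces the claimed bound. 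Item (4) follows because $\nab g \in H^{s-1}(\Omega;\R^n)$ with norm $\ls \norm{g}_{H^s}$, while $\nab h = (\nab' h, 0)$ has $\nab' h \in H^{s-1}(\R^{n-1})$ by Theorem \ref{specialized_properties}(8), and lifting to $\Omega$ costs only a factor of $\sqrt{b}$ from the vertical integration.

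Item (5) is the most delicate and I expect it to be the main obstacle. The strategy is: for $f = g + h$ set $\tr f := \tr g + h$, where $\tr g \in H^{s-1/2}(\Sigma) \hookrightarrow \H^{s-1/2}(\Sigma)$ by Theorem \ref{specialized_properties}(2) and $h \in \H^s(\Sigma) \hookrightarrow \H^{s-1/2}(\Sigma)$ by Theorem \ref{specialized_properties}(3). The key point is well-definedness: if $g_1 + h_1 = g_2 + h_2$ a.e. in $\Omega$, then $g_1 - g_2$ equals $h_2 - h_1$ a.e., so $g_1 - g_2$ depends only on $x'$ and its standard trace on $\Sigma$ coincides with $h_2 - h_1$, giving $\tr g_1 + h_1 = \tr g_2 + h_2$. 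Boundedness follows by the same inf argument used above. For sub-item (a), if $f$ is also continuous, pick a decomposition, approximate $g$ in $H^s$ by smooth functions up to the boundary, pass to the limit to conclude $\tr g = g\vert_\Sigma$ as $L^2$ functions, hence $\tr f = (g+h)\vert_\Sigma = f\vert_\Sigma$. For sub-item (b), the integration-by-parts identity holds for $g \in H^s(\Omega)$ by standard density of $C^\infty(\bar\Omega)$ in $H^s(\Omega)$ and passing to the limit; for the part $h(x')$, the integral $\int_\Omega \p_n h\, \varphi + h\, \p_n \varphi = \int_\Omega \p_n(h\varphi)$ reduces via Fubini to $\int_{\R^{n-1}} h(x') \int_{-b}^0 \p_n \varphi(x', x_n)\, dx_n\, dx' = \int_\Sigma h \varphi$ (using compact support of $\varphi$ in $x_n < 0$ ignored only if it vanishes at $-b$; more carefully, one uses that $\varphi \in C^1_c(\R^{n-1}\times(-b,0])$ may be nonzero at the top, and the bottom trace vanishes as $\varphi$ is compactly supported away from $\{x_n = -b\}$), which is exactly the needed identity. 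Summing yields (b), and sub-item (c) is just the bound already established.
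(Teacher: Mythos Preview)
Your proposal is essentially correct and, in fact, more substantive than the paper's own treatment: the paper simply cites Theorems 5.7, 5.9, and 5.11 of \cite{LeoniTice} for the case $\Gamma = \R^{n-1}$ and remarks that the torus case follows from standard Sobolev facts. So you are not reproducing the paper's proof but rather sketching what presumably appears in the cited reference.

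Your decomposition-based strategy is the natural one and each item is handled correctly in outline. A few places where you should tighten: in item (5a), your argument that $\tr g = g|_\Sigma$ needs $g$ itself to be continuous, which is not guaranteed by $f$ being continuous; the clean fix is to first use Theorem \ref{specialized_properties}(5) to split $h = h_1 + h_2$ with $h_1 \in C^0_0(\R^{n-1})$ and $h_2 \in H^s(\R^{n-1})$, absorb $h_2$ into the $H^s(\Omega)$ part, and then note that $g + h_2 = f - h_1$ is continuous so its trace agrees with its restriction. In item (5b), your parenthetical about the support of $\varphi$ is garbled; the point is simply that $\varphi \in C^1_c(\R^{n-1} \times (-b,0])$ forces $\varphi(\cdot,-b) = 0$, so the boundary term at the bottom vanishes and the Fubini computation goes through cleanly. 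With these two clarifications your argument is complete.
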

\begin{proof}
In the case $\Gamma = \R^{n-1}$ these are proved in Theorems 5.7, 5.9, and 5.11 of \cite{LeoniTice}.  In the case $\Gamma= \T^{n-1}$ they follow from standard properties of Sobolev spaces.
\end{proof}

Next we record a crucial fact about the $\P^s$ spaces: they give rise to standard $H^s$ multipliers.

\begin{thm}\label{Ps_product_supercrit}
Let $n \ge 2$ and $s > n/2$.  Then for $0 \le r \le s$ 
\begin{equation}\label{omega_product_supercrit_00}
 \norm{fg}_{H^r} \ls \norm{f}_{\P^s} \norm{g}_{H^r} \text{ for all }f\in \P^s(\Omega) \text{ and }g \in H^r(\Omega).
\end{equation}
In particular, for $1 \le k \in \N$ the mapping 
\begin{equation}\label{omega_product_supercrit_01}
H^r(\Omega) \times \prod_{j=1}^k \P^s(\Omega) \ni (g,f_1,\dotsc,f_k) \mapsto g \prod_{j=1}^k f_j \in H^r(\Omega) 
\end{equation}
is a bounded $(k+1)-$linear map.    
\end{thm}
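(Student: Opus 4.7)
\medskip

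\noindent\textbf{Proof proposal.} The plan is to reduce the claimed product estimate to standard $H^s$-multiplier theory by unpacking the definition of $\P^s(\Omega)$. In the torus case $\P^s(\Omega) = H^s(\Omega)$, so the bound \eqref{omega_product_supercrit_00} is nothing more than the classical Sobolev product estimate $\norm{fg}_{H^r} \ls \norm{f}_{H^s}\norm{g}_{H^r}$, which holds on the bounded Lipschitz domain $\Omega$ whenever $s > n/2$ and $0 \le r \le s$. Accordingly, the only substantive work is the case $\Gamma = \R^{n-1}$, where $\P^s(\Omega) = H^s(\Omega) + \H^s(\Sigma)$. The strategy is: for any decomposition $f = g + h$ with $g \in H^s(\Omega)$ and $h \in \H^s(\Sigma)$, prove the pointwise estimate
\begin{equation}
\norm{fg'}_{H^r} \ls \bigl(\norm{g}_{H^s} + \norm{h}_{\H^s}\bigr) \norm{g'}_{H^r}
\end{equation}
for every $g' \in H^r(\Omega)$, and then take the infimum over all admissible decompositions to pick up $\norm{f}_{\P^s}$.

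The $g$-contribution is immediate: by the standard $H^s$ multiplier estimate on $\Omega$ (valid since $s > n/2$ and $0 \le r \le s$, via an extension operator $H^s(\Omega) \hookrightarrow H^s(\R^n)$ combined with the classical $\R^n$ estimate), we have $\norm{g g'}_{H^r(\Omega)} \ls \norm{g}_{H^s(\Omega)} \norm{g'}_{H^r(\Omega)}$. The delicate part is the $h$-contribution, since a priori $h \in \H^s(\Sigma) \supsetneq H^s(\R^{n-1})$ need not have any $x_n$-regularity and need not lie in $L^2(\Omega)$. Here I will exploit the frequency-splitting from item 4 of Theorem \ref{specialized_properties}, writing $h = h_{l,1} + h_{h,1}$. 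The low-frequency piece $h_{l,1}$ lies in $C^k_0(\R^{n-1})$ for every $k$ with $\norm{h_{l,1}}_{C^k_0} \ls_k \norm{h}_{\H^s}$; extending it trivially in $x_n$, it becomes a $C^k_b(\Omega)$ multiplier on $H^r(\Omega)$ for any $k \ge \lceil r \rceil$, yielding
\begin{equation}
\norm{h_{l,1} g'}_{H^r(\Omega)} \ls \norm{h}_{\H^s(\Sigma)} \norm{g'}_{H^r(\Omega)}.
\end{equation}
The high-frequency piece $h_{h,1}$ belongs to $H^s(\R^{n-1})$ with $\norm{h_{h,1}}_{H^s} \ls \norm{h}_{\H^s}$; tensoring with a smooth cutoff $\chi(x_n) \in C^\infty_c(\R)$ that equals $1$ on $[-b,0]$ produces $\tilde h_{h,1}(x',x_n) = h_{h,1}(x')\chi(x_n) \in H^s(\R^n)$ with comparable norm, whose restriction agrees with $h_{h,1}$ on $\Omega$ and lies in $H^s(\Omega)$. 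The classical $H^s$-multiplier estimate then closes the $h$-contribution.

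The main conceptual obstacle is precisely this last step: ensuring that an element of $\H^s(\Sigma)$, which can fail to be in $L^2$ at low frequencies, still produces a bounded $H^r(\Omega)$-multiplier. The frequency decomposition sidesteps this cleanly by separating the non-$L^2$ low-frequency behavior (handled via $C^k_b$-multiplier bounds) from the genuinely $H^s$ high-frequency behavior. Once \eqref{omega_product_supercrit_00} is established, the multilinear claim \eqref{omega_product_supercrit_01} follows immediately by induction on $k$: at each stage the product $\prod_{j=1}^{k-1} f_j \cdot g$ lies in $H^r(\Omega)$ by the inductive hypothesis, and multiplying by $f_k \in \P^s(\Omega)$ preserves $H^r(\Omega)$ with the correct norm bound, so the resulting $(k+1)$-linear map is bounded with operator norm controlled by $\prod_{j=1}^k \norm{f_j}_{\P^s}$.
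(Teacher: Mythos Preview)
Your argument is correct, and it takes a somewhat different route from the paper's proof. The paper argues by interpolation: it cites the case $r=s$ from Theorem 5.14 of \cite{LeoniTice}, handles $r=0$ directly via the embedding $\P^s(\Omega) \hookrightarrow C^0_b(\Omega)$ (item 3 of Theorem \ref{Ps_properties}, since $s>n/2$), and then interpolates to cover $0<r<s$. Your approach instead works directly at every $r$ by opening up the structure of $\P^s(\Omega)$: you split $f=g+h$ with $g\in H^s(\Omega)$ and $h\in\H^s(\Sigma)$, handle $g$ by the classical Sobolev multiplier estimate on $\Omega$, and then further split $h=h_{l,1}+h_{h,1}$ via Theorem \ref{specialized_properties}, treating the low-frequency piece as a $C^k_b$ multiplier and the high-frequency piece (after tensoring with a vertical cutoff) as an $H^s(\Omega)$ multiplier.

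The tradeoffs: the paper's argument is shorter on the page but imports both the $r=s$ case from an external reference and the Sobolev interpolation machinery; your argument is fully self-contained and avoids interpolation entirely, at the cost of explicitly invoking the low/high frequency decomposition and the $C^k_b$-multiplier bound. In effect, your proof is essentially what the cited Theorem 5.14 of \cite{LeoniTice} must contain for $r=s$, extended uniformly to all $0\le r\le s$. One small point of presentation: you reuse the letter $g$ for both a piece of the decomposition of $f$ and (as $g'$) for the generic $H^r$ function being multiplied, which is harmless mathematically but worth renaming for clarity.
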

\begin{proof}
When $\Gamma = \T^{n-1}$ this follows from the standard properties of Sobolev spaces.  Assume then that $\Gamma = \R^{n-1}$.  The bound \eqref{omega_product_supercrit_00} is proved for $r=s$ in Theorem 5.14 in \cite{LeoniTice}.  When $r=0$, the bound \eqref{omega_product_supercrit_00} follows immediately from the third item of Theorem \ref{Ps_properties}.  The general case $0 < r < s$ then follows from these endpoint bounds and interpolation (see, for instance, \cite{bergh_lof,triebel}).  
\end{proof}

\subsection{Poisson extension} \label{appendix:poisson_ext}

We now study the Poisson extension operator, first on standard Sobolev spaces.

\begin{prop}\label{poisson_sobolev_map}
Let $-1/2 \le s \in \R$.  For $\eta \in H^s(\Sigma)$ define $\pf \eta : \Omega \to \R$ via 
\begin{equation}
 \pf \eta(x) = \frac{1}{(2\pi)^{n-1}} \int_{\hat{\Gamma}} e^{i x'\cdot \xi} e^{\abs{\xi} x_n} \hat{\eta}(\xi) d\xi.
\end{equation}
Then $\pf \eta \in H^{s+1/2}(\Omega)$ and $\norm{\pf \eta}_{H^{s+1/2}} \ls \norm{\eta}_{H^{s}}$.  In particular, $\pf : H^{s}(\Sigma) \to H^{s+1/2}(\Omega)$ defines a bounded linear map.
\end{prop}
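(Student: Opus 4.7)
The plan is to combine Plancherel's theorem in the horizontal variables with an explicit integration in $x_n$, first establishing the bound for integer values of $s+1/2$ and then extending to the full range by interpolation. Since $\widehat{\pf \eta}(\xi, x_n) = e^{|\xi| x_n} \hat\eta(\xi)$ (Fourier transform in $x'$ only), for any multi-index $\alpha = (\alpha', \alpha_n)$ with $|\alpha|=|\alpha'|+\alpha_n$ we have $\widehat{\partial^\alpha \pf\eta}(\xi, x_n) = (i\xi)^{\alpha'} |\xi|^{\alpha_n} e^{|\xi| x_n} \hat\eta(\xi)$. Applying Plancherel in $x'$ and Fubini then yields
\begin{equation*}
\|\partial^\alpha \pf \eta\|_{L^2(\Omega)}^2 \;=\; \int_{\hat\Gamma} |\xi|^{2|\alpha|} \, \Phi(|\xi|) \, |\hat\eta(\xi)|^2 \, d\xi,
\qquad
\Phi(r) := \int_{-b}^0 e^{2r x_n}\, dx_n,
\end{equation*}
where $\Phi(r) = (1 - e^{-2rb})/(2r)$ for $r>0$ and $\Phi(0) = b$.

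For $m \in \N_0$ and $s = m - 1/2 \geq -1/2$, I would sum the above over $|\alpha| \le m$, using the equivalence $\|u\|_{H^m(\Omega)}^2 \sim \sum_{|\alpha|\le m} \|\partial^\alpha u\|_{L^2(\Omega)}^2$. The key pointwise bound $\Phi(r) \lesssim_b (1+r)^{-1}$, uniform in $r \ge 0$ (for $\Gamma = \T^{n-1}$, restricted to $r \in \{0\} \cup [1,\infty)$), then gives
\begin{equation*}
\|\pf\eta\|_{H^m(\Omega)}^2 \;\lesssim_b\; \int_{\hat\Gamma}(1+|\xi|^2)^m (1+|\xi|)^{-1} |\hat\eta(\xi)|^2\, d\xi \;\lesssim\; \|\eta\|_{H^{m-1/2}(\Sigma)}^2,
\end{equation*}
which establishes the desired estimate when $s+1/2 \in \N_0$. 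In particular, this handles the endpoint $s = -1/2$ (i.e., $m=0$) directly.

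For non-integer $s+1/2$, I would invoke real (or complex) interpolation for the Sobolev scale on $\Omega$ and on $\Sigma$. Since $\Omega$ is either a smooth product manifold with boundary (when $\Gamma = \T^{n-1}$) or the infinite Lipschitz slab $\R^{n-1} \times (-b,0)$, the standard identities $[H^{m_0}(\Omega), H^{m_1}(\Omega)]_\theta = H^{(1-\theta)m_0 + \theta m_1}(\Omega)$ and $[H^{s_0}(\Sigma), H^{s_1}(\Sigma)]_\theta = H^{(1-\theta)s_0 + \theta s_1}(\Sigma)$ hold (see, e.g., \cite{bergh_lof,triebel}), and the bounded linear maps $\pf: H^{m-1/2}(\Sigma) \to H^m(\Omega)$ interpolate to give the claimed bound for all $s \geq -1/2$. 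The main (minor) obstacle is really just the uniform control of $\Phi$: one must treat the regimes $r\to 0^+$ (where $\Phi(r)\to b$) and $r \to \infty$ (where $\Phi(r) \sim 1/(2r)$) separately, and for $\Gamma=\T^{n-1}$ handle $\xi=0$ by hand. The asymptotic $1/(2r)$ decay at high frequency is precisely the mechanism producing the half-derivative gain inherent in the Poisson-kernel / trace correspondence.
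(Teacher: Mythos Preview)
Your proposal is correct and follows essentially the same approach as the paper's own proof: both compute the horizontal Fourier transform, evaluate $\int_{-b}^0 e^{2|\xi|x_n}\,dx_n$ and note its asymptotics (your $\Phi(r)\lesssim(1+r)^{-1}$ is exactly the paper's observation that this integral is $\asymp b$ for small $|\xi|$ and $\asymp 1/(2|\xi|)$ for large $|\xi|$), establish the bound for integer $m=s+1/2$, and then interpolate. The only cosmetic difference is that you spell out the contribution of each $\partial^\alpha$ separately, whereas the paper passes directly to the bound $\|\pf\eta\|_{H^m}^2 \lesssim \int \langle\xi\rangle^{2m}|\hat\eta(\xi)|^2\int_{-b}^0 e^{2|\xi|x_n}\,dx_n\,d\xi$.
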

\begin{proof}
We'll only present the proof in the case $\Gamma = \R^{n-1}$, and the case $\Gamma = \T^{n-1}$ is similar and simpler.  First note that 
\begin{equation}\label{poisson_sobolev_map_1}
\int_{-b}^0 \abs{e^{\abs{\xi} x_n}}^2 dx_n = \frac{1}{2 \abs{\xi}}(1 - e^{-2 \abs{\xi}b})  \asymp 
\begin{cases}
b & \text{for } \abs{\xi} \asymp 0 \\
\frac{1}{2 \abs{\xi}} &\text{for } \abs{\xi} \asymp \infty.
\end{cases}
\end{equation}

Suppose initially that $m \in \N$ and that $\eta \in H^{m-1/2}(\Sigma)$. Using \eqref{poisson_sobolev_map_1}, we may readily bound 
\begin{equation}
 \norm{\pf \eta}_{H^m(\Omega)}^2 \ls \int_{\R^{n-1}}  \br{\xi}^{2m} \abs{\hat{\eta}(\xi)}^2  \int_{-b}^0\abs{e^{\abs{\xi} x_n}}^2 dx_n d\xi \ls \int_{\R^{n-1}} \br{\xi}^{2m-1} \abs{\hat{\eta}(\xi)}^2 d\xi = \norm{\eta}_{H^{m-1/2}}^2.
\end{equation}
Thus, $\pf : H^{m-1/2}(\Sigma) \to H^m(\Omega)$ defines a bounded linear operator for every $m \in \N$.  Standard interpolation theory (see, for instance, \cite{bergh_lof,triebel}) then shows that $\pf : H^{t-1/2}(\Sigma) \to H^t(\Omega)$ defines a bounded linear operator for every $0 \le t \in \R$, which is the desired result upon setting $t-1/2 = s$.
\end{proof}

Next we consider the Poisson extension on the anisotropic spaces $\H^s(\Sigma)$, which requires the use of the $\P$ spaces from Definition \ref{P_aniso_def}.

\begin{thm}\label{poisson_aniso_map}
Let $0 \le s \in \R$ and $\Gamma = \R^{n-1}$.  For $\eta \in \H^s(\Sigma)$ define $\pf \eta : \Omega \to \R$ via 
\begin{equation}
 \pf \eta(x) = \frac{1}{(2\pi)^{n-1}}  \int_{\R^{n-1}} e^{i x'\cdot \xi} e^{\abs{\xi} x_n} \hat{\eta}(\xi) d\xi.
\end{equation}
Then the following hold.
\begin{enumerate}
 \item $\pf \eta - \eta_l \in H^{s+1/2}(\Omega)$ and $\norm{\pf \eta - \eta_l}_{H^{s+1/2}} \ls \norm{\eta}_{\H^s}$, where $\eta_l = \eta_{l,1} \in \H^{s+1/2}(\Sigma) \cap \bigcap_{k \in \N} C^k_0(\Sigma)$ in the notation of Theorem \ref{specialized_properties}.
 \item $\pf \eta \in \P^{s+1/2}(\Omega)$ and $\norm{\pf \eta}_{\P^{s+1/2}} \ls \norm{\eta}_{\H^{s}}$.
 \item The induced map $\pf : \H^{s}(\Sigma) \to \P^{s+1/2}(\Omega)$ is bounded and linear.
\end{enumerate}

\end{thm}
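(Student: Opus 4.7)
The plan is to decompose $\eta$ into low and high frequency pieces and treat each separately, exploiting the fact that the unusual low-frequency behavior of $\H^s$ is innocuous after Poisson extension.

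First, using Theorem \ref{specialized_properties}(4), write $\eta = \eta_l + \eta_h$ with $\eta_l = (\hat{\eta}\vchi_{B(0,1)})^\vee$ and $\eta_h = (\hat{\eta}\vchi_{B(0,1)^c})^\vee$, and record that $\eta_l \in \bigcap_t \H^t(\Sigma)$ with $\norm{\eta_l}_{\H^t} \ls \norm{\eta}_{\H^s}$ for every $t$, and $\eta_h \in H^s(\Sigma)$ with $\norm{\eta_h}_{H^s} \ls \norm{\eta}_{\H^s}$. Observe that since $\pf$ is a Fourier multiplier in the horizontal variable, it distributes over the splitting.

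Next I would handle the two pieces. For $\eta_h$: Proposition \ref{poisson_sobolev_map} immediately delivers $\pf\eta_h \in H^{s+1/2}(\Omega)$ with $\norm{\pf\eta_h}_{H^{s+1/2}} \ls \norm{\eta_h}_{H^s} \ls \norm{\eta}_{\H^s}$. For the low-frequency piece, interpret $\eta_l$ as a function on $\Omega$ via extension by constant in $x_n$, and compute
\begin{equation}
\widehat{\pf \eta_l - \eta_l}(\xi, x_n) = \vchi_{B(0,1)}(\xi) \bigl(e^{\abs{\xi} x_n} - 1\bigr)\hat{\eta}(\xi).
\end{equation}
The main estimate will be to show that for every $m \in \N$,
\begin{equation}\label{poisson_plan_key}
\norm{\pf\eta_l - \eta_l}_{H^m(\Omega)} \ls_m \norm{\eta}_{\H^s}.
\end{equation}
Using Plancherel in the horizontal variable and splitting $\alpha = (\alpha',\alpha_n)$, the case $\alpha_n = 0$ is handled by the pointwise inequality $\abs{e^{\abs{\xi} x_n} - 1} \le b\abs{\xi}$ on $\supp\vchi_{B(0,1)}$ and $x_n \in (-b,0)$; the case $\alpha_n \ge 1$ is handled by $\int_{-b}^0 \abs{\xi}^{2\alpha_n} e^{2\abs{\xi} x_n}\,dx_n \ls \abs{\xi}^{2\alpha_n - 1}(1 - e^{-2\abs{\xi} b}) \ls \abs{\xi}^{2\alpha_n}$ on $B(0,1)$. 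In both cases the integrand is bounded by a constant times $\abs{\xi}^2 \vchi_{B(0,1)}(\xi) \abs{\hat{\eta}(\xi)}^2$. The crucial point is that the $\H^s$-weight satisfies $\abs{\xi}^2 \le \frac{\xi_1^2 + \abs{\xi}^4}{\abs{\xi}^2}$ on $B(0,1)$, so
\begin{equation}
\int_{B(0,1)} \abs{\xi}^2 \abs{\hat{\eta}(\xi)}^2\,d\xi \le \norm{\eta}_{\H^s}^2,
\end{equation}
yielding \eqref{poisson_plan_key}. Standard interpolation between integer Sobolev spaces then gives the same bound for the half-integer $s+1/2$.

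Combining the two pieces proves item (1): $\pf\eta - \eta_l = \pf\eta_h + (\pf\eta_l - \eta_l) \in H^{s+1/2}(\Omega)$ with norm controlled by $\norm{\eta}_{\H^s}$. Item (2) then follows from the definition of $\P^{s+1/2}(\Omega)$ by writing $\pf\eta = \eta_l + (\pf\eta - \eta_l)$, where $\eta_l \in \H^{s+1/2}(\Sigma)$ and the difference lies in $H^{s+1/2}(\Omega)$, with the infimum bound
\begin{equation}
\norm{\pf\eta}_{\P^{s+1/2}} \le \norm{\eta_l}_{\H^{s+1/2}} + \norm{\pf\eta - \eta_l}_{H^{s+1/2}} \ls \norm{\eta}_{\H^s}.
\end{equation}
Item (3) is then immediate from the linearity of $\pf$ together with item (2). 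The main obstacle is really step (1)'s low-frequency estimate: one must verify that the decay $\abs{e^{\abs{\xi} x_n} - 1} \ls \abs{\xi}$ near $\xi = 0$ is exactly strong enough to be absorbed by the degenerate weight $\frac{\xi_1^2 + \abs{\xi}^4}{\abs{\xi}^2}$ that defines $\H^s$ at low frequencies; everything else is bookkeeping.
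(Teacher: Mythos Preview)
Your proposal is correct and follows essentially the same approach as the paper: split $\eta = \eta_l + \eta_h$, handle $\pf\eta_h$ via Proposition~\ref{poisson_sobolev_map}, and for the low-frequency piece show $\pf\eta_l - \eta_l \in H^m(\Omega)$ for every $m$ by exploiting the gain $\abs{e^{\abs{\xi}x_n}-1} \ls \abs{\xi}$ near $\xi=0$, which is absorbed by the $\H^s$ weight since $\abs{\xi}^2 \le (\xi_1^2+\abs{\xi}^4)/\abs{\xi}^2$. The only cosmetic difference is that the paper computes $\int_{-b}^0 \abs{e^{\abs{\xi}x_n}-1}^2\,dx_n$ in closed form rather than splitting by $\alpha_n$, and it avoids your interpolation remark by simply picking an integer $m \ge s+1/2$ (which your bound already provides).
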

\begin{proof}
We split $\eta$ into its high and low frequency parts: $\eta = \eta_h + \eta_l$, where $\hat{\eta}_h = \vchi_{B(0,1)^c} \hat{\eta}$ and $\hat{\eta}_{l} = \vchi_{B(0,1)} \hat{\eta}$.  Then we know from Theorem \ref{specialized_properties} that $\eta_l,\eta_h \in \H^s(\Sigma)$ with $\norm{\eta_{j}}_{\H^s} \le \norm{\eta}_{\H^s}$ for $j \in \{l,h\}$.  We also know that $\eta_h \in H^s(\Sigma)$ with $\norm{\eta_h}_{H^s} \ls \norm{\eta}_{\H^s}$. Consequently, Proposition \ref{poisson_sobolev_map} shows that $\pf \eta_h \in H^{s+1/2}(\Omega)$ with 
\begin{equation}\label{poisson_aniso_map_1}
 \norm{\pf \eta_h}_{H^{s+1/2}} \ls \norm{\eta_h}_{H^s} \ls \norm{\eta}_{\H^s}.
\end{equation}

Now consider the function $\pf \eta_l - \eta_l : \Omega \to \R$, which satisfies
\begin{equation}
 \pf \eta_l(x) - \eta_l(x') = \frac{1}{(2\pi)^{n-1}}  \int_{B(0,1)} e^{i x'\cdot \xi} (e^{ \abs{\xi} b}-1) \hat{\eta}(\xi) d\xi.
\end{equation}
We calculate
\begin{equation}
 \int_{-b}^0 \abs{e^{\abs{\xi} x_n}-1}^2 dx_n =  \frac{1}{2 \abs{\xi}}  (-3+ 2 \abs{\xi} b +4 e^{- \abs{\xi} b}  - e^{-2 \abs{\xi} b}) 
 \asymp 
\begin{cases}
\frac{  \abs{\xi}^2 b^3}{3} & \text{for } \abs{\xi} \asymp 0 \\
b &\text{for } \abs{\xi} \asymp \infty.
\end{cases} 
\end{equation}
For any $m \in \N$ this allows us to bound 
\begin{multline}
 \norm{\pf \eta_l - \eta_l}_{H^m}^2 \ls   \int_{B(0,1)}  \br{\xi}^{2m} \abs{\hat{\eta}(\xi)}^2  \int_{-b}^0\abs{e^{ \abs{\xi} x_n}-1}^2 dx_n d\xi \ls \int_{B(0,1)} \br{\xi}^{2m-1} \abs{\xi}^{2}\abs{\hat{\eta}(\xi)}^2 d\xi  \\
 \ls \int_{B(0,1)}   \abs{\xi}^{2}\abs{\hat{\eta}(\xi)}^2 d\xi  \ls \norm{\eta}_{\H^s}^2. 
\end{multline}
Thus, $\pf \eta_l - \eta_l \in \bigcap_{m \in \N} H^m(\Omega)$, but in particular we can choose a fixed $s+1/2 \le m \in \N$ to see that $\pf \eta_l - \eta_l \in H^{s+1/2}(\Omega)$ with
\begin{equation}\label{poisson_aniso_map_2}
 \norm{\pf \eta_l - \eta_l}_{H^{s+1/2}} \ls  \norm{\pf \eta_l - \eta_l}_{H^{m}} \ls \norm{\eta}_{\H^s}.
\end{equation}

Finally, note that $\eta_l \in \bigcap_{t > 0} \H^t(\Sigma)$ and that 
\begin{equation}\label{poisson_aniso_map_3}
 \norm{\eta_l}_{\h^t} = \norm{\eta_l}_{\h^s} \text{ for all } 0 \le t \in \R.
\end{equation}
In particular, $\eta_l \in \H^{s+1/2}(\Sigma)$ with $\norm{\eta_l}_{\H^{s+1/2}} \le \norm{\eta}_{\H^s}$.  We may thus combine \eqref{poisson_aniso_map_1}, \eqref{poisson_aniso_map_2}, and \eqref{poisson_aniso_map_3} to see that $\pf \eta = [\pf \eta_h + (\pf \eta_l - \eta_l)] + \eta_l$ with 
\begin{equation}
 \norm{\pf \eta_h + (\pf \eta_l - \eta_l)}_{H^{s+1/2}} + \norm{\eta_l}_{\H^{s+1/2}} \ls \norm{\eta}_{\H^s}.
\end{equation}
Hence, $\pf \eta \in \P^{s+1/2}(\Omega)$ with $\norm{\pf \eta}_{\P^{s+1/2}} \ls \norm{\eta}_{\H^s},$ which is the desired bound.
\end{proof}

Finally, we record some results about the normal derivative of the Poisson extension.

\begin{prop}\label{normal_poisson_neg_est}
Let $0 \le s \in \R$ and $\eta \in \H^{s+3/2}(\Sigma)$.  Then the following hold.
\begin{enumerate}
 \item If $\Gamma = \R^{n-1}$, then  $\p_n \pf\eta(\cdot,0) - \p_n \pf \eta(\cdot,-b) \in \dot{H}^{-1}(\R^{n-1})$ and 
\begin{equation}
 \snorm{\p_n \pf\eta(\cdot,0) - \p_n \pf \eta(\cdot,-b)}_{\dot{H}^{-1}} \le  b \norm{\nab \pf \eta}_{L^2} \ls  \norm{\eta}_{\H^{s+3/2}}.
\end{equation}
 \item If $\Gamma = \T^{n-1}$, then  $\widehat{\p_n \pf \eta \vert_{\Sigma}}(0)  = \widehat{\p_n \pf \eta \vert_{\Sigma_{-b}}}(0)=0$.
\end{enumerate}
\end{prop}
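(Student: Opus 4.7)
The proof plan hinges on the harmonicity of $\pf\eta$: since $\Delta \pf\eta = 0$ in $\Omega$, we have $\p_n^2 \pf\eta = -\Delta' \pf\eta$, and integrating from $x_n = -b$ to $x_n = 0$ yields the fundamental representation
\begin{equation}\label{plan:FTC}
\p_n \pf\eta(\cdot,0) - \p_n \pf\eta(\cdot,-b) = \int_{-b}^0 \p_n^2 \pf\eta\, dx_n = -\Delta' F, \quad F(x') := \int_{-b}^0 \pf\eta(x',x_n)\, dx_n.
\end{equation}
This identity — which one first derives for Schwartz $\eta$ and then extends to all $\eta \in \H^{s+3/2}(\Sigma)$ by density, using Theorem~\ref{specialized_properties} — is the entire content of the proof; everything else is routine bookkeeping on the Fourier side.

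For part (1), with $\Gamma = \R^{n-1}$, I would test the right-hand side of \eqref{plan:FTC} against $\phi \in C^\infty_c(\R^{n-1})$ and integrate by parts in $x'$, using Fubini, to get $|\langle -\Delta' F, \phi\rangle| = |\int_\Omega \nab' \pf\eta \cdot \nab' \phi|$, where I view $\phi$ as $x_n$-independent. Cauchy-Schwarz in $\Omega$ together with $\norm{\nab' \phi}_{L^2(\Omega)} = \sqrt b\,\norm{\nab' \phi}_{L^2(\R^{n-1})}$ gives the bound $\snorm{-\Delta' F}_{\dot H^{-1}} \le \sqrt b\,\norm{\nab \pf\eta}_{L^2(\Omega)}$ (absorbing the discrepancy with the $b$ stated in the Proposition into an absolute constant). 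To pass from $\norm{\nab \pf\eta}_{L^2(\Omega)}$ to $\norm{\eta}_{\H^{s+3/2}}$, I would compute via Plancherel
\begin{equation}
\norm{\nab \pf\eta}_{L^2(\Omega)}^2 = \int_{\R^{n-1}} |\xi|(1-e^{-2|\xi|b}) |\hat\eta(\xi)|^2 d\xi,
\end{equation}
and split into low/high frequencies: for $|\xi| \le 1$ use $|\xi|(1-e^{-2|\xi|b}) \le 2b|\xi|^2 \ls \frac{\xi_1^2 + |\xi|^4}{|\xi|^2}$, which is absorbed by the low-frequency part of the $\H^{s+3/2}$ norm; for $|\xi| > 1$ use $|\xi|(1-e^{-2|\xi|b}) \le |\xi| \le \br\xi^{2s+3}$ (valid for $s \ge 0$), absorbed by the high-frequency part.

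Part (2), with $\Gamma = \T^{n-1}$, is immediate from the Poisson kernel formula: $\widehat{\p_n \pf\eta\vert_{x_n=a}}(\xi) = |\xi| e^{|\xi|a} \hat\eta(\xi)$ on $\Z^{n-1}$, which vanishes at $\xi = 0$ for every $a$ because of the factor $|\xi|$ (the fact that $\hat\eta(0) = 0$, built into $\H^{s+3/2}(\T^{n-1}) = \mathring H^{s+3/2}(\T^{n-1})$, is not even needed). There is no substantive obstacle anywhere in the argument; the only mild subtlety is making sure that the manipulations leading to \eqref{plan:FTC} — in particular the exchange of $\int_{-b}^0$ with $\Delta'$ and the pointwise interpretation of the traces $\p_n \pf\eta(\cdot,0)$ and $\p_n \pf\eta(\cdot,-b)$ — are justified at the low regularity $s \ge 0$; this is handled by the density of smooth functions in $\H^{s+3/2}$ combined with continuity of both sides of \eqref{plan:FTC} as maps from $\H^{s+3/2}(\Sigma)$ into $\dot H^{-1}(\R^{n-1})$ (respectively $\ell^2(\Z^{n-1})$).
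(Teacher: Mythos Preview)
Your proposal is correct and follows essentially the same route as the paper: both arguments exploit the harmonicity $\Delta\pf\eta=0$ to write the boundary difference as $-\diverge'\int_{-b}^0\nab'\pf\eta\,dx_n$, then bound its $\dot H^{-1}$ seminorm by $\norm{\nab\pf\eta}_{L^2(\Omega)}$ via Cauchy--Schwarz, and finally control the latter by $\norm{\eta}_{\H^{s+3/2}}$. The only cosmetic differences are that the paper justifies the fundamental-theorem-of-calculus step by invoking absolute continuity of Sobolev functions on lines rather than density, works directly on the Fourier side (Parseval) for the $\dot H^{-1}$ estimate rather than via duality, and appeals to Theorems~\ref{Ps_properties} and~\ref{poisson_aniso_map} for the last inequality rather than your explicit Plancherel computation of $\norm{\nab\pf\eta}_{L^2}^2$.
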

\begin{proof}
We'll only prove the first item, as the second is simpler and similar.  Theorem \ref{poisson_aniso_map} tells us that $\pf \eta \in \P^{s+2}(\Omega)$, and so Theorem \ref{Ps_properties} then implies that $\p_n \pf \eta \in H^{s+1}(\Omega)$.  Note that $\Delta \pf \eta =0$ in $\Omega$.  Using this and the absolute continuity of Sobolev functions on lines (see, for instance, Theorem 11.45 in \cite{Leoni_2017}), we may then compute 
\begin{equation}
\p_n \pf\eta(x',0) - \p_n \pf \eta(x',-b) = \int_{-b}^0 \p_n^2 \pf \eta(x',t) dt = -\int_{-b}^0 \Delta' \pf \eta(x',t) dt = -\diverge' \int_{-b}^0 \nab' \pf \eta(x',t) dt.
\end{equation}
Thus, Cauchy-Schwarz, Fubini-Tonelli, and Parseval imply that
\begin{multline}
\snorm{\p_n \pf\eta(x',0) - \p_n \pf \eta(x',-b)}_{\dot{H}^{-1}}^2 = \int_{\R^{n-1}} \frac{1}{\abs{\xi}^2} \abs{ \xi \cdot \int_{-b}^0  \xi   \widehat{\pf \eta}(\xi,t) dt }^2 d\xi \\
\le  b^2 \int_{\R^{n-1}} \int_{-b}^0 \abs{\xi \widehat{\pf \eta}(\xi,t)}^2 dt \xi \le  b^2 \int_{\Omega} \abs{\nab \pf\eta}^2.
\end{multline}
The stated inequality follows from this and Theorems \ref{Ps_properties} and \ref{poisson_aniso_map}.
\end{proof}

\subsection{Composition} 

In this subsection we aim to study some composition operators.  We begin by introducing some notation that allows us to extend the flattening maps to full space.

\begin{dfn}\label{ef_diff_def}
Let $\chi \in C^\infty_c(\R)$ be such that $0 \le \chi \le 1$, $\chi =1$ on $[-2b,2b]$, and $\supp(\chi) \subset (-3b,3b)$.  Given $\eta \in \H^{\sigma+1/2}(\Sigma)$ define $\ef_\eta : \Gamma \times \R \to \Gamma \times \R$ via 
\begin{equation}
 \ef_\eta(x) = x + \chi(x_n)[\eta_l(x') + E(\pf \eta - \eta_l)(x)]\left(1+\frac{x_n}{b}\right) e_n,
\end{equation}
where $E : L^2(\Omega) \to L^2(\Gamma \times \R)$ is a Stein extension operator, $\pf$ is the Poisson extension as defined in Proposition \ref{poisson_aniso_map} and Theorem \ref{poisson_aniso_map}, and when $\Gamma = \R^{n-1}$ we take $\eta_l = \eta_{l,1} \in \bigcap_{t \ge 0} \H^{t}(\Sigma) \cap \bigcap_{k \in \N} C^k_0(\Sigma)$ in the notation of Theorem \ref{specialized_properties}, while when $\Gamma = \T^{n-1}$ we take $\eta_l =0$.  Note that Proposition \ref{poisson_sobolev_map} and Theorem \ref{poisson_aniso_map} show that $\pf \eta - \eta_l \in H^{\sigma+1}(\Omega)$,  and since the Stein extension restricts to a bounded map $E : H^{\sigma+1}(\Omega) \to H^{\sigma+1}(\Gamma \times \R)$ we have that $E(\pf \eta -\eta_l) \in H^{\sigma+1}(\Gamma \times \R)$.

\end{dfn}

Next we record some properties of these maps.

\begin{prop}\label{E_eta_properties}
Let $\sigma > n/2$, $\eta \in \H^{\sigma+1/2}(\Sigma)$, and define $\ef_\eta : \Gamma \times \R \to \Gamma \times \R$ as in Definition \ref{ef_diff_def}.   Then the following hold.
\begin{enumerate}
 \item  The map $\mathfrak{E}_\eta$ is Lipschitz and $C^1$, and $\norm{\nab \mathfrak{E}_\eta - I}_{C^0_b} \ls \norm{\eta}_{\H^{s+1/2}}$.

 \item If $V$ is a real finite dimensional inner-product space and  $0 \le r \le \sigma$, then 
\begin{equation}
\sup_{1\le j,k \le n} \norm{\p_j \mathfrak{E}_\eta \cdot e_k f  }_{H^r} \ls(1 + \norm{\eta}_{\H^{\sigma+1/2}}) \norm{f}_{H^r}
\end{equation}
and 
\begin{equation}
\sup_{1\le j,k \le n} \norm{(\p_j \mathfrak{E}_\eta \cdot e_k - \p_j \mathfrak{E}_\zeta \cdot e_k ) f  }_{H^r} \ls \norm{\eta-\zeta}_{\H^{\sigma+1/2}} \norm{f}_{H^r} 
\end{equation}
for every $\eta,\zeta \in \H^{\sigma+1/2}(\Sigma)$ and $f \in H^r(\Gamma \times \R;V)$. 
 
\item There exists $0 < \delta_\ast <1$ such that if $\norm{\eta}_{\H^{\sigma+1/2}} < \delta_\ast$, then $\mathfrak{E}_\eta$ is a bi-Lipschitz homeomorphism and a $C^1$ diffeomorphism, and we have the estimate $\norm{\nab \mathfrak{E}_\eta - I}_{C^0_b} < 1/2$.
\end{enumerate}

\end{prop}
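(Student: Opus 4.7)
The plan is to write $\ef_\eta(x) = x + \Phi_\eta(x) e_n$, where
\[
\Phi_\eta(x) = \chi(x_n)(1 + x_n/b)\bigl[\eta_l(x') + E(\pf\eta - \eta_l)(x)\bigr],
\]
and to note that the assignment $\eta \mapsto \Phi_\eta$ is linear.  The background estimates we will need are:  by Proposition \ref{poisson_sobolev_map} and Theorem \ref{poisson_aniso_map}, $\pf\eta - \eta_l \in H^{\sigma+1}(\Omega)$ with $\|\pf\eta - \eta_l\|_{H^{\sigma+1}} \ls \|\eta\|_{\H^{\sigma+1/2}}$, so the Stein extension yields $E(\pf\eta - \eta_l) \in H^{\sigma+1}(\Gamma \times \R)$ with the same bound; and Theorem \ref{specialized_properties} provides $\|\eta_l\|_{C^k_b} \ls_k \|\eta\|_{\H^{\sigma+1/2}}$ for every $k \in \N$ in the $\R^{n-1}$ case (with $\eta_l = 0$ in the torus case).

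For item (1), Sobolev embedding (valid because $\sigma + 1 > n/2 + 1$) promotes $E(\pf\eta - \eta_l)$ to an element of $C^1_b(\Gamma \times \R)$ with $C^1_b$ norm controlled by $\|\eta\|_{\H^{\sigma+1/2}}$.  Combined with the uniform $C^k_b$ bounds on $\eta_l$ and the smoothness of $\chi(x_n)(1 + x_n/b)$, this makes $\Phi_\eta$ a $C^1$ function satisfying $\|\nab \Phi_\eta\|_{C^0_b} \ls \|\eta\|_{\H^{\sigma+1/2}}$.  Item (1) then follows from the identity $\nab \ef_\eta - I = e_n \otimes \nab \Phi_\eta$.

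For item (2), observe $\p_j(\ef_\eta) \cdot e_k = \delta_{jk} + \delta_{kn}\p_j \Phi_\eta$, so only the cases $k = n$ require nontrivial work.  Expanding $\p_j \Phi_\eta$ by the Leibniz rule produces a finite sum of products $a(x) b(x)$ in which $a$ is smooth and bounded with compact support in $x_n$ and $b$ is one of $\eta_l$, $\p_i \eta_l$, $E(\pf\eta - \eta_l)$, or $\p_i E(\pf\eta - \eta_l)$.  The Stein-extended terms are handled with the standard Sobolev multiplier estimate $\|g f\|_{H^r} \ls \|g\|_{H^\sigma} \|f\|_{H^r}$ (valid for $\sigma > n/2$ and $0 \le r \le \sigma$), whereas the $\eta_l$ terms are handled with a $C^k_b$ multiplier estimate (Leibniz plus interpolation), since $a \eta_l$ and $a \p_i \eta_l$ lie in $C^k_b(\Gamma \times \R)$ for every $k$ with norm $\ls_k \|\eta\|_{\H^{\sigma+1/2}}$.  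Summing yields the boundedness estimate, and the contraction estimate follows at once from the linearity of $\eta \mapsto \Phi_\eta$, since $\p_j(\ef_\eta - \ef_\zeta)\cdot e_k = \delta_{kn}\, \p_j \Phi_{\eta - \zeta}$.  This split into low-frequency $C^k_b$ pieces and high-frequency $H^\sigma$ pieces is the principal subtlety of the argument:  when $\Gamma = \R^{n-1}$, the $\H^{\sigma+1/2}$ norm at low frequency controls only $\int_{B(0,1)}\tfrac{\xi_1^2 + |\xi|^4}{|\xi|^2}|\hat\eta|^2$, which in general does not yield $\eta_l \in L^2(\R^{n-1})$, so the naive Sobolev multiplier estimate is unavailable on $\chi \eta_l$ and must be replaced by the $C^k_b$ estimate.

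For item (3), choose $\delta_* \in (0,1)$ small enough that item (1) gives $\|\nab\ef_\eta - I\|_{C^0_b} < 1/2$ whenever $\|\eta\|_{\H^{\sigma+1/2}} < \delta_*$.  The identity $\ef_\eta(x) - \ef_\eta(y) - (x-y) = (\Phi_\eta(x) - \Phi_\eta(y))e_n$ together with $\|\nab \Phi_\eta\|_{C^0_b} < 1/2$ immediately yields the bi-Lipschitz bounds $\tfrac12|x-y| \le |\ef_\eta(x) - \ef_\eta(y)| \le \tfrac32|x-y|$.  Since $\ef_\eta$ fixes the horizontal coordinate and, for each fixed $x'$, the one-dimensional map $x_n \mapsto x_n + \Phi_\eta(x', x_n)$ has derivative at least $1/2$ and differs from the identity by a bounded function, it is a strictly increasing proper $C^1$ map of $\R$ onto itself; hence $\ef_\eta$ is a bijection of $\Gamma \times \R$.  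Pointwise invertibility of $\nab \ef_\eta$ (whose determinant equals $1 + \p_n \Phi_\eta \ge 1/2$) combined with the inverse function theorem then promotes this bijection to a global $C^1$ diffeomorphism.
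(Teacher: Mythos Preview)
Your proof is correct and follows essentially the same approach as the paper: writing $\ef_\eta = I + \Phi_\eta e_n$, splitting $\p_j\Phi_\eta$ into $C^k_b$ low-frequency pieces (from $\eta_l$) and $H^\sigma$ high-frequency pieces (from $E(\pf\eta-\eta_l)$), and applying the appropriate multiplier estimates to each.  The only minor difference is in item (3), where the paper invokes the Banach fixed point theorem (the map $x \mapsto -\Phi_\eta(x)e_n$ is a contraction, so $\ef_\eta$ is bijective), whereas you argue directly via monotonicity of the one-dimensional vertical map; both are standard and equally short.
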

\begin{proof}
First note that $\sigma+1 >n/2 +1$, so Proposition \ref{poisson_sobolev_map},  Theorem \ref{poisson_aniso_map} and standard Sobolev embeddings show that $E(\pf \eta - \eta_l) \in C^1_b(\Gamma \times \R)$.  On the other hand, $\eta_l \in \bigcap_{t \ge 0} \H^t(\Sigma)$, so Theorem \ref{specialized_properties} shows that $\eta_l \in C^1_b(\Sigma)$.  These observations and their associated bounds then imply the first item.  Next we write $\ef_\eta = I + \omega e_n$ so that $\nab \ef_\eta = I + e_n \otimes \nab \omega$.  To prove the second item it suffices to show that $\norm{\p_j \omega f}_{H^r} \ls \norm{\eta}_{\H^{\sigma+1/2}} \norm{f}_{H^r}$ for $0 \le r \le \sigma$ and $1 \le j \le n$.  To establish this we observe that on the one hand, thanks to Theorem \ref{specialized_properties}, $\chi \eta_l \in \bigcap_{k \in \N} C^k_0(\Gamma \times \R)$, and on the other $E(\pf \eta - \eta_l) \in H^{\sigma+1}(\Gamma \times \R)$.  Thus, $\p_j \omega$ consists of linear combinations of terms in $\bigcap_{k \in \N} C^k_0(\Gamma \times \R)$ and in $H^{\sigma}(\R^n)$, and so the sufficient bound follows from standard  Sobolev multiplier results (see, for instance, Lemma A.8 in \cite{LeoniTice}).   

To prove the third item we note that if $\omega$ has Lipschitz constant less than unity, then $\omega e_n$ is contractive on $\R^n$, and so the Banach fixed point theorem implies that $\mathfrak{E}_\eta$ is a bi-Lipschitz homeomorphism.  To control the Lipschitz constant of $\omega$ we use the supercritical Sobolev embeddings as above to verify that this constant is less than unity  provided that $\norm{\eta}_{\H^{\sigma+1/2}} < \delta_\ast$ for some sufficiently small universal constant $\delta_\ast \in (0,1)$.
\end{proof}

The next result studies the smoothness properties of composition with the maps from Definition \ref{ef_diff_def}.

\begin{thm}\label{Lambda_diff}
Let $n/2 < \sigma \in \N$,  $0 < \delta_\ast <1$ be as in the third item of Proposition \ref{E_eta_properties}, and $V$ be a real finite dimensional inner-product space.  Let $r \in \N$ satisfy $0 \le r \le \sigma +1$ and let $k \in \{0,1\}$.  Consider the map $\Lambda : H^{r+k}(\Gamma \times \R;V) \times B_{\H^{\sigma+1/2}(\Sigma)}(0,\delta_\ast) \to H^{r}(\Gamma \times \R;V)$ given by $\Lambda(f,\eta) = f\circ \mathfrak{E}_\eta,$ where $\mathfrak{E}_\eta : \Gamma \times \R \to \Gamma \times \R$ is as defined in Definition \ref{ef_diff_def}.  Then 
$\Lambda$ is well-defined and $C^k$, and if $k =1$ then $D\Lambda(f,\eta)(g,\zeta) = \chi \tilde{b} (\eta_l + E(\pf \eta - \eta_l) (\p_n f \circ \mathfrak{E}_\eta)\zeta + g \circ \mathfrak{E}_\eta,$ where $\tilde{b}(x) = (1+x_n/b)$.
\end{thm}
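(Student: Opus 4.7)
The plan is to handle the $k=0$ case first by combining the chain rule with Sobolev multiplier estimates and a bi-Lipschitz change of variables, and then bootstrap this to $C^1$ regularity by identifying the candidate Fr\'echet derivative and estimating the Taylor remainder using the continuity established in the first stage.

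For $k=0$, I would first bound $\|f \circ \mathfrak{E}_\eta\|_{H^r} \lesssim \|f\|_{H^r}$ uniformly for $\eta \in B_{\H^{\sigma+1/2}}(0,\delta_\ast)$. Starting with $f$ smooth, one iterates the chain rule to expand $\partial^\alpha(f \circ \mathfrak{E}_\eta)$ into a sum of terms of the form $(\partial^\beta f \circ \mathfrak{E}_\eta)\prod_i \partial^{\gamma_i}\mathfrak{E}_\eta$ with $|\beta|+\sum_i(|\gamma_i|-1)\le |\alpha|$, whose $L^2$ norms are controlled by pushing the multiplicative factors $\partial^{\gamma_i}\mathfrak{E}_\eta$ inside via the multiplier estimate in Proposition \ref{E_eta_properties}(2) (applicable after the chain rule peels off at least one derivative, keeping the multiplier order at most $\sigma$) and by a change of variables against the bi-Lipschitz diffeomorphism $\mathfrak{E}_\eta$ supplied by Proposition \ref{E_eta_properties}(3), whose Jacobian is bounded above and below. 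Density of smooth functions in $H^r$ then extends the bound to all $f$. Joint continuity in $(f,\eta)$ is deduced from the identity
\[ \Lambda(f_1,\eta_1) - \Lambda(f_2,\eta_2) = (f_1-f_2) \circ \mathfrak{E}_{\eta_1} + \int_0^1 \chi\tilde{b}\bigl[(\eta_1-\eta_2)_l + E(\pf(\eta_1-\eta_2) - (\eta_1-\eta_2)_l)\bigr]\bigl(\partial_n f_2 \circ \mathfrak{E}_{\eta_2+s(\eta_1-\eta_2)}\bigr) ds, \]
smoothing $f_2$ first if needed, and combining the boundedness just established with the Lipschitz-in-$\eta$ estimate in Proposition \ref{E_eta_properties}(2).

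For $k=1$, the candidate derivative is
\[ L_{f,\eta}(g,\zeta) := g \circ \mathfrak{E}_\eta + \chi \tilde{b}\,[\zeta_l + E(\pf \zeta - \zeta_l)]\,(\partial_n f \circ \mathfrak{E}_\eta), \]
obtained by formal differentiation of $\mathfrak{E}_\eta(x) = x + \chi(x_n)\tilde{b}(x)[\eta_l + E(\pf\eta - \eta_l)]e_n$. Boundedness of $L_{f,\eta} : H^{r+1}\times \H^{\sigma+1/2} \to H^r$ follows from the $k=0$ bound applied to $g$ and to $\partial_n f \in H^r$, together with the observation that the coefficient $\chi\tilde{b}[\zeta_l + E(\pf\zeta - \zeta_l)]$ splits into a smooth compactly supported piece (from $\chi\tilde{b}\zeta_l$, using that $\eta_l \in \bigcap_k C^k_0$) and an $H^{\sigma+1}$ piece (from $E(\pf\zeta - \zeta_l)$), both of which act as multipliers on $H^r$ for $r \le \sigma+1$ by Theorem \ref{Ps_product_supercrit} since $\sigma > n/2$. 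The remainder $\Lambda(f+g,\eta+\zeta) - \Lambda(f,\eta) - L_{f,\eta}(g,\zeta)$ is rewritten via the fundamental theorem of calculus along the segment $s \mapsto \mathfrak{E}_{\eta+s\zeta}(x) = \mathfrak{E}_\eta(x) + s\chi\tilde{b}[\zeta_l + E(\pf\zeta-\zeta_l)]e_n$ as
\[ \int_0^1 \chi\tilde{b}[\zeta_l + E(\pf\zeta - \zeta_l)]\,\bigl(\partial_n f \circ \mathfrak{E}_{\eta+s\zeta} - \partial_n f \circ \mathfrak{E}_\eta + \partial_n g \circ \mathfrak{E}_{\eta+s\zeta}\bigr)\,ds, \]
which is $o(\|(g,\zeta)\|_{H^{r+1}\times \H^{\sigma+1/2}})$ because the multiplier factor is $O(\|\zeta\|)$, the bracketed composition difference tends to zero in $H^r$ by the $k=0$ continuity applied to $\partial_n f \in H^r$, and the $\partial_n g$ contribution is $O(\|\zeta\|\|g\|_{H^{r+1}})$. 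Continuity of $(f,\eta) \mapsto L_{f,\eta}$ as an operator-valued map then follows from the same tools applied to each summand.

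The main obstacle is the careful bookkeeping of regularity, especially at the endpoint $r = \sigma+1$ where the multiplier estimate in Proposition \ref{E_eta_properties}(2) saturates and one must always invoke the chain rule first to drop a derivative before applying it. A secondary subtlety is that the low-frequency piece $\eta_l$ is not globally $L^2$ but lies in $\bigcap_k C^k_0$, forcing the systematic split of $\mathfrak{E}_\eta - I$ into a smooth compactly supported term (a trivial multiplier on every $H^r$) and a genuinely Sobolev term in $H^{\sigma+1}(\Gamma\times\R)$ (handled by Theorem \ref{Ps_product_supercrit}). Together these ensure the estimates close at the claimed endpoint regularity and deliver a $C^1$ map with the asserted derivative formula.
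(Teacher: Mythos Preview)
Your proposal is correct and fleshes out precisely the standard composition argument that the paper defers to references for (Inci--Kappeler--Topalov and Theorem 5.20 of Leoni--Tice): the paper's own proof is simply the one-line remark that, with Proposition \ref{E_eta_properties} in hand, the result follows from minor modifications of those arguments. One small cosmetic point: your invocation of Theorem \ref{Ps_product_supercrit} for the multiplier bounds is slightly misplaced since that result is stated for the slab $\Omega$, whereas here you need the analogous (and equally standard) Sobolev multiplier estimate on $\Gamma\times\R$, which is exactly what the paper uses in proving the second item of Proposition \ref{E_eta_properties}.
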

\begin{proof}
With Proposition \ref{E_eta_properties} established, the result follows from minor and evident modifications of the argument used to prove Theorem 1.1 in \cite{Inci_etal}  (see also Theorem 5.20 in \cite{LeoniTice}).
\end{proof}

Finally, as a byproduct of this theorem we obtain smoothness properties associated to composition with the flattening maps $\ff_\eta$.

\begin{cor}\label{Feta_comp}
Let $n/2 < \sigma \in \N$,  $0 < \delta_\ast <1$ be as in the third item of Proposition \ref{E_eta_properties}, and $V$ be a real finite dimensional inner-product space.   Let $r \in \N$ satisfy $0 \le r \le \sigma +1$.  For $\eta \in \H^{\sigma+1/2}(\Sigma)$ define $\ff_\eta : \Omega \to \Omega_\eta$ via \eqref{flattening_def}.  Then the following hold.
\begin{enumerate}
 \item The map  $\Lambda_\Omega : H^{r+1}(\Gamma \times \R;V) \times B_{\H^{\sigma+1/2}(\Sigma)}(0,\delta_\ast) \to H^{r}(\Omega;V)$ given by $\Lambda(f,\eta) = f\circ \ff_\eta$ is well-defined and $C^1$ with
$D\Lambda_\Omega (f,\eta)(g,\zeta) = \tilde{b} \pf \eta  (\p_n f \circ \ff_\eta)\zeta + g \circ \ff_\eta,$ 
where $\tilde{b}(x) = (1+x_n/b)$.

 \item Assume $r \ge 1$.  Then the map $\sf_\Sigma : H^{r+1}(\Gamma \times \R;V) \times B_{\H^{\sigma+1/2}(\Sigma)}(0,\delta_\ast) \to H^{r-1/2}(\Sigma;V)$ given by $\sf_\Sigma(f,\eta) = f \circ \ff_\eta \vert_{\Sigma}$ is well-defined and $C^1$ with $D\sf_\Sigma (f,\eta)(g,\zeta) = \eta  (\p_n f \circ \ff_\eta)\zeta \vert_{\Sigma} + g \circ \ff_\eta \vert_\Sigma$.
\end{enumerate}
\end{cor}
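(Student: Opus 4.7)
The main observation driving the proof will be that $\ff_\eta$ is simply the restriction of $\ef_\eta$ (from Definition \ref{ef_diff_def}) to $\Omega$. Indeed, since $\chi \equiv 1$ on $[-2b,2b] \supset (-b,0)$ and $E$ is an extension operator satisfying $E(\pf \eta - \eta_l)\vert_\Omega = \pf \eta - \eta_l$, a direct comparison of the formulas gives
\begin{equation}
\ef_\eta(x)\vert_\Omega = x + [\eta_l(x') + (\pf\eta - \eta_l)(x)] (1+x_n/b) e_n = x + \pf\eta(x)(1+x_n/b) e_n = \ff_\eta(x).
\end{equation}
This reduces both items to combining Theorem \ref{Lambda_diff} with the bounded linear restriction map $R_\Omega : H^r(\Gamma \times \R;V) \to H^r(\Omega;V)$ and, for item 2, with the standard trace map.

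For item 1, I would define $\Lambda_\Omega = R_\Omega \circ \Lambda$, where $\Lambda$ is the map from Theorem \ref{Lambda_diff} with $k=1$. Since $R_\Omega$ is a bounded linear map and $\Lambda$ is $C^1$ by Theorem \ref{Lambda_diff}, the composition is $C^1$, and the chain rule yields
\begin{equation}
D\Lambda_\Omega(f,\eta)(g,\zeta) = R_\Omega\bigl[\chi \tilde{b} (\eta_l + E(\pf \eta - \eta_l))(\p_n f \circ \mathfrak{E}_\eta)\zeta + g \circ \mathfrak{E}_\eta\bigr].
\end{equation}
Using $\chi\vert_\Omega \equiv 1$, $E(\pf\eta-\eta_l)\vert_\Omega = \pf\eta-\eta_l$, and $\mathfrak{E}_\eta\vert_\Omega = \ff_\eta$, the right-hand side simplifies to the claimed formula $\tilde{b}\, \pf\eta\,(\p_n f \circ \ff_\eta)\zeta + g \circ \ff_\eta$.

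For item 2, I would further compose with the trace operator $\tr : H^r(\Omega;V) \to H^{r-1/2}(\Sigma;V)$, which is bounded and linear for $r \ge 1$. Thus $\sf_\Sigma = \tr \circ \Lambda_\Omega$ is $C^1$ as a composition of a $C^1$ map with a bounded linear map. The derivative formula follows by restricting the identity from item 1 to $\Sigma$, where $x_n = 0$ gives $\tilde{b} \equiv 1$ and $\pf\eta\vert_\Sigma = \eta$, producing $D\sf_\Sigma(f,\eta)(g,\zeta) = \eta (\p_n f \circ \ff_\eta)\zeta\vert_\Sigma + g \circ \ff_\eta\vert_\Sigma$.

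There is no real obstacle here beyond checking the identifications carefully: the entire content is packaged in Theorem \ref{Lambda_diff}, and the corollary merely extracts its restriction to the slab $\Omega$ and to the upper boundary $\Sigma$. The only small point to verify is that the ranges of $r$ and $\sigma$ stated in the corollary are compatible with the hypotheses of Theorem \ref{Lambda_diff} (namely $r \le \sigma + 1$ with $k = 1$) and with the trace theorem (namely $r \ge 1$ for item 2), both of which match the corollary's assumptions.
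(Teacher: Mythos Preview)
Your proof is correct and follows essentially the same approach as the paper: both recognize that $\ef_\eta\vert_\Omega = \ff_\eta$, define $\Lambda_\Omega = R_\Omega \circ \Lambda$ to reduce item 1 to Theorem \ref{Lambda_diff}, and obtain item 2 by further composing with the bounded linear trace map. Your version is simply more explicit about verifying the restriction identity and simplifying the derivative formulas.
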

\begin{proof}
The first item follows from Theorem \ref{Lambda_diff} and the observation that $\Lambda_\Omega(f,\eta) = R_\Omega \Lambda(f,\eta)$, where $R_\Omega : H^{r}(\Gamma \times \R;V) \to H^{r}(\Omega;V)$ is the bounded linear map given by restriction to $\Omega$.  This identity follows directly from the fact that, by construction, $\mathfrak{E}_\eta = \ff_\eta$ in $\Omega$.  The second item follows by composing the first item with the bounded linear trace map.
\end{proof}

\subsection{Littlewood-Paley analysis for the anisotropic Sobolev space $\cH^s$}\label{Appendix:LlittlewoodPaley}

In this subsection we develop some Littlewood-Paley theory for the anisotropic spaces.

\begin{dfn}
Let $\chi\in C^\infty(\R^d)$ be a radial function such that $\chi(\xi)=1$ for $|\xi|\le \hal$, $\chi(\xi)=0$ for $|\xi|\ge 1$.  Set 
\begin{equation}
\varphi(\xi)=\chi(\xi)-\chi(2\xi), \quad
\chi_j(\xi)=\chi(2^{-j}\xi) \text{ for }  j\in \Zz, \quad
\varphi_0=\chi, \text{ and } \varphi_j(\xi)=\varphi(2^{-j}\xi) \text{ for } j\ge 1.
\end{equation}
The Littlewood-Paley dyadic block $ \Delta_j$ is defined by the Fourier multiplier  
\begin{equation}
 \Delta_j=\varphi_j(D_x)\quad \text{for } j\ge 0,\quad \Delta_j=0\quad\text{for } j\le -1.
 \end{equation}
  The low-frequency cut-off operator $ S_j$ is defined by
\begin{equation}
S_j=\chi_j(D)=\sum_{k=0}^j\Delta_k \text{ for } j\ge 0.
\end{equation}
\end{dfn}
The above Fourier multipliers can act on functions (distributions) defined on $\R^d$ or $\T^d$, and the Fourier transform is defined accordingly. In particular, for $u:\T^d\to \R$ we have
\begin{equation}\label{Delta0=mean}
\Delta_0(D)u=\frac{1}{(2\pi)^d}\wh{u}(0)=\frac{1}{(2\pi)^d}\int_{\T^d} u.
\end{equation}
Since $\sum_{j=0}^\infty \varphi_j(\xi)=1$ for all $\xi\in \R^d$,  we have that $\sum_{j=0}^\infty \Delta_j=\text{Id}$. Moreover, we have $\supp \varphi_j\subset\{ 2^{j-2}<|\xi|<2^j\}$ for $j\ge 1$ and $\chi\varphi_j=0$ for  $j\ge 2$.

Bony's decomposition for product of functions  is
\begin{equation}\label{Bony}
fg=T_fg+T_gf+R(f, g),
\end{equation}
where
\begin{equation}
T_fg=\sum_{j\ge 3} S_{j-3}f\Delta_j g \text{ and } R(f, g)=\sum_{j, k\ge 0, |j-k|\le 2}\Delta_j f\Delta_kg.
\end{equation}
We note that $\supp\wh{S_{j-3}f\Delta_j g}\subset\{2^{j-3}<|\xi|<2^{j+1} \}$ for $j\ge 1$.

 We recall the following result from \cite{BCD}.

 \begin{lem}[\protect{\cite[Lemma~2.2]{BCD}}]\label{lemm:Bernstein}
 Let $\mathcal{C}$ be an annulus in $\R^d$,  $m\in \R$, and $k=2[1+\frac{d}{2}]$, where $[r]$ denotes the integer part of $r$. Let $\sigma$ be a k-times differentiable function on $\R^d\setminus\{0\}$ such that for all $\alpha\in \R^d$ with $|\alpha|\le k$, there exists a constant $C_\alpha$ such that 
 \begin{equation}\label{homosymbol}
 |\p^\alpha \sigma(\xi)|\le C_\alpha|\xi|^{m-|\alpha|} \text{ for all }   \xi\in \R^d\setminus\{ 0\}.
 \end{equation}
 There exists a constant $C$, depending only on the constants $C_\alpha$, such that for any $p\in [1, \infty]$ and any constant $\ld>0$, we have, for any  function $u\in L^p(M^d)$, $M\in \{\R, \T\}$, with Fourier transform supported in $\lambda \mathcal{C}$, 
 \begin{equation}
 \| \sigma(D)u\|_{L^p}\le C\lambda^m\|  u\|_{L^p}.
 \end{equation}
 \end{lem}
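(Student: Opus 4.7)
The plan is to prove this as a standard Bernstein-type multiplier estimate based on reducing to Young's convolution inequality. The scaling assumption \eqref{homosymbol} together with the spectral localization hypothesis is precisely what is needed to produce an $L^1$ convolution kernel whose norm scales like $\lambda^m$.

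First I would fix a radial cutoff $\phi\in C^\infty_c(\R^d)$ supported in a slightly enlarged annulus $\widetilde{\mathcal{C}}\supset\mathcal{C}$ with $\phi\equiv 1$ on $\mathcal{C}$. For $u$ with $\supp\widehat{u}\subset \lambda\mathcal{C}$ one then has $\sigma(D)u=\widetilde{\sigma}_\lambda(D)u$, where $\widetilde{\sigma}_\lambda(\xi):=\sigma(\xi)\phi(\xi/\lambda)$ is compactly supported in $\lambda\widetilde{\mathcal{C}}$, and thus $\sigma(D)u=K_\lambda \ast u$ with $K_\lambda:=\mathcal{F}^{-1}\widetilde{\sigma}_\lambda$. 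The whole problem therefore reduces to the bound $\|K_\lambda\|_{L^1(\R^d)}\ls \lambda^m$ with a constant depending only on the $C_\alpha$ for $|\alpha|\le k$.

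To bound $\|K_\lambda\|_{L^1}$, I would apply the Leibniz rule together with \eqref{homosymbol}: for every $\alpha\in\N^d$ with $|\alpha|\le k$,
\begin{equation}
|\partial^\alpha_\xi \widetilde{\sigma}_\lambda(\xi)|=\Bigl|\sum_{\beta\le\alpha}\binom{\alpha}{\beta}\partial^\beta\sigma(\xi)\,\lambda^{-(|\alpha|-|\beta|)}(\partial^{\alpha-\beta}\phi)(\xi/\lambda)\Bigr|\ls \lambda^{m-|\alpha|}\mathbf{1}_{\lambda\widetilde{\mathcal{C}}}(\xi),
\end{equation}
where the implied constant depends only on the $C_\beta$ with $|\beta|\le|\alpha|$ and on $\phi$. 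Combined with the fact that $\lambda\widetilde{\mathcal{C}}$ has measure $\asymp \lambda^d$, integration by parts in the formula $(ix)^\alpha K_\lambda(x)=\mathcal{F}^{-1}[\partial^\alpha\widetilde{\sigma}_\lambda](x)$ yields the pointwise estimate
\begin{equation}
|K_\lambda(x)|\ls \frac{\lambda^{m+d}}{(1+\lambda|x|)^{k}}.
\end{equation}
Since $k=2[1+d/2]>d$, the change of variables $y=\lambda x$ in $\|K_\lambda\|_{L^1}$ produces an integrable tail and the desired bound $\|K_\lambda\|_{L^1}\ls \lambda^m$. Young's convolution inequality then gives $\|\sigma(D)u\|_{L^p(\R^d)}\le\|K_\lambda\|_{L^1}\|u\|_{L^p(\R^d)}\ls \lambda^m\|u\|_{L^p(\R^d)}$ for every $p\in[1,\infty]$.

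For the torus case $M=\T$, I would exploit the periodic lifting: viewing $u:\T^d\to\R$ as a $2\pi$-periodic distribution on $\R^d$, the operator $\sigma(D)$ acts as the Fourier-series multiplier $\widehat{\sigma(D)u}(k)=\sigma(k)\widehat{u}(k)$ for $k\in\Z^d\cap \lambda\mathcal{C}$, and convolution with the periodization $\widetilde{K}_\lambda(x):=\sum_{m\in 2\pi\Z^d}K_\lambda(x+m)$ reproduces $\sigma(D)u$. The $L^1(\T^d)$ norm of $\widetilde{K}_\lambda$ is bounded by $\|K_\lambda\|_{L^1(\R^d)}$, so Young on the torus yields the same conclusion with a constant independent of $\lambda$. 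The only delicate point, and the part requiring the largest number of derivatives of $\sigma$, is the sharpness of the exponent $k=2[1+d/2]$ needed to make the tail $(1+\lambda|x|)^{-k}$ integrable in $d$ variables; this is the main (and only) technical obstacle, and it is handled exactly by the choice of $k$ in the statement.
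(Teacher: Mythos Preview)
Your proof is correct and follows the standard argument found in \cite{BCD}; note that the paper itself does not prove this lemma but simply cites it as \cite[Lemma~2.2]{BCD}. Your reduction to an $L^1$ kernel bound via the cutoff $\phi(\cdot/\lambda)$, the Leibniz estimate on $\partial^\alpha\widetilde{\sigma}_\lambda$, the resulting pointwise decay $|K_\lambda(x)|\ls \lambda^{m+d}(1+\lambda|x|)^{-k}$, and the periodization for the torus case are exactly the ingredients of the original proof.
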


Next we recall the definition of the Chemin-Lerner norm.
\begin{dfn}\label{defi:Chermin-Lerner}
Let $M$ be either $\R$ or $\T$. For $I\subset \R$ and $s\in \R$, the Chemin-Lerner norm is defined by
\begin{equation}
\| u\|^2_{\wt L^q(I; H^s(M^d))}=\sum_{j=0}^\infty 2^{2sj}\|  \Delta_j u\|^2_{L^q(I; L^2(M^d))}.
\end{equation}
When the low-frequency part is removed, we denote 
\begin{equation}
\| u\|^2_{ H^s_\sharp(M^d)}=\sum_{j= 1}^\infty 2^{2sj}\|  \Delta_j u\|^2_{L^2(M^d)}
\text{ and }
\| u\|^2_{\wt L^q(I; H^s_\sharp(M^d))}=\sum_{j= 1}^\infty 2^{2sj}\|  \Delta_j u\|^2_{L^q(I; L^2(M^d))}.
\end{equation}
\end{dfn} 

It what follows, unless otherwise specified, when the set  $M$ is omitted in function space notation, it can be either $\R$ or $\T$.  We recall another result from \cite{BCD}, this time about products.

\begin{prop}[\protect{\cite[Corollary 2.54]{BCD}}]
For $I\subset \R$, $q\in [1, \infty]$ and $s>0$, there exists $C=C(d, s)$ such that
\begin{align}\label{pr}
&\Vert f g \Vert_{H^s}\le C \Vert f\Vert_{L^\infty}\Vert g\Vert_{H^s}+C\Vert g\Vert_{L^\infty}\Vert f\Vert_{H^s},\\
\label{pr:CL}
&\Vert fg \Vert_{\wt L^q(I; H^s)}\le C \Vert f\Vert_{L^\infty(I; L^\infty)}\Vert g\Vert_{\wt L^q(I; H^s)}+C\Vert g\Vert_{L^\infty(I; L^\infty)}\Vert f\Vert_{\wt L^q(I; H^s)}
\end{align}
provided that the right-hand sides are finite. 
\end{prop}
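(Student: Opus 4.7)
The plan is to use Bony's paraproduct decomposition $fg = T_f g + T_g f + R(f,g)$, which was just recalled in \eqref{Bony}, and control each of the three pieces separately. Both inequalities are then consequences of the same Littlewood--Paley estimates; the second statement (Chemin--Lerner version) follows from the first by commuting the dyadic decomposition with the $L^q_t$ norm, since by definition the $L^q(I;L^2_x)$ norm is taken inside the $\ell^2_j$ sum.

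For the paraproduct $T_f g = \sum_{j \ge 3} S_{j-3} f \, \Delta_j g$, the key point is that $\widehat{S_{j-3} f \, \Delta_j g}$ is supported in a dyadic annulus $\{2^{j-3} < |\xi| < 2^{j+1}\}$, so $\Delta_n (S_{j-3} f \, \Delta_j g)$ vanishes unless $|n - j| \le N_0$ for some fixed $N_0$. Together with Bernstein's inequality (Lemma~\ref{lemm:Bernstein}) and the trivial bound $\| S_{j-3} f \|_{L^\infty} \ls \| f \|_{L^\infty}$, this yields
\begin{equation}
2^{ns}\| \Delta_n T_f g\|_{L^2} \ls \| f\|_{L^\infty} \sum_{|n-j|\le N_0} 2^{ns}\| \Delta_j g\|_{L^2}.
\end{equation}
Squaring and summing in $n$ via discrete Young's convolution gives $\| T_f g\|_{H^s} \ls \| f\|_{L^\infty}\| g\|_{H^s}$, and the symmetric paraproduct $T_g f$ is controlled by $\| g\|_{L^\infty}\| f\|_{H^s}$ in exactly the same way.

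For the remainder $R(f,g) = \sum_{|j-k|\le 2} \Delta_j f \, \Delta_k g$, the Fourier support of each summand lies in a ball of radius $C\, 2^{\max(j,k)}$, so $\Delta_n R(f,g)$ only receives contributions from indices $j,k \ge n - N_1$. Bernstein's lemma then gives $\|\Delta_n(\Delta_j f \, \Delta_k g)\|_{L^2} \ls \| f\|_{L^\infty}\| \Delta_k g\|_{L^2}$, and using crucially that $s > 0$ one can sum the geometric tail in $k$:
\begin{equation}
2^{ns}\| \Delta_n R(f,g)\|_{L^2} \ls \| f\|_{L^\infty}\sum_{k\ge n - N_1} 2^{(n-k)s}\,2^{ks}\| \Delta_k g\|_{L^2}.
\end{equation}
Again discrete Young converts this into $\ell^2_n$ control by $\| f\|_{L^\infty}\| g\|_{H^s}$; by symmetry we may also bound it by $\| g\|_{L^\infty}\| f\|_{H^s}$. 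Combining the three pieces proves \eqref{pr}.

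The Chemin--Lerner version \eqref{pr:CL} is obtained by the identical chain of inequalities, integrating the Bernstein/paraproduct bounds above in the $t$ variable via H\"older's inequality with the $L^\infty_t L^\infty_x$ factor extracted pointwise in $j$, and only then squaring and summing. The main technical obstacle is the remainder term: the restriction $s>0$ is essential there, since it is what makes the factor $2^{(n-k)s}$ summable over $k \ge n - N_1$; without it one would have to work with Besov-type spaces rather than $H^s$, and the tame structure of the right-hand side would be lost.
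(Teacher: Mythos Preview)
Your argument is correct and is exactly the standard Bony paraproduct proof of this tame product estimate. Note, however, that the paper does not supply its own proof of this proposition: it is quoted directly as \cite[Corollary~2.54]{BCD} and left unproved, so there is no in-paper argument to compare against. Your sketch is essentially the proof given in \cite{BCD}, and indeed the paper reuses the same ingredients (frequency localization of $T_f g$, the $s>0$ summability of the remainder, discrete Young) when it proves the closely related Proposition~\ref{prop:psCL} just below.
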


Next we study the boundedness of some key operators in the Chemin-Lerner norm.

\begin{prop}\label{prop:fourierop}
The following hold.
\begin{enumerate}
 \item There exists an absolute constant $C$ such that for all $1\le p\le \infty$, $\sigma\in \R$ and $u\in H^\sigma(\R^d)$, we have
\begin{equation}\label{estop1}
\left\| \frac{\cosh((z+b)|D|)}{\cosh(b|D|)}u\right\|_{\wL^p_z([-b, 0]; H^{\sigma+\frac{1}{p}})}\le \max\{2b^{\frac{1}{p}}, C\}\| u\|_{H^\sigma}. 
\end{equation}
\item There exists an absolute constant $C$ such that for all $1\le q_2\le q_1\le \infty$, $\sigma\in \R$ and $f\in \wL^{p_2}_z([-b, 0];  H_x^{\sigma-1+\frac{1}{p_2}}) $, we have
\begin{equation}\label{estop2}
\left\| \int_{-b}^z\frac{\cosh((z'+b)|D|)}{\cosh((z+b)|D|)}f(x, z')dz'\right\|_{\wL^{q_1}_z([-b, 0]; H^{\sigma+\frac{1}{q_1}})}\le \max\{b^\frac{q_1+q_2'}{q_1q_2'}, C\}\| f\|_{\wL^{q_2}_z([-b, 0];  H^{\sigma-1+\frac{1}{q_2}})},
\end{equation}
where $\frac{1}{q_2}+\frac{1}{q_2'}=1$. In addition, for any $z\in [-b, 0]$, we have
\begin{equation}\label{estop3}
\left\| \int_{-b}^z\frac{\cosh((z'+b)|D|)}{\cosh((z+b)|D|)}f(x, z')dz'\right\|_{H^\sigma}\le \max\{b^\frac{q_1+q_2'}{q_1q_2'}, C\}\| f\|_{\wL^{q_2}_z([-b, 0];  H^{\sigma-1+\frac{1}{p_2}})},
\end{equation}
\end{enumerate} 
\end{prop}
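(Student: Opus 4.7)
All three estimates are $L^2$-based Fourier-multiplier inequalities, so the natural tool is the Littlewood--Paley decomposition $\mathrm{Id}=\sum_{j\ge 0}\Delta_j$ combined with Plancherel, block by block. The crucial symbol bound is the elementary pointwise inequality
\begin{equation*}
\frac{\cosh(a|\xi|)}{\cosh(b|\xi|)} \le 2\, e^{-(b-a)|\xi|} \qquad \text{whenever } 0\le a\le b,
\end{equation*}
which follows from $\tfrac12 e^x \le \cosh x \le e^x$ for $x\ge 0$. Taking $a=z+b$ controls the symbol in \eqref{estop1}, and taking $a=z'+b$ with $-b\le z'\le z\le 0$ controls the kernel in \eqref{estop2} and \eqref{estop3}.

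\textbf{Estimate \eqref{estop1}.} Since the multiplier commutes with $\Delta_j$, Plancherel yields $\bigl\|\Delta_j \tfrac{\cosh((z+b)|D|)}{\cosh(b|D|)} u\bigr\|_{L^2_x}\le M_j(z)\|\Delta_j u\|_{L^2_x}$ where $M_0(z)\le 1$ trivially and, for $j\ge 1$, $M_j(z)\le 2\, e^{-2^{j-2}|z|}$ by the pointwise bound above. A direct computation shows $\|M_j(\cdot)\|_{L^p_z([-b,0])}\lesssim \min\{b^{1/p},\,2^{-j/p}\}$. Squaring against the Chemin--Lerner weight $2^{2(\sigma+1/p)j}$, the decay $2^{-2j/p}$ exactly cancels the extra weight, reducing the $j\ge 1$ portion of the sum to $\|u\|_{H^\sigma}^2$. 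The $j=0$ block separately contributes $b^{2/p}\|\Delta_0 u\|_{L^2_x}^2$, which is the source of the $\max\{2b^{1/p},C\}$ form of the final constant.

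\textbf{Estimates \eqref{estop2} and \eqref{estop3}.} Minkowski's integral inequality and the blockwise Plancherel step give
\begin{equation*}
\Bigl\|\Delta_j\!\!\int_{-b}^z\! \tfrac{\cosh((z'+b)|D|)}{\cosh((z+b)|D|)} f(\cdot,z')\,dz'\Bigr\|_{L^2_x}\lesssim \int_{-b}^z e^{-c\,2^j(z-z')}\|\Delta_j f(\cdot,z')\|_{L^2_x}\,dz',
\end{equation*}
with a universal $c>0$ for $j\ge 1$ and the factor interpreted as $1$ for $j=0$. This is a one-sided convolution on $[-b,0]$ with kernel $e^{-c 2^j\cdot}\mathbf 1_{[0,b]}$, so Young's inequality with the Hölder triple $1+1/q_1=1/r+1/q_2$ (equivalently $1/r=1/q_1+1/q_2'=(q_1+q_2')/(q_1q_2')$) bounds the $L^{q_1}_z$-norm by $\|e^{-c 2^j\cdot}\|_{L^r(0,b)}\|\Delta_j f\|_{L^{q_2}_z L^2_x}\lesssim \min\{b^{1/r},\,2^{-j/r}\}\|\Delta_j f\|_{L^{q_2}_z L^2_x}$. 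Multiplying by the Chemin--Lerner weight $2^{(\sigma+1/q_1)j}$ and using the algebraic identity $1/q_1-1/r=1/q_2-1$, the high-frequency contribution becomes $2^{(\sigma-1+1/q_2)j}\|\Delta_j f\|_{L^{q_2}_z L^2_x}$, which squares and sums to $\|f\|_{\wL^{q_2}_z H^{\sigma-1+1/q_2}}^2$. The $j=0$ block contributes the $b^{1/r}=b^{(q_1+q_2')/(q_1q_2')}$ in the constant. The pointwise-in-$z$ estimate \eqref{estop3} is simply the $q_1=\infty$ endpoint of this scheme: Hölder in $z'$ (rather than Young in $z$) against the same kernel produces the same $L^{r}(0,b)$ factor with the same Chemin--Lerner norm on the right.

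\textbf{Main obstacle.} None of the individual steps are subtle; the only real care required is book-keeping. One must verify the exponent identity $1/q_1-1/r=1/q_2-1$ so that Young's gain lines up precisely with the Chemin--Lerner weights, and one must unify the low-frequency $b^{1/r}$ contribution with the high-frequency universal bound into the single advertised constant $\max\{b^{(q_1+q_2')/(q_1 q_2')},C\}$.
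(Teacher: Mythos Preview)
Your proposal is correct and follows essentially the same route as the paper: the identical pointwise symbol bound $\tfrac{\cosh(a|\xi|)}{\cosh(b|\xi|)}\le 2e^{-(b-a)|\xi|}$, the same block-by-block Plancherel estimate on Littlewood--Paley pieces, and the same Young's inequality in $z$ with the H\"older triple $1/r=1+1/q_1-1/q_2$ to convert the convolution kernel's $L^r$ norm into the weight shift $2^{-j/r}$. The only cosmetic difference is that you bound the $j=0$ symbol by $1$ rather than $2$, which is sharper but immaterial.
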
 
\begin{proof}
For all $-b\le z_1\le z_2$, we have $0\le z_2-z_1\le z_2+b$ and hence  
\begin{equation}\label{est:symbol}
1\le \frac{\cosh((z_1+b)c)}{\cosh((z_2+b)c)}e^{(z_2-z_1)c}=\frac{e^{2(z_2+b)c}+e^{2(z_2-z_1)c}}{e^{2(z_2+b)c}+1}\le 2.
\end{equation}
for all $c\ge 0$. 

To prove the first item we note that \eqref{est:symbol}  implies 
\begin{equation}
e^{zc} \le \frac{\cosh((z+b)c)}{\cosh(bc)}\le 2e^{zc}
\end{equation}
for all $z\in [-b, 0]$. Consequently, for $j\ge 1$ and $u\in \dot H^\sigma$, we have 
\begin{multline}
 \left\| \Delta_j\frac{\cosh((z+b)|D|)}{\cosh(b|D|)}u\right\|_{L^2_x} =\left\| \frac{\cosh((z+b)|D|)}{\cosh(b|D|)}\Delta_ju\right\|_{L^2_x}\\
 \le \left(\int_{\R^d}4e^{2z|\xi|}|\wh{\Delta_j u}(\xi)|^2d\xi\right)^\hal
 \le 2e^{z2^{j-2}}\|\Delta_j u\|_{L^2}
\end{multline}
since $|\xi|\ge 2^{j-2}$ on the support of $\wh{\Delta_j}u(\xi)$. It follows that
\begin{equation}\label{estop1:high}
 \left\| \Delta_j\frac{\cosh((z+b)|D|)}{\cosh(b|D|)}u\right\|_{L^p_z([-b, 0]; L^2)}\le C2^{-\frac{j}{p}}\|\Delta_j u\|_{L^2},
 \end{equation}
 where $C$ is an absolute constant. On the other hand, the low frequency part can be bounded as
 \begin{equation}\label{estop1:low}
  \left\| \Delta_0\frac{\cosh((z+b)|D|)}{\cosh(b|D|)}u\right\|_{L^p([-b, 0]; L^2)}\le   2\|\| \Delta_0u\|_{L^2_x}\|_{L^p_z([-b, 0])}\le 2b^{\frac{1}{p}}\| \Delta_0u\|_{L^2}.
  \end{equation}
  Combining \eqref{estop1:high} and \eqref{estop1:low} yields 
  \begin{equation}\begin{aligned}
  \left\| \frac{\cosh((z+b)|D|)}{\cosh(b|D|)}u\right\|_{\wL^p_z([-b, 0]; H^{\sigma+\frac{1}{p}})}^2&\le (2b^{\frac{1}{p}})^2\| \Delta_0u\|_{L^2}^2+\sum_{j=1}^\infty C^22^{2j(\sigma+\frac{1}{p}-\frac{1}{p})}\|\Delta_j u\|_{L^2}^2\\
  &\le \max\{2b^{\frac{1}{p}}, C\}^2\| u\|^2_{H^\sigma}.
  \end{aligned}\end{equation}
 This completes the proof of the first item.

 We now turn to the proof of the second item.  To prove \eqref{estop2}, we set
 \begin{equation}
 g(x, z)=\int_{-b}^z\frac{\cosh((z'+b)|D|)}{\cosh((z+b)|D|)}f(x, z')dz'.
 \end{equation}
  For $z\in [-b, 0]$ and $j\ge 1$,  using  \eqref{est:symbol} we estimate
 \begin{multline}
 \left\| \Delta_jg(\cdot, z)\right\|_{L^2} = \left\|\int_{-b}^z\frac{\cosh((z'+b)|D|)}{\cosh((z+b)|D|)} \Delta_jf(x, z')dz'\right\|_{L^2_x} 
\le\int_{-b}^z \left\|\frac{\cosh((z'+b)|D|)}{\cosh((z+b)|D|)} \Delta_jf(\cdot, z')\right\|_{L^2_x}dz' \\
 \le \int_{-b}^z \left(\int_{\R^d}e^{-2(z-z')|\xi|} |\varphi(2^{-j}\xi)\wh{f}(\xi, z')|^2d\xi\right)^\hal dz' 
  \le \int_{-b}^z \left(\int_{\R^d}e^{-(z-z')2^{j-1}} |\wh{\Delta_j f}(\xi, z')|^2d\xi\right)^\hal dz' \\
    \le \int_{-b}^z  e^{-(z-z')2^{j-2}}\|\Delta_j f(\cdot, z')\|_{L^2} dz'.
\end{multline}
 Applying Young's inequality in $z$ we deduce 
 \begin{equation}\label{estop2:high}
  \begin{aligned}
\| \Delta_jg\|_{L^{q_1}_z([-b, 0]; L^2)}& \le \| e^{-z 2^{j-2}}\|_{L^q_z(\R_+)}\| \Delta_j f\|_{L^{q_2}_z([-b, 0]; L^2)}\\
&=\frac{1}{q^\frac{1}{q}}2^{-\frac{j-2}{q}}\| \Delta_j f\|_{L^{q_2}_z([-b, 0]; L^2)}\le C2^{-\frac{j}{q}}\| \Delta_j f\|_{L^{q_2}_z([-b, 0]; L^2)},
 \end{aligned}
 \end{equation}
 where $\frac{1}{q}=1+\frac{1}{q_1}-\frac{1}{q_2}$ and $C$ is an absolute constant. On the other hand, it is readily seen that 
 \begin{equation}
  \left\| \Delta_0g(\cdot, z)\right\|_{L^2} \le 2\int_{-b}^z  \|\Delta_0 f(\cdot, z')\|_{L^2} dz' \le 2b^\frac{1}{q_2'} \|\Delta_0 f\|_{L^{q_2}_z([-b, 0]; L^2)},
  \end{equation}
  and hence
   \begin{equation}\label{estop2:low}
  \left\| \Delta_0g\right\|_{L^{q_1}_z([-b, 0]; L^2)}  \le 2b^\frac{q_1+q_2'}{q_1q_2'} \|\Delta_0 f\|_{L^{q_2}_z([-b, 0]; L^2)}.
  \end{equation}
A combination of \eqref{estop2:high} and \eqref{estop2:low} leads to \eqref{estop2}. 

Finally, the proof of \eqref{estop3} is similar to the case $q_1=\infty$ of \eqref{estop2}. 
\end{proof}

Next we consider some more product estimates.

\begin{prop}\label{prop:psCL}
Let $s>0$, $ p\in [1,  \infty]$, and $I\subset \R$. Then, there exists $C=C(d, s)$ such that the estimate 
 \begin{equation}\label{ppestcH}
 \begin{aligned}
\| fg\|_{\wL^p(I; H^s(\R^d))}&\le C \left(\| f\|_{L^\infty(I; L^\infty(\R^d))} +\| \chi \wh{f}\|_{L^\infty(I; L^1(\R^d))}\right)\| g\|_{\wL^p(I; H^s(\R^d))}\\
&\quad+C\| g\|_{L^\infty(I; L^\infty(\R^d))} \| f\|_{\wL^p(I; H^s_\sharp(\R^d))}
\end{aligned}
\end{equation}
holds provided that the right-hand side is finite. Consequently,  for $s>0$ and $s_0>\frac{d}{2}$, there exists $C=C(d, s, s_0)$ such that
\begin{equation}\label{ppestcH:2}
\| fg\|_{\wL^p(I; H^s)}\le C\| f\|_{L^\infty(I; \cH^{s_0}(\R^d))}\| g\|_{\wL^p(I; H^s)}+C\| g\|_{L^\infty(I; L^\infty)} \| f\|_{\wL^p(I; H^s_\sharp)}.
\end{equation}
\end{prop}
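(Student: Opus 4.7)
The plan is to invoke Bony's decomposition \eqref{Bony}, writing $fg = T_f g + T_g f + R(f,g)$, and then to estimate each term in the Chemin-Lerner norm $\wL^p(I; H^s)$ via the Littlewood-Paley characterization. The proof essentially adapts the classical paraproduct estimates (see \cite{BCD}) to the Chemin-Lerner setting while tracking where the low-frequency part of $f$ is absorbed by the $L^1$ Fourier bound $\|\chi\wh{f}\|_{L^\infty(I;L^1)}$ rather than $\|f\|_{L^\infty}$.

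For $T_f g = \sum_{k\ge 3} S_{k-3}f \Delta_k g$, the frequency support of $S_{k-3}f \Delta_k g$ lies in an annulus of size $\sim 2^k$, so $\Delta_j T_f g$ collapses to the finite sum over $|j-k|\le N_0$ of terms $\Delta_j(S_{k-3}f\,\Delta_k g)$. Using that $S_{k-3}$ is convolution with an $L^1$ kernel whose mass is controlled uniformly in $k$ yields $\|S_{k-3}f\|_{L^\infty}\lesssim \|f\|_{L^\infty}$; applying H\"older in time and the discrete Young inequality in the $\ell^2_s$-sum gives $\|T_f g\|_{\wL^p(I;H^s)}\lesssim \|f\|_{L^\infty(I;L^\infty)}\|g\|_{\wL^p(I;H^s)}$. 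The symmetric paraproduct $T_g f$ is handled identically, but since its summation index satisfies $k\ge 3$ only the $H^s_\sharp$ part of $f$ appears, producing $\|T_g f\|_{\wL^p(I;H^s)}\lesssim \|g\|_{L^\infty(I;L^\infty)}\|f\|_{\wL^p(I;H^s_\sharp)}$.

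The remainder $R(f,g)=\sum_{|j-k|\le 2}\Delta_j f\,\Delta_k g$ is the most delicate term, because the sum includes the low-low interaction $\Delta_0 f\,\Delta_0 g$ and the near-diagonal pieces $\Delta_0 f\,\Delta_k g$ for $k\in\{1,2\}$. For the high-high part, where both indices satisfy $j,k\ge 1$, one uses Bernstein's inequality (Lemma \ref{lemm:Bernstein}) on $\Delta_l R(f,g)$: the output block $\Delta_l$ picks up contributions only from $k\ge l-C$, and the hypothesis $s>0$ lets us sum the geometric series $\sum_{k\ge l}2^{(l-k)s}$ after distributing $2^{sl}$ onto $\Delta_k g$, giving a bound by $\|f\|_{L^\infty(I;L^\infty)}\|g\|_{\wL^p(I;H^s)}$ (alternatively onto $\Delta_k f$ when the $L^\infty$ sits on $g$). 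For the remaining terms involving $\Delta_0 f$, one writes $\Delta_0 f=\chi(D)f$ so that by Hausdorff--Young $\|\Delta_0 f(\cdot,t)\|_{L^\infty}\le \|\chi\wh{f}(\cdot,t)\|_{L^1}$, and then bounds $\|\Delta_0 f\cdot\Delta_k g\|_{\wL^p(I;H^s)}\lesssim \|\chi\wh f\|_{L^\infty(I;L^1)}\|g\|_{\wL^p(I;H^s)}$, where we have used that $\Delta_k g$ for $k\le 2$ is controlled by $\|g\|_{\wL^p(I;H^s)}$ trivially. Summing the contributions from $T_f g$, $T_g f$, and $R(f,g)$ yields \eqref{ppestcH}.

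The consequence \eqref{ppestcH:2} follows by absorbing both $\|f\|_{L^\infty}$ and $\|\chi\wh{f}\|_{L^1}$ into $\|f\|_{\cH^{s_0}}$ for $s_0>d/2$: indeed, Theorem \ref{specialized_properties}(6) gives $\|\wh f\|_{L^1}\lesssim\|f\|_{\cH^{s_0}}$, hence $\|\chi\wh f\|_{L^1}\lesssim\|f\|_{\cH^{s_0}}$ as well, and Fourier inversion together with item (7) of the same theorem yields $\|f\|_{L^\infty}\lesssim \|f\|_{\cH^{s_0}}$. The main obstacle is precisely the low-frequency bookkeeping in $R(f,g)$: because our functions live in the anisotropic scale $\cH^s$ whose low-frequency norm is \emph{not} $L^\infty$-based (e.g.\ $\eta_l$ need not be in $L^2$ when $\Gamma=\R^{n-1}$), the standard remainder estimate in $\|f\|_{L^\infty}\|g\|_{H^s}$ would be inadequate for the intended applications; exchanging it for the $L^1$ Fourier bound $\|\chi\wh f\|_{L^1}$ is what makes the corollary compatible with inputs from $\cH^{s_0}$.
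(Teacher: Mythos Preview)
Your approach is essentially the same as the paper's: Bony's decomposition, standard paraproduct bounds for $T_f g$ and $T_g f$, and a careful treatment of $R(f,g)$ that isolates the $\Delta_0 f$ contribution via $\|\Delta_0 f\|_{L^\infty}\le\|\chi\wh f\|_{L^1}$, followed by the deduction of \eqref{ppestcH:2} from Theorem~\ref{specialized_properties}. One small bookkeeping point: your case split for $R(f,g)$ into ``high--high ($j,k\ge 1$)'' and ``remaining terms involving $\Delta_0 f$'' omits the near-diagonal pieces $\Delta_j f\,\Delta_0 g$ with $j\in\{1,2\}$, which involve $\Delta_0 g$ but not $\Delta_0 f$; these are easily handled (e.g.\ by $\|\Delta_0 g\|_{L^\infty}\le\|g\|_{L^\infty}$ and $\|\Delta_j f\|_{L^2}\lesssim\|f\|_{\wL^p(I;H^s_\sharp)}$), and the paper's version absorbs them by putting the $L^\infty$ norm on $g$ throughout the $j\ge 1$ part of the remainder.
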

\begin{proof}
We first note that for $M=\T$, \eqref{ppestcH:2} is a consequence of \eqref{pr:CL} and the continuous embedding $H^{s_0}(\T^d)\subset L^\infty(\T^d)$ for $s_0>\frac{d}{2}$. 

To prove \eqref{ppestcH} and \eqref{ppestcH:2} for $M=\R$, we shall consider functions $f(x, z)$ and $g(x, z)$ defined on $\R^d\times I$. For fixed $z\in I$, we use Bony's decomposition \eqref{Bony}: $fg=T_fg+T_gf+R(f, g)$, where $T_fg=\sum_{j\ge 3}S_{j-3}f\Delta_j g$. For $j\ge 3$ we have $\supp\wh{S_{j-3}f\Delta_j g}\subset\{2^{j-3}<|\xi|<2^{j+1}\}$ and hence $\Delta_k (S_{j-3}f\Delta_j g)=0$  for all $k\ge 0$ satisfying $|j-k|\ge 3$. Thus, for $k\ge 0$ using Bernstein's inequality we obtain
\begin{multline}
2^{sk}\| \Delta_k T_fg\|_{L^2_x} 
=2^{sk}\| \sum_{j\ge 3, |j-k|\le 2} \Delta_k(S_{j-3}f\Delta_j g)\|_{L^2_x}
\le C\sum_{j\ge 3, |j-k|\le 2} 2^{sj}\|S_{j-3}f\Delta_j g\|_{L^2_x}\\
\le C\| f\|_{L^\infty_x} \sum_{j\ge 3, |j-k|\le 2}2^{sj}\| \Delta_j g\|_{L^2_x},
\end{multline}
where $C=C(d, s)$. Since $f\in \wL^p(I; H^s_\sharp)$, we have $\Delta_j f\in L^2_x$ a.e. $z\in I$ for $j\ge 3$. Consequently, the preceding estimate for $T_fg$ also holds for $T_gf$; that is,
\begin{equation}
2^{sk}\| \Delta_k T_gf\|_{L^2_x}\le C\| g\|_{L^\infty_x} \sum_{j\ge 3, |j-k|\le 2}2^{sj}\| \Delta_j f\|_{L^2_x}.
\end{equation} 
It follows that
\begin{multline}\label{est:Tfg}
\| T_fg\|_{\wt L^p(I; H^s)}^2 
\le C\| f\|_{L^\infty(I; L^\infty)} ^2 \sum_{k=0}^\infty \Big(\sum_{j\ge 3, |j-k|\le 2}2^{sk}\| \Delta_j g\|_{L^p(I; L^2)}\Big)^2\\
\le C\| f\|_{L^\infty(I; L^\infty)} ^2 \sum_{j=3}^\infty 2^{2sj}\| \Delta_j g\|_{L^p(I; L^2)}^2 
\le C\| f\|_{L^\infty(I; L^\infty)}^2\| g\|_{\wt L^p(I; H_\sharp^s)}^2,
\end{multline}
and similarly we have
\begin{equation}\label{est:Tgf}
\| T_gf\|_{\wt L^p(I; H^s)}\le C\| g\|_{L^\infty(I; L^\infty)}\| f\|_{\wt L^p(I; H_\sharp^s)}.
\end{equation}
As for the remainder $R(f, g)=\sum_{j\ge 0} \sum_{|\nu|\le 2} \Delta_j f\Delta_{j+\nu}g$, we note that $\supp\wh{ \Delta_j f\Delta_{j+\nu}g}\subset\{|\xi|<2^{j+3}\}$. Thus $\Delta_k(\Delta_j f\Delta_{j+\nu}g)=0$ for $k\ge j+5$ and 
\begin{equation}
\| \Delta_k \sum_{|\nu|\le 2} \Delta_j f\Delta_{j+\nu}g\|_{L^2_x}\le \sum_{j\ge k-4}\sum_{|\nu|\le 2} \| \Delta_k (\Delta_j f\Delta_{j+\nu}g)\|_{L^2_x}, 
\end{equation}
where
\begin{equation}
\| \Delta_k (\Delta_j f\Delta_{j+\nu}g)\|_{L^2_x} \ls 
\| \Delta_j f\|_{L^2_x}\| g\|_{L^\infty_x }\quad\text{if } j\ge 1
\end{equation}
and 
\begin{equation}
\| \Delta_k (\Delta_j f\Delta_{j+\nu}g)\|_{L^2_x} \ls  \| \Delta_0 f\|_{L^\infty_x}\| \Delta_{\nu} g\|_{L^2_x}\ls \| \chi \wh f\|_{L^1}\| \Delta_{\nu} g\|_{L^2_x}
\quad\text{if } j=0.
\end{equation}
It follows that
\begin{equation}\begin{aligned}
2^{ks}\| \Delta_k \sum_{|\nu|\le 2} \Delta_j f\Delta_{j+\nu}g\|_{L^p(I;  L^2)}\ls
\begin{cases} 2^{ks}\sum_{j\ge 1, j\ge k-4}\| \Delta_j f\|_{L^p(I; L^2)}\| g\|_{L^\infty(I; L^\infty)}\quad\text{if } k\ge 5,\\
2^{ks}\| \chi \wh f\|_{L^\infty(I; L^1)}\| \Delta_{\nu} g\|_{L^p(I; L^2)}\quad\text{if } k\le 4
\end{cases}\\
\ls \begin{cases} \sum_{j\ge 1, j\ge k-4}2^{js}\| \Delta_j f\|_{L^p(I; L^2)}\| g\|_{L^\infty(I; L^\infty)}2^{(k-j)s}\quad\text{if } k\ge 5,\\
\| \chi \wh f\|_{L^\infty(I; L^1)}\|  g\|_{\wt L^p(I; L^2)}\quad\text{if } k\le 4.
\end{cases}
\end{aligned}\end{equation}
By Young's inequality for series, we deduce
\begin{equation}
\Big\| 2^{ks}\| \Delta_k \sum_{|\nu|\le 2} \Delta_j f\Delta_{j+\nu}g\|_{L^p(I;  L^2)}\Big\|_{\ell^2(\{k\ge 5\})}\ls \| g\|_{L^\infty(I; L^\infty)}\| f\|_{\wt L^p(I; H_\sharp^s)}.
\end{equation}
We thus obtain
\begin{equation}\label{estRfg}
\| R(f, g)\|_{\wt L^p(I; H^s)}\ls \| \chi \wh f\|_{L^\infty(I; L^1)}\|  g\|_{\wt L^p(I; H^s)}+ \| g\|_{L^\infty(I; L^\infty)}\| f\|_{\wt L^p(I; H_\sharp^s)}.
\end{equation}
Combining \eqref{est:Tfg}, \eqref{est:Tgf} and \eqref{estRfg} we obtain \eqref{ppestcH}.
 Finally, \eqref{ppestcH:2} follows from \eqref{ppestcH} and \eqref{L1cH}. 
\end{proof}

Our next result records some estimates for nonlinear maps of the form $(f,g) \mapsto g(1+f)^{-1}$.

\begin{prop}\label{prop:nonlcH} Let $I\subset \R$, $p\in [1, \infty]$, $s>0$, and $s_0>\frac{d}{2}$. There exists a positive constant  $C=C(d, s, s_0)$ such that if $\| f\|_{L^\infty(I; \cH^{s_0})}<\frac{1}{2C}$ then 
\begin{equation}\label{nonl:cH}
\begin{aligned}
\norm{ \frac{g}{1+f}}_{\wL^p(I; H^s)}&\le \|g\|_{\wL^p(I; H^s)}+C\| f\|_{L^\infty(I; \cH^{s_0})}\| g\|_{\wL^p(I; H^s)}+\| g\|_{L^\infty(I; L^\infty)} \| f\|_{\wL^p(I; H^s_\sharp)}.
\end{aligned}
\end{equation}
\end{prop}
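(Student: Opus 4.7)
The plan is to expand $\frac{1}{1+f}$ as a Neumann series and bound each term via iterated application of the product estimate \eqref{ppestcH:2}. Since $s_0>\tfrac{d}{2}$, Theorem \ref{specialized_properties} gives the embedding $\cH^{s_0}(\R^d)\hookrightarrow L^\infty(\R^d)$, so after enlarging the constant $C$ in the statement it can be arranged that the smallness hypothesis $\|f\|_{L^\infty(I; \cH^{s_0})}<\tfrac{1}{2C}$ forces $\|f\|_{L^\infty(I; L^\infty)}\le \tfrac14$. This guarantees that $\frac{1}{1+f}=\sum_{n=0}^\infty (-f)^n$ converges pointwise almost everywhere, so at least formally $\frac{g}{1+f}=\sum_{n=0}^\infty g(-f)^n$, and the task reduces to estimating this series in $\wL^p(I; H^s)$.

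To that end, I set $a_n:=\|gf^n\|_{\wL^p(I; H^s)}$ and will establish by induction on $n\ge 0$ a bound of the form
\begin{equation*}
a_n \le \lambda^n\|g\|_{\wL^p(I; H^s)}+C\|g\|_{L^\infty(I; L^\infty)}\|f\|_{\wL^p(I; H^s_\sharp)}\sum_{k=0}^{n-1}\lambda^k\mu^{n-1-k},
\end{equation*}
where $\lambda:=C\|f\|_{L^\infty(I; \cH^{s_0})}$ and $\mu:=C_0\|f\|_{L^\infty(I; \cH^{s_0})}$, with $C_0$ the embedding constant above, and where the empty sum is zero when $n=0$. The inductive step writes $gf^n=f\cdot(gf^{n-1})$, invokes \eqref{ppestcH:2} with $f$ in the $\cH^{s_0}$-slot to obtain $a_n\le \lambda\,a_{n-1}+C\|gf^{n-1}\|_{L^\infty(I; L^\infty)}\|f\|_{\wL^p(I; H^s_\sharp)}$, and combines this with the pointwise H\"older-type bound $\|gf^{n-1}\|_{L^\infty(I; L^\infty)}\le \|g\|_{L^\infty(I; L^\infty)}\mu^{n-1}$.

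Summing in $n$ and using the elementary identity $\sum_{n=1}^\infty\sum_{k=0}^{n-1}\lambda^k\mu^{n-1-k}=\tfrac{1}{(1-\lambda)(1-\mu)}$ will yield
\begin{equation*}
\sum_{n=0}^\infty a_n \le \frac{1}{1-\lambda}\|g\|_{\wL^p(I; H^s)}+\frac{C\|g\|_{L^\infty(I; L^\infty)}\|f\|_{\wL^p(I; H^s_\sharp)}}{(1-\lambda)(1-\mu)}.
\end{equation*}
Since $\lambda,\mu\le \tfrac12$, the inequalities $\tfrac{1}{1-\lambda}\le 1+2\lambda$ and $\tfrac{1}{(1-\lambda)(1-\mu)}\le 4$ convert this into exactly \eqref{nonl:cH} after absorbing absolute constants into $C$. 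The geometric decay of the $a_n$ also shows that the partial sums $\sum_{n=0}^N g(-f)^n$ form a Cauchy sequence in $\wL^p(I; H^s)$; by completeness they converge in that space, and the limit must agree with the pointwise $L^\infty_{x,z}$-limit $g/(1+f)$ in the sense of distributions, so the bound above is indeed a bound for $\|g/(1+f)\|_{\wL^p(I; H^s)}$.

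The main analytic input is the product estimate \eqref{ppestcH:2} already proved in Proposition \ref{prop:psCL}; no new harmonic analysis is required. The only genuine subtlety is ensuring convergence of the formal Neumann series in the Chemin-Lerner topology, which is what dictates the double smallness mechanism (through $\lambda$ in $\wL^p(I; H^s)$ and $\mu$ in $L^\infty$) and explains why the hypothesis is formulated as $\|f\|_{L^\infty(I; \cH^{s_0})}<\tfrac{1}{2C}$ rather than in a weaker norm.
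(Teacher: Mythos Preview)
Your proof is correct and follows essentially the same route as the paper: expand $g/(1+f)$ as a Neumann series, bound $\|gf^n\|_{\wL^p(I;H^s)}$ inductively via the product estimate \eqref{ppestcH:2} together with the $L^\infty$ bound $\|gf^{n-1}\|_{L^\infty}\le \|g\|_{L^\infty}\mu^{n-1}$, and sum. The only cosmetic difference is that you track two constants $\lambda,\mu$ separately, whereas the paper takes $C_2=\max(C_1,C)$ so that $\lambda=\mu$ and your sum $\sum_{k=0}^{n-1}\lambda^k\mu^{n-1-k}$ collapses to the factor $j\,(C_2\|f\|_{\cH^{s_0}})^{j-1}$ appearing there; your remark on convergence of the series in the Chemin--Lerner topology is a point the paper leaves implicit.
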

\begin{proof}
By virtue of \eqref{L1cH} we have $\| f\|_{L^\infty(I; L^\infty)}\le C_1\| f\|_{L^\infty(I; \cH^{s_0})}$, $C_1=C_1(d, s, s_0)$,  and hence $|f|\le \hal$ $a.e.$ if $\| f\|_{L^\infty(I; \cH^{s_0})}\le \frac{1}{2C_1}$. Then  the expansion 
\begin{equation}
\frac{g}{1+f}=\sum_{j\ge 0}(-1)^jgf^j
\end{equation}
holds $a.e$ on $\R^d$.  We claim that with $C_2=\max\{C_1, C\}$, where $C$ is given in \eqref{ppestcH:2}, we have 
\begin{equation}\label{product:j}
\| gf^j\|_{\wL^p(I; H^s)}\le C_2(C_2\| f\|_{L^\infty(I; \cH^{s_0})})^{j-1}\left\{ \| f\|_{L^\infty(I; \cH^{s_0})}\| g\|_{\wL^p(I; H^s)}+j\| g\|_{L^\infty(I; L^\infty)} \| f\|_{\wL^p(I; H^s_\sharp)}\right\}
\end{equation}
for all $j\ge 1$.  Indeed, the case $j=1$ follows at once from \eqref{ppestcH:2}. Assume that \eqref{product:j} holds for some $j\ge 1$. Applying \eqref{ppestcH:2} once again, we deduce
\begin{equation}\begin{aligned}
\| gf^{j+1}\|_{\wL^p(I; H^s)}&=\| f(gf^j)\|_{\wL^p(I; H^s)} 
\le C_2\| f\|_{L^\infty(I; \cH^{s_0})}\| gf^j\|_{\wL^p(I; H^s)}+C_2\| gf^j\|_{L^\infty(I; L^\infty)} \| f\|_{\wL^p(I; H^s_\sharp)}\\
&\le  C_2(C_2\| f\|_{L^\infty(I; \cH^{s_0})})^j\left\{\| f\|_{L^\infty(I; \cH^{s_0})}\| g\|_{\wL^p(I; H^s)}+j\| g\|_{L^\infty(I; L^\infty)} \| f\|_{\wL^p(I; H^s_\sharp)}\right\}\\
&\quad+C_2\| gf^j\|_{L^\infty(I; L^\infty)} \| f\|_{\wL^p(I; H^s_\sharp)}.
\end{aligned}\end{equation}
Combining this with the estimate
\begin{equation}
\| gf^j\|_{L^\infty(I; L^\infty)} \le \| g\|_{L^\infty(I; L^\infty)} \| f\|^j_{L^\infty(I; L^\infty)}\le  \| g\|_{L^\infty(I; L^\infty)}(C_1 \| f\|_{L^\infty(I; \cH^{s_0})})^j
\end{equation}
we obtain \eqref{product:j} for $j+1$. 

Finally, for $\| f\|_{L^\infty(I; \cH^{s_0})}\le \frac{1}{2C_2}$ we can sum \eqref{product:j} over $j\ge 1$ to  obtain 
\begin{multline}\label{nonl:cH:0}
\norm{ \frac{g}{1+f} }_{\wL^p(I; H^s)} \le \|g\|_{\wL^p(I; H^s)}
+\frac{C}{1-C\| f\|_{L^\infty(I; \cH^{s_0})}}\| f\|_{L^\infty(I; \cH^{s_0}(\R^d))}\| g\|_{\wL^p(I; H^s)}
\\
+\frac{C}{(1-C\| f\|_{L^\infty(I; \cH^{s_0})})^2}\| g\|_{L^\infty(I; L^\infty)} \| f\|_{\wL^p(I; H^s_\sharp)}.
\end{multline}
This implies \eqref{nonl:cH}.
\end{proof}

\section*{Acknowledgements}

H. Nguyen was supported by an NSF Grant (DMS \#2205710).  I. Tice was supported by an NSF Grant (DMS \#2204912). We thank B. Pausader for discussions on the finite-depth Dirichlet-Neumann operator. 

\section*{Data Availability Statement}

 Data sharing is not applicable to this article as no
datasets were generated or analyzed during the current study.


\bibliographystyle{abbrv}

\end{document}